\pdfoutput=1
\documentclass[twoside]{article}
\usepackage[letterpaper, left=3.5cm, right=3.5cm, top=4cm, bottom=2.5cm]{geometry}
\usepackage{graphicx}
\usepackage{amsmath,amssymb,amsthm}
\usepackage{authblk}
\usepackage[utf8]{inputenc}
\usepackage{microtype}
\usepackage{mathrsfs} 
\usepackage{enumitem}
\usepackage{calc}
\usepackage{esint}
\usepackage{fancyhdr}
\usepackage{xcolor}
\usepackage[labelfont = bf]{caption}
\usepackage{doi}
\usepackage{subfigure}
\usepackage[numbers]{natbib}
\usepackage{float}
\usepackage{stmaryrd}
\usepackage{mathtools}
\usepackage{booktabs}
\usepackage{colortbl}
\usepackage{tabulary}
\usepackage{diagbox}
\usepackage{caption}
\usepackage{cleveref}
\usepackage{commath}
\usepackage{multirow}
\usepackage{relsize}

\usepackage{physics}
\usepackage{amsmath}
\usepackage{tikz}
\usepackage{pgfplots}
\pgfplotsset{compat=1.18}
\usepackage{mathdots}
\usepackage{yhmath}
\usepackage{cancel}
\usepackage{color}
\usepackage{siunitx}
\usepackage{array}
\usepackage{multirow}
\usepackage{amssymb}
\usepackage{gensymb}
\usepackage{tabularx}
\usepackage{extarrows}
\usepackage{booktabs}
\usetikzlibrary{fadings}
\usetikzlibrary{patterns}
\usetikzlibrary{shadows.blur}
\usetikzlibrary{shapes}

\newtheorem{theorem}{Theorem}[section]
\numberwithin{equation}{section}
\newtheorem{proposition}[theorem]{Proposition}

\newtheorem{corollary}[theorem]{Corollary}
\newtheorem{remark}[theorem]{Remark}
\newtheorem{lemma}[theorem]{Lemma}

\newtheorem{algorithm}[theorem]{Algorithm}
\newtheorem{assumption}[theorem]{Assumption}

\usepackage{titlesec}
\titleformat{\section}{\normalfont\scshape\centering}{\thesection.}{0.5em}{}
\titleformat*{\subsection}{\itshape}
\titleformat*{\subsubsection}{\itshape}

 \providecommand{\keywords}[1]
 {
 	{\small\emph{Keywords:} #1}
 }
 \providecommand{\MSC}[1]
 {
 	{\small\emph{AMS MSC (2020):~~} #1}
 }

\definecolor{denim}{rgb}{0.08, 0.38, 0.74}
\definecolor{byzantium}{rgb}{0.44, 0.16, 0.39} 
\definecolor{shamrockgreen}{rgb}{0.0, 0.62, 0.38} 

\usepackage{hyperref}
\hypersetup{
	colorlinks=true,
	linkcolor=denim,
	citecolor = shamrockgreen,
	filecolor=magenta, 
	urlcolor=byzantium,
}

\begin{document}
	\setlength{\abovedisplayskip}{5.5pt}
	\setlength{\belowdisplayskip}{5.5pt}
	\setlength{\abovedisplayshortskip}{5.5pt}
	\setlength{\belowdisplayshortskip}{5.5pt}

	\title{\vspace{-17.5mm}A Finite Element Approximation of an Optimal Insulation Problem with Convective Heat Transfer\thanks{This work is partially supported by the Office of Naval Research (ONR) under Award NO: N00014-24-1-2147, NSF grant DMS-2408877, the Air Force Office of Scientific Research (AFOSR) under Award NO: FA9550-25-1-0231.}
    }
	\author[1]{Harbir Antil\thanks{Email: \url{hantil@gmu.edu}}}
	\author[2]{Alex Kaltenbach\thanks{Email: \url{kaltenbach@math.tu-berlin.de}}}
	\author[3]{Keegan L.~A. Kirk\thanks{Email: \url{kkirk6@gmu.edu}}}
	\date{\today\vspace{-3.5mm}} 
	\affil[1,2,3]{\small{Department of Mathematical Sciences and the Center for Mathematics and Artificial Intelligence (CMAI), George Mason University, Fairfax, VA 22030, USA.}}
	\affil[2]{\small{Institute of Mathematics, Technical University of Berlin, Stra\ss e des 17.\ Juni 135, 10623 Berlin}\vspace{-2mm}}
	\maketitle

	\pagestyle{fancy}
	\fancyhf{}
	\fancyheadoffset{0cm}
	\addtolength{\headheight}{-0.25cm}
	\renewcommand{\headrulewidth}{0pt} 
	\renewcommand{\footrulewidth}{0pt}
	\fancyhead[CO]{\textsc{FE Approximation of an Optimal Insulation Problem}}
	\fancyhead[CE]{\textsc{H. Antil, A. Kaltenbach, and K.  L.~A. Kirk}}
	\fancyhead[R]{\thepage}
	\fancyfoot[R]{}
	
	\begin{abstract} 
        A finite element discretization of an optimal insulation problem with convective heat transfer is considered. The model is formulated as a non-smooth, two-variable convex minimization problem. It accounts for the temperature distribution in a thermally conducting~body~${\Omega\hspace{-0.1em}\subseteq\hspace{-0.1em}\mathbb{R}^d}$, with~${d\hspace{-0.1em}\in\hspace{-0.1em} \{2,3\}}$,~and the distribution of a given amount of insulation material  on an insulated boundary~part~$\Gamma_I\subseteq \partial\Omega$.

        The surface integral over the insulated boundary $\Gamma_I$ is approximated~by~a~\mbox{mass-lumping} quadrature that preserves the structure of the continuous setting and,~in~\mbox{particular}, yields discrete optimality conditions mirroring their continuous counterparts. Well-posedness, stability, and weak convergence of discrete solutions to the continuous~ones~are~established.

        Furthermore, a block coordinate descent algorithm for the computation of the discrete solutions is formulated and its linear convergence is derived. Under suitable regularity assumptions, uniform $L^\infty(\Gamma_I)$-bounds and \textit{a priori} error estimates for both the temperature distribution and the distribution of a given amount of insulation material are obtained. 
Numerical experiments are carried out that confirm the predicted error decay rates and
demonstrate the method in a qualitative three-dimensional test on a
realistic spacecraft crew module capsule geometry with idealized reentry-heating Robin data.
    \end{abstract} 
	
	\keywords{Optimal insulation; convective heat transfer; Robin boundary condition; finite element method; block coordinate descent; convergence analysis; \textit{a priori} error analysis}
	
	\MSC{65N30; 65N15; 65N12; 49M05; 35J25; 80A19\vspace{-1mm}}

    \medskip

	\section{Introduction}\label{sec:introduction}\thispagestyle{empty}\enlargethispage{17.5mm}\vspace{-0.5mm}

    \hspace{5mm}Thermal \hspace{-0.1mm}insulation \hspace{-0.1mm}is \hspace{-0.1mm}commonly \hspace{-0.1mm}used \hspace{-0.1mm}to \hspace{-0.1mm}regulate \hspace{-0.1mm}heat \hspace{-0.1mm}exchange \hspace{-0.1mm}between \hspace{-0.1mm}a \hspace{-0.1mm}thermally~\hspace{-0.1mm}\mbox{conducting}\linebreak body and its 
surrounding environment in applications ranging from buildings and electronic devices to air- and spacecraft.
 If only a limited amount of insulating material is available,~\textit{e.g.}, due to 
geometric, weight, or cost constraints, the question of its optimal distribution arises naturally 
and has attracted considerable attention in both theoretical and applied~research~(\textit{cf}.~\cite{Claesson1980,DellaPietraOliva2025}). 

While many classical models assume that heat transfer across the boundary of the thermally conducting body is governed by thermal 
conduction, numerous practical situations are dominated by \hspace{-0.15mm}convective \hspace{-0.15mm}heat \hspace{-0.15mm}transfer, \hspace{-0.15mm}which \hspace{-0.15mm}is 
\hspace{-0.15mm}commonly \hspace{-0.15mm}modeled \hspace{-0.15mm}by \hspace{-0.15mm}Robin-type~\hspace{-0.15mm}\mbox{boundary}~\hspace{-0.15mm}\mbox{conditions}.
In this setting,~we~seek~the~\textit{`best'} distribution of a given amount of insulating material 
attached to parts of a thermally conducting body. 

To this end, we study a convex minimization problem~proposed~by~Della~Pietra~\textit{et~al.}~(\textit{cf}.~\cite{PietraNitschScalaTrombetti2021}):

    Let $\Omega\subseteq \mathbb{R}^d$, $d\in \{2,3\}$, be a bounded polyhedral Lipschitz domain
    representing the \textit{thermally conducting body}, with (topological) boundary $\partial\Omega$ decomposed into an \textit{insulation part} (\textit{i.e.}, $\Gamma_I$)  (to which the insulating material is attached) and a Neumann part (\textit{i.e.}, $\Gamma_N$). Moreover, let
    $m>0$ be a given \textit{amount of insulating material}, $f\in L^2(\Omega)$ a given 
    \textit{heat source density}, $g\in  (H^{\smash{\frac{1}{2}}}(\Gamma_N))^*$ a~given~\textit{heat flux}, $u_\infty\in H^1(\mathbb{R}^d\setminus \overline{\Omega})$ a given \textit{ambient temperature}, $\kappa>0$ the \textit{thermal conductivity}, and $\beta>0$ a given \textit{heat transfer coefficient}.\enlargethispage{1mm}
    
    Then, we seek a \textit{temperature distribution} $u\hspace{-0.15em}\in \hspace{-0.15em}H^1(\Omega)$ and a \textit{distribution function}~(of~the~insula\-tion material in direction of the normal vector field) $\smash{\widetilde{\mathtt{d}}}\in \mathcal{H}_m$, where\vspace{-0.5mm}
    \begin{align*}
         \mathcal{H}_m\coloneqq \smash{\bigl\{ \eta\in L^2(\Gamma_{I})\mid \eta\ge 0\text{ a.e.\ on }\Gamma_{I}\,,\; (\eta,1)_{\Gamma_{I}}=m\bigr\}}\,,\\[-6mm]\notag
    \end{align*}
    that minimize the energy functional $E\colon\hspace{-0.1em} H^1(\Omega)\times \mathcal{H}_m\hspace{-0.1em}\to\hspace{-0.1em} \mathbb{R}$, for every $(v,\eta)\hspace{-0.1em}\in\hspace{-0.1em} H^1(\Omega)\times \mathcal{H}_m$~\mbox{defined}~by\vspace{-0.5mm}
    \begin{align}\label{def:E_intro}
        E(v,\eta)\coloneqq  
            \tfrac{\kappa}{2}\|\nabla v\|_{\Omega}^2 + \tfrac{\beta}{2} \|(1+\beta \eta)^{-\smash{\frac{1}{2}}}(v-u_{\infty})\|_{\Gamma_{I}}^2-(f,v)_{\Omega}-\langle g,v\rangle_{\Gamma_N}+I_{\mathcal{H}_m}(\eta)\,.\\[-6mm]\notag
    \end{align} 

    \subsection{Main contributions}\vspace{-1mm}
    
    \hspace{5mm}The main contributions of the present paper can be summarized as follows:

\begin{enumerate}[leftmargin=*, noitemsep,font=\itshape,topsep=2pt]

\item 
A finite element discretization of the non-smooth convex minimization problem, given via the minimization of \eqref{def:E_intro}, is introduced, in which the insulation boundary integral is approximated by means of a mass-lumping quadrature. 
This quadrature preserves the structure of the continuous problem  and yields discrete optimality conditions that closely mirror~the~\mbox{continuous}~\mbox{setting};

\item 
Weak convergence of discrete solutions to a solution of the continuous problem~is~established~by passing to the limit in the discrete optimality conditions;

\item 
A block coordinate descent algorithm for the computation of discrete solutions based on the discrete optimality system is formulated, and its well-posedness and linear convergence~are~proved;

\item 
Uniform $L^\infty(\Gamma_I)$-bounds for both the continuous and  discrete temperature distribution and the distribution function are derived using continuous and discrete Stampacchia truncation arguments, providing a key ingredient for the subsequent error analysis;

\item 
Under appropriate regularity assumptions, including fractional regularity, 
\textit{a~priori} error estimates for both the temperature distribution and the distribution function are established;

\item 
An $H^1(\Gamma_I)$-stable quasi-interpolation operator and a positivity- and mean-preserving projection are constructed as  technical tools required for the error analysis;

\item   Numerical experiments are presented that confirm the optimality of the predicted error decay
rates and illustrate the practical applicability of the proposed approach, including a 
three-dimensional capsule example with an idealized reentry-heating
Robin datum.\vspace{-0.5mm}

\end{enumerate}

\subsection{Related contributions}\vspace{-1mm}

\hspace{5mm}
The existing literature primarily addresses either the theoretical analysis of the minimization 
of~\eqref{def:E_intro} or the numerical analysis of related eigenvalue-type problems, typically 
in the setting $\partial\Omega\in C^{1,1}$ with $\Gamma_I=\partial\Omega$ and predominantly for 
heat transfer governed by conduction:\vspace*{-0.5mm}

\begin{itemize}
[noitemsep,topsep=2pt,leftmargin=!,labelwidth=\widthof{6.},font=\itshape]

\item \textit{Theoretical analysis:} There is a rich literature deriving energy functionals related to \eqref{def:E_intro} under 
conductive heat transfer as asymptotic limits as the thickness of the insulating~layer~tends~to~zero (see, \textit{e.g.}, \cite{AcerbiButtazzo1986} when $\partial\Omega\in C^{1,1}$, 
\cite{AKK2025_modelling}  when $\partial\Omega\in C^{0,1}$ is piece-wise flat,~and~\cite{CristoforoniNitschTrombetti2026} for a recent overview of the literature). 
The energy functional~\eqref{def:E_intro}~\mbox{itself}~was~derived~in~\cite{PietraNitschScalaTrombetti2021} 
for~${\partial\Omega\in C^{1,1}}$ and has recently been complemented by the case 
$\partial\Omega\in C^{0,1}$ with~\mbox{piece-wise}~flat~\mbox{boundary}~in~\cite{AKK2025ConvectionModelling2025};

\item \textit{Numerical analysis:}  Numerical analyses have been carried out for related eigenvalue problems in
\cite{BartelsButtazzo2019,BartelsKellerWachsmuth2022,BartelsButtazzoKeller2025}
and for a related optimal insulation problem under conductive heat transfer in
\cite{AKK2025_numerics}.\vspace{-0.5mm}

\end{itemize}
    
	\section{Preliminaries}\label{sec:preliminaries}\vspace{-1.5mm}\enlargethispage{6.5mm}

    \subsection{Classical function spaces}\vspace{-1mm}

    \hspace{5mm}Let $\omega\subseteq \mathbb{R}^d$, $d\in \mathbb{N}$, be a (Lebesgue) measurable set. Then, for (Lebesgue) measurable functions or vector fields $v,w\colon \omega\to \mathbb{R}^{\ell}$, $\ell\in\{1,d\}$, we employ~the~inner~product~${(v,w)_{\omega}\coloneqq \int_{\omega}{v\odot w\,\mathrm{d}x}}$, 
	whenever the right-hand side is well-defined, where $\odot\colon \mathbb{R}^{\ell}\times \mathbb{R}^{\ell}\to \mathbb{R}$ either denotes scalar~multiplication (\textit{i.e.}, $v\odot w\coloneqq v\,w$) or the Euclidean inner product (\textit{i.e.}, $v\odot w\coloneqq v\cdot w$).
    ~For~$p\in [1,+\infty]$, we employ the notation $\|\cdot\|_{p,\omega}\coloneqq (\int_\omega{\vert \cdot\vert^p\,\mathrm{d}x})^{\smash{\frac{1}{p}}}$~if~$p\in [1,+\infty)$ and $\|\cdot\|_{\infty,\omega}\coloneqq 
    \textup{ess\,sup}_{x\in \omega}{\vert (\cdot)(x)\vert}$~else. Moreover, in the particular case $p=2$, we employ the abbreviated notation $\|\cdot\|_{\omega}\coloneqq \|\cdot\|_{2,\omega}$.\newpage

	
	For $m\in \mathbb{N}$ and an open set $\omega\subseteq \mathbb{R}^d$, $d\in \mathbb{N}$,~we~define the Sobolev space
	\begin{align*}  H^m(\omega)\coloneqq \bigl\{v\in L^2(\omega)\mid \mathrm{D}^{\boldsymbol{\alpha}} v\in L^2(\omega)\textup{ for all }\boldsymbol{\alpha}\in \mathbb{N}_0^d\text{ with }\vert \boldsymbol{\alpha}\vert \leq m\bigr\}\,,
	\end{align*}
	where $\mathrm{D}^{\boldsymbol{\alpha}}\coloneqq \frac{\partial^{\smash{\vert \boldsymbol{\alpha}\vert }}}{\partial x_1^{\smash{\alpha_1}}\cdot\ldots\cdot \partial x_d^{\smash{\alpha_d}}}$ and $\vert \boldsymbol{\alpha}\vert \coloneqq\sum_{i=1}^d{\alpha_i}$ 
	for each multi-index 
    ${\boldsymbol{\alpha}\hspace{-0.1em}\coloneqq \hspace{-0.1em}(\alpha_1,\ldots,\alpha_d)\in \mathbb{N}_0^d}$, and the Sobolev~semi-norm
	\begin{align*} 
		\vert\cdot\vert _{m,\omega}\coloneqq\Bigg(\sum_{\boldsymbol{\alpha}\in \smash{\mathbb{N}_0^d}\,:\,\vert\boldsymbol{\alpha}\vert= m }{\|\mathrm{D}^{\boldsymbol{\alpha}}(\cdot)\|_{\omega}^2}\Bigg)^{\smash{\frac{1}{2}}}\,.
	\end{align*}  
	For $s \in (0,\infty)\setminus \mathbb{N}$ and an open set $\omega\subseteq \mathbb{R}^d$, $d\in \mathbb{N}$, the  Sobolev--Slobodeckij~semi-norm~is~defined~by
	\begin{align*}
		\vert \cdot \vert_{s,\omega}\coloneqq \Bigg(\sum_{\boldsymbol{\alpha}\in \smash{\mathbb{N}_0^d}\,:\,\vert \boldsymbol{\alpha}\vert= \lfloor s\rfloor}{\int_{\omega}{\int_{\omega}{\frac{\vert(\mathrm{D}^{\boldsymbol{\alpha}} (\cdot))(x)-(\mathrm{D}^{\boldsymbol{\alpha}}(\cdot))(y)\vert^2}{\vert x-y\vert^{2(s-\lfloor s\rfloor)+d}}\,\mathrm{d}x}\,\mathrm{d}y}}\Bigg)^{\smash{\frac{1}{2}}}\,.
	\end{align*} 
	Then, for $s \in (0,\infty)\setminus \mathbb{N}$ 
    and an open set $\omega\subseteq \mathbb{R}^n$, $n\in \mathbb{N}$,  the Sobolev--Slobodeckij space~is~\mbox{defined}~by
	\begin{align*}
	      H^s(\omega)\coloneqq \bigl\{v\in H^{ \lfloor s\rfloor}(\omega)\mid \vert v\vert _{s,\omega}<\infty\bigr\}\,.
	\end{align*}  
    If $\omega$ is replaced with a relatively open boundary part $\gamma\subseteq \partial\Omega$, we still employ the~above~notation, with  the Lebesgue measure $\mathrm{d}x$ replaced by the surface measure $\mathrm{d}s$. 

     \subsection{Assumptions on the thermally conducting body and insulated boundary}

     \hspace{5mm}Throughout the entire paper, we assume that
     the \emph{thermally conducting body} $\Omega\subseteq \mathbb{R}^d$, $d\in \{2,3\}$, is a bounded polyhedral Lipschitz domain with (topological) boundary $\partial\Omega$, which  is disjointly~split~into a relatively open \emph{insulated boundary part} $\Gamma_I\subseteq \partial\Omega$ and a relatively open  \emph{Neumann~part}~$\Gamma_N\subseteq \partial\Omega$; more precisely, we have that $\partial\Omega=\overline{\Gamma}_I\cup \overline{\Gamma}_N$.  In~this~connection, we always assume that $\Gamma_I\neq \emptyset$, which
    guarantees the validity of 
    Friedrich's inequality (\textit{cf}.\ \cite[Lem.~3.30]{EG21I}), which states that there exists a constant $c_{\mathrm{F}}>0$ such that for every ${v\in H^1(\Omega)}$,~there~holds
        \begin{align}\label{lem:poin_cont}
            \|v\|_{\Omega}^2\leq \smash{c_{\mathrm{F}}(\|\nabla v\|_{\Omega}^2+\|v\|_{1,\Gamma_I}^2)}\,.
        \end{align}
    The outward unit normal vector field to $\Omega$ is denoted by $n\colon \partial\Omega\to \mathbb{S}^{d-1}\coloneqq\{x\in \mathbb{R}^d\mid \vert x\vert=1\}$.

    \subsection{Triangulations and finite element space}

    \hspace{5mm}Throughout \hspace{-0.1mm}the \hspace{-0.1mm}entire \hspace{-0.1mm}paper, \hspace{-0.1mm}we \hspace{-0.1mm}denote \hspace{-0.1mm}by \hspace{-0.1mm}$\{\mathcal{T}_h\}_{h>0}$ \hspace{-0.1mm}a \hspace{-0.1mm}family \hspace{-0.1mm}of \hspace{-0.1mm}shape-regular~\hspace{-0.1mm}\mbox{triangulations}~\hspace{-0.1mm}of~\hspace{-0.1mm}$\Omega$ (\textit{cf}.\  \cite[Def.\ 22.21]{EG21I}). Here, the parameter
	$h\hspace{-0.1em}\coloneqq\hspace{-0.1em} \max_{T\in \mathcal{T}_h}{\{h_T\hspace{-0.1em}\coloneqq \hspace{-0.1em}\textup{diam}(T)\}}\hspace{-0.1em}>\hspace{-0.1em}0$ refers~to~the~\textit{maximal mesh-size}. The sets of nodes and sides of a triangulation $\mathcal{T}_h$ are denoted by $\mathcal{N}_h$~and~$\mathcal{S}_h$,~\mbox{respectively}. Moreover, we employ the following notation for sets of boundary nodes and facets, respectively:
    \begin{align*}
        \begin{aligned} 
        \mathcal{N}_h^{\partial}&\coloneqq \{\nu\in \mathcal{N}_h\mid \nu\in \partial\Omega\} \,,&&\mathcal{N}_h^{\mathrm{X}}\coloneqq \{\nu\in \mathcal{N}_h^{\partial}\mid \nu\in  \Gamma_{\mathrm{X}}\}\,,&&\mathrm{X}\in \{I,N\}\,,\\
        \mathcal{S}_h^{\partial}&\coloneqq\{S\in \mathcal{S}_h\mid S\subseteq \partial\Omega\}\,, &&\mathcal{S}_h^{X}\coloneqq\{S\in \mathcal{S}_h^{\partial}\mid S\subseteq \Gamma_{\mathrm{X}}\}\,,&&\mathrm{X}\in \{I,N\}\,.
        \end{aligned}
    \end{align*}
    Here, we assume that  $\{\mathcal{T}_h\}_{h>0}$, $\Gamma_I$, and $\Gamma_N$ are chosen in such a way that  ${\mathcal{S}_h^{\partial}=\mathcal{S}_h^I\dot{\cup} \mathcal{S}_h^N}$.

    For every $T\in \mathcal{T}_h$ (and $S\in \mathcal{S}_h$), denoting by 
    $\mathbb{P}^1(T)$ (and $\mathbb{P}^1(S)$) the space of affine functions in $T$ (on $S$), we define the spaces of globally continuous element-wise (and side-wise)~affine~functions  
    \begin{align}
      \mathcal{S}^1(\mathcal{T}_h) &\coloneqq \{ v_h \in C^0(\overline{\Omega})\;\, \mid v_h|_{T} \in \mathbb{P}^1(T)\text{ for all } T \in \mathcal{T}_h \}\,, \label{eq:def_P1space_dom} \\
      \mathcal{S}^1(\mathcal{S}_h^\mathrm{X}) & \coloneqq \{ v_h \in C^0(\overline{\Gamma}_\mathrm{X}) \mid  v_h|_{S} \in \mathbb{P}^1(S)\text{ for all } S \in \mathcal{S}_h^\mathrm{X}\}\,,\quad \mathrm{X}\in \{\partial,I,N\}\,. \label{eq:def_P1space_bndX}
    \end{align}
    Restricting the heat loss functional \eqref{def:E_intro}  to the product space
$ \mathcal{S}^1(\mathcal{T}_h)\times \mathcal{S}^1(\mathcal{S}_h^I)$,
the boundary term generally cannot be integrated exactly and, thus, must  be approximated by a~suitable~quadrature. A \hspace{-0.15mm}natural \hspace{-0.15mm}choice \hspace{-0.15mm}is \hspace{-0.15mm}a \hspace{-0.15mm}node-based \hspace{-0.15mm}quadrature, \hspace{-0.15mm}analogous \hspace{-0.15mm}to \hspace{-0.15mm}the \hspace{-0.15mm}standard~\hspace{-0.15mm}\mbox{mass-lumping}~\hspace{-0.15mm}\mbox{inner}~\hspace{-0.15mm}\mbox{product}, obtained by integrating the nodal interpolation of the original integrand over $\Gamma_I$.

    More precisely, denoting by $I_h\colon C^0(\overline{\Gamma}_I)\to \mathcal{S}^1(\mathcal{S}_h^I)$, the nodal interpolation operator associated with $\mathcal{S}_h^I$ (\textit{cf}.\ \cite[Sec.\ 4]{DziukElliott2013}), for every $v\in C^0(\overline{\Gamma}_I)$ defined by
    \begin{align*}
        I_h v\coloneqq \sum_{\nu \in \mathcal{N}_h^{I}}{v(\nu)\varphi_\nu\!\!\restriction_{\Gamma_I}}\quad\text{ in }\Gamma_I\,,
    \end{align*}
    where $\{\varphi_\nu\}_{\nu\in \mathcal{N}_h}$ is the nodal basis of $\mathcal{S}^1(\mathcal{T}_h)$, 
    the mass-lumping inner product $(\cdot,\cdot)_{\Gamma_I,h}\colon C^0(\overline{\Gamma}_I)\times C^0(\overline{\Gamma}_I)\to \mathbb{R} $ and the induced norm $\|\cdot\|_{\Gamma_I,h}\colon C^0(\overline{\Gamma}_I)\to \mathbb{R}$, for every $v,w\in C^0(\overline{\Gamma}_I)$, are defined~by\vspace{-0.5mm}
    \begin{align*}
        (v,w)_{\Gamma_I,h}\coloneqq (I_h\{v w\},1)_{\Gamma_I}\,,\qquad \|v\|_{\Gamma_I,h}\coloneqq \|I_h\{v\}\|_{\Gamma_I}\,.
    \end{align*}

    Important approximation properties of the node-based quadrature on $\Gamma_I$ (applied~to compositions of Lipschitz functions with polynomial functions) are summarized in~the~following~lemma.\enlargethispage{5mm}\vspace{-0.5mm}

    \begin{lemma}[Approximation properties of node-based quadrature on $\Gamma_I$]
\label{lem:mass_lumping_lipschitz}
    Let $\Phi\hspace{-0.1em}\in\hspace{-0.1em} C^{0,1}(\mathbb{R}\times \mathbb{R})$, $v\in H^1(\Omega)$, and $\mathtt{C}\in \mathbb{R}$. Moreover, let  $v_h\in \mathcal{S}^{\ell}(\mathcal{T}_h)$, $h>0$, where $\ell\in \mathbb{N}$, and $\{\mathtt{C}_h\}_{h>0}\subseteq \mathbb{R}$ be sequences such that
\begin{subequations} \label{lem:mass_lumping_lipschitz.0}
    \begin{alignat}{3}\label{lem:mass_lumping_lipschitz.0.1}
        v_h&\rightharpoonup v&&\quad\text{ in }H^1(\Omega)&&\quad (h\to 0^+)\,,\\
        \mathtt{C}_h&\to \mathtt{C}&&\quad \text{ in }\mathbb{R}&&\quad (h\to 0^+)\,.\label{lem:mass_lumping_lipschitz.0.2}
    \end{alignat} 
\end{subequations}
Then, there holds\vspace{-0.5mm}
\begin{subequations} \label{lem:mass_lumping_lipschitz.1}
\begin{align}\label{lem:mass_lumping_lipschitz.1.1}
    \|I_h\{\Phi(v_h,\mathtt{C}_h)\}-\Phi(v_h,\mathtt{C}_h)\|_{\Gamma_I}\lesssim \smash{h^{\frac{1}{2}}}\| \nabla v_h\|_{\Omega}\,,\\
    I_h\{\Phi(v_h,\mathtt{C}_h)\}\to \Phi(v,\mathtt{C})\quad \text{ in }L^2(\Gamma_I)\quad (h\to 0^+)\,.\label{lem:mass_lumping_lipschitz.1.2}
\end{align} 
\end{subequations}
\end{lemma}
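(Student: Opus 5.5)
Both claims reduce to a side-wise estimate on $\Gamma_I$ combined with a discrete trace inequality. Note first that $v_h\in\mathcal{S}^\ell(\mathcal{T}_h)$ is continuous and side-wise polynomial on $\Gamma_I$. Hence $w_h\coloneqq\Phi(v_h,\mathtt{C}_h)$ is continuous on $\overline{\Gamma}_I$, and its restriction to each $S\in\mathcal{S}_h^I$ is Lipschitz with
\begin{align*}
|w_h(x)-w_h(y)|\leq L\,|v_h(x)-v_h(y)|\quad\text{for all }x,y\in S\,,
\end{align*}
where $L$ is the Lipschitz constant of $\Phi$ and the second argument $\mathtt{C}_h$ is fixed.

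\textbf{Proof of \eqref{lem:mass_lumping_lipschitz.1.1}.} Fix $S\in\mathcal{S}_h^I$ and $x\in S$. Since $I_h w_h(x)$ is a convex combination of the nodal values $w_h(\nu)$, $\nu$ a vertex of $S$ (the barycentric coordinates are nonnegative and sum to one), we get
\begin{align*}
|I_hw_h(x)-w_h(x)|\leq\max_{\nu}|w_h(\nu)-w_h(x)|\leq L\,\mathrm{osc}_S v_h\leq L\,h_S\|\nabla_\Gamma v_h\|_{\infty,S}\,.
\end{align*}
Let $T_S\in\mathcal{T}_h$ be the element with side $S$. Inverse estimates on the polynomial $v_h|_{T_S}$ together with shape regularity (tangential gradient bounded by the full gradient) give $\|\nabla v_h\|_{\infty,T_S}\lesssim h_S^{-d/2}\|\nabla v_h\|_{T_S}$. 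Squaring and integrating over $S$, with $|S|\sim h_S^{d-1}$, yields
\begin{align*}
\|I_hw_h-w_h\|_S^2\lesssim h_S^{d-1}h_S^2h_S^{-d}\|\nabla v_h\|_{T_S}^2=h_S\|\nabla v_h\|_{T_S}^2\,.
\end{align*}
Each element contributes to boundedly many sides, so summing over $S\in\mathcal{S}_h^I$ gives \eqref{lem:mass_lumping_lipschitz.1.1}. In fact this argument produces the factor $h^{1/2}$ directly.

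\textbf{Proof of \eqref{lem:mass_lumping_lipschitz.1.2}.} Split
\begin{align*}
\|I_hw_h-\Phi(v,\mathtt{C})\|_{\Gamma_I}\leq\|I_hw_h-w_h\|_{\Gamma_I}+\|\Phi(v_h,\mathtt{C}_h)-\Phi(v,\mathtt{C})\|_{\Gamma_I}\,.
\end{align*}
Weak convergence in \eqref{lem:mass_lumping_lipschitz.0.1} makes $\sup_h\|\nabla v_h\|_\Omega$ finite, so the first term is $O(h^{1/2})$ by \eqref{lem:mass_lumping_lipschitz.1.1}. For the second term, the Lipschitz property gives
\begin{align*}
\|\Phi(v_h,\mathtt{C}_h)-\Phi(v,\mathtt{C})\|_{\Gamma_I}\leq L\bigl(\|v_h-v\|_{\Gamma_I}+|\mathtt{C}_h-\mathtt{C}|\,|\Gamma_I|^{1/2}\bigr)\,.
\end{align*}
The last summand vanishes by \eqref{lem:mass_lumping_lipschitz.0.2}. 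The trace operator $H^1(\Omega)\to L^2(\partial\Omega)$ is compact, since it factors through $H^{1/2}(\partial\Omega)\hookrightarrow\hookrightarrow L^2(\partial\Omega)$. It therefore maps weakly convergent sequences to strongly convergent ones, so $\|v_h-v\|_{\Gamma_I}\to0$. This holds for the whole family, not merely a subsequence, because the weak limit is unique.

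\textbf{Main obstacle.} The delicate step is the local estimate: the inverse inequality $\|\nabla v_h\|_{\infty,T}\lesssim h_T^{-d/2}\|\nabla v_h\|_T$ and the comparison of boundary and bulk measures must both rest on shape regularity, with constants depending only on $\ell$ and the shape-regularity parameter.
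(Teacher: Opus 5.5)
Your proof is correct and follows the same route as the paper. You bound $I_hw_h-w_h$ side-wise in $L^\infty(S)$ by $h_S$ times the Lipschitz constant of $v_h$ on $S$, and then convert this to $h_S^{1/2}\|\nabla v_h\|_{T_S}$ via inverse estimates. Your convex-combination argument replaces the paper's cited interpolation estimate plus Sobolev chain rule, and your single inverse estimate on $T_S$ replaces the paper's inverse estimate on $S$ followed by a discrete trace inequality. For \eqref{lem:mass_lumping_lipschitz.1.2} you also make explicit the compactness of the trace $H^1(\Omega)\to L^2(\partial\Omega)$, which the paper uses only implicitly when it appeals to the Lipschitz continuity of $\Phi$.
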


\begin{proof}
Let $S \in  \mathcal{S}_h^I$ be fixed, but arbitrary with outward unit normal vector $n_S\coloneqq n\!\! \restriction_S\in \mathbb{S}^{d-1}$. Moreover, let ${\mathrm{P}_S \coloneqq(\mathrm{Id} - n_S \otimes n_S)\in \mathbb{R}^{d\times d}}$ denote the projection onto $(\mathbb{R}n_S)^\perp$ and denote by $\nabla_S \hspace{-0.1em}\coloneqq\hspace{-0.1em} \mathrm{P}_S \nabla$ the tangential gradient on $S$. Then, since $\Phi\hspace{-0.1em}\in\hspace{-0.1em} C^{0,1}(\mathbb{R}\times \mathbb{R})$,~by~the~chain~rule~for~\mbox{Sobolev}\linebreak  functions (\textit{cf}.\ \cite[Thm.\ 2.1.11]{Ziemer1989}) and $\nabla_S\mathtt{C}_h=0$ on $\Gamma_I$, we~have~that~${\Phi(v_h,\mathtt{C}_h)|_S\in W^{1,\infty}(S)}$~with\vspace{-0.5mm}
\begin{align}\label{lem:mass_lumping_lipschitz.2}
    |\nabla_S \Phi(v_h,\mathtt C_h)|
\le
\operatorname{Lip}(\Phi)|\nabla_S v_h|
\quad\text{ a.e.\ on }S\,.
\end{align}
By a local inverse estimate (\textit{cf}.\ \cite[Lem.\ 12.1]{EG21I}), the local interpolation estimate for~$I_h$~on~$S$ (\textit{cf}.\ \cite[ineq.\ (11.17)]{EG21I}), again, \eqref{lem:mass_lumping_lipschitz.2}, a local inverse estimate (\textit{cf}.\ \cite[Lem.\ 12.1]{EG21I}), and the discrete trace inequality (\textit{cf}.\ \cite[Lem.\ 12.8]{EG21I}), we have that
\begin{align}\label{lem:mass_lumping_lipschitz.3}
\begin{aligned}
    \|I_h\{\Phi(v_h,\mathtt C_h)\}-\Phi(v_h,\mathtt C_h)\|_{S}^2&\lesssim \vert S\vert\|I_h\{\Phi(v_h,\mathtt C_h)\}-\Phi(v_h,\mathtt C_h)\|_{\infty,S}^2
    \\&\lesssim
\vert S\vert h_S^2 \|\nabla_S \Phi(v_h,\mathtt C_h)\|_{\infty,S}^2
\\&\lesssim
\vert S\vert  h_S^2 \|\nabla_S v_h\|_{\infty,S}^2
\\&\lesssim
h_S^2 \|\nabla_S v_h\|_{S}^2
\\&\lesssim h_S \|\nabla v_h\|_{T_S}^2\,,
\end{aligned}
\end{align}
where $T_S\hspace{-0.15em}\in\hspace{-0.15em} \mathcal{T}_h$ is the unique element such that $S\hspace{-0.15em}\subseteq\hspace{-0.15em} T_S$. Eventually, summing with respect~to~${S\hspace{-0.15em}\in \hspace{-0.15em}\mathcal{S}_h^{I}}$, we conclude that the claimed error estimate \eqref{lem:mass_lumping_lipschitz.1.1} applies,
which, by the Lipschitz continuity of $\Phi\colon \mathbb{R}\times \mathbb{R}\to \mathbb{R}$, in turn, implies that the claimed convergence \eqref{lem:mass_lumping_lipschitz.1.2} applies as well.
\end{proof}

The following lemma summarizes standard stability and approximation properties of the node-based quadrature on $\Gamma_I$.\vspace{-0.5mm}

    \begin{lemma}[Stability and approximation properties of node-based quadrature on $\Gamma_I$]\label{lem:std_prop_lumping}
        For every $v_h,w_h\in \mathcal{S}^1(\mathcal{S}_h^{I})$, there holds\vspace{-0.5mm}
        \begin{subequations}
        \begin{align}\label{lem:std_prop_lumping.1}
            \|v_h\|_{\Gamma_I}\leq \|v_h\|_{\Gamma_I,h}\leq (d+1)\|v_h\|_{\Gamma_I}\,,
            \\
            \vert (v_h,w_h)_{\Gamma_I,h}- (v_h,w_h)_{\Gamma_I}\vert \lesssim h^2\|\nabla_{\Gamma} v_h\|_{\Gamma_I}\|\nabla_{\Gamma} w_h\|_{\Gamma_I}\,,\label{lem:std_prop_lumping.2}
        \end{align}
        where the  \emph{tangential gradient} is defined by 
        $\nabla_{\Gamma} v_h\!\!\restriction_S\coloneqq \nabla_S (v_h\!\!\restriction_S)$ for all $v_h\in \mathcal{S}^1(\mathcal{S}_h^{I})$ and $S\in \mathcal{S}_h^{I}$.
        \end{subequations}
    \end{lemma}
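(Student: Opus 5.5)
The plan is to argue side by side: both estimates reduce, on each $S\in\mathcal{S}_h^I$, to a computation with local mass matrices on a $(d-1)$-simplex, and the global statements then follow by summing over the sides. On $S$ we have $v_h|_S=\sum_{i=1}^{d}v_i\lambda_i$, where $\lambda_i$ are the barycentric coordinates of the $d$ vertices of $S$. The exact local mass matrix is $M_S=\frac{|S|}{d(d+1)}(\mathbf{1}\mathbf{1}^\top+\mathrm{Id})$, using $\int_S\lambda_i\lambda_j=|S|(1+\delta_{ij})/(d(d+1))$. The lumped mass matrix is $L_S=\frac{|S|}{d}\mathrm{Id}$, since $I_h\{v_hw_h\}|_S=\sum_i v_iw_i\lambda_i$ and $\int_S\lambda_i=|S|/d$.

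\textbf{Proof of \eqref{lem:std_prop_lumping.1}.} I will compare $M_S$ and $L_S$ spectrally.
The eigenvalues of $\mathbf{1}\mathbf{1}^\top+\mathrm{Id}$ (a $d\times d$ matrix) are $1$ and $d+1$. Hence
\[
\tfrac{|S|}{d(d+1)}\mathrm{Id}\preceq M_S\preceq\tfrac{|S|}{d}\mathrm{Id}=L_S .
\]
This gives $\|v_h\|_S^2\le\|v_h\|_{S,h}^2\le(d+1)\|v_h\|_S^2$, and summation over the sides yields both bounds. The upper constant is in fact $\sqrt{d+1}\le d+1$, which suffices. Note that $\|v_h\|_{\Gamma_I,h}$ is the norm induced by the lumped inner product, namely $(I_h\{v_h^2\},1)_{\Gamma_I}^{1/2}$. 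I will therefore interpret the definition as such, since $I_h v_h=v_h$ would otherwise make the statement trivial.

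\textbf{Proof of \eqref{lem:std_prop_lumping.2}.} The local error is the bilinear form $e_S(v,w):=(v,w)_{S,h}-(v,w)_S$. It is symmetric, and it vanishes whenever one argument is constant on $S$: the nodal quadrature is exact for affine integrands, because $I_h\{c\,w\}=c\,w$ for affine $w$. I will therefore subtract the means $\bar v_S,\bar w_S$ and write $e_S(v,w)=e_S(v-\bar v_S,w-\bar w_S)$. This follows by bilinearity, using $e_S(c,\cdot)=0$ and $e_S(\cdot,c)=0$.

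Next, $|e_S(a,b)|\le\|a\|_{S,h}\|b\|_{S,h}+\|a\|_S\|b\|_S\lesssim\|a\|_S\|b\|_S$ by the local version of \eqref{lem:std_prop_lumping.1}. Combining this with the Poincaré inequality on $S$, $\|v_h-\bar v_S\|_S\lesssim h_S\|\nabla_S v_h\|_S$, gives
\[
|e_S(v_h,w_h)|\lesssim h_S^2\|\nabla_S v_h\|_S\|\nabla_S w_h\|_S .
\]
Summing over $S\in\mathcal{S}_h^I$ and applying the discrete Cauchy--Schwarz inequality yields \eqref{lem:std_prop_lumping.2} with $h_S\le h$.

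\textbf{Main obstacle.} The only delicate point is that the constants are uniform in the mesh. The Poincaré constant on $S$ scales like $h_S$ by mapping to a reference simplex, and the side shape-regularity it needs is inherited from the shape regularity of $\{\mathcal{T}_h\}_{h>0}$. The equivalence constants in the first part are purely combinatorial and do not depend on the mesh at all.
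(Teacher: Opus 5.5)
Your proof is correct. The paper gives no argument of its own and simply cites \cite[Lem.~3.9]{Bartels15}. Your explicit computation with the local mass matrices on the $(d-1)$-simplices $S\in\mathcal{S}_h^I$ makes the transfer to the boundary mesh transparent. It also shows that the constant $d+1$ in \eqref{lem:std_prop_lumping.1} can be replaced by $(d+1)^{1/2}$. Your reading of $\|\cdot\|_{\Gamma_I,h}$ as the norm induced by $(\cdot,\cdot)_{\Gamma_I,h}$ is the intended one.

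For \eqref{lem:std_prop_lumping.2} you take a different route from the usual one, which is also how the paper itself treats $T_1^h$ in the proof of Lemma~\ref{lem:energy_difference}. There one bounds the side-wise quadrature error via $\|I_h\{v_hw_h\}-v_hw_h\|_{1,S}\lesssim h_S^2\|\mathrm{D}^2_S(v_hw_h)\|_{1,S}$ and uses $\mathrm{D}^2_S(v_hw_h)=\nabla_Sv_h\otimes\nabla_Sw_h+\nabla_Sw_h\otimes\nabla_Sv_h$. You instead use bilinearity and exactness on constants to reduce to mean-free affine functions.

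Your version needs no interpolation (Bramble--Hilbert) estimate and gives explicit constants. Since the mean of an affine function on $S$ is its value at the barycenter $x_S$, one has $v_h-\bar v_S=\nabla_Sv_h\cdot(x-x_S)$. Hence $\|v_h-\bar v_S\|_S\le h_S\|\nabla_Sv_h\|_S$ with constant $1$. So \eqref{lem:std_prop_lumping.2} holds with constant $d+2$ and without any shape-regularity assumption, and the discussion in your last paragraph is not needed.

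The Hessian route, in turn, does not rely on the bilinear structure. It therefore also covers genuinely nonlinear integrands such as $\Psi_h$ in Lemma~\ref{lem:energy_difference}.
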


    \begin{proof}
        See \cite[Lem.\ 3.9]{Bartels15}.
    \end{proof}

     \pagebreak

      \section{Mathematical modelling of an optimal insulation problem}\label{sec:modelling}\vspace{-1mm}
      
\hspace{5mm}Let $f\in L^2(\Omega)$ be a given \emph{heat source density} (located in the thermally conducting~body~$\Omega$), $g\in (H^{\smash{\frac{1}{2}}}(\Gamma_N))^*$ a given \emph{heat flux} (across the Neumann boundary $\Gamma_N$), jointly satisfying the \textit{non-trivial net heat input condition} (see \cite[Sec.\ 3]{AKK2025ConvectionModelling2025}, for a short interpretation)\vspace{-0.5mm}
\begin{align}\label{eq:data_condition}
    (f,1)_{\Omega}+\langle g,1\rangle_{\Gamma_N}\neq 0\,,\\[-6mm]\notag
\end{align}
$u_{\infty}\in \smash{H^1(\mathbb{R}^d\setminus \overline{\Omega})}$ a given \emph{ambient temperature} (of the surrounding medium in $\smash{\mathbb{R}^d\setminus \Omega}$), $\kappa>0$ the material-specific \emph{thermal conductivity} of the conducting body, $\beta>0$ a given system-specific \emph{heat transfer coefficient},  $m>0$ a given \emph{amount of insulation material}, and $\smash{\widetilde{\mathtt{d}}}\in L^\infty(\Gamma_I)$ a non-negative \emph{distribution function (of the insulation material in direction of the normal vector field)}.~Moreover,\linebreak let 
 $\varepsilon>0$ be a fixed, but arbitrarily small number. Then, for a given Lipschitz continuous (globally) transversal vector field $k\in (C^{0,1}(\Gamma_I))^d$ (\textit{cf}.\ \cite[Sec.\ 2.4]{AKK2025ConvectionModelling2025}), denoting by $\mathtt{d}\coloneqq (k\cdot n)^{-1}\widetilde{\mathtt{d}}\in L^\infty(\Gamma_I)$~the\linebreak distribution function in direction of the transversal vector field, we define the~\textit{\mbox{insulation}~\mbox{boundary} layer} $\Sigma^{\varepsilon}_{I}\subseteq \mathbb{R}^d$, the \textit{interacting insulation boundary} $\Gamma^{\varepsilon}_{I}$, and the \textit{insulated body} $\Omega^{\varepsilon}_{I}$,~respectively,~via\vspace{-4.5mm}
 \begin{subequations}
     \begin{align}\label{def:some_notation}
        \Sigma^{\varepsilon}_{I}&\coloneqq\Sigma^{\varepsilon}_{I}(\mathtt{d})\coloneqq \bigl\{s+tk(s)\mid s\in \Gamma_I\,,\;t\in [0,\varepsilon\mathtt{d}(s))\bigr\}\,,\\
        \Gamma^{\varepsilon}_{I}&\coloneqq \Gamma^{\varepsilon}_{I}(\mathtt{d})\coloneqq \bigl\{s+\varepsilon\mathtt{d}(s)k(s)\mid s\in \Gamma_{I}\bigr\}\,,\\
        \Omega^{\varepsilon}_{I}&\coloneqq \Omega^{\varepsilon}_{I}(\mathtt{d})\coloneqq \overline{\Omega}\cup\Sigma^{\varepsilon}_{I}\,.\\[-6mm]\notag
    \end{align} 
 \end{subequations}
    Then, we consider the \emph{heat loss functional} $\smash{E}_\varepsilon^\mathtt{d}\colon 
    H^1(\Omega^{\varepsilon}_{I})\to \mathbb{R}$, for every $v_\varepsilon\in H^1(\Omega^{\varepsilon}_{I})$ defined by\vspace{-0.5mm}
    \begin{align}\label{eq:Evarh}
\smash{E}_\varepsilon^\mathtt{d}(v_\varepsilon)\coloneqq  
    \tfrac{\kappa}{2}\|\nabla v_\varepsilon\|_{\Omega}^2+\tfrac{\varepsilon}{2}\|\nabla v_\varepsilon\|_{\Sigma^{\varepsilon}_{I}}^2+\tfrac{\beta}{2}\|v_\varepsilon-u_{\infty}\|_{\Gamma^{\varepsilon}_{I}}^2-(f,v_\varepsilon)_{\Omega}-\langle g,v_{\varepsilon}\rangle_{\Gamma_N}\,. \\[-6mm]\notag
    \end{align} 
    Since \hspace{-0.1mm}the \hspace{-0.1mm}functional \hspace{-0.1mm}\eqref{eq:Evarh} \hspace{-0.1mm}is \hspace{-0.1mm}proper, \hspace{-0.1mm}strictly \hspace{-0.1mm}convex, \hspace{-0.1mm}weakly \hspace{-0.1mm}coercive, \hspace{-0.1mm}and \hspace{-0.1mm}lower \hspace{-0.1mm}semi-continuous,~\hspace{-0.1mm}the
    direct method in the calculus of variations yields the existence of a unique~minimizer~${u_\varepsilon^\mathtt{d}\in H^1(\Omega^{\varepsilon}_{I})}$, which formally satisfies the  Euler--Lagrange equations\vspace{-0.5mm} 
\begin{align}\label{eq:ELE_Eepsh}
        \left\{\quad\begin{aligned}
            -\kappa \Delta u_\varepsilon^\mathtt{d}&=f&&\quad \text{ a.e.\ in }\Omega\,,\\[-0.5mm]
             \kappa\nabla u_\varepsilon^\mathtt{d}\cdot n&=g&&\quad \text{ a.e.\ on }\Gamma_N\,,\\[-0.5mm]
             -\varepsilon \Delta u_\varepsilon^\mathtt{d}&=0&&\quad \text{ a.e.\ in }\Sigma^{\varepsilon}_{I}\,,\\[-0.5mm]
            \varepsilon \nabla u_\varepsilon^\mathtt{d}\cdot n +\beta(u_\varepsilon^\mathtt{d}-u_{\infty}) &= 0&&\quad \text{ a.e.\ on }\Gamma^{\varepsilon}_{I}\,,\\[-0.5mm]
           \kappa \nabla (u_\varepsilon^\mathtt{d})^+\cdot n&=\varepsilon \nabla (u_\varepsilon^\mathtt{d})^-\cdot n&&\quad \text{ a.e.\ on } \Gamma_I\,,
        \end{aligned}\right.\\[-6mm]\notag
    \end{align} 
    where $(u_\varepsilon^\mathtt{d})^-$ and $(u_\varepsilon^\mathtt{d})^+$ denote the
    traces of $u_\varepsilon^\mathtt{d}$ with respect to $\Omega$ and $\Sigma^{\varepsilon}_{I}$, respectively.\enlargethispage{4mm}

    In \hspace{-0.15mm}the \hspace{-0.15mm}case \hspace{-0.15mm}$\mathtt{d}\hspace{-0.15em}\in\hspace{-0.15em} C^{0,1}(\Gamma_I)$, \hspace{-0.15mm}if \hspace{-0.15mm}we \hspace{-0.15mm}pass \hspace{-0.15mm}to \hspace{-0.15mm}the \hspace{-0.15mm}limit \hspace{-0.15mm}(as \hspace{-0.15mm}$\varepsilon\hspace{-0.15em}\to\hspace{-0.15em} 0^{+}$)~\hspace{-0.15mm}with~\hspace{-0.15mm}a~\hspace{-0.15mm}family~\hspace{-0.15mm}of~\hspace{-0.15mm}\mbox{trivial}~\hspace{-0.15mm}\mbox{extensions}~to $L^2(\mathbb{R}^d)$ of the heat loss functionals $\smash{E}_\varepsilon^\mathtt{d}\colon H^1(\Omega^{\varepsilon}_{I})\to \mathbb{R}$, $\varepsilon>0$, 
    in the sense of $\Gamma(L^2(\mathbb{R}^d))$-convergence (\textit{cf}.\ \cite[Thm.\ 5.1]{AKK2025ConvectionModelling2025}), we arrive at the functional $E^\mathtt{d}(v)\colon H^1(\Omega)\to  \mathbb{R}$, for every $v\in H^1(\Omega)$~defined~by\vspace{-0.5mm}
    \begin{align}\label{eq:Eh}
        \smash{E}^\mathtt{d}(v)\coloneqq  \tfrac{\kappa}{2}\|\nabla v\|_{\Omega}^2  + \tfrac{\beta}{2}\|(1+\beta\smash{\widetilde{\mathtt{d}}})^{-\smash{\frac{1}{2}}}(v- u_{\infty})\|_{\Gamma_{I}}^2-(f,v)_{\Omega}-\langle g,v\rangle_{\Gamma_N}\,. \\[-6mm]\notag
    \end{align}
    Since \hspace{-0.1mm}the \hspace{-0.1mm}functional \hspace{-0.1mm}\eqref{eq:Eh} \hspace{-0.1mm}is \hspace{-0.1mm}proper, \hspace{-0.1mm}strictly \hspace{-0.1mm}convex, \hspace{-0.1mm}weakly \hspace{-0.1mm}coercive, \hspace{-0.1mm}and \hspace{-0.1mm}lower \hspace{-0.1mm}semi-continuous,~\hspace{-0.1mm}the
    direct method in the calculus of variations yields the existence of a~unique~minimizer~${u^\mathtt{d}\in H^1(\Omega)}$, which formally satisfies
     \begin{align}\label{eq:ELE_Eh}
         \left\{\quad\begin{aligned}
             -\kappa \Delta u^\mathtt{d}&=f&&\quad \text{ a.e.\ in }\Omega\,,\\[-0.5mm]
             (1+\beta\smash{\widetilde{\mathtt{d}}})\nabla u^\mathtt{d}\cdot n+\beta (u^\mathtt{d}-u_{\infty})&=0&&\quad \text{ a.e.\ on }\Gamma_{I}\,,\\[-0.5mm]
              \nabla u^\mathtt{d} \cdot n &=g&&\quad \text{ a.e.\ on }\Gamma_N\,.
         \end{aligned}\right.
     \end{align} 
    We are interested in determining the non-negative distribution function $\smash{\widetilde{\mathtt{d}}}\in L^2(\Gamma_{I})$  that provides the \textit{`best'} insulating performance, once
the total amount of  insulation material~$m>0$~is~fixed. We seek the distribution function $\smash{\widetilde{\mathtt{d}}}\in L^2(\Gamma_{I})$  in the class\vspace{-0.5mm}
    \begin{align*}
        \mathcal{H}_m\coloneqq \smash{\bigl\{ \eta\in L^2(\Gamma_{I})\mid \eta\ge 0\text{ a.e.\ on }\Gamma_{I}\,,\; (\eta,1)_{\Gamma_{I}}=m\bigr\}}\,.\\[-6mm]
    \end{align*}
    In summary, we are interested in the non-smooth double minimization problem\vspace{-0.5mm}
    \begin{align}\label{eq:double_min}
        \smash{\min_{(v,\eta)\in H^1(\Omega)\times \mathcal{H}_m}{\bigl\{\smash{E}^{\eta}(v)\bigr\}}\,.}
    \end{align}

\section{The continuous optimal insulation problem}

\hspace{5mm}Given a \emph{heat source density} $f\in L^2(\Omega)$, a \emph{heat flux} $g\in (H^{\smash{\frac{1}{2}}}(\Gamma_N))^*$, jointly satisfying \eqref{eq:data_condition}, 
an \emph{ambient temperature} $u_{\infty}\in H^1(\mathbb{R}^d\setminus \overline{\Omega})$, a \emph{thermal conductivity coefficient} $\kappa>0$,~a~\emph{heat~transfer coefficient} $\beta>0$, and an \emph{amount of insulation material} $m>0$, we~seek~a~pair~${(u,\smash{\widetilde{\mathtt{d}}})\hspace{-0.15em}\in\hspace{-0.15em} H^1(\Omega)\hspace{-0.15em}\times\hspace{-0.15em} L^2(\Gamma_{I})}$ consisting of a \emph{temperature distribution} and a \emph{distribution function (of the insulation~\mbox{material})} that minimize  the \emph{heat loss} functional $ \smash{E}\colon H^1(\Omega)\times L^2(\Gamma_{I})\to  \mathbb{R}\cup\{+\infty\}$, for every $(v,\eta) \in H^1(\Omega)\times L^2(\Gamma_{I})$ defined by
    \begin{align}\label{def:E}
        E(v,\eta)\coloneqq  
            \tfrac{\kappa}{2}\|\nabla v\|_{\Omega}^2 + \tfrac{\beta}{2} \|(1+\beta \eta)^{-\smash{\frac{1}{2}}}(v-u_{\infty})\|_{\Gamma_{I}}^2-(f,v)_{\Omega}-\langle g,v\rangle_{\Gamma_N}+I_{\mathcal{H}_m}(\eta)\,.
    \end{align}
Here, we have invoked the abbreviated notation $\langle g,\widehat{v}\rangle_{\Gamma_N}\coloneqq \langle g,\widehat{v}\rangle_{H^{\smash{\frac{1}{2}}}(\Gamma_N)}$ for all $\widehat{v}\in H^{\smash{\frac{1}{2}}}(\Gamma_N)$ and  the indicator functional $I_{\mathcal{H}_m}\colon L^2(\Gamma_{I})\to \mathbb{R}\cup\{+\infty\}$, for every $\widehat{\eta}\in L^2(\Gamma_{I})$, is defined by
\begin{align*}
    I_{\mathcal{H}_m}(\widehat{\eta})\coloneqq \begin{cases}
        0&\text{ if }\widehat{\eta}\in \mathcal{H}_m\,,\\
        +\infty &\text{ else}\,.
    \end{cases}
\end{align*} 

\subsection{Optimality conditions}

\hspace{5mm}The following proposition identifies optimality conditions that characterize a minimizing pair $(u,\smash{\widetilde{\mathtt{d}}})\in H^1(\Omega)\times L^2(\Gamma_{I})$ of the heat loss functional \eqref{def:E}.

\begin{proposition}[Optimality conditions]\label{prop:optimality}
    A pair $(u,\smash{\widetilde{\mathtt{d}}})\in  H^1(\Omega)\times \mathcal{H}_m$ is minimal for the heat loss functional \eqref{def:E} if and only if the following optimality conditions are satisfied:\enlargethispage{5mm}
    \begin{itemize}[noitemsep,topsep=2pt,leftmargin=!,labelwidth=\widthof{(iii)}]
        \item[(i)] There exists a constant $\mathtt{C}_u>0$ such that\footnote{Here, $(\cdot)_+\coloneqq \max\{\cdot,0\}\colon \mathbb{R}\to \mathbb{R}_{\ge 0}$.}
        \begin{align}\label{prop:optimality.1}
            \mathtt{C}_u=\tfrac{1}{\beta m}\|(|u-u_\infty|-\mathtt{C}_u)_+\|_{1,\Gamma_{I}}\,;
        \end{align}
        \item[(ii)] The distribution function $\smash{\widetilde{\mathtt{d}}} \in \mathcal{H}_m$ satisfies 
        \begin{align}\label{prop:optimality.2}
              \smash{\widetilde{\mathtt{d}}} = 
             \tfrac{1}{\beta \mathtt{C}_u}(\vert u-u_\infty\vert-\mathtt{C}_u)_+\quad \text{ a.e.\ on }\Gamma_{I}\,; 
        \end{align}
        \item[(iii)] The pair $(u,\smash{\widetilde{\mathtt{d}}})\in  H^1(\Omega)\times \mathcal{H}_m$, for every $v\in H^1(\Omega)$, satisfies
        \begin{align}\label{prop:optimality.3}
           \kappa (\nabla u,\nabla v)_{\Omega}+\beta ( (1+\beta \smash{\widetilde{\mathtt{d}}})^{-1}(u-u_{\infty}),v)_{\smash{\Gamma_{I}}}&=(f,v)_{\Omega}+\langle g,v\rangle_{\Gamma_N}\,.
        \end{align}
    \end{itemize}
\end{proposition}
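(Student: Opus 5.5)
Since $E$ is jointly convex on $H^1(\Omega)\times L^2(\Gamma_I)$, I will prove the statement by characterizing joint minimality through the two partial minimizations. The key observation is that $(v,\eta)\mapsto \tfrac{\beta}{2}\int_{\Gamma_I}\frac{|v-u_\infty|^2}{1+\beta\eta}\,\mathrm{d}s$ is jointly convex, because $(a,t)\mapsto a^2/t$ is jointly convex on $\mathbb{R}\times(0,\infty)$ and $t=1+\beta\eta\ge 1$. The other terms of \eqref{def:E} are convex, so $E$ is convex. I will work with the reduced functional $\eta\mapsto \min_v E(v,\eta)=E^{\eta}(u^{\eta})$, where $u^\eta$ is the unique minimizer of \eqref{eq:Eh}. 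Then $(u,\widetilde{\mathtt d})$ is minimal if and only if $u=u^{\widetilde{\mathtt d}}$, which is equivalent to \eqref{prop:optimality.3} as the Euler--Lagrange equation of the strictly convex functional \eqref{eq:Eh}, and $\widetilde{\mathtt d}$ minimizes the reduced functional over $\mathcal H_m$.

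\emph{Step 1 (necessity).} If $(u,\widetilde{\mathtt d})$ is minimal, then $u$ minimizes $E(\cdot,\widetilde{\mathtt d})$, which gives (iii). Moreover, $\widetilde{\mathtt d}$ minimizes $\eta\mapsto \tfrac{\beta}{2}\int_{\Gamma_I} w^2(1+\beta\eta)^{-1}\,\mathrm{d}s$ over $\mathcal H_m$ with $w\coloneqq|u-u_\infty|$ fixed. This is a convex pointwise problem with a mass constraint. I will derive KKT conditions by variations $\widetilde{\mathtt d}+t(\eta-\widetilde{\mathtt d})$, $\eta\in\mathcal H_m$: the directional derivative $-\tfrac{\beta^2}{2}\int w^2(1+\beta\widetilde{\mathtt d})^{-2}(\eta-\widetilde{\mathtt d})\,\mathrm{d}s\ge0$ must hold for all $\eta\in\mathcal H_m$. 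A bathtub-type argument then shows that $\lambda\coloneqq w^2(1+\beta\widetilde{\mathtt d})^{-2}$ equals its essential supremum $\Lambda$ on $\{\widetilde{\mathtt d}>0\}$ and is $\le\Lambda$ elsewhere. Setting $\mathtt C_u\coloneqq\Lambda^{1/2}$ gives $1+\beta\widetilde{\mathtt d}=w/\mathtt C_u$ where $\widetilde{\mathtt d}>0$ and $w\le \mathtt C_u$ where $\widetilde{\mathtt d}=0$, i.e.\ \eqref{prop:optimality.2}. Integrating \eqref{prop:optimality.2} and using $(\widetilde{\mathtt d},1)_{\Gamma_I}=m$ yields \eqref{prop:optimality.1}. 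The bathtub argument requires care since $\lambda$ is only measurable, not continuous. I will test with $\eta$ that moves mass from $\{\widetilde{\mathtt d}>\delta\}$ to $\{\lambda>\mathrm{ess\,sup}_{\{\widetilde{\mathtt d}>\delta\}}\lambda+\epsilon\}$.

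\emph{Step 2 (positivity of $\mathtt C_u$).} I must exclude $\mathtt C_u=0$, i.e.\ $\Lambda=0$. If $\Lambda=0$, then $w=0$, so $u=u_\infty$ a.e.\ on $\Gamma_I$. Testing \eqref{prop:optimality.3} with $v=1$ then gives $(f,1)_\Omega+\langle g,1\rangle_{\Gamma_N}=\beta((1+\beta\widetilde{\mathtt d})^{-1}(u-u_\infty),1)_{\Gamma_I}=0$, which contradicts \eqref{eq:data_condition}. The same computation is what makes \eqref{prop:optimality.2} well defined in the sufficiency direction.

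\emph{Step 3 (sufficiency).} Given (i)--(iii), (iii) says $u$ minimizes $E(\cdot,\widetilde{\mathtt d})$. By (i)--(ii), $\widetilde{\mathtt d}\in\mathcal H_m$ and $\lambda=w^2(1+\beta\widetilde{\mathtt d})^{-2}=\mathtt C_u^2$ on $\{w>\mathtt C_u\}$, with $\lambda=w^2\le\mathtt C_u^2$ elsewhere. Hence the variational inequality from Step 1 holds, because $\int\lambda(\eta-\widetilde{\mathtt d})\le \mathtt C_u^2\int(\eta-\widetilde{\mathtt d})=0$. By convexity in $\eta$, $\widetilde{\mathtt d}$ minimizes $E(u,\cdot)$. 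Since a minimizer in each block separately does not directly give a joint minimizer of a nonsmooth convex function, I will argue as follows. The smooth part $F(v,\eta)$ is Gâteaux differentiable in both variables at $(u,\widetilde{\mathtt d})$ in the directions $(v-u,\eta-\widetilde{\mathtt d})$ with $\eta\in\mathcal H_m$; differentiability in $\eta$ holds because $\eta\ge0$ keeps $1+\beta\eta\ge1$. The only nonsmooth term $I_{\mathcal H_m}$ depends on $\eta$ alone. Therefore the joint convexity inequality $E(v,\eta)\ge E(u,\widetilde{\mathtt d})+\partial_vF[v-u]+\partial_\eta F[\eta-\widetilde{\mathtt d}]\ge E(u,\widetilde{\mathtt d})$ holds, which proves joint minimality.

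I expect the main obstacle to be the rigorous measure-theoretic bathtub step in Step 1 and the differentiability of $\eta\mapsto\int w^2(1+\beta\eta)^{-1}$ in $L^2$ directions. The latter I will handle by dominated convergence, using $w^2\in L^1(\Gamma_I)$ from the trace theorem together with the bound $(1+\beta\eta)^{-1}\le1$.
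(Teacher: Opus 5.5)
Your proposal is correct and takes the paper's route. You split minimality into the $v$-block condition $0=\mathrm{D}_1E(u,\widetilde{\mathtt{d}})$, i.e.\ \eqref{prop:optimality.3}, and the $\eta$-block condition $\widetilde{\mathtt{d}}\in\operatorname{argmin}_{\eta\in\mathcal{H}_m}E(u,\eta)$, and you get $\mathtt{C}_u>0$ by testing with $v=1$ exactly as in Lemma~\ref{lem:positivity_Cu}. The difference is that you solve the $\eta$-subproblem yourself by a bathtub argument and justify the splitting via joint convexity and one-sided directional derivatives, whereas the paper asserts the splitting and cites \cite[Prop.~4.1 \& Lem.~4.1]{PietraNitschScalaTrombetti2021}.

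Two details in your sketch need repair. First, the mass transfer you describe only yields $\operatorname{ess\,sup}_{\{\widetilde{\mathtt{d}}>\delta\}}\lambda=\Lambda$, not $\lambda=\Lambda$ a.e.\ on $\{\widetilde{\mathtt{d}}>0\}$; that follows instead from $m\Lambda\ge(\lambda,\widetilde{\mathtt{d}})_{\Gamma_I}\ge\sup_{\eta\in\mathcal{H}_m}(\lambda,\eta)_{\Gamma_I}=m\Lambda$, or from moving mass out of $\{\widetilde{\mathtt{d}}>\delta,\ \lambda<\Lambda-2\epsilon\}$. Second, the difference quotients in $\eta$ are dominated by $\beta w^2|\eta-\widetilde{\mathtt{d}}|$, which is integrable because $w\in L^4(\Gamma_I)$ by the trace embedding for $d\le 3$, not merely because $w^2\in L^1(\Gamma_I)$.
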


\begin{remark}[Physical interpretation of the constant $\mathtt{C}_u$]
The constant $\mathtt{C}_u>0$ appearing in the optimality conditions \eqref{prop:optimality.1} may be interpreted 
as a \emph{critical temperature difference} on the insulated boundary. 
More precisely, the optimality condition \eqref{prop:optimality.2} 
shows that insulating material is allocated only at boundary points, where 
the local temperature difference exceeds~the~threshold~value~$\mathtt{C}_u$. 
In addition, $\mathtt{C}_u$ coincides with the Lagrange multiplier associated with the 
constraint on the total amount of insulation material. 
\end{remark}

The following lemma shows that the implicitly defined constant in \eqref{prop:optimality.1} exists and is unique.

\begin{lemma}[Existence and uniqueness of $\mathtt{C}_u$]\label{lem:uniqueness_Cu}
    For every $v\in  L^2(\Gamma_{I})$, there exists a unique constant $\mathtt{C}_v\ge  0$ such that
    \begin{align*}
         \mathtt{C}_v=\tfrac{1}{\beta m}\|(|v-u_{\infty}|-\mathtt{C}_v)_+\|_{1,\Gamma_{I}}\,.
    \end{align*}
    In addition, we have that $\mathtt{C}_v=0$ if and only if $v=u_\infty$ a.e.\ on $\Gamma_{I}$.
\end{lemma}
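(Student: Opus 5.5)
Set $w\coloneqq |v-u_\infty|\in L^2(\Gamma_I)\subseteq L^1(\Gamma_I)$, which is non-negative ($\Gamma_I$ has finite surface measure). Consider the function $\psi\colon[0,\infty)\to\mathbb{R}$,
\begin{align*}
    \psi(t)\coloneqq t-\tfrac{1}{\beta m}\|(w-t)_+\|_{1,\Gamma_I}\,.
\end{align*}
A constant $\mathtt{C}_v\ge 0$ has the required property exactly when $\psi(\mathtt{C}_v)=0$. We will therefore show that $\psi$ has exactly one zero on $[0,\infty)$.

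\emph{Continuity and monotonicity.} Since $t\mapsto (w(s)-t)_+$ is $1$-Lipschitz and non-increasing for a.e.\ $s\in\Gamma_I$, the map $\Psi(t)\coloneqq \|(w-t)_+\|_{1,\Gamma_I}$ has two properties. It is Lipschitz with constant $|\Gamma_I|$, and it is non-increasing. Consequently $\psi$ is continuous and strictly increasing: for $t_1<t_2$ we have $\psi(t_2)-\psi(t_1)\ge t_2-t_1>0$. Strict monotonicity gives \emph{uniqueness} of a zero at once.

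\emph{Existence.} We have $\psi(0)=-\tfrac{1}{\beta m}\|w\|_{1,\Gamma_I}\le 0$. For $t\ge \tfrac{1}{\beta m}\|w\|_{1,\Gamma_I}$, the bound $(w-t)_+\le w$ yields $\psi(t)\ge t-\tfrac{1}{\beta m}\|w\|_{1,\Gamma_I}\ge 0$. The intermediate value theorem then provides a zero $\mathtt{C}_v\in\bigl[0,\tfrac{1}{\beta m}\|w\|_{1,\Gamma_I}\bigr]$.

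\emph{Characterization of $\mathtt{C}_v=0$.} The conditions are linked by
\begin{align*}
    \mathtt{C}_v=0 \iff \psi(0)=0 \iff \|w\|_{1,\Gamma_I}=0 \iff v=u_\infty\ \text{a.e.\ on }\Gamma_I\,.
\end{align*}
The first equivalence holds because $\psi$ is strictly increasing with unique zero $\mathtt{C}_v$. The second is the formula for $\psi(0)$ above. The third holds because $w\ge 0$.

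The only point needing care is the measure-theoretic justification of the continuity of $\Psi$, which we obtain from the pointwise Lipschitz bound and the finiteness of $|\Gamma_I|$ (alternatively, from dominated convergence with dominating function $w$). No step is a genuine obstacle.
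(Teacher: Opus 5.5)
Your proof is correct and takes essentially the same approach as the paper. The paper does not write the argument out: it defers to \cite[Lem.~4.1]{PietraNitschScalaTrombetti2021}, which, as the paper notes in the proof of Theorem~\ref{thm:algorithm}(i), is an intermediate value theorem argument. Your continuous, strictly increasing $\psi$ with $\psi(0)\le 0\le \psi\bigl(\tfrac{1}{\beta m}\|w\|_{1,\Gamma_I}\bigr)$ is that argument written out in full, including the characterization of $\mathtt{C}_v=0$.
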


\begin{proof}
    This follows along the lines of the proof of \cite[Lem.\ 4.1]{PietraNitschScalaTrombetti2021} up to minor adjustments.
\end{proof}
 
The following lemma demonstrates that the implicitly defined constant in \eqref{prop:optimality.1} is positive, which is needed for the well-posedness of the optimality condition \eqref{prop:optimality.2}.

\begin{lemma}[Positivity of $\mathtt{C}_u$]\label{lem:positivity_Cu}
    Let $(u,\smash{\widetilde{\mathtt{d}}})\in  H^1(\Omega)\times L^2(\Gamma_{I})$  be a pair such that the optimality condition \eqref{prop:optimality.3} 
    is satisfied. Then, the constant 
    $\mathtt{C}_u\ge 0$ implicitly defined by \eqref{prop:optimality.1} satisfies $\mathtt{C}_u>0$.
\end{lemma}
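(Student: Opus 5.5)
We argue by contradiction, combining Lemma~\ref{lem:uniqueness_Cu} with the non-trivial net heat input condition \eqref{eq:data_condition}. By Lemma~\ref{lem:uniqueness_Cu}, the constant $\mathtt{C}_u\ge 0$ from \eqref{prop:optimality.1} exists, is unique, and satisfies $\mathtt{C}_u=0$ if and only if $u=u_\infty$ a.e.\ on $\Gamma_I$. It therefore suffices to show that $u\neq u_\infty$ on a subset of $\Gamma_I$ of positive surface measure. So suppose, for contradiction, that $u=u_\infty$ a.e.\ on $\Gamma_I$, i.e., the trace of $u$ on $\Gamma_I$ equals the trace of $u_\infty$.

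Under this assumption, the boundary term in \eqref{prop:optimality.3} vanishes for every test function. Indeed, $(1+\beta\widetilde{\mathtt{d}})^{-1}(u-u_\infty)=0$ a.e.\ on $\Gamma_I$, and this holds regardless of $\widetilde{\mathtt{d}}$. We only need $(1+\beta\widetilde{\mathtt{d}})^{-1}$ to be well-defined a.e.; this is implicit in the hypothesis that \eqref{prop:optimality.3} is meaningful, and automatic when $\widetilde{\mathtt{d}}\ge 0$. Hence, for every $v\in H^1(\Omega)$,
\begin{align*}
    \kappa(\nabla u,\nabla v)_{\Omega}=(f,v)_{\Omega}+\langle g,v\rangle_{\Gamma_N}\,.
\end{align*}

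Now test with the constant function $v\equiv 1\in H^1(\Omega)$. Its gradient vanishes, so the left-hand side is zero, and we obtain $0=(f,1)_{\Omega}+\langle g,1\rangle_{\Gamma_N}$. This contradicts \eqref{eq:data_condition}. Consequently $u\neq u_\infty$ on a set of positive measure in $\Gamma_I$, and Lemma~\ref{lem:uniqueness_Cu} yields $\mathtt{C}_u>0$.

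There is no serious obstacle here. The only point requiring care is that the constant $1$, restricted to $\Gamma_N$, lies in $H^{\frac12}(\Gamma_N)$, so that $\langle g,1\rangle_{\Gamma_N}$ is well-defined. This holds because $1\in H^1(\Omega)$ has trace $1$, and it is the same pairing that appears in \eqref{eq:data_condition}.
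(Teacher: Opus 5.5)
Your proof is correct and is essentially the paper's argument: assuming $\mathtt{C}_u=0$ forces $u=u_\infty$ a.e.\ on $\Gamma_I$, which removes the boundary term in \eqref{prop:optimality.3}, and testing with $v=1$ then contradicts \eqref{eq:data_condition}. The only cosmetic difference is that you obtain $\mathtt{C}_u=0\Rightarrow u=u_\infty$ from Lemma~\ref{lem:uniqueness_Cu}, whereas the paper reads it off directly from \eqref{prop:optimality.1}.
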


\begin{proof}
    Suppose that $\mathtt{C}_u=0$. Then, \eqref{prop:optimality.1} implies that $u=u_{\infty}$ a.e.\ on $\Gamma_{I}$ and, consequently,~\eqref{prop:optimality.3}, for every $v\in H^1(\Omega) $, reads
    \begin{align}\label{lem:positivity_Cu.1}
        \kappa(\nabla u,\nabla v)_{\Omega}=(f,v)_{\Omega}+\langle g,v\rangle_{\Gamma_N}\,.
    \end{align}
    For $v=1\in H^1(\Omega)$ in \eqref{lem:positivity_Cu.1}, we find that $(f,1)_{\Omega}+\langle g,1\rangle_{\Gamma_N}=0$, which contradicts assumption~\eqref{eq:data_condition}. As a result, we conclude~that~$\mathtt{C}_u>0$.
\end{proof}

\begin{proof}[Proof (of Proposition \ref{prop:optimality}).]
    A pair $(u,\smash{\widetilde{\mathtt{d}}})\in  H^1(\Omega)\times L^2(\Gamma_{I})$ is minimal for \eqref{def:E} if and only if
    \begin{subequations}\label{prop:optimality.4}
    \begin{align}\label{prop:optimality.4.1}
        0_{L^2(\Gamma_I)}&\in \partial_2 E(u,\smash{\widetilde{\mathtt{d}}})\,,\\
        0_{(H^1(\Omega))^*}&=\mathrm{D}_1 E(u,\smash{\widetilde{\mathtt{d}}})\,,\label{prop:optimality.4.2}
    \end{align}
    \end{subequations}
    where the identity \eqref{prop:optimality.4.2} is equivalent to \eqref{prop:optimality.3}, while the inclusion \eqref{prop:optimality.4.1} is equivalent to 
    \begin{align}\label{prop:optimality.5}
        \smash{\widetilde{\mathtt{d}}} \in \underset{\eta\in \mathcal{H}_m}{\textup{argmin}\,}{\{E(u,\eta)\}}\,.
    \end{align}
    Due to \cite[Prop.\ 4.1 \& Lem.\ 4.1]{PietraNitschScalaTrombetti2021} (with $v=u-u_{\infty}$), the unique minimizer~is~given~via~\eqref{prop:optimality.2} 
    \if0\begin{align}\label{prop:optimality.5}
        \smash{\mathtt{C}_u=\tfrac{1}{\vert \{\vert u-u_{\infty}\vert \ge \mathtt{C}_u\}\vert+\beta m}\|u-u_{\infty}\|_{1,\{\vert u-u_{\infty}\vert \ge \mathtt{C}_u\}}\,,}
    \end{align}
    \fi
     with  $\mathtt{C}_u>0$
    given via \eqref{prop:optimality.1}. 
\end{proof}

\begin{remark}[on Proposition \ref{prop:optimality}]
    Since \eqref{prop:optimality.5} is equivalent to $-\beta(1+\beta \smash{\widetilde{\mathtt{d}}})^{-2}(u - u_\infty)^2\in \partial I_{\mathcal{H}_m}(\smash{\widetilde{\mathtt{d}}})$, 
    the optimality conditions \eqref{prop:optimality.1} and \eqref{prop:optimality.2} are jointly equivalent to the optimality condition 
    \begin{align}
        ((1+\beta \smash{\widetilde{\mathtt{d}}})^{-2}(u - u_\infty)^2,\smash{\widetilde{\mathtt{d}}} - \eta)_{\Gamma_I}\ge 0\quad\text{ for all }\eta\in \mathcal{H}_m\,.\label{prop:optimality.3b}
    \end{align} 
\end{remark}

\subsection{$L^\infty$-bounds on the insulated boundary}

\hspace{5mm}The \textit{a priori} error analysis in \Cref{sec:apriori} hinges on the assumption that both $u,\smash{\widetilde{\mathtt{d}}} \in L^\infty(\Gamma_I)$.
The following lemma affirms this assumption provided the heat source $f\in L^2(\Omega)$ and heat flux $g\in (H^{\frac{1}{2}}(\Gamma_N))^*$ 
are sufficiently integrable and the ambient temperature $u_\infty$ is essentially~bounded.
\begin{lemma}\label{lem:Linf}
Let $f \in L^{r}(\Omega)$ with $r>\frac{d}{2}$, $g \in L^s(\Gamma_N)$ with $s > d - 1$, and $u_{\infty} \in L^\infty(\Gamma_I)$. Then, there holds $u \in L^\infty(\Gamma_I)$ and, thus,  $\smash{\widetilde{\mathtt{d}}} \in L^\infty(\Gamma_I)$.
\end{lemma}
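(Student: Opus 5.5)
The plan is a Stampacchia truncation argument applied to the state equation \eqref{prop:optimality.3}, treating $\smash{\widetilde{\mathtt{d}}}$ as a given coefficient. Set $a\coloneqq \beta(1+\beta\smash{\widetilde{\mathtt{d}}})^{-1}$. Since $\smash{\widetilde{\mathtt{d}}}\ge 0$, we have $0< a\le \beta$ a.e.\ on $\Gamma_I$, although $a$ may degenerate where $\smash{\widetilde{\mathtt{d}}}$ is large. Because of this possible degeneration, the boundary term cannot be used directly for coercivity. Instead, the argument will use only its sign after truncation, and coercivity will come from a bound on the measure of the truncation sets combined with Sobolev and trace embeddings.

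Concretely, let $M\coloneqq \|u_\infty\|_{\infty,\Gamma_I}$ and, for $k\ge M$, test \eqref{prop:optimality.3} with $v\coloneqq (u-k)_+\in H^1(\Omega)$. On $\Gamma_I$, where $(u-k)_+>0$, we have $u-u_\infty\ge u-k\ge 0$, so that
\begin{align*}
a(u-u_\infty)(u-k)_+\ge a\,(u-k)_+^2\ge 0\,.
\end{align*}
This yields
\begin{align*}
\kappa\|\nabla (u-k)_+\|_{\Omega}^2+\beta\|(1+\beta\smash{\widetilde{\mathtt{d}}})^{-\frac12}(u-k)_+\|_{\Gamma_I}^2\le (f,(u-k)_+)_{A_k}+(g,(u-k)_+)_{B_k}\,,
\end{align*}
where $A_k\coloneqq\{u>k\}\subseteq\Omega$ and $B_k\coloneqq\{u>k\}\cap\Gamma_N$. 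The left-hand side must control $\|(u-k)_+\|_{H^1(\Omega)}$ despite the degenerate weight. For this, fix $k_0\ge M$ so large that $|A_{k_0}|$ is small; this is possible since $u\in L^2(\Omega)$. A Poincaré-type inequality for functions vanishing on a set of positive measure, namely $\Omega\setminus A_k$, gives $\|(u-k)_+\|_{H^1}\lesssim\|\nabla(u-k)_+\|_{\Omega}$ uniformly for $k\ge k_0$. Then Hölder's inequality, the Sobolev embedding $H^1(\Omega)\hookrightarrow L^{2^*}(\Omega)$ and the trace embedding $H^1(\Omega)\hookrightarrow L^{2_\sharp}(\partial\Omega)$, with $2_\sharp=\frac{2(d-1)}{d-2}$ (any finite exponent if $d=2$), give
\begin{align*}
\|\nabla(u-k)_+\|_{\Omega}\lesssim \|f\|_{r,\Omega}|A_k|^{1-\frac1r-\frac1{2^*}}+\|g\|_{s,\Gamma_N}|B_k|^{1-\frac1s-\frac1{2_\sharp}}\,.
\end{align*}
For $h>k$, on the left-hand side we bound $(h-k)|A_h|^{1/2^*}$ and $(h-k)|B_h|^{1/2_\sharp}$ by the $L^{2^*}$ and $L^{2_\sharp}$ norms of $(u-k)_+$. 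Here $r>\frac d2$ and $s>d-1$ are exactly the conditions ensuring that the resulting exponents exceed one. Stampacchia's lemma, applied to $\varphi(k)\coloneqq|A_k|^{\alpha}+|B_k\cup(\{u>k\}\cap\Gamma_I)|^{\gamma}$ with suitably balanced powers (the trace of $(u-k)_+$ on $\Gamma_I$ is also controlled by the same trace embedding), then yields $\varphi(k_0+\delta)=0$ for some finite $\delta$. Hence $u\le k_0+\delta$ a.e.\ on $\partial\Omega$ and in $\Omega$. Testing with $(-u-k)_+$ gives the symmetric lower bound, so $u\in L^\infty(\Gamma_I)$.

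Finally, $\smash{\widetilde{\mathtt{d}}}\in L^\infty(\Gamma_I)$ follows directly from \eqref{prop:optimality.2}, since $\mathtt{C}_u>0$ by Lemma~\ref{lem:positivity_Cu}:
\begin{align*}
\smash{\widetilde{\mathtt{d}}}\le \tfrac{1}{\beta\mathtt{C}_u}\bigl(\|u\|_{\infty,\Gamma_I}+\|u_\infty\|_{\infty,\Gamma_I}\bigr)\,.
\end{align*}
The main obstacle is the coercivity step. Because the weight $a$ may degenerate, the truncated estimate has no uniform Robin control, so the $H^1$-norm must be recovered via the small-measure Poincaré inequality. I would first try to use this inequality uniformly for $k\ge k_0$, and to combine the domain and boundary level-set measures into a single Stampacchia iteration with compatible exponents.
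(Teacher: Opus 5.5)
Your proof is correct. It is a Stampacchia truncation argument like the paper's, but it gets coercivity of the truncated energy by a different mechanism.

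\textbf{The paper's route.} The paper first rewrites \eqref{prop:optimality.3} using \eqref{prop:optimality.1}--\eqref{prop:optimality.2}. On $\Gamma_I^>=\{|u-u_\infty|\ge\mathtt{C}_u\}$ one has $1+\beta\widetilde{\mathtt{d}}=|u-u_\infty|/\mathtt{C}_u$. So the Robin flux is saturated there: $\beta(1+\beta\widetilde{\mathtt{d}})^{-1}(u-u_\infty)=\beta\mathtt{C}_u\,\mathrm{sgn}(u-u_\infty)$. The paper then truncates two-sidedly with $\mathtt{T}_\lambda u$, and only above $\lambda_0=\|u_\infty\|_{\infty,\Gamma_I}+\mathtt{C}_u$. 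This guarantees $\Gamma_I^\lambda\subseteq\Gamma_I^>$, so the boundary term equals $\beta\mathtt{C}_u\|\mathtt{T}_\lambda u\|_{1,\Gamma_I}$. The defining identity for $\mathtt{C}_u$ gives $\beta\mathtt{C}_u\ge\frac1m\|\mathtt{T}_\lambda u\|_{1,\Gamma_I}$. Friedrichs' inequality \eqref{lem:poin_cont}, with its $L^1(\Gamma_I)$-term, then yields full $H^1$-control for every $\lambda>\lambda_0$. No smallness of the level sets in $\Omega$ is needed. In other words, the degeneration of the weight that you identify as the main obstacle is harmless there, because what enters is the flux, and on the truncation set it has modulus $\beta\mathtt{C}_u>0$.

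\textbf{Your route.} You treat $\widetilde{\mathtt{d}}$ as an arbitrary nonnegative coefficient and keep only the sign of the Robin term for $k\ge\|u_\infty\|_{\infty,\Gamma_I}$. Coercivity then comes from the Poincar\'e inequality for functions vanishing on a set of measure at least, say, $\frac12|\Omega|$. This inequality holds with a uniform constant on the connected Lipschitz domain $\Omega$, for instance via Poincar\'e--Wirtinger and a bound on the mean. The price is that the iteration must start at a level $k_0$ chosen by Chebyshev's inequality from $\|u\|_{\Omega}$.

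\textbf{What each buys.} Your version is more general. It bounds $u$ in $L^\infty$ for the state equation with any measurable $\widetilde{\mathtt{d}}\ge0$, and it uses \eqref{prop:optimality.1}--\eqref{prop:optimality.2} only for the final bound on $\widetilde{\mathtt{d}}$. The paper's version exploits the optimality structure. It gives an explicit starting level in terms of $\mathtt{C}_u$ and $\|u_\infty\|_{\infty,\Gamma_I}$. It also transfers almost verbatim to the discrete Lemma~\ref{lem:Linf_discrete} through \eqref{eq:Linf_bnd_discrete.2.0}. Your route would additionally need the uniform $L^2$-stability of $u_h$ and a nodal version of the vanishing-set Poincar\'e inequality there.

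The remaining steps are equivalent: H\"older's inequality, the Sobolev and trace embeddings, and the way $r>\frac d2$ and $s>d-1$ make the exponents exceed one. You apply Lemma~\ref{lem:Stampacchia} with $a=1$ where the paper uses $a=2$, which is immaterial. For the lower bound, the test function is $-(-u-k)_+$ rather than $(-u-k)_+$, which is only a sign detail.
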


The proof of Lemma \ref{lem:Linf} proceeds using a classic argument by Stampacchia (\textit{cf}.\ \cite{Stampacchia1965}), which makes use of a truncation argument based on the truncation operator $\mathtt{T}_\lambda\colon W^{1,1}(\Omega)\to W^{1,1}(\Omega)\cap L^{\infty}(\Omega) $, $\lambda>0$, for every $v\in W^{1,1}(\Omega)$ defined by
\begin{align} \label{eq:def_trunc}
    \mathtt{T}_\lambda v \coloneqq \textup{sign}(v)(|v| - \lambda)_+\quad \text{ a.e.\ in }\Omega\,,
\end{align}
which is well-defined by the Lipschitz continuity of $\Phi\coloneqq ((a,c)\mapsto\textup{sign}(a)(|a| - c)_+)\colon  \mathbb{R}\times \mathbb{R}\to \mathbb{R}$ and the chain rule for Sobolev functions (\textit{cf}.\ \cite[Thm.\ 2.1.11]{Ziemer1989}), combined~with~the~\mbox{following}~lemma. 

 \begin{lemma}\label{lem:Stampacchia}
 Let $\lambda_0\in\mathbb{R}$ and let
$\varphi:[\lambda_0,\infty)\to[0,\infty)$ be non-increasing. Moreover, 
assume that there exist constants $K,a>0$ and $b>1$ such that
for every $\lambda'>\lambda\ge\lambda_0$, there holds
\begin{align*}
\varphi(\lambda')
\le
K(\lambda'-\lambda)^{-a}
\,\varphi(\lambda)^{b}\,.
\end{align*}
Then, for $\smash{\lambda^*\coloneqq
2^{\smash{\frac{b}{b-1}}}
K^{\smash{\frac{1}{a}}}
\varphi(\lambda_0)^{\smash{\frac{b-1}{a}}}+\lambda_0}$, there holds $\varphi(\lambda^*)=0$.
\end{lemma}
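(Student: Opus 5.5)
The plan is the classical Stampacchia iteration on a geometric sequence of levels. Set $\lambda^*\coloneqq\lambda_0+\delta$, where $\delta>0$ will be fixed at the end. Introduce the levels $\lambda_k\coloneqq\lambda_0+\delta(1-2^{-k})$ for $k\in\mathbb{N}_0$. These increase, satisfy $\lambda_k\le\lambda^*$, and have gaps $\lambda_{k+1}-\lambda_k=\delta 2^{-(k+1)}$. Applying the hypothesis with $\lambda'=\lambda_{k+1}$ and $\lambda=\lambda_k$ gives the recursion
\begin{align*}
\varphi(\lambda_{k+1})\le K\delta^{-a}2^{a(k+1)}\varphi(\lambda_k)^{b}\,.
\end{align*}

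The key step is to show by induction that $\varphi(\lambda_k)\le\varphi(\lambda_0)\,r^{-k}$ for a suitable ratio $r>1$. I would take $r\coloneqq 2^{\frac{a}{b-1}}$. Assuming the bound at step $k$, the recursion gives
\begin{align*}
\varphi(\lambda_{k+1})\le K\delta^{-a}2^{a(k+1)}\varphi(\lambda_0)^{b}r^{-kb}\,,
\end{align*}
and this is at most $\varphi(\lambda_0)r^{-(k+1)}$ provided
\begin{align*}
K\delta^{-a}\varphi(\lambda_0)^{b-1}2^{a(k+1)}r^{-kb+k+1}\le 1\,.
\end{align*}
With the chosen $r$, the $k$-dependent factors combine to $2^{ak}r^{-k(b-1)}=1$, so the condition becomes independent of $k$:
\begin{align*}
K\delta^{-a}\varphi(\lambda_0)^{b-1}2^{a}r\le1\,,\qquad\text{i.e.}\qquad \delta^{a}\ge K\varphi(\lambda_0)^{b-1}2^{\frac{ab}{b-1}}\,.
\end{align*}
Taking the $a$-th root gives $\delta=2^{\frac{b}{b-1}}K^{\frac1a}\varphi(\lambda_0)^{\frac{b-1}{a}}$, which is exactly the constant in the statement. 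The main obstacle is this bookkeeping: choosing $r$ so that the exponential growth $2^{ak}$ from the shrinking gaps is exactly cancelled by the superlinear power $b>1$.

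To conclude, since $\varphi$ is non-increasing and $\lambda_k\le\lambda^*$, we get $0\le\varphi(\lambda^*)\le\varphi(\lambda_k)\le\varphi(\lambda_0)r^{-k}\to0$ as $k\to\infty$, hence $\varphi(\lambda^*)=0$.

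Two degenerate cases need a remark. If $\varphi(\lambda_0)=0$, then $\delta=0$ and $\lambda^*=\lambda_0$, and the claim is trivial. Otherwise $\delta>0$, all the levels are well defined, and the argument above applies.
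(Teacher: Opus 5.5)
Your proof is correct, including the bookkeeping: with $r=2^{\frac{a}{b-1}}$ the induction step reduces to the $k$-independent condition $\delta^{a}\ge K\varphi(\lambda_0)^{b-1}2^{\frac{ab}{b-1}}$, which gives exactly the stated $\lambda^*$, and you handle the case $\varphi(\lambda_0)=0$ separately. The paper does not prove the lemma itself; it cites Kinderlehrer--Stampacchia (Lem.~B.1), whose proof is this same iteration over the dyadic levels $\lambda_0+\delta(1-2^{-k})$, so your argument matches the one the paper relies on.
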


\begin{proof}
    See \cite[Lem.\ B.1]{KinderlehrerStampacchia}.
\end{proof}

\begin{proof}[Proof (of Lemma \ref{lem:Linf}).] To begin with, we introduce the sets
\begin{align*}
    \Gamma_I^> \coloneqq \{s \in \Gamma_I \mid |u(s) - u_\infty(s)| \ge \mathtt{C}_u\}\,,\qquad
    \Gamma_I^\le \coloneqq \Gamma_I \setminus \Gamma_I^>\,,
\end{align*} 
and fix $b \in (1,2)$. Then, depending on $d=3$ or $d=2$, we make the following definitions:
\begin{itemize}[noitemsep,topsep=2pt,leftmargin=!,labelwidth=\widthof{\quad$\bullet$ Case 1: ($d=3$).},font=\itshape]
    \item[$\bullet$ Case 1: ($d=3$).] We set $p = 6$, $q = 4$, $\tilde{r} = \frac{3}{3-b}$, $\tilde{s} = \frac{2}{2-b}$, implying $p = 2b \tilde{r}'$ and $q = 2b\tilde{s}'$;
    \item[$\bullet$ Case 2: ($d=2$).] We set $p = 2b \tilde{r}'$ and $q = 2b \tilde{s}'$ for fixed $\tilde{r} ,\tilde{s} > 1$.
\end{itemize}\newpage
\noindent Since $\mathtt{C}_u=0$ is excluded by Lemma \ref{lem:positivity_Cu} and 
since $|\Gamma_I^>| = 0$ is equivalent to $\vert u-u_\infty\vert \leq \mathtt{C}_u$~a.e.~in~$\Gamma_I$, which implies that $\vert u\vert\leq \mathtt{C}_u+\vert u_\infty\vert$ a.e.\ in $\Gamma_I$ and, due to \eqref{prop:optimality.2}, that $\smash{\widetilde{\mathtt{d}}}=0$ a.e.\ on $\Gamma_I$, given the assumption that $u_\infty\in L^\infty(\Gamma_I)$, it remains to consider the case that both $\mathtt{C}_u > 0$~and~$|\Gamma_I^>| > 0$.  Using the optimality conditions \eqref{prop:optimality.1}--\eqref{prop:optimality.3}, for every $v \in H^1(\Omega)$, we find that
\begin{align} \label{eq:Linf_bnd.1}
\smash{\kappa (\nabla u, \nabla v)_\Omega+ \mathtt{C}_u\beta ( \text{sgn}(u-u_{\infty}),v)_{\smash{\Gamma_I^>}} + \beta(u - u_\infty, v)_{\smash{\Gamma_I^\le}} = (f,v)_\Omega + (g, v)_{\Gamma_N}\,.}
\end{align}
Next, for every $\lambda > \lambda_0\coloneqq \|u_\infty \|_{\infty,\Gamma_I} + \mathtt{C}_u$, we define the sets 
\begin{align*}
    \Omega^\lambda   &\coloneqq \{ x \in \Omega \mid |u(x)| > \lambda\} \,,\\ \Gamma^{\lambda} &\coloneqq \{s \in \partial\Omega \mid |u(s)| > \lambda\}\,,\\\Gamma_{\mathrm{X}}^{\lambda}&\coloneqq \Gamma^{\lambda}\cap \Gamma_{\mathrm{X}}\,,\;\mathrm{X}\in \{I,N\}\,.
\end{align*} 
Then, by the definition of the truncation operator \eqref{eq:def_trunc} and the chain rule for Sobolev functions (\textit{cf}.\ \cite[Thm.\ 2.1.11]{Ziemer1989}), we have that
\begin{subequations}\label{eq:Linf_bnd.1.0}
\begin{alignat}{2}\label{eq:Linf_bnd.1.0.1}
    \mathtt{T}_\lambda u &= 0&&\quad\text{ a.e.\ in }\Omega\setminus \Omega^{\lambda}\text{ and }\text{a.e.\ on }\partial\Omega\setminus \Gamma^{\lambda}\,,\\\label{eq:Linf_bnd.1.0.2}
    \nabla \mathtt{T}_\lambda u &= \chi_{\Omega^{\lambda}} \nabla u&&\quad \text{ a.e.\ in }\Omega\,,\\
    \text{sgn}(\mathtt{T}_\lambda u) &=\text{sgn}(u)=\text{sgn}(u - u_\infty)&&\quad\text{ a.e.\ on }\smash{\Gamma_{I}^{\lambda}\subseteq \Gamma_{I}^{>}}\,,\label{eq:Linf_bnd.1.0.3}
\end{alignat}
\end{subequations}
where we used in \eqref{eq:Linf_bnd.1.0.3} that $\lambda > u_\infty + \mathtt{C}_u$ a.e.\ on $\Gamma_I$ and $-\lambda < u_\infty - \mathtt{C}_u$~a.e.~on~$\Gamma_I$ and, thus, $\Gamma^\lambda_I \subseteq \Gamma_I^>$ for $\lambda > \lambda_0$.
As a result, due to \eqref{eq:Linf_bnd.1.0}, by choosing  $v = \mathtt{T}_\lambda u \in H^1(\Omega)$ in \eqref{eq:Linf_bnd.1},~we~obtain
\begin{align} \label{eq:Linf_bnd.2}
\smash{\kappa \|\nabla \mathtt{T}_\lambda u \|_{\Omega}^2+ \mathtt{C}_u\beta\|\mathtt{T}_\lambda u\|_{1,\Gamma_I^\lambda}  = (f, \mathtt{T}_\lambda u )_{\Omega^\lambda} + (g , \mathtt{T}_\lambda u )_{\Gamma_N^\lambda}\,.}
\end{align}
Moreover,  due to \eqref{eq:Linf_bnd.1.0.1} and \eqref{eq:Linf_bnd.1.0.3}, we have that
\begin{align}\label{eq:Linf_bnd.2.0}
    \smash{\mathtt{C}_u = \tfrac{1}{\beta m} \||u-u_\infty| - \mathtt{C}_u\|_{1,\Gamma_I^>} \ge \tfrac{1}{\beta m} \|\mathtt{T}_\lambda u\|_{1,\Gamma_I^\lambda} = \tfrac{1}{\beta m} \|\mathtt{T}_\lambda u \|_{1,\Gamma_I}\,.}
\end{align}
Using Friedrich's inequality (\textit{cf}.\ \eqref{lem:poin_cont}), \eqref{eq:Linf_bnd.2.0} in \eqref{eq:Linf_bnd.2}, Hölder's inequality together with \eqref{eq:Linf_bnd.1.0.1}, the Sobolev~and~trace embedding $H^1(\Omega)\hookrightarrow L^{p}(\Omega)\cap L^{q}(\partial\Omega)\hookrightarrow L^{2\tilde{r}'}(\Omega)\cap L^{2\tilde{s}'}(\partial\Omega)$, and the $\varepsilon$-Young inequality, for every $\varepsilon>0$,~we~find~that 
\begin{align*} 
\begin{aligned}
\|\mathtt{T}_\lambda u\|_{\Omega}^2+\|\nabla\mathtt{T}_\lambda u\|_{\Omega}^2&\lesssim
\kappa \|\nabla \mathtt{T}_\lambda u\|_{\Omega}^2+ \tfrac{1}{m}\|\mathtt{T}_\lambda u\|_{1,\Gamma_I}^2\\&\leq  (f, \mathtt{T}_\lambda u )_{\Omega^\lambda} + (g, \mathtt{T}_\lambda u)_{\Gamma_N^\lambda} \\&\leq \|f\|_{\tilde{r},\Omega} |\Omega^\lambda|^{\smash{\frac{1}{2\tilde{r}'}}}\|\mathtt{T}_\lambda u\|_{2\tilde{r}',\Omega} +  \|g\|_{\tilde{s},\Gamma_N} |\Gamma_N^\lambda|^{\smash{\frac{1}{2\tilde{s}'}}}\|\mathtt{T}_\lambda u\|_{2\tilde{s}',\Gamma_N}
\\&\lesssim (|\Omega^\lambda|^{\smash{\frac{b}{p}}} +  |\Gamma^\lambda|^{\smash{\frac{b}{q}}})(\|\mathtt{T}_\lambda u\|_{\Omega}+\|\nabla\mathtt{T}_\lambda u\|_{\Omega})
\\&\lesssim \tfrac{1}{\varepsilon}(|\Omega^\lambda|^{\smash{\frac{2}{p}}} +  |\Gamma^\lambda|^{\smash{\frac{2}{q}}})^b+\varepsilon(\|\mathtt{T}_\lambda u\|_{\Omega}^2+\|\nabla\mathtt{T}_\lambda u\|_{\Omega}^2)\,,
\end{aligned}
\end{align*}
which, for $\varepsilon>0$ sufficiently small, implies that
\begin{align}\label{eq:Linf_bnd.3}
    \smash{\|\mathtt{T}_\lambda u\|_{\Omega}^2+\|\nabla\mathtt{T}_\lambda u\|_{\Omega}^2\lesssim (|\Omega^\lambda|^{\smash{\frac{2}{p}}} + |\Gamma^\lambda|^{\smash{\frac{2}{q}}})^b\,,}
\end{align}
By the Sobolev~and~trace embedding $H^1(\Omega) \hookrightarrow  L^{p}(\Omega)\cap L^{q}(\partial\Omega)$ as well as $\vert \mathtt{T}_\lambda u\vert=\vert u\vert-\lambda\ge \lambda'-\lambda $ a.e.\ in $\Omega^{\lambda'}\subseteq \Omega^\lambda$ and a.e.\ on $\Gamma^{\lambda'}\subseteq \Gamma^\lambda$, for every $\lambda' > \lambda$, we have that
\begin{align}\label{eq:Linf_bnd.4}
\begin{aligned}
     \smash{\|\mathtt{T}_\lambda u\|_{\Omega}^2+\|\nabla\mathtt{T}_\lambda u\|_{\Omega}^2
     \gtrsim  \| \mathtt{T}_\lambda u \|_{p,\Omega^\lambda}^2 + \| \mathtt{T}_\lambda u \|_{q,\Gamma^\lambda}^2 
     \gtrsim (\lambda'-\lambda)^2 (|\Omega^{\lambda'}|^{\smash{\frac{2}{p}}} + |\Gamma^{\lambda'}|^{\smash{\frac{2}{q}}})\,.}
     \end{aligned}
\end{align}
Putting \eqref{eq:Linf_bnd.3} and \eqref{eq:Linf_bnd.4} together, for every $\lambda' \hspace{-0.1em}>\hspace{-0.1em} \lambda$, we find that
\begin{align}\label{eq:Linf_bnd.5}
\smash{|\Omega^{\lambda'}|^{\smash{\frac{2}{p}}} + |\Gamma^{\lambda'}|^{\smash{\frac{2}{q}}}  \lesssim (\lambda'-\lambda)^{-2}(|\Omega^\lambda|^{\smash{\frac{2}{p}}} + |\Gamma^\lambda|^{\smash{\frac{2}{q}}})^b,}
\end{align}
and we conclude from Lemma \ref{lem:Stampacchia} (where $K>0$ is the hidden constant in $\lesssim$ from \eqref{eq:Linf_bnd.5}, $a=2$, $b\in (1,2)$ fixed as above, and $\varphi\colon [\lambda_0,+\infty)\to [0,\infty)$, defined by ${\varphi(\lambda)\coloneqq \vert\Omega^{\lambda}\vert^{\smash{\frac{2}{p}}} +\vert \Gamma^{\lambda}\vert^{\smash{\frac{2}{q}}}}$~for~all~$\lambda>\lambda_0$) the existence of a constant $\lambda^*\coloneqq 2^{\smash{\frac{b}{b-1}}}K^{\smash{\frac{1}{a}}}\varphi(\lambda_0)^{\smash{\frac{b-1}{a}}}+\lambda_0\in [\lambda_0,\infty)$ such that ${\vert\Omega^{\lambda^*}\vert^{\smash{\frac{2}{p}}} +\vert \Gamma^{\lambda^*}\vert^{\smash{\frac{2}{q}}}=0}$. In~other~words, we have that $\vert u\vert\leq \lambda^*$ a.e.\ in $\Omega$ and a.e.\ on $\partial\Omega$, \textit{i.e.},   $u \in L^\infty(\Omega) \cap L^\infty(\partial \Omega)$ and, due to \eqref{prop:optimality.2},  $\smash{\widetilde{\mathtt{d}}} \in L^\infty(\Gamma_I)$.
\end{proof}\newpage


 \section{The discrete insulation problem}\label{sec:discrete_problem}

\hspace{5mm}Given a \emph{discrete heat source} $f_h\in \mathcal{S}^1(\mathcal{T}_h)$, a \emph{discrete heat flux} $g_h\in \mathcal{S}^1(\mathcal{S}_h^N)$, jointly satisfying the \textit{discrete non-trivial net heat input condition}
\begin{align}\label{eq:discrete_data_condition}
    (f_h,1)_{\Omega}+(g_h,1)_{\Gamma_N}\neq 0\,,
\end{align}
and a \emph{discrete ambient temperature} $u_{\infty}^h\in \mathcal{S}^1(\mathcal{S}_h^{I})$, we seek a pair $(u_h,\smash{\widetilde{\mathtt{d}}_h})\in \mathcal{S}^1(\mathcal{T}_h)\times\mathcal{S}^1(\mathcal{S}_h^{I})$, consisting of a \emph{discrete temperature distribution} and a \emph{discrete distribution function (of the insulation material)} that minimize
 the \emph{discrete heat loss} functional ${\smash{E}_h\colon \mathcal{S}^1(\mathcal{T}_h)\times\mathcal{S}^1(\mathcal{S}_h^{I})  \to \mathbb{R}\cup\{+\infty\}}$, for every $(v_h,\eta_h)\in  \mathcal{S}^1(\mathcal{T}_h)\times\mathcal{S}^1(\mathcal{S}_h^{I})   $ defined by
\begin{equation} \label{def:Eh} 
 \smash{E}_h(v_h,\eta_h)\coloneqq \tfrac{\kappa}{2}\|\nabla v_h\|^2_{\Omega}  + \tfrac{\beta}{2}\|(1 + \beta \eta_h)^{-\smash{\frac{1}{2}}}(v_h-u_{\infty}^h)\|_{\Gamma_{I},h}^2 - (f_h, v_h)_{\Omega}-(g_h,v_h)_{\Gamma_N} + I_{\mathcal{H}_m^h}(\eta_h)\,,
\end{equation}
where\vspace{-0.5mm}
\begin{align*}
        \smash{\mathcal{H}_m^h\coloneqq \bigl\{ \eta_h\in \mathcal{S}^1(\mathcal{S}_h^{I}) \mid \eta_h\ge 0\text{ on }\Gamma_{I}\,,\; (\eta_h,1)_{\Gamma_{I},h}=m \bigr\}\,.}
\end{align*}
Note that 
the discrete non-trivial net heat input condition \eqref{eq:discrete_data_condition} follows from its continuous~counter\-part~\eqref{eq:data_condition} if, \textit{e.g.}, $f_h \in \mathcal{S}^1(\mathcal{T}_h)$ and  $g_h \in \mathcal{S}^1(\mathcal{S}^{N}_h)$~are~global $L^2$-projections of $f\in  L^2(\Omega)$ and $g\in (H^{\smash{\frac{1}{2}}}(\Gamma_N))^*$ onto $\mathcal{S}^1(\mathcal{T}_h)$~and~$\mathcal{S}^1(\mathcal{S}^{N}_h)$,~\mbox{respectively}, as  $(f_h,1)_{\Omega}=(f,1)_{\Omega}$~and~${( g_h,1)_{\Gamma_N}=\langle g,1\rangle_{\Gamma_N}}$.\vspace{-1.5mm}

\subsection{Optimality conditions}

\hspace{5mm}The following proposition identifies optimality conditions that characterize a minimizing pair $(u_h,\smash{\widetilde{\mathtt{d}}_h})\in \mathcal{S}^1(\mathcal{T}_h)\times\mathcal{S}^1(\mathcal{S}_h^{I}) $ of the discrete heat loss functional \eqref{def:Eh}.\enlargethispage{5mm}

\begin{proposition}[Optimality conditions]\label{prop:discrete_optimality}
    A pair $(u_h,\smash{\widetilde{\mathtt{d}}_h})\in  \mathcal{S}^1(\mathcal{T}_h)\times\mathcal{S}^1(\mathcal{S}_h^{I})   $ is minimal for the discrete heat loss functional \eqref{def:Eh} if and only if the following optimality conditions~are~satisfied:
    \begin{itemize}[noitemsep,topsep=2pt,leftmargin=!,labelwidth=\widthof{(iii)}]
        \item[(i)] There exists a constant $\mathtt{C}_{\smash{u_h}}>0$ such that 
        \begin{align}\label{prop:discrete_optimality.1}
            \smash{\mathtt{C}_{\smash{u_h}}=\tfrac{1}{\beta m}\|I_h\{(\vert u_h-u_{\infty}^h\vert-\mathtt{C}_{\smash{u_h}})_+\}\|_{1,\Gamma_{I}}\,;}
        \end{align}
        \item[(ii)] The function $\smash{\widetilde{\mathtt{d}}_h}\in \smash{\mathcal{S}^1(\mathcal{S}_h^{I})}$  satisfies
        \begin{align}\label{prop:discrete_optimality.2}
            \smash{\smash{\widetilde{\mathtt{d}}_h}=\tfrac{1}{\beta \mathtt{C}_{\smash{u_h}}}I_h\{(\vert u_h-u_{\infty}^h\vert-\mathtt{C}_{\smash{u_h}})_+\}\quad\text{ on }\Gamma_{I}\,;}
        \end{align}
        \item[(iii)] The pair $(u_h,\smash{\widetilde{\mathtt{d}}_h})\in  \mathcal{S}^1(\mathcal{T}_h)\times\mathcal{S}^1(\mathcal{S}_h^{I})   $, for every $v_h\in \mathcal{S}^1(\mathcal{T}_h) $, satisfies
        \begin{align}\label{prop:discrete_optimality.3}
           \kappa (\nabla u_h,\nabla  v_h)_{\Omega}+
          \beta ((1+\beta \smash{\widetilde{\mathtt{d}}_h})^{-1}(u_h-u_{\infty}^h),v_h)_{\Gamma_I,h}=(f_h,v_h)_{\Omega}+(g_h,v_h)_{\Gamma_N}\,.
        \end{align}
    \end{itemize}
\end{proposition}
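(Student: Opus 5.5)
The plan is to mirror the proof of Proposition~\ref{prop:optimality}, exploiting that the mass-lumping quadrature turns everything on $\Gamma_I$ into a \emph{nodal}, finite-dimensional problem. First, $E_h$ is convex on $\mathcal{S}^1(\mathcal{T}_h)\times\mathcal{S}^1(\mathcal{S}_h^I)$. Write $\varphi_\nu$ for the nodal basis restricted to $\Gamma_I$ and $\mu_\nu\coloneqq(\varphi_\nu,1)_{\Gamma_I}>0$ for the lumped weights. Then
\begin{align*}
\tfrac{\beta}{2}\|(1+\beta\eta_h)^{-\frac12}(v_h-u_\infty^h)\|_{\Gamma_I,h}^2=\tfrac{\beta}{2}\sum_{\nu\in\mathcal{N}_h^I}\mu_\nu\,\frac{(v_h(\nu)-u_\infty^h(\nu))^2}{1+\beta\eta_h(\nu)}\,.
\end{align*}
Each summand is jointly convex in $(a,t)\mapsto a^2/(1+\beta t)$ for $t>-1/\beta$, being a perspective-type function. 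Moreover, $\mathcal{H}_m^h$ is convex, and for $\eta_h\in\mathcal{S}^1(\mathcal{S}_h^I)$ the condition $\eta_h\ge0$ is equivalent to $\eta_h(\nu)\ge0$ for all nodes, with $(\eta_h,1)_{\Gamma_I,h}=\sum_\nu\mu_\nu\eta_h(\nu)$. Hence minimality is equivalent to the two conditions $0=\mathrm{D}_1E_h(u_h,\widetilde{\mathtt d}_h)$ and $0\in\partial_2E_h(u_h,\widetilde{\mathtt d}_h)$. This equivalence holds for a convex function that is differentiable in the first variable on the open domain $\{1+\beta\eta>0\}$, which contains $\mathcal{H}_m^h$, by the usual block argument (sum rule with the indicator, which only depends on $\eta_h$). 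The first condition is exactly \eqref{prop:discrete_optimality.3}, by differentiating the quadratic form in $v_h$.

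For the second condition I would reduce to a nodal knapsack-type problem. We minimize
\begin{align*}
\sum_\nu\mu_\nu\,\frac{a_\nu^2}{1+\beta t_\nu}\quad\text{over}\quad t_\nu\ge0\,,\quad \sum_\nu\mu_\nu t_\nu=m\,,\qquad\text{where } a_\nu\coloneqq u_h(\nu)-u_\infty^h(\nu)\,.
\end{align*}
This is the discrete analogue of \cite[Prop.~4.1 \& Lem.~4.1]{PietraNitschScalaTrombetti2021}, with the measure on $\Gamma_I$ replaced by the atomic measure $\sum_\nu\mu_\nu\delta_\nu$. The KKT conditions, which are necessary and sufficient by convexity and the affine constraints, give a multiplier $\lambda\ge0$ with $\beta^2 a_\nu^2/(1+\beta t_\nu)^2\le\lambda$, with equality where $t_\nu>0$. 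Setting $\lambda=\beta^2\mathtt C^2$, this yields $t_\nu=\frac{1}{\beta\mathtt C}(|a_\nu|-\mathtt C)_+$. Inserting this into the mass constraint gives $\mathtt C=\frac{1}{\beta m}\sum_\nu\mu_\nu(|a_\nu|-\mathtt C)_+$. Since $\sum_\nu\mu_\nu w(\nu)=(I_hw,1)_{\Gamma_I}$ and $I_h$ of a nonnegative function is nonnegative, the right-hand side is precisely $\frac{1}{\beta m}\|I_h\{(|u_h-u_\infty^h|-\mathtt C)_+\}\|_{1,\Gamma_I}$. This gives \eqref{prop:discrete_optimality.1} and \eqref{prop:discrete_optimality.2}, since a $\mathcal{S}^1$ function is determined by its nodal values. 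Existence and uniqueness of $\mathtt C_{u_h}\ge0$ follow as in Lemma~\ref{lem:uniqueness_Cu} applied to this atomic measure: the map $\mathtt C\mapsto\mathtt C-\frac{1}{\beta m}\sum\mu_\nu(|a_\nu|-\mathtt C)_+$ is continuous and strictly increasing, negative or zero at $0$, and positive for large $\mathtt C$. Moreover, $\mathtt C=0$ holds iff all $a_\nu=0$.

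The main obstacle is positivity $\mathtt C_{u_h}>0$, needed for \eqref{prop:discrete_optimality.2} to make sense. If $\mathtt C=0$ with $\lambda=0$, the KKT system degenerates: any feasible $t$ is optimal when all $a_\nu=0$. I would argue as in Lemma~\ref{lem:positivity_Cu}. If $\mathtt C_{u_h}=0$, then $u_h=u_\infty^h$ at all nodes of $\Gamma_I$, so the boundary term in \eqref{prop:discrete_optimality.3} vanishes, because the lumped inner product only sees nodal values. Testing with $v_h=1\in\mathcal{S}^1(\mathcal{T}_h)$ then contradicts \eqref{eq:discrete_data_condition}. For the converse direction (conditions imply minimality), the conditions with $\mathtt C_{u_h}>0$ give a KKT point with multiplier $\beta^2\mathtt C_{u_h}^2$, hence a minimizer of the convex nodal problem. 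Combined with \eqref{prop:discrete_optimality.3}, this yields joint minimality by convexity of $E_h$.
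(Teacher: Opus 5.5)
Your proposal is correct and follows essentially the paper's route. You split minimality into $0=\mathrm{D}_1E_h(u_h,\widetilde{\mathtt{d}}_h)$, which is \eqref{prop:discrete_optimality.3}, and partial minimality in $\eta_h$ over $\mathcal{H}_m^h$, which you solve as the discrete (atomic-measure) analogue of \cite[Prop.~4.1 \& Lem.~4.1]{PietraNitschScalaTrombetti2021}, and you obtain $\mathtt{C}_{u_h}>0$ by testing with $v_h=1$ exactly as in Lemma~\ref{lem:positivity_Cu_h}. Your nodal KKT computation simply makes explicit the paper's ``modifications to fit the discrete setting''; the one point to state more carefully is that the block splitting needs the smooth part of $E_h$ to be jointly (not just partially) differentiable on $\{1+\beta\eta_h>0\}$, which holds here because it is a rational function of the nodal values.
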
 

\begin{lemma}[Positivity of $\mathtt{C}_{\smash{u_h}}$]\label{lem:positivity_Cu_h}
    Let $(u_h,\smash{\widetilde{\mathtt{d}}_h})\hspace{-0.1em}\in  \hspace{-0.1em}\mathcal{S}^1(\mathcal{T}_h)\times\mathcal{S}^1(\mathcal{S}_h^{I})   $ be a pair such that the optimality condition \eqref{prop:discrete_optimality.3} is satisfied. Then, the constant $\mathtt{C}_{\smash{u_h}}\ge 0$ implicitly defined by \eqref{prop:discrete_optimality.1}~satisfies~$\mathtt{C}_{\smash{u_h}}>0$.
\end{lemma}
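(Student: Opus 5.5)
We argue by contradiction, mirroring the proof of Lemma \ref{lem:positivity_Cu}. Assume that $\mathtt{C}_{\smash{u_h}}=0$. Then \eqref{prop:discrete_optimality.1} reads $0=\tfrac{1}{\beta m}\|I_h\{\vert u_h-u_{\infty}^h\vert\}\|_{1,\Gamma_I}$.

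The first step is to translate this into a nodal statement. Since $I_h\{\vert u_h-u_\infty^h\vert\}=\sum_{\nu\in\mathcal{N}_h^I}\vert (u_h-u_\infty^h)(\nu)\vert\,\varphi_\nu\!\restriction_{\Gamma_I}$ is a non-negative combination of non-negative hat functions, each having positive integral over $\Gamma_I$, we obtain
\begin{align*}
\|I_h\{\vert u_h-u_{\infty}^h\vert\}\|_{1,\Gamma_I}=\sum_{\nu\in\mathcal{N}_h^I}\vert (u_h-u_\infty^h)(\nu)\vert\,(\varphi_\nu,1)_{\Gamma_I}\,.
\end{align*}
Its vanishing therefore forces $(u_h-u_\infty^h)(\nu)=0$ for all $\nu\in\mathcal{N}_h^I$. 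One point of care: $\mathcal{N}_h^I$ should be read as containing all nodes of sides in $\mathcal{S}_h^I$, i.e.\ the nodes on $\overline{\Gamma}_I$ on which $I_h$ acts. With that reading, $u_h=u_\infty^h$ on $\Gamma_I$ as elements of $\mathcal{S}^1(\mathcal{S}_h^I)$.

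The second step removes the boundary term in \eqref{prop:discrete_optimality.3}. The mass-lumped form $((1+\beta\widetilde{\mathtt{d}}_h)^{-1}(u_h-u_\infty^h),v_h)_{\Gamma_I,h}=(I_h\{(1+\beta\widetilde{\mathtt{d}}_h)^{-1}(u_h-u_\infty^h)v_h\},1)_{\Gamma_I}$ only involves nodal values of the integrand. All of these vanish, so the term is zero for every $v_h$. Here we use that $1+\beta\widetilde{\mathtt{d}}_h$ is finite at each node, which holds since the pair lies in $\mathcal{S}^1(\mathcal{T}_h)\times\mathcal{S}^1(\mathcal{S}_h^I)$. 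If $\widetilde{\mathtt{d}}_h$ could be negative at some node, the argument still goes through, because only the zero factor $u_h-u_\infty^h$ matters, provided the weight is defined.

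Consequently, \eqref{prop:discrete_optimality.3} reduces to $\kappa(\nabla u_h,\nabla v_h)_\Omega=(f_h,v_h)_\Omega+(g_h,v_h)_{\Gamma_N}$ for all $v_h\in\mathcal{S}^1(\mathcal{T}_h)$. Testing with $v_h=1\in\mathcal{S}^1(\mathcal{T}_h)$ gives $(f_h,1)_\Omega+(g_h,1)_{\Gamma_N}=0$, which contradicts \eqref{eq:discrete_data_condition}. Hence $\mathtt{C}_{\smash{u_h}}>0$.

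The only potentially delicate step is the first one, deducing nodal vanishing from the vanishing $L^1$-norm of the interpolant. This is elementary because the lumped quadrature weights $(\varphi_\nu,1)_{\Gamma_I}$ are strictly positive. The rest is a direct discrete transcription of Lemma \ref{lem:positivity_Cu}.
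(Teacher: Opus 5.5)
Your proposal is correct and follows the paper's proof: assuming $\mathtt{C}_{u_h}=0$, condition \eqref{prop:discrete_optimality.1} forces $u_h=u_\infty^h$ on $\Gamma_I$, so the lumped boundary term in \eqref{prop:discrete_optimality.3} vanishes, and testing with $v_h=1$ contradicts \eqref{eq:discrete_data_condition}. Your nodal argument via the positive weights $(\varphi_\nu,1)_{\Gamma_I}$ spells out a step the paper leaves implicit, and your caveat about the node set is harmless: both \eqref{prop:discrete_optimality.1} and the lumped form in \eqref{prop:discrete_optimality.3} are built from the same $I_h$, hence from the same nodes, so the argument holds under either reading.
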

\begin{proof}
    Suppose that $\mathtt{C}_{\smash{u_h}}\hspace*{-0.1em}=\hspace*{-0.1em}0$. Then, \eqref{prop:discrete_optimality.1} implies that $u_h\hspace*{-0.1em}=\hspace*{-0.1em}u_\infty^h$ a.e.\ on $\Gamma_{I}$ and,~\mbox{consequently},~\eqref{prop:discrete_optimality.3}, for every $v_h\in \mathcal{S}^1(\mathcal{T}_h)$, reads
    \begin{align}\label{lem:positivity_Cu_h.1}
       \kappa (\nabla u_h,\nabla  v_h)_{\Omega}=(f_h,v_h)_{\Omega}+(g_h,v_h)_{\Gamma_N}\,.
    \end{align}
    For \hspace{-0.1mm}$v_h\hspace{-0.175em}=\hspace{-0.175em}1\hspace{-0.175em}\in\hspace{-0.175em} \mathcal{S}^1(\mathcal{T}_h)$ \hspace{-0.1mm}in \hspace{-0.1mm}\eqref{lem:positivity_Cu_h.1}, \hspace{-0.1mm}we \hspace{-0.1mm}find \hspace{-0.1mm}that \hspace{-0.1mm}$(f_h,1)_{\Omega}+(g_h,1)_{\Gamma_N}\hspace{-0.175em}=\hspace{-0.175em}0$, \hspace{-0.1mm}which \hspace{-0.1mm}contradicts~\hspace{-0.1mm}\mbox{assumption}~\hspace{-0.1mm}\eqref{eq:discrete_data_condition}. As a result, we conclude that $\mathtt{C}_{\smash{u_h}}>0$.
\end{proof}

\begin{proof}[Proof (of Proposition \ref{prop:discrete_optimality}).]
    A pair $(u_h,\smash{\widetilde{\mathtt{d}}_h})\in  \mathcal{S}^1(\mathcal{T}_h)\times \mathcal{S}^1(\mathcal{S}_h^I)$ is minimal for \eqref{def:Eh} if and only if
    \begin{subequations}\label{prop:discrete_optimality.4}
    \begin{align}\label{prop:discrete_optimality.4.1}
        0_{\smash{(\mathcal{S}^1(\mathcal{S}_h^{I}))^*}}&\in \partial_2 E_h(u_h,\smash{\widetilde{\mathtt{d}}_h})\,,\\
        0_{\smash{(\mathcal{S}^1(\mathcal{T}_h))^*}}&=\mathrm{D}_1 E_h(u_h,\smash{\widetilde{\mathtt{d}}_h})\,,\label{prop:discrete_optimality.4.2}
    \end{align}
    \end{subequations}
    where the identity \eqref{prop:discrete_optimality.4.2} is equivalent to \eqref{prop:discrete_optimality.3}, while the inclusion \eqref{prop:discrete_optimality.4.1} is equivalent to 
    \begin{align}\label{prop:discrete_optimality.5}
       \smash{\widetilde{\mathtt{d}}_h} \in \underset{\eta_h\in \mathcal{H}_m^h}{\textup{argmin}}{\{E_h(u_h,\eta_h)\}}\,.
    \end{align}
    Arguing as in the proof of \cite[Prop.\  4.1 \& Lem.\ 4.1]{PietraNitschScalaTrombetti2021}, with 
    modifications to fit~the~discrete~setting, we find that 
    the unique minimizer $\smash{\widetilde{\mathtt{d}}_h}\in \mathcal{H}_m^h$ is given~via \eqref{prop:discrete_optimality.2} 
    with  $\mathtt{C}_{\smash{u_h}}>0$~given~via~\eqref{prop:discrete_optimality.1}.
    \if0\begin{align}\label{prop:discrete_optimality.6}
            \mathtt{C}_{\smash{u_h}}=\tfrac{1}{\vert \{I_h\{\vert u_h-u_{\infty}^h\vert\} \ge \mathtt{C}_{\smash{u_h}}\}\vert+\beta m}\|I_h\{\vert u_h-u_{\infty}^h\vert\}\|_{1,\{I_h\{\vert u_h-u_{\infty}^h\vert\} \ge \mathtt{C}_{\smash{u_h}}\}}\,,
    \end{align}
    which is equivalent to \eqref{prop:discrete_optimality.1}.\fi 
\end{proof}\newpage

\begin{remark}[on Proposition \ref{prop:discrete_optimality}]
    Since \eqref{prop:discrete_optimality.5}  is equivalent to $-\beta I_h\{(1+\beta \smash{\widetilde{\mathtt{d}}_h})^{-2}(u_h- u_\infty^h)^2\}\in I_h\{\partial I_{\mathcal{H}_m^h}(\smash{\widetilde{\mathtt{d}}_h})\}$, 
    the optimality conditions \eqref{prop:discrete_optimality.1} and \eqref{prop:discrete_optimality.2} are jointly equivalent to the optimality condition\vspace{-2mm}
    \begin{align}
        ((1+\beta \smash{\widetilde{\mathtt{d}}_h})^{-2}(u_h - u_\infty^h)^2,\smash{\widetilde{\mathtt{d}}_h} - \eta_h)_{\Gamma_I,h}\ge 0\quad\text{ for all }\eta_h\in \mathcal{H}_m^h\,.\label{prop:discrete_optimality.3b}
    \end{align} 
\end{remark}

\subsection{$L^\infty$-bounds on the insulated boundary}\vspace{-0.5mm}\enlargethispage{2.5mm}

\hspace{5mm}The \textit{a priori} error analysis in Section \ref{sec:apriori}, in fact, does not only hinge on 
the assumption that both $u,\smash{\widetilde{\mathtt{d}}} \in L^\infty(\Gamma_I)$, but also the assumption of uniform $L^\infty(\Gamma_I)$-bounds for $u_h\in \mathcal{S}^1(\mathcal{T}_h)$~and~$\smash{\widetilde{\mathtt{d}}_h}\in \mathcal{H}_m^h$.
The following lemma affirms this assumption provided the heat source $f\in L^2(\Omega)$ and heat flux $g\in (H^{\frac{1}{2}}(\Gamma_N))^*$ are sufficiently integrable, the ambient temperature $u_\infty$ is essentially~bounded, and the following non-obtuseness condition is satisfied.

\begin{assumption}[Non-obtuse insulation triangulation condition] \label{assum:nonobtuse_mesh} Suppose $\mathcal{T}_h$ is \emph{non-obtuse}  in the sense of \cite{DieningScharleSuli:2021}, \textit{i.e.}, for every $\nu,\nu'\in \mathcal{N}_h$ with $\nu \ne \nu'$,~we~have~that\vspace{-0.5mm}
\begin{align}\label{assum:nonobtuse_mesh.0}
    \nabla \phi_{\nu}\cdot \nabla \phi_{\nu'} \le 0\quad\text{ a.e.\ in }\Omega\,.
\end{align}
     \end{assumption}


\begin{lemma} \label{lem:Linf_discrete}
 Let Assumption \ref{assum:nonobtuse_mesh} be satisfied. Then, if there exist  $r>\frac{d}{2}$ and $s>d-1$~such~that $\sup_{h>0}{\{\|f_h\|_{r,\Omega}+\|g_h\|_{s,\Gamma_N}+\|u_{\infty}^h\|_{\infty,\Gamma_I}\}}<+\infty$, 
then $\sup_{h>0}{\{\|u_h\|_{\infty,\Gamma_I}+\|\smash{\widetilde{\mathtt{d}}_h}\|_{\infty,\Gamma_I}\}}<+\infty$. 
\end{lemma}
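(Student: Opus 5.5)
The plan is to transfer the Stampacchia argument from the proof of Lemma \ref{lem:Linf} to the discrete setting. The truncation $\mathtt{T}_\lambda u_h$ is not in $\mathcal{S}^1(\mathcal{T}_h)$, so I would use the nodal truncation $I_h^\Omega\{\mathtt{T}_\lambda u_h\}\coloneqq\sum_{\nu\in\mathcal{N}_h}\mathtt{T}_\lambda(u_h(\nu))\varphi_\nu$ as test function in \eqref{prop:discrete_optimality.3}. Assumption \ref{assum:nonobtuse_mesh} enters only to guarantee the discrete analogue of $(\nabla u,\nabla \mathtt{T}_\lambda u)_\Omega=\|\nabla\mathtt{T}_\lambda u\|_\Omega^2$, namely
\begin{align*}
\kappa(\nabla u_h,\nabla I_h^\Omega\{\mathtt{T}_\lambda u_h\})_\Omega\ge \kappa\|\nabla I_h^\Omega\{\mathtt{T}_\lambda u_h\}\|_\Omega^2\,.
\end{align*}
To see this, write $u_h=w_h+z_h$ with $w_h\coloneqq I_h^\Omega\{\mathtt{T}_\lambda u_h\}$. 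The nodal values of $z_h$ are the clamped values $\max\{-\lambda,\min\{\lambda,u_h(\nu)\}\}$. Hence $(\nabla z_h,\nabla w_h)_\Omega=\sum_{\nu\neq\nu'}z_\nu w_{\nu'}(\nabla\varphi_\nu,\nabla\varphi_{\nu'})_\Omega+\sum_\nu z_\nu w_\nu(\nabla\varphi_\nu,\nabla\varphi_\nu)_\Omega$. Using $\sum_{\nu}\nabla\varphi_\nu=0$, this can be rewritten as $-\tfrac12\sum_{\nu\ne\nu'}(\nabla\varphi_\nu,\nabla\varphi_{\nu'})_\Omega(z_\nu-z_{\nu'})(w_\nu-w_{\nu'})$. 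The monotone maps $a\mapsto \mathrm{clamp}_\lambda(a)$ and $a\mapsto\mathtt{T}_\lambda a$ produce factors of equal sign, and \eqref{assum:nonobtuse_mesh.0} then gives non-negativity. This is the standard Diening--Scharle--Süli argument.

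Next come the boundary terms. Inserting \eqref{prop:discrete_optimality.2} into \eqref{prop:discrete_optimality.3} gives, nodewise on $\mathcal{N}_h^I$, $\beta(1+\beta\widetilde{\mathtt{d}}_h(\nu))^{-1}(u_h-u^h_\infty)(\nu)=\beta\min\{|u_h-u_\infty^h|(\nu),\mathtt{C}_{u_h}\}\,\mathrm{sgn}((u_h-u_\infty^h)(\nu))$. Because the mass-lumped product only evaluates nodal values, this reproduces \eqref{eq:Linf_bnd.1} exactly, with nodal sets replacing $\Gamma_I^>,\Gamma_I^\le$. For $\lambda>\lambda_0\coloneqq\sup_h\|u^h_\infty\|_{\infty,\Gamma_I}+\mathtt{C}_{u_h}$, every boundary node with $|u_h(\nu)|>\lambda$ lies in the ``$>$'' set with matching signs, so the boundary term equals $\mathtt{C}_{u_h}\beta\|I_h\{|\mathtt{T}_\lambda u_h|\}\|_{1,\Gamma_I}\ge0$. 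The analogue of \eqref{eq:Linf_bnd.2.0} gives $\|I_h\{\mathtt{T}_\lambda u_h\}\|_{1,\Gamma_I}\le\beta m\,\mathtt{C}_{u_h}$. With Friedrich's inequality \eqref{lem:poin_cont}, the term $\|w_h\|_\Omega^2+\|\nabla w_h\|_\Omega^2$ is controlled by $\kappa\|\nabla w_h\|^2_\Omega$ plus $\tfrac1m\|w_h\|_{1,\Gamma_I}^2$. The right-hand sides $(f_h,w_h)_\Omega$ and $(g_h,w_h)_{\Gamma_N}$ are supported on the elements and sides touching nodes with $|u_h|>\lambda$. I would take $\Omega_h^\lambda$ and $\Gamma_h^\lambda$ to be the unions of such elements and sides and apply Hölder and the Sobolev/trace embeddings as before, with uniform constants from $\sup_h\|f_h\|_{r,\Omega}+\|g_h\|_{s,\Gamma_N}<\infty$. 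This yields \eqref{eq:Linf_bnd.3} with $|\Omega_h^\lambda|,|\Gamma_h^\lambda|$.

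The main obstacle is the level-set step \eqref{eq:Linf_bnd.4}, since the patch sets $\Omega_h^\lambda$ are larger than the level sets. The fix is to work with the nodal level sets directly. If some element has a node with $|u_h(\nu)|>\lambda'$, then $|w_h(\nu)|\ge\lambda'-\lambda$ there. By norm equivalence on $\mathbb{P}^1$ with shape-regular constants, $\|w_h\|_{p,T}^p\gtrsim(\lambda'-\lambda)^p|T|$, because $w_h\restriction_T$ has all nodal values of one sign or zero (with $\lambda'>\lambda$ fixed, at least one nodal value is large and none are of opposite sign since $\mathtt{T}_\lambda$ preserves sign and $|\lambda|$-gap... ). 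I would check this carefully: nodal values of opposite sign on one element are possible, but then $|u_h|>\lambda$ at both nodes, and the $L^p$-norm is still bounded below by the maximal nodal value by finite-dimensionality. Summing gives $(\lambda'-\lambda)^2(|\Omega_h^{\lambda'}|^{2/p}+|\Gamma_h^{\lambda'}|^{2/q})\lesssim\|w_h\|^2_{1,2}$, since $\Omega_h^{\lambda'}\subseteq\Omega_h^\lambda$. Lemma \ref{lem:Stampacchia} then bounds $\max_\nu|u_h(\nu)|\le\lambda^*$, and hence $\|u_h\|_{\infty,\Gamma_I}$, uniformly in $h$. This uses $\varphi(\lambda_0)\le|\Omega|^{2/p}+|\partial\Omega|^{2/q}$ and the uniform bound $\mathtt{C}_{u_h}\le\tfrac1{\beta m}\|I_h|u_h-u^h_\infty|\|_{1,\Gamma_I}$, which is uniform via the energy bound $\sup_h\|u_h\|_{H^1}<\infty$ from testing \eqref{prop:discrete_optimality.3} with $u_h$ together with \eqref{lem:std_prop_lumping.1}. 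Finally, \eqref{prop:discrete_optimality.2} together with Lemma \ref{lem:positivity_Cu_h} bounds $\widetilde{\mathtt{d}}_h$. For this I still need a uniform lower bound $\mathtt{C}_{u_h}\ge c>0$, and I would obtain it by contradiction using weak convergence of $u_h$ and the argument of Lemma \ref{lem:positivity_Cu} in the limit.
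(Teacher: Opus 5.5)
Your core argument follows the paper's proof step for step, and that part is correct:

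- the nodal truncation $I_h\{\mathtt{T}_\lambda u_h\}$ as test function;
- the non-obtuse discrete chain rule (your shrink/clamp splitting with the zero-row-sum stiffness matrix is the integrated form of Lemma~\ref{lem:discrete_chainrule}, whose key inequality \eqref{lem:discrete_chainrule.2} is exactly the monotonicity of the clamp);
- the nodal rewriting \eqref{eq:Linf_bnd_discrete.1} of the Robin term and the bound \eqref{eq:Linf_bnd_discrete.2.0};
- H\"older/Sobolev on patch sets, which are equivalent to the nodal measures \eqref{eq:Linf_bnd_discrete.2.1};
- the $\mathbb{P}^1$ norm equivalence in the level-set step, and Lemma~\ref{lem:Stampacchia}.

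\textbf{Upper bound on $\mathtt{C}_{u_h}$.} This is the step that does not go through as you justify it, and you need it for an $h$-independent $\lambda_0$. Testing \eqref{prop:discrete_optimality.3} with $u_h$ and invoking \eqref{lem:std_prop_lumping.1} does not give $\sup_h\|u_h\|_{H^1(\Omega)}<\infty$. The weight $(1+\beta\widetilde{\mathtt{d}}_h)^{-1}$ has no $h$-uniform lower bound, since a bound on $\widetilde{\mathtt{d}}_h$ is part of what you are proving. Combined with an inverse estimate, \eqref{lem:std_prop_lumping.1} only yields the $h$-dependent coercivity used for well-posedness in Theorem~\ref{thm:convergence_p1}. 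The missing idea is to use the mass constraint. With $\ell_\nu\coloneqq(1,\varphi_\nu)_{\Gamma_I}$ and $\sum_{\nu}\ell_\nu(1+\beta\widetilde{\mathtt{d}}_h(\nu))=|\Gamma_I|+\beta m$, Cauchy--Schwarz gives
\[
\|I_h\{|u_h-u_\infty^h|\}\|_{1,\Gamma_I}^2=\Bigl(\sum_{\nu\in\mathcal{N}_h^I}\ell_\nu|(u_h-u_\infty^h)(\nu)|\Bigr)^2\le(|\Gamma_I|+\beta m)\bigl((1+\beta\widetilde{\mathtt{d}}_h)^{-1}(u_h-u_\infty^h),u_h-u_\infty^h\bigr)_{\Gamma_I,h}\,,
\]
which is the content of \eqref{thm:convergence.4}. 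With this inequality, and the data terms estimated through $\|f_h\|_{r,\Omega}$, $\|g_h\|_{s,\Gamma_N}$ and the Sobolev/trace embeddings, the stability bound of Theorem~\ref{thm:convergence_p1}(ii) holds under your hypotheses and yields $\sup_h\mathtt{C}_{u_h}<\infty$. The paper uses exactly this, without saying so, when it defines $\lambda_0$ as a supremum over $h$.

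\textbf{Lower bound on $\mathtt{C}_{u_h}$.} You are right that the last step also needs $\inf_h\mathtt{C}_{u_h}>0$, and the paper's proof passes over this. Your contradiction argument through weak limits, however, needs convergent data whose limits satisfy \eqref{eq:data_condition}, and the lemma assumes neither. It is more direct to test \eqref{prop:discrete_optimality.3} with $v_h=1$ and use your nodal identity for the Robin term, whose nodal values are bounded in modulus by $\mathtt{C}_{u_h}$. This gives $|(f_h,1)_\Omega+(g_h,1)_{\Gamma_N}|\le\beta|\Gamma_I|\,\mathtt{C}_{u_h}$. So what is needed is an $h$-uniform lower bound on the discrete net heat input. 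That holds, e.g., under the $L^2$-projection assumption of Section~\ref{sec:apriori}, where the net input equals $(f,1)_\Omega+\langle g,1\rangle_{\Gamma_N}\neq0$. Without such an assumption, the bound on $\widetilde{\mathtt{d}}_h$ via \eqref{prop:discrete_optimality.2} is covered neither by your argument nor by the paper's.
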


The proof of Lemma \ref{lem:Linf_discrete} proceeds using a discrete version of a classic argument by Stampacchia (\textit{cf}.\ \cite{Stampacchia1965}), which makes use of a truncation argument based on the discrete truncation operator $\mathtt{T}_\lambda^h\colon W^{1,1}(\Omega)\to \mathcal{S}^1(\mathcal{T}_h)$, $\lambda>0$, for every $v\in W^{1,1}(\Omega)$ defined by
\begin{align} \label{eq:def_trunc_discrete}
    \mathtt{T}_\lambda^h v \coloneqq I_h\{\mathtt{T}_\lambda v\}=I_h\{\textup{sign}(v)(|v| - \lambda)_+\}\quad \text{ in }\Omega\,.
\end{align}
A key ingredient in the proof of the continuous counterpart (\textit{cf}.\ Lemma \ref{lem:Linf})~of~Lemma~\ref{lem:Linf_discrete}~is~the~gradient relation \eqref{eq:Linf_bnd.1.0.2}, which results from the chain rule for Sobolev functions (\textit{cf}.~\cite[Thm.~2.1.11]{Ziemer1989}). Owing to the application of the nodal interpolation operator to the truncation operator in \eqref{eq:def_trunc}, we cannot expect the latter to hold for the discrete truncation operator.~Instead, we resort to the following substitute that applies on non-obtuse triangulations.

\begin{lemma}\label{lem:discrete_chainrule}
    Let Assumption \ref{assum:nonobtuse_mesh} be satisfied. Then, 
    for every $v_h\in \mathcal{S}^1(\mathcal{T}_h)$, there holds
    \begin{align}\label{lem:discrete_chainrule.0}
        \vert \nabla \mathtt{T}_\lambda^hv_h\vert^2\leq  \nabla \mathtt{T}_\lambda^hv_h\cdot \nabla v_h\quad \text{ a.e.\ in }\Omega\,.
    \end{align}
\end{lemma}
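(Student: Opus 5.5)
The inequality is local, so I will prove it on a single element $T\in\mathcal{T}_h$; since both gradients are constant on $T$, it suffices to show $\nabla w_h\cdot\nabla(v_h-w_h)\ge 0$ on $T$, where $w_h\coloneqq \mathtt{T}_\lambda^h v_h$.

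\emph{Step 1: nodal expansion.} On $T$ write $v_h=\sum_{\nu\in\mathcal{N}_T}v_\nu\varphi_\nu$ and $w_h=\sum_{\nu}w_\nu\varphi_\nu$, with $v_\nu\coloneqq v_h(\nu)$ and $w_\nu=\mathtt{T}_\lambda(v_\nu)=\Phi(v_\nu)$. Here $\Phi(a)\coloneqq\textup{sign}(a)(|a|-\lambda)_+$ is non-decreasing and $1$-Lipschitz. Set $e_\nu\coloneqq v_\nu-w_\nu$. Then $e_\nu=\psi(v_\nu)$ with $\psi(a)=a-\Phi(a)$, which is the clipping $\max\{-\lambda,\min\{a,\lambda\}\}$ and is also non-decreasing and $1$-Lipschitz.

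\emph{Step 2: remove the constant mode.} Because $\sum_\nu\nabla\varphi_\nu=0$ on $T$, we have $\sum_{\nu'}\nabla\varphi_\nu\cdot\nabla\varphi_{\nu'}=0$. This converts the bilinear form into edge differences:
\begin{align*}
\nabla w_h\cdot\nabla(v_h-w_h)=\sum_{\nu,\nu'}w_\nu e_{\nu'}\,\nabla\varphi_\nu\cdot\nabla\varphi_{\nu'}
=-\tfrac12\sum_{\nu\neq\nu'}(w_\nu-w_{\nu'})(e_\nu-e_{\nu'})\,\nabla\varphi_\nu\cdot\nabla\varphi_{\nu'}.
\end{align*}
This is the standard identity $a^\top K b=-\tfrac12\sum_{i\ne j}K_{ij}(a_i-a_j)(b_i-b_j)$, valid for any symmetric $K$ with zero row sums.

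\emph{Step 3: sign argument.} By Assumption \ref{assum:nonobtuse_mesh}, $-\nabla\varphi_\nu\cdot\nabla\varphi_{\nu'}\ge 0$ for $\nu\neq\nu'$. Moreover $(w_\nu-w_{\nu'})(e_\nu-e_{\nu'})=(\Phi(v_\nu)-\Phi(v_{\nu'}))(\psi(v_\nu)-\psi(v_{\nu'}))\ge0$, since $\Phi$ and $\psi$ are both non-decreasing. Every term in the sum is therefore non-negative, so $\nabla w_h\cdot\nabla(v_h-w_h)\ge0$ on $T$. This is exactly \eqref{lem:discrete_chainrule.0} on $T$, and summing over elements gives the claim a.e.\ in $\Omega$.

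\emph{Main obstacle.} The key step is Step 2, which rewrites the form as edge differences so that non-obtuseness can be applied. One subtlety: Assumption \ref{assum:nonobtuse_mesh} is stated for the global basis functions. It implies the local elementwise sign condition because, on $T$, the restrictions of the global basis functions coincide with the local barycentric functions, and a.e.\ in $T$ is all that is needed. Once this reduction is in place, the monotonicity of the splitting $a=\Phi(a)+\psi(a)$ into two non-decreasing parts finishes the argument.
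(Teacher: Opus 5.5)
Your proof is correct and takes essentially the paper's route. Both arguments rewrite $\nabla \mathtt{T}_\lambda^h v_h\cdot\nabla v_h$ on each element as a sum of edge differences and then apply the sign condition $\nabla\varphi_\nu\cdot\nabla\varphi_{\nu'}\le 0$. The paper cites the nodal inequality $(\mathtt{T}_\lambda v_h(\nu')-\mathtt{T}_\lambda v_h(\nu))(v_h(\nu')-v_h(\nu))\ge|\mathtt{T}_\lambda v_h(\nu')-\mathtt{T}_\lambda v_h(\nu)|^2$ from Scharle's thesis; you instead prove this equivalent statement directly, using that both $\Phi$ and the clipping $\mathrm{Id}-\Phi$ are non-decreasing.
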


\begin{proof}
    Let $T\in \mathcal{T}_h$ be fixed, but arbitrary and denote by $\mathcal{N}_h(T)\coloneqq \mathcal{N}_h\cap T$ the set of nodes in $T$. Then, there holds
    \begin{align} \label{lem:discrete_chainrule.1}
      \nabla \mathtt{T}_\lambda^h v_h\cdot \nabla v_h = - \sum_{\nu',\nu \in \mathcal{N}_h(T)}{\tfrac{1}{2}(\mathtt{T}_\lambda v_h(\nu') - \mathtt{T}_\lambda v_h(\nu))(v_h(\nu') - v_h(\nu)) \nabla \varphi_{\nu'} \cdot \nabla \varphi_\nu}\quad\text{ in }T\,.
    \end{align}
    By \cite[Lem.\ 4.8, eq. (4.28)]{Scharle:thesis}, for every $\nu',\nu \in \mathcal{N}_h(T)$ with $\nu'\ne \nu$, there holds
    \begin{align}\label{lem:discrete_chainrule.2}
        \smash{(\mathtt{T}_\lambda v_h(\nu') - \mathtt{T}_\lambda v_h(\nu))(v_h(\nu') - v_h(\nu)) \ge |\mathtt{T}_\lambda v_h(\nu') - \mathtt{T}_\lambda v_h(\nu)|^2\,,}
    \end{align}
    Therefore, since $ \nabla \varphi_{\nu'} \cdot \nabla \varphi_\nu\leq 0$ in $T$ (\textit{cf}.\ Assumption \ref{assum:nonobtuse_mesh}), from \eqref{lem:discrete_chainrule.2}  in \eqref{lem:discrete_chainrule.1}, it follows that
    \begin{align}\label{lem:discrete_chainrule.3} 
       \nabla \mathtt{T}_\lambda^hv_h\cdot \nabla v_h 
        \ge -\sum_{\nu',\nu \in \mathcal{N}_h(T)} {\tfrac{1}{2} |\mathtt{T}_\lambda v_h(\nu') - \mathtt{T}_\lambda v_h(\nu)|^2 \nabla \varphi_{\nu'} \cdot \nabla \varphi_\nu} = |\nabla \mathtt{T}_\lambda^hv_h|^2\quad \text{ in }T\,.
    \end{align}
    Eventually, since $\smash{T\in \mathcal{T}_h}$ was chosen arbitrarily, from \eqref{lem:discrete_chainrule.3}, we conclude the assertion.
\end{proof}
\begin{proof}[Proof (of Lemma \ref{lem:Linf_discrete}).] To begin with, we introduce the nodal sets
\begin{align*}
     \smash{\mathcal{N}_{h,I}^{>} \coloneqq \{\nu \in \mathcal{N}_h^{I} \mid |u_h(\nu)-u_\infty^h(\nu)| > \mathtt{C}_{\smash{u_h}}\}\,,\qquad \mathcal{N}_{h,I}^{\leq}\coloneqq \mathcal{N}_h^{I}\setminus \mathcal{N}_{h,I}^{>}\,,}
\end{align*} 
and fix $b \in (1,2)$. Then, depending on $d=3$ or $d=2$, we make the following definitions:
\begin{itemize}[noitemsep,topsep=2pt,leftmargin=!,labelwidth=\widthof{\quad$\bullet$ Case 1: ($d=3$).},font=\itshape]
    \item[$\bullet$ Case 1: ($d=3$).] We set $p = 6$, $q = 4$, $\tilde{r} = \frac{3}{3-b}$, $\tilde{s} = \frac{2}{2-b}$, implying $p = 2b \tilde{r}'$ and $q = 2b\tilde{s}'$;
    \item[$\bullet$ Case 2: ($d=2$).] We set $p = 2b \tilde{r}'$ and $q = 2b \tilde{s}'$ for fixed $\tilde{r} ,\tilde{s} > 1$.
\end{itemize}\newpage
\noindent Using the optimality conditions \eqref{prop:discrete_optimality.1}--\eqref{prop:discrete_optimality.3}, for every $v_h \in \mathcal{S}^1(\mathcal{T}_h)$, we find that
\begin{align} \label{eq:Linf_bnd_discrete.1}
\begin{aligned}
\kappa (\nabla u_h, \nabla v_h)_\Omega&+ \mathtt{C}_{\smash{u_h}}\beta ( \text{sgn}(u_h-\smash{u_{\infty}^h})\chi_{\smash{\mathcal{N}_{h,I}^{>}}},v_h)_{\Gamma_I,h}\\&\quad + \beta((u_h-\smash{u_\infty^h})\chi_{\smash{\mathcal{N}_{h,I}^{\le}}}, v_h)_{\Gamma_I,h} = (f_h,v_h)_\Omega + (g_h, v_h)_{\Gamma_N}\,.
\end{aligned}
\end{align}
 Next, for every $\lambda > \lambda_0\coloneqq \sup_{h>0}{\{\|u_\infty^h\|_{\infty,\Gamma_I} + \mathtt{C}_{\smash{u_h}}\}}$, we introduce the (nodal) sets 
\begin{align*}
    \mathcal{N}^\lambda_h   &\coloneqq \{ \nu \in \mathcal{N}_h \mid |u_h(\nu)| > \lambda\} \,,\\[-0.5mm] \mathcal{N}^{\lambda}_{h,\partial} &\coloneqq \{\nu \in \mathcal{N}_h^{\partial} \mid |u_h(\nu)| > \lambda\}\,,\\[-0.5mm]\mathcal{N}_{h,\mathrm{X}}^{\lambda}&\coloneqq \mathcal{N}_h^{\lambda}\cap \mathcal{N}^{\mathrm{X}}_h\,,\;\mathrm{X}\in \{I,N\}\,.
\end{align*} 
Then, by the definition of the discrete truncation operator \eqref{eq:def_trunc_discrete} and Lemma \ref{lem:discrete_chainrule}, we have that\vspace{-0.5mm}
\begin{subequations}\label{eq:Linf_bnd_discrete.1.0}
\begin{alignat}{2}\label{eq:Linf_bnd_discrete.1.0.1}
    \mathtt{T}_\lambda^h u_h&= 0&&\quad\text{ in }\smash{\mathcal{N}_h\setminus \mathcal{N}^{\lambda}_h}\text{ and }\text{on }\smash{\mathcal{N}_h^{\partial}\setminus \mathcal{N}_{h,\partial}^{\lambda}}\,,\\[-0.5mm]\label{eq:Linf_bnd_discrete.1.0.2}
    \vert \nabla \mathtt{T}_\lambda^h u_h\vert^2 &\leq  \nabla \mathtt{T}_\lambda^h u_h\cdot \nabla u_h&&\quad \text{ a.e.\ in }\Omega\,,\\[-0.5mm]
    \text{sgn}(\mathtt{T}_\lambda^h u_h) &=\text{sgn}(u_h)=\text{sgn}(u_h - u_\infty^h)&&\quad\text{ on }\smash{\mathcal{N}_{h,I}^{\lambda}\subseteq \mathcal{N}_{h,I}^{>}}\,,\label{eq:Linf_bnd_discrete.1.0.3}
\end{alignat}
\end{subequations}
where we used in \eqref{eq:Linf_bnd_discrete.1.0.3} that $\lambda > u_\infty^h + \mathtt{C}_{\smash{u_h}}$ a.e.\ on $\Gamma_I$ and $-\lambda < u_\infty^h - \mathtt{C}_{\smash{u_h}}$~a.e.~on~$\Gamma_I$~and,~thus, $\mathcal{N}_{h,I}^{\lambda}\hspace{-0.175em} \subseteq \hspace{-0.175em} \mathcal{N}_{h,I}^{>}$ \hspace{-0.15mm}for \hspace{-0.15mm}$\lambda \hspace{-0.175em}>\hspace{-0.175em} \lambda_0$.
\hspace{-0.15mm}As \hspace{-0.15mm}a \hspace{-0.15mm}result, \hspace{-0.15mm}due \hspace{-0.15mm}to \hspace{-0.15mm}\eqref{eq:Linf_bnd_discrete.1.0}, \hspace{-0.15mm}by \hspace{-0.15mm}choosing \hspace{-0.15mm}$v_h\hspace{-0.175em} =\hspace{-0.175em} \mathtt{T}_\lambda^h u_h \hspace{-0.175em}\in \hspace{-0.175em}\mathcal{S}^1(\mathcal{T}_h)$~\hspace{-0.15mm}in~\hspace{-0.15mm}\eqref{eq:Linf_bnd_discrete.1},~\hspace{-0.15mm}we~\hspace{-0.15mm}\mbox{obtain}\vspace{-0.5mm}
\begin{align} \label{eq:Linf_bnd_discrete.2}
\smash{\kappa \|\nabla \mathtt{T}_\lambda^h u_h \|_{\Omega}^2+ \mathtt{C}_{\smash{u_h}}\beta\|\mathtt{T}_\lambda^h u_h\|_{1,\Gamma_I}  \leq (f_h, \mathtt{T}_\lambda^h u _h)_{\Omega} + (g_h, \mathtt{T}_\lambda^h u_h )_{\Gamma_N}\,.}
\end{align}
Moreover,  due to \eqref{eq:Linf_bnd_discrete.1.0.1} and \eqref{eq:Linf_bnd_discrete.1.0.3}, we have that\vspace{-0.5mm}
\begin{align}\label{eq:Linf_bnd_discrete.2.0}
    \smash{\mathtt{C}_{\smash{u_h}} = \tfrac{1}{\beta m} \|I_h\{(|u_h-u_\infty^h| - \mathtt{C}_{\smash{u_h}})_+\}\|_{1,\Gamma_I} \ge  \tfrac{1}{\beta m} \|\mathtt{T}_\lambda^h u_h \|_{1,\Gamma_I}\,.}
\end{align}
Using Friedrich's inequality (\textit{cf}.\ \eqref{lem:poin_cont}), \eqref{eq:Linf_bnd_discrete.2.0} in \eqref{eq:Linf_bnd_discrete.2}, Hölder's inequality,  the 
notation
\begin{align}\label{eq:Linf_bnd_discrete.2.1}
    \vert \mathcal{N}_{h}^{\lambda}\vert \coloneqq \sum_{\nu\in \smash{\mathcal{N}_{h}^{\lambda}}}{h_\nu^{d}}\sim \bigl\|\chi_{\smash{\operatorname{supp}\mathtt{T}_\lambda^h u_h}}\bigr\|_{s,\Omega}^{s}\,,\qquad\vert \mathcal{N}_{h,\partial}^{\lambda}\vert \coloneqq \sum_{\nu\in \smash{\mathcal{N}_{h,\partial}^{\lambda}}}{h_\nu^{d-1}}\sim \bigl\|\chi_{\smash{\operatorname{supp}\mathtt{T}_\lambda^h u_h}}\bigr\|_{s,\partial\Omega}^{s}\,,
\end{align}
where the implicit constants in the  node-based norm equivalence $\sim $ depend only~on~the~shape~regularity of $\{\mathcal{T}_h\}_{h>0}$ and $s\in [1,\infty)$ (\textit{cf}.\ \cite[Lem.\ 3.4]{Bartels15}),
the Sobolev~and~trace embedding $H^1(\Omega)\hookrightarrow L^{p}(\Omega)\cap L^{q}(\partial\Omega)\hookrightarrow L^{2\tilde{r}'}(\Omega)\cap L^{2\tilde{s}'}(\partial\Omega)$, and the $\varepsilon$-Young inequality, for~every~$\varepsilon>0$,~we~find~that 
\begin{align*} 
\begin{aligned}
\|\mathtt{T}_\lambda^h u_h\|_{\Omega}^2+\|\nabla\mathtt{T}_\lambda ^hu_h\|_{\Omega}^2&\lesssim
\kappa \|\nabla \mathtt{T}_\lambda^h u_h\|_{\Omega}^2+ \tfrac{1}{m}\|\mathtt{T}_\lambda^h u_h\|_{1,\Gamma_I}^2\\&\leq  (f_h, \mathtt{T}_\lambda^h u_h )_{\Omega} + (g_h, \mathtt{T}_\lambda ^h u_h)_{\Gamma_N} \\&\lesssim \|f_h\|_{\tilde{r},\Omega} |\mathcal{N}^\lambda_h|^{\smash{\frac{1}{2\tilde{r}'}}}\|\mathtt{T}_\lambda^h u_h\|_{2\tilde{r}',\Omega} +  \|g_h\|_{\tilde{s},\Gamma_N} 
\vert \mathcal{N}_{h,\partial}^{\lambda}\vert^{\smash{\frac{1}{2\tilde{s}'}}}\|\mathtt{T}_\lambda ^hu_h\|_{2\tilde{s}',\Gamma_N}
\\&\lesssim (|\mathcal{N}^\lambda_h|^{\smash{\frac{b}{p}}} +  \vert \mathcal{N}_{h,\partial}^{\lambda}\vert^{\smash{\frac{b}{q}}})(\|\mathtt{T}_\lambda ^hu_h\|_{\Omega}+\|\nabla\mathtt{T}_\lambda^h u_h\|_{\Omega})
\\&\lesssim \tfrac{1}{\varepsilon}(|\mathcal{N}^\lambda_h|^{\smash{\frac{2}{p}}} +  \vert \mathcal{N}_{h,\partial}^{\lambda}\vert^{\smash{\frac{2}{q}}})^b+\varepsilon(\|\mathtt{T}_\lambda u_h^h\|_{\Omega}^2+\|\nabla\mathtt{T}_\lambda^h u_h\|_{\Omega}^2)\,,
\end{aligned}
\end{align*}
which, for $\varepsilon>0$ sufficiently small, implies that
\begin{align}\label{eq:Linf_bnd_discrete.3}
    \smash{\|\mathtt{T}_\lambda^h u_h\|_{\Omega}^2+\|\nabla\mathtt{T}_\lambda^h u_h\|_{\Omega}^2\lesssim (|\mathcal{N}^\lambda_h|^{\smash{\frac{2}{p}}} + |\mathcal{N}_{h,\partial}^{\lambda}|^{\smash{\frac{2}{q}}})^b\,,}
\end{align}
By the Sobolev~and~trace embedding $H^1(\Omega)\hookrightarrow L^{p}(\Omega)\cap L^{q}(\partial\Omega)$ as well as $\mathtt{T}_\lambda^h u_h\ge |u_h| - \lambda\ge \lambda'-\lambda$ in $\mathcal{N}_h^{\lambda'}\subseteq \mathcal{N}_h^{\lambda}$  and on $\mathcal{N}_{h,\partial}^{\lambda'}\subseteq \mathcal{N}_{h,\partial}^{\lambda}$ together with \eqref{eq:Linf_bnd_discrete.2.1}, for every $\lambda' > \lambda$,~we have that\enlargethispage{6.5mm}
\begin{align}\label{eq:Linf_bnd_discrete.4} 
     \smash{\|\mathtt{T}_\lambda^h u_h\|_{\Omega}^2+\|\nabla\mathtt{T}_\lambda ^hu_h\|_{\Omega}^2\gtrsim  \| \mathtt{T}_\lambda^h u_h \|_{p,\Omega}^2 + \| \mathtt{T}_\lambda^h u_h \|_{q,\partial\Omega}^2 
     \gtrsim (\lambda'-\lambda)^2 (|\mathcal{N}^{\lambda'}_h|^{\smash{\frac{2}{p}}} + |\mathcal{N}^{\lambda'}_{h,\partial}|^{\smash{\frac{2}{q}}})\,.}
\end{align}
Putting \eqref{eq:Linf_bnd_discrete.3} and \eqref{eq:Linf_bnd_discrete.4} together, for every $\lambda' \hspace{-0.1em}>\hspace{-0.1em} \lambda$, we find that\vspace{-0.5mm}
\begin{align}\label{eq:Linf_bnd_discrete.5}
\smash{|\mathcal{N}^{\lambda'}_h|^{\smash{\frac{2}{p}}} + |\mathcal{N}^{\lambda'}_{h,\partial}|^{\smash{\frac{2}{q}}}  \lesssim (\lambda'-\lambda)^{-2}(|\mathcal{N}^{\lambda}_h|^{\smash{\frac{2}{p}}} + |\mathcal{N}^{\lambda}_{h,\partial}|^{\smash{\frac{2}{q}}})^b\,,}
\end{align}
and we conclude from Lemma \ref{lem:Stampacchia} (where $K>0$ is the hidden constant in $\lesssim$ from \eqref{eq:Linf_bnd_discrete.5}, $a=2$, $b\in (1,2)$ fixed as above, and $\varphi_h\colon \hspace{-0.1em}[\lambda_0,+\infty)\hspace{-0.15em}\to\hspace{-0.15em} [0,\infty)$, defined by ${\varphi_h(\lambda)\hspace{-0.15em}\coloneqq\hspace{-0.15em} \vert\mathcal{N}^{\lambda}_h\vert^{\smash{\frac{2}{p}}} \hspace{-0.15em}+\hspace{-0.15em}\vert \mathcal{N}^{\lambda}_{h,\partial}\vert^{\smash{\frac{2}{q}}}}$~for~all~${\lambda\hspace{-0.15em}>\hspace{-0.15em}\lambda_0}$) the existence of a constant $\lambda^*_h\hspace{-0.15em}\coloneqq \hspace{-0.15em}2^{\smash{\frac{b}{b-1}}}K^{\smash{\frac{1}{a}}}\varphi_h(\lambda_0)^{\smash{\frac{b-1}{a}}}\hspace{-0.15em}+\hspace{-0.15em}\lambda_0\in [\lambda_0,\infty)$ such that ${\vert\mathcal{N}^{\lambda^*_h}_h\vert^{\smash{\frac{2}{p}}}\hspace{-0.15em} +\hspace{-0.15em}\vert \mathcal{N}^{\lambda_h^*}_{h,\partial}\vert^{\smash{\frac{2}{q}}}\hspace{-0.15em}=\hspace{-0.15em}0}$. In~other~words, we have that $\vert u_h\vert\leq \lambda^*_h$
 a.e.\ in $\Omega$ and a.e.\ on $\partial\Omega$. Since, due to \eqref{eq:Linf_bnd_discrete.2.1}, we have that $\vert \smash{\mathcal{N}^{\lambda_0}_h}\vert+\vert \smash{\mathcal{N}^{\lambda_0}_{h,\partial}}\vert\lesssim \vert \Omega\vert+\vert \partial\Omega\vert$, there holds 
 $\lambda^*_h\lesssim 2^{\smash{\frac{b}{b-1}}}K^{\smash{\frac{1}{a}}}(\vert \Omega\vert+\vert \partial\Omega\vert)^{\smash{\frac{b-1}{a}}}+\lambda_0\eqqcolon \lambda^*$,~so~that,~eventually, $\sup_{h>0}{\{\|u_h\|_{\infty,\Omega}+\|u_h\|_{\infty,\partial\Omega}\}}<+\infty$ and, due to \eqref{prop:discrete_optimality.2},~${\sup_{h>0}{\{\|\smash{\widetilde{\mathtt{d}}_h}\|_{\infty,\Gamma_I}\}}<+\infty}$. 
\end{proof}

\subsection{Well-posedness, stability, and convergence}

\hspace{5mm}On the basis of the discrete optimality conditions
\eqref{prop:discrete_optimality.1}--\eqref{prop:discrete_optimality.3}
(and the continuous optimality conditions
\eqref{prop:optimality.1}--\eqref{prop:optimality.3}),
we establish well-posedness, stability, and convergence of the
discretization introduced in Section~\ref{sec:discrete_problem}.\enlargethispage{2.5mm}

\begin{theorem}[Well-posedness, stability, and convergence of \eqref{prop:discrete_optimality.1}--\eqref{prop:discrete_optimality.3}]\label{thm:convergence_p1}
    Assume that 
    \begin{subequations}\label{thm:convergence_p1.0}
    \begin{alignat}{3}
    	f_h&\rightharpoonup f&&\quad \text{ in }L^2(\Omega)&&\quad (h\to 0^+)\,,\label{thm:convergence_p1.0.1}\\ 
        g_h&\rightharpoonup g&&\quad \text{ in }H^{\smash{-\frac{1}{2}}}(\Gamma_{N})&&\quad (h\to 0^+)\,,\label{thm:convergence_p1.0.2}\\ 
    	u_{\infty}^h&\to u_{\infty}&&\quad \text{ in }L^2(\Gamma_{I})&&\quad (h\to 0^+)\,.\label{thm:convergence_p1.0.3}
    \end{alignat}
	\end{subequations}
     Then, the following statements apply:
    \begin{itemize}[noitemsep,topsep=2pt,leftmargin=!,labelwidth=\widthof{(iii)}]
        \item[(i)]\hypertarget{thm:convergence_p1.i}{} \emph{Well-posedness:} There exists a triple $(u_h,\mathtt{C}_{\smash{u_h}},\smash{\widetilde{\mathtt{d}}_h})\hspace{-0.1em}\in\hspace{-0.1em}\mathcal{S}^{1}(\mathcal{T}_h)\times \mathbb{R}_{>0}\times \mathcal{H}_m^h$~\mbox{satisfying}~\mbox{\eqref{prop:discrete_optimality.1}--\eqref{prop:discrete_optimality.3}};
        \item[(ii)]\hypertarget{thm:convergence_p1.ii}{} \label{thm:convergence.ii} \emph{Stability:}      There exists a constant $c_{\mathrm{stab}}>0$, which does not depend on $h>0$, such that
    \begin{align}\label{thm:convergence_p1.1}
        \mathtt{C}_{\smash{u_h}}+\|\smash{\widetilde{\mathtt{d}}_h}\|_{\Gamma_{I}}^2+\|u_h\|_{\Omega}^2+\|\nabla u_h\|_{\Omega}^2\leq c_{\mathrm{stab}}\,.
    \end{align}
        \item[(iii)]\hypertarget{thm:convergence_p1.iii}{}  \emph{Convergence:} There holds\vspace{-2.5mm}
        \begin{subequations}\label{thm:convergence_p1.2}
            \begin{alignat}{3}
                u_h&\rightharpoonup u&&\quad\text{ in }H^1(\Omega)&&\quad (h\to 0^+)\,,\label{thm:convergence_p1.2.1}\\[-0.5mm]
                \smash{\widetilde{\mathtt{d}}_h}&\to \smash{\widetilde{\mathtt{d}}}&&\quad \text{ in }L^2(\Gamma_{I})&&\quad (h\to 0^+)\,,\label{thm:convergence_p1.2.2}\\[-0.5mm]
                \mathtt{C}_{\smash{u_h}}&\to \mathtt{C}_u&&\quad \text{ in }\mathbb{R}&&\quad (h\to 0^+)\,,\label{thm:convergence_p1.2.3}
        \end{alignat}
    \end{subequations}
    where $(u,\mathtt{C}_u,\smash{\widetilde{\mathtt{d}}})\in H^1(\Omega)\times \mathbb{R}_{> 0}\times \mathcal{H}_m$ satisfies the optimality conditions \eqref{prop:optimality.1}--\eqref{prop:optimality.3} and, thus, $(u,\smash{\widetilde{\mathtt{d}}})\in H^1(\Omega)\times \mathcal{H}_m$ is a minimizer of \eqref{def:E}.
    \end{itemize}
   
\end{theorem}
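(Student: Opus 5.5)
For (i), I would apply the direct method to the discrete functional \eqref{def:Eh} on the finite-dimensional set $\mathcal{S}^1(\mathcal{T}_h)\times\mathcal{H}_m^h$. The set $\mathcal{H}_m^h$ is nonempty (constant $m/|\Gamma_I|$), closed, convex and bounded, since $\eta_h\ge 0$ and $(\eta_h,1)_{\Gamma_I,h}=m$ bound the nodal values. The map $(v_h,\eta_h)\mapsto \frac{\beta}{2}\|(1+\beta\eta_h)^{-1/2}(v_h-u_\infty^h)\|_{\Gamma_I,h}^2$ is a nodal sum of the jointly convex perspective-type functions $(a,t)\mapsto a^2/(1+\beta t)$ on $t\ge0$, so $E_h$ is convex and continuous on its domain.

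For coercivity in $v_h$, I would use $\eta_h\le \|\eta_h\|_\infty\le c_h$ on $\mathcal{H}_m^h$ together with \eqref{lem:std_prop_lumping.1}. This gives $E_h(v_h,\eta_h)\gtrsim \|\nabla v_h\|^2+c_h^{-1}\|v_h\|_{\Gamma_I}^2-\text{lin.}$, and Friedrich's inequality \eqref{lem:poin_cont} then yields a minimizer. Proposition \ref{prop:discrete_optimality} and Lemma \ref{lem:positivity_Cu_h} supply the triple with $\mathtt{C}_{u_h}>0$.

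For (ii), I would derive the bounds uniformly in $h$ from the optimality system rather than from the minimization, since $c_h$ above is not uniform. First test \eqref{prop:discrete_optimality.3} with $v_h=u_h-\bar u$ (or simply with $u_h$) and use \eqref{prop:discrete_optimality.2}. On nodes where $\widetilde{\mathtt d}_h>0$, the boundary term reads $\beta(1+\beta\widetilde{\mathtt d}_h)^{-1}|u_h-u_\infty^h|=\mathtt{C}_{u_h}\beta$ nodewise, since $1+\beta\widetilde{\mathtt d}_h=|u_h-u_\infty^h|/\mathtt{C}_{u_h}$ there. The boundary term therefore behaves like $\mathtt{C}_{u_h}\beta\|u_h-u^h_\infty\|_{1,\Gamma_I,h}$ plus a quadratic term on the remaining nodes. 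Both the $L^1$ and the quadratic part control $\|u_h\|_{1,\Gamma_I}$ up to $\|u_\infty^h\|$.

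The delicate point is that the $L^1$ term carries the weight $\mathtt{C}_{u_h}$, which could degenerate. I would handle this by a second test with $v_h=1$, which gives $\beta(\ldots,1)_{\Gamma_I,h}=(f_h,1)+(g_h,1)$. Since the boundary integrand is bounded nodewise by $\max\{\mathtt{C}_{u_h},|u_h-u_\infty^h|\}$ times a sign, this yields a lower bound $\mathtt{C}_{u_h}|\Gamma_I|+\|u_h-u_\infty^h\|_{1}\gtrsim |(f_h,1)+(g_h,1)|\to|(f,1)+\langle g,1\rangle|>0$. An upper bound on $\mathtt{C}_{u_h}$ follows from \eqref{prop:discrete_optimality.1}, since $\mathtt{C}_{u_h}\le (\beta m)^{-1}\|u_h-u_\infty^h\|_{1,\Gamma_I,h}$. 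Combining these, with Young's inequality and \eqref{lem:poin_cont}, should close to $\|u_h\|_{H^1}\lesssim1$. The bound $\|\widetilde{\mathtt d}_h\|_{\Gamma_I}^2$ would come from \eqref{prop:discrete_optimality.2} via a uniform lower bound $\mathtt{C}_{u_h}\ge c>0$. I expect this lower bound to be the main obstacle, and I would prove it by contradiction through the limit argument of (iii): if $\mathtt{C}_{u_h}\to0$ along a subsequence, then by \eqref{prop:discrete_optimality.1} the mass $m$ concentrates, the limiting equation becomes \eqref{lem:positivity_Cu.1}, and Lemma \ref{lem:positivity_Cu} gives a contradiction.

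For (iii), I would extract subsequences with $u_h\rightharpoonup u$ in $H^1(\Omega)$ and $\mathtt{C}_{u_h}\to\mathtt{C}\ge c>0$. Lemma \ref{lem:mass_lumping_lipschitz} with $\Phi(a,c)=(|a|-c)_+$, applied to $v_h=u_h-\tilde u^h_\infty$ (after extending $u_\infty^h$ suitably, or treating it via \eqref{thm:convergence_p1.0.3} and Lipschitz continuity), gives $I_h\{(|u_h-u_\infty^h|-\mathtt{C}_{u_h})_+\}\to(|u-u_\infty|-\mathtt{C})_+$ in $L^2(\Gamma_I)$. This passes \eqref{prop:discrete_optimality.1} and \eqref{prop:discrete_optimality.2} to the limit and yields strong convergence of $\widetilde{\mathtt d}_h$. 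The boundary coefficient $(1+\beta\widetilde{\mathtt d}_h)^{-1}(u_h-u_\infty^h)$ is again a Lipschitz function $\Phi(u_h-u_\infty^h,\mathtt{C}_{u_h})$ nodewise, namely the truncation $\min\{1,\mathtt{C}/|a|\}a$, so the same lemma gives strong convergence. Testing with $I_h$-type approximations $v_h\to v$ strongly and using \eqref{lem:std_prop_lumping.2}, I would pass to \eqref{prop:optimality.3}. Uniqueness of $\mathtt{C}_u$ (Lemma \ref{lem:uniqueness_Cu}) and of the minimizer $u$ (strict convexity in $v$ after eliminating $\eta$) then upgrades subsequence convergence to convergence of the whole family.
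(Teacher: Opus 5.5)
Your parts (i) and (iii) follow the paper's route. The stability step (ii), however, has a genuine gap: the relations you list do not give a uniform bound on $\|u_h\|_{H^1(\Omega)}$, because you use \eqref{prop:discrete_optimality.1} only in the weak direction $\mathtt{C}_{u_h}\le(\beta m)^{-1}\|I_h\{|u_h-u_\infty^h|\}\|_{1,\Gamma_I}$.

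Write $a_\nu\coloneqq u_h(\nu)-u_\infty^h(\nu)$, $w_\nu\coloneqq(1,\varphi_\nu)_{\Gamma_I}$, and $A\coloneqq\{\nu\in\mathcal{N}_h^{I}\mid |a_\nu|>\mathtt{C}_{u_h}\}$. Testing \eqref{prop:discrete_optimality.3} with $u_h$ is the same as testing with $u_h-\bar u_h$ (with $\bar u_h$ the mean of $u_h$ over $\Gamma_I$) plus $\bar u_h$ times the test with $v_h=1$. The first test gives only $\|\nabla u_h\|_\Omega\lesssim 1+\mathtt{C}_{u_h}$, and the second only a scalar relation for the signed flux. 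A family with $\bar u_h\to\infty$, while $\|\nabla u_h\|_\Omega$ and $\mathtt{C}_{u_h}$ stay bounded, is not excluded by these two tests. Nor is it excluded by a uniform lower bound $\mathtt{C}_{u_h}\ge c>0$, or by your upper bound on $\mathtt{C}_{u_h}$. Put differently, if $\mathtt{C}_{u_h}$ is merely bounded below, the active boundary term $\beta\mathtt{C}_{u_h}\sum_{\nu\in A}w_\nu|a_\nu|$ grows only linearly in the mismatch. It therefore cannot absorb the linear right-hand side $(f_h,u_h)_\Omega+(g_h,u_h)_{\Gamma_N}$ via Young's inequality.

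The missing idea is to read \eqref{prop:discrete_optimality.1} as the identity
\[
\mathtt{C}_{u_h}\Bigl(\beta m+\sum_{\nu\in A}w_\nu\Bigr)=\sum_{\nu\in A}w_\nu|a_\nu|\,.
\]
This says $\mathtt{C}_{u_h}$ scales with the mismatch, so the active term becomes quadratic: $\beta\mathtt{C}_{u_h}\sum_{\nu\in A}w_\nu|a_\nu|\ge\tfrac{\beta}{\beta m+|\Gamma_I|}\bigl(\sum_{\nu\in A}w_\nu|a_\nu|\bigr)^2$. Combined with Cauchy--Schwarz on the inactive nodes, this is the paper's key estimate \eqref{thm:convergence.4}. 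After it, the test with $u_h$, Young's inequality, and \eqref{lem:poin_cont} close the $H^1$-bound without any lower bound on $\mathtt{C}_{u_h}$. So degeneration of $\mathtt{C}_{u_h}$ is not the obstacle for the $H^1$-bound. Moreover, your contradiction argument for $\mathtt{C}_{u_h}\ge c$ itself presupposes that bound.

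Two smaller points. First, at each node $(1+\beta\widetilde{\mathtt d}_h)^{-1}(u_h-u_\infty^h)$ equals $\operatorname{sign}(a_\nu)\min\{|a_\nu|,\mathtt{C}_{u_h}\}$ — a minimum, not a maximum. Hence the test with $v_h=1$ directly gives $\beta|\Gamma_I|\,\mathtt{C}_{u_h}\ge|(f_h,1)_\Omega+(g_h,1)_{\Gamma_N}|\to|(f,1)_\Omega+\langle g,1\rangle_{\Gamma_N}|>0$. This uniform lower bound is exactly what the bound on $\|\widetilde{\mathtt d}_h\|_{\Gamma_I}$ via \eqref{prop:discrete_optimality.2} needs, a step the paper's stability proof leaves implicit, and it replaces your contradiction argument.

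Second, the reduced functional $v\mapsto\min_{\eta\in\mathcal{H}_m}E(v,\eta)$ is not strictly convex. If $\operatorname{ess\,inf}_{\Gamma_I}|v-u_\infty|>\mathtt{C}_v$ and $|\{v>u_\infty\}\cap\Gamma_I|=|\{v<u_\infty\}\cap\Gamma_I|$, it is affine along $t\mapsto v+t$ for small $|t|$. Uniqueness of $u$ does hold, and you need it to pass from subsequential convergence to the whole family (the paper only argues along subsequences). But it has to be derived from \eqref{eq:data_condition}, not from strict convexity.
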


\begin{proof} \emph{1.\ Well-posedness:} 
Since $\smash{E}_h\colon \mathcal{S}^1(\mathcal{T}_h)\times\mathcal{S}^1(\mathcal{S}_h^{I})  \to \mathbb{R}\cup\{+\infty\}$ is proper, continuous,~convex, and weakly coercive,  since, \textit{e.g.}, for every $(v_h,\eta_h)\in \mathcal{S}^1(\mathcal{T}_h)\times \mathcal{H}_m^h$, due to \eqref{lem:std_prop_lumping.1} and $\|\cdot\|_{\infty,\Gamma_I}\lesssim h^{-d+1}\|\cdot\|_{1,\Gamma_I}=h^{-d+1}m$ on $\mathcal{H}_m^h$ (\textit{cf}.\ \cite[Lem.~3.5]{Bartels15}),~there~holds
\begin{align*}
    \smash{\tfrac{\beta}{2}\|(1 + \beta \eta_h)^{-\smash{\frac{1}{2}}}(v_h-u_{\infty}^h)\|_{\Gamma_{I},h}^2\ge \tfrac{\beta(d+1)}{2(1+\beta\|\eta_h\|_{\infty,\Gamma_I})}\|v_h-u_\infty^h\|_{\Gamma_I}^2\gtrsim \|v_h-u_\infty^h\|_{\Gamma_I}^2\,,}
\end{align*} 
the existence of a triple $(u_h,\mathtt{C}_{\smash{u_h}},\smash{\widetilde{\mathtt{d}}_h})\in\mathcal{S}^{1}(\mathcal{T}_h)\times \mathbb{R}_{>0}\times \mathcal{H}_m^h$~\mbox{satisfying}~\mbox{\eqref{prop:discrete_optimality.1}--\eqref{prop:discrete_optimality.3}} follows from
the direct method in the calculus of variations together with Proposition \ref{prop:discrete_optimality}.

\emph{2.\ Stability:} If we choose $v_h=u_h\in \mathcal{S}^{1}(\mathcal{T}_h)$ in the discrete optimality condition \eqref{prop:discrete_optimality.3}, using the $\varepsilon$-Young inequality, for every $\varepsilon>0$, we find that
\begin{align}\label{thm:convergence.3} 
	\begin{aligned} 
    \kappa\|\nabla u_h\|_{\Omega}^2+\beta \|(1+\beta \smash{\widetilde{\mathtt{d}}_h})^{-\smash{\frac{1}{2}}}(u_h-u_{\infty}^h)\|_{\Gamma_{I},h}^2&=(f_h,u_h)_{\Omega}+(g_h,u_h)_{\Gamma_{N}}\\&\quad-\beta((1+\beta \smash{\widetilde{\mathtt{d}}_h})^{-1}u_{\infty}^h,u_h)_{\Gamma_{I},h}
    \\& \leq \tfrac{1}{2\varepsilon} \|f_h\|_{\Omega}^2+\tfrac{\varepsilon}{2}\|u_h\|_{\Omega}^2
    \\&\quad+\tfrac{1}{2\varepsilon} \|g_h\|_{\smash{H^{\smash{-\frac{1}{2}}}(\Gamma_N)}}^2+\tfrac{\varepsilon}{2}\|u_h\|_{\smash{H^{\smash{\frac{1}{2}}}(\Gamma_N)}}^2
    \\&\quad+\beta\bigl(\tfrac{1}{2\varepsilon} \|(1+\beta \smash{\widetilde{\mathtt{d}}_h})^{-\smash{\frac{1}{2}}} u_{\infty}^h\|_{\Gamma_{I},h}^2\\&\quad\quad\quad+\tfrac{\varepsilon}{2} \|(1+\beta \smash{\widetilde{\mathtt{d}}_h})^{-\smash{\frac{1}{2}}}u_h\|_{\Gamma_{I},h}^2\bigr)\,.
	\end{aligned}\hspace*{-5mm}
\end{align}
Due to the optimality condition \eqref{prop:discrete_optimality.2}, setting $\smash{\Gamma_{h,I}^{>}\coloneqq \{I_h\{\vert u_h-u_{\infty}^h\vert\} \ge \mathtt{C}_{\smash{u_h}}\}}$ and $\smash{\Gamma_{h,I}^{\leq}\coloneqq \Gamma_I\setminus \Gamma_{h,I}^{>}}$ as well as using Hölder's inequality,
we observe that
\begin{align}\label{thm:convergence.4} 
	\begin{aligned} 
   	 \|(1+\beta \smash{\widetilde{\mathtt{d}}_h})^{-\smash{\frac{1}{2}}}(u_h-u_{\infty}^h)\|_{\Gamma_{I},h}^2
    &=\|I_h\{\vert u_h-u_{\infty}^h\vert\}\|_{\smash{\Gamma_{h,I}^{\leq}}}^2+(I_h\{\vert u_h-u_{\infty}^h\vert\},\mathtt{C}_{\smash{u_h}})_{\smash{\Gamma_{h,I}^{\geq}}}
   	 \\
     &= \|I_h\{\vert u_h-u_{\infty}^h\vert\}\|_{\smash{\Gamma_{h,I}^{\leq}}}^2+\tfrac{1}{\vert \Gamma_{h,I}^{\geq}\vert + \beta m}\|I_h\{\vert u_h-u_{\infty}^h\vert\}\|_{1,\smash{\Gamma_{h,I}^{\geq}}}^2
    \\[-1mm]&\ge \tfrac{1}{\vert \Gamma_{h,I}^{\leq}\vert}\|I_h\{\vert u_h-u_{\infty}^h\vert\}\|_{1,\smash{\Gamma_{h,I}^{\leq}}}^2\\[-1mm]&\quad 
    +\tfrac{1}{\vert \Gamma_{h,I}^{\geq}\vert + \beta m}\|I_h\{\vert u_h-u_{\infty}^h\vert\}\|_{1,\smash{\Gamma_{h,I}^{\geq}}}^2
    \\[-1mm]&\ge \smash{\tfrac{1}{\vert \Gamma_I\vert + \beta m}}\|I_h\{\vert u_h-u_{\infty}^h\vert\}\|_{1,\Gamma_{I}}^2\,.
	\end{aligned}
\end{align}
Using \eqref{thm:convergence.4}  in \eqref{thm:convergence.3}, the trace embedding $H^1(\Omega)\hookrightarrow H^{\frac{1}{2}}(\partial\Omega)$ with embedding constant $c_{\mathrm{Tr}}>0$, and  Friedrich's inequality (\textit{cf}.\ \eqref{lem:poin_cont}),~we~arrive~at 
\begin{align}\label{thm:convergence.5} 
	\begin{aligned} 
   \kappa \|\nabla u_h\|_{\Omega}^2&+\tfrac{\beta}{2}\bigl(\tfrac{1}{\vert \Gamma_I\vert + \beta m}\|I_h\{\vert u_h-u_{\infty}^h\vert\}\|_{1,\Gamma_{I}}^2+ \|(1+\beta \smash{\widetilde{\mathtt{d}}_h})^{-\smash{\frac{1}{2}}}(u_h-u_{\infty}^h)\|_{\Gamma_{I},h}^2\bigr) \\&\leq \tfrac{1}{2\varepsilon}\bigl(\|f_h\|_{\Omega}^2+\|g_h\|_{\smash{H^{\smash{-\frac{1}{2}}}(\Gamma_N)}}^2+\beta \|(1+\beta \smash{\widetilde{\mathtt{d}}_h})^{-\smash{\frac{1}{2}}} u_{\infty}^h\|_{\Gamma_{I},h}^2\bigr)\\&\quad+\tfrac{\varepsilon}{2}\bigl(
  (1+c_{\textrm{Tr}}^2) \|u_h\|_{\Omega}^2+ c_{\textrm{Tr}}^2\|\nabla u_h\|_{\Omega}^2+ \beta \|(1+\beta \smash{\widetilde{\mathtt{d}}_h})^{-\smash{\frac{1}{2}}}u_h\|_{\Gamma_{I},h}^2\bigr)
    \\&\leq \tfrac{1}{2\varepsilon}\bigl(\|f_h\|_{\Omega}^2+\|g_h\|_{\smash{H^{\smash{-\frac{1}{2}}}(\Gamma_N)}}^2+\beta \|u_{\infty}^h\|_{\Gamma_{I}}^2\bigr)
    \\&\quad +\tfrac{\varepsilon}{2}\bigl( (1+c_{\textrm{Tr}}^2)c_{\textrm{F}}^2\bigl(\|\nabla u_h\|_{\Omega}^2+2\|u_h-u_{\infty}^h\|_{1,\Gamma_{I}}^2+2\|u_{\infty}^h\|_{1,\Gamma_{I}}^2\bigr)
   \\&\quad+c_{\textrm{Tr}}^2\|\nabla u_h\|_{\Omega}^2+ \beta \|(1+\beta \smash{\widetilde{\mathtt{d}}_h})^{-\smash{\frac{1}{2}}}u_h\|_{\Gamma_{I},h}^2\\&\quad+2\beta \bigl(\|(1+\beta \smash{\widetilde{\mathtt{d}}_h})^{-\smash{\frac{1}{2}}}( u_h-u_{\infty}^h)\|_{\Gamma_{I},h}^2+ \|u_{\infty}^h\|_{\Gamma_{I},h}^2\bigr)\bigr)\,.
\end{aligned}
\end{align} 
Therefore, due to \eqref{thm:convergence_p1.0} and  \eqref{lem:std_prop_lumping.1}, and  $\|\cdot\|_{1,\Gamma_I}\sim \|I_h\{\vert \cdot\vert\}\|_{1,\Gamma_I}$ on $\mathcal{S}^1(\mathcal{S}_h^{I})$ (\textit{cf}.\ \cite[Lem.~3.4]{Bartels15}), choosing $\varepsilon>0$ small~enough~in~\eqref{thm:convergence.5}, we conclude that the claimed stability~estimate~\eqref{thm:convergence_p1.1}~\mbox{applies}.

\emph{3. Convergence:} Due to the stability estimate in \eqref{thm:convergence_p1.1} and the weak closedness of $\mathbb{R}_{\ge 0}$ and $\mathcal{H}_m$, there exists a triple $(u,\mathtt{C}_u,\smash{\widetilde{\mathtt{d}}})\in H^1(\Omega)\times \mathbb{R}_{\ge 0}\times \mathcal{H}_m$ such that (up to a subsequence) the convergences \eqref{thm:convergence_p1.2.1}--\eqref{thm:convergence_p1.2.3} apply. 
    It remains to establish that the triple $(u,\mathtt{C}_u,\smash{\widetilde{\mathtt{d}}})\in H^1(\Omega)\times \mathbb{R}_{\ge 0}\times \mathcal{H}_m$  satisfies the optimality conditions \eqref{prop:optimality.1}--\eqref{prop:optimality.3}:

    \textit{Step 1:} Since 
    $\Phi\coloneqq((a,c)\mapsto(|a|-c)_+)\in C^{0,1}(\mathbb{R}\times \mathbb{R})$, from Lemma \ref{lem:mass_lumping_lipschitz} together~with~\eqref{thm:convergence_p1.0.3}, \eqref{thm:convergence_p1.2.1}, and \eqref{thm:convergence_p1.2.3}, it follows  that
    \begin{align}\label{thm:convergence.6.0} 
    I_h\{(\vert u_h-u_\infty^h\vert-\mathtt{C}_{\smash{u_h}})_+\}\to (\vert u-u_\infty\vert-\mathtt{C}_u)_+\quad\text{ in }L^2(\Gamma_I)\quad (h\to 0^+)\,.
    \end{align}
    Therefore, if  we pass to the limit as $h\to 0^+$ in \eqref{prop:discrete_optimality.1}, using \eqref{thm:convergence.6.0}  and \eqref{thm:convergence_p1.2.3}, 
    we find that
    \begin{align}\label{thm:convergence.6} 
        \mathtt{C}_u=\tfrac{1}{\beta m}\|(|u-u_{\infty}|-\mathtt{C}_u)_+\|_{1,\Gamma_{I}}\,;
    \end{align}

    \textit{Step 2:} Let $v\hspace{-0.15em}\in\hspace{-0.15em} H^1(\Omega)$ be fixed, but arbitrary. Then, there exists a sequence ${v_h\hspace{-0.15em}\in\hspace{-0.15em} \mathcal{S}^1(\mathcal{T}_h)}$,~${h\hspace{-0.15em}>\hspace{-0.15em}0}$, such that $v_h\hspace{-0.1em}\to\hspace{-0.1em} v$ in $H^1(\Omega)$ $(h\hspace{-0.1em}\to\hspace{-0.1em} 0^+)$. Since $I_h\{\beta(1+\beta \smash{\widetilde{\mathtt{d}}_h})^{-1}(u_h-u_\infty^h)v_h\}\hspace{-0.1em}=\hspace{-0.1em}I_h\{\mathtt{T}_{\mathtt{C}_{\smash{u_h}}}(u_h-u_\infty^h)v_h\}$ on $\Gamma_I$ 
and 
$\Phi\coloneqq ((a,c)\mapsto\mathtt{T}_ca=\textup{sign}(a)\min\{\vert a\vert,c\})\in C^{0,1}(\mathbb{R}\times\mathbb{R})$, from
Lemma \ref{lem:mass_lumping_lipschitz} 
 together with \eqref{thm:convergence_p1.0.3}, \eqref{thm:convergence_p1.2.1}, and \eqref{thm:convergence_p1.2.3}, it follows that
\begin{align}\label{thm:convergence.7} 
    \beta((1+\beta \smash{\widetilde{\mathtt{d}}_h})^{-1}(u_h-u_\infty^h),v_h)_{\smash{\Gamma_{I}},h}\to ( \mathtt{T}_{\mathtt{C}_u}\{u-u_\infty\},v)_{\smash{\Gamma_{I}}}\quad (h\to 0^+)
\end{align}
Moreover, if we multiply \eqref{prop:discrete_optimality.2} with $\beta\mathtt{C}_{\smash{u_h}}>0$ and, subsequently, 
we pass to the limit as $h\to 0^+$, using \eqref{thm:convergence_p1.2.1},  \eqref{thm:convergence_p1.2.2}, and \eqref{thm:convergence.6.0} in doing so, we find that
\begin{align}\label{thm:convergence.8} 
   \beta\mathtt{C}_u \smash{\widetilde{\mathtt{d}}}=(\vert u-u_\infty\vert-\mathtt{C}_u)_+\quad\text{ a.e.\ on }\Gamma_I\,.
\end{align}
Next, we need to distinguish the cases $\mathtt{C}_u>0$ and  $\mathtt{C}_u=0$:
\begin{itemize}[noitemsep,topsep=2pt]
    \item[$\bullet$] \emph{Case ($\mathtt{C}_u>0$).} In this case, \eqref{thm:convergence.8} is equivalent to \eqref{prop:optimality.2}, so that  
    \begin{align}\label{thm:convergence.9}
    \beta(1+\beta \smash{\widetilde{\mathtt{d}}})^{-1}(u-u_\infty)=\mathtt{T}_{\mathtt{C}_u}\{u-u_\infty\}\quad \text{ a.e.\ on }\Gamma_I\,.
    \end{align}

    \item[$\bullet$] \emph{Case ($\mathtt{C}_u=0$).} In this case, due to \eqref{thm:convergence.8}, we have that $u=u_\infty$ a.e.\ on $\Gamma_I$,~so~that~\eqref{thm:convergence.9}~holds.

\end{itemize}

In summary, if we pass to the limit as $h\to 0^+$ in \eqref{prop:discrete_optimality.3}, using \eqref{thm:convergence_p1.0.1}--\eqref{thm:convergence_p1.0.3} and \eqref{thm:convergence_p1.2.1}--\eqref{thm:convergence_p1.2.3},  
    for every $v\in H^1(\Omega)$, we find that\vspace{-0.5mm}
    \begin{align}\label{thm:convergence.10} 
        \kappa(\nabla u ,\nabla v)_{\Omega}+\beta((1+\beta \smash{\widetilde{\mathtt{d}}})^{-1}(u-u_{\infty}),v)_{\smash{\Gamma_{I}}}=(f,v)_{\Omega}+\langle g,v\rangle_{\Gamma_N}\,.
    \end{align}

    \textit{Step 3:}  Due to Lemma \ref{lem:positivity_Cu}, from \eqref{thm:convergence.6} and \eqref{thm:convergence.10}, it follows that  $\mathtt{C}_u>0$, so that \eqref{thm:convergence.8} is equivalent to \eqref{prop:optimality.2}.

    In summary, the triple $(u,\mathtt{C}_u,\smash{\widetilde{\mathtt{d}}})\in H^1(\Omega)\times \mathbb{R}_{> 0}\times \mathcal{H}_m$  satisfies the optimality conditions \eqref{prop:optimality.1}--\eqref{prop:optimality.3} and, thus, by Proposition \ref{prop:optimality}, the pair $(u,\smash{\widetilde{\mathtt{d}}})\in H^1(\Omega)\times \mathcal{H}_m$ is a minimizer of \eqref{def:E}.
\end{proof}

\newpage
\section{Error analysis} \label{sec:apriori}

\hspace*{5mm}In this section, we derive \textit{a priori} error estimates for the discretization introduced in Section~\ref{sec:discrete_problem}.\linebreak
Throughout this section, we assume that $f_h \in \mathcal{S}^1(\mathcal{T}_h)$ and  $g_h \in \mathcal{S}^1(\mathcal{S}^{N}_h)$ are global $L^2$-projections of $f\hspace{-0.1em}\in\hspace{-0.1em}  L^2(\Omega)$ and $g\hspace{-0.1em}\in\hspace{-0.1em} (H^{\smash{\frac{1}{2}}}(\Gamma_N))^*$ onto $\mathcal{S}^1(\mathcal{T}_h)$ and $\mathcal{S}^1(\mathcal{S}^{N}_h)$,  respectively.~\mbox{Moreover},~we~\mbox{assume}~that $u_\infty\in  H^1(\Gamma_I)$ and set $u_\infty^h \coloneqq\mathcal{I}_h^{I} u_\infty\in  \mathcal{S}^1(\mathcal{T}_h)$, where $\mathcal{I}_h^{I}\colon H^1(\Gamma_I)\to\mathcal{S}^1(\mathcal{S}_h^{I})$ is the quasi-interpolation operator from Appendix \ref{sec:appendix_A}. 

\subsection{Error bounds for the temperature distribution}

\hspace{5mm}In this subsection, we derive an \textit{a priori} error estimate for the temperature distribution. As a by-product, we additionally obtain an \textit{a priori} error estimate for the distribution of the insulation material over $\operatorname{supp}(u-u_\infty)$.\enlargethispage{5mm}

\begin{theorem}\label{thm:H1_error}
    Let the assumptions of Lemma \ref{lem:Linf} and Lemma \ref{lem:Linf_discrete} be satisfied. Moreover, assume that $\{\mathcal{T}_h\}_{h>0}$ is quasi-uniform, that $u_\infty \in H^{1}(\Gamma_I) \cap L^\infty(\Gamma_I)$, and that $u \in H^{1+s}(\Omega)$, $s\in [\frac{1}{2},1]$, so that $u \in E_{\Delta}(\Omega)\hookrightarrow H^{1}(\Gamma_I)$ (since $\Delta u=-f \in L^2(\Omega)$ by \eqref{prop:optimality.3} for all $v\in H^1_0(\Omega)$) and, thus,  $\smash{\widetilde{\mathtt{d}}} \in H^{1}(\Gamma_I)$ (by \eqref{prop:optimality.2} and the chain rule for Sobolev functions).  Then, there holds 
    \begin{align}\label{thm:H1_error.0.1}
    \begin{aligned} 
        \tfrac{\kappa}{2}
\|\nabla(u_h-u)\|_\Omega^2
&+
\tfrac{\beta}{2}
\bigl\|
(1+\beta\smash{\widetilde{\mathtt{d}}_h})^{-\frac{1}{2}}
\bigl(
u_h-u
-
\tfrac{\beta(u-u_\infty)}
{1+\beta\smash{\widetilde{\mathtt{d}}}}
(\smash{\widetilde{\mathtt{d}}_h}-\smash{\widetilde{\mathtt{d}}})\bigr)\bigr\|_{\Gamma_I}^2
\\
&\quad+
\tfrac{\beta^2}{2}
(
(1+\beta\smash{\widetilde{\mathtt{d}}})^{-2}
(u-u_\infty)^2,
\smash{\widetilde{\mathtt{d}}}-\smash{\widetilde{\mathtt{d}}_h})_{\Gamma_I}\lesssim h^{2s} \,. 
\end{aligned}
    \end{align} 
    If, in addition, the non-degeneracy condition \begin{align}\label{cor:quadratic_growth_new.-1}
    \delta_{\star}
    \coloneqq
    \operatorname{dist}_{L^2(\Gamma_I)}\bigl(1, \tfrac{\beta (u-u_\infty)}{1+\beta\smash{\widetilde{\mathtt{d}}}}\smash{\widetilde{L}}^2(\Gamma_I)\bigr)>0\,,
    \end{align} 
    where $\smash{\widetilde{L}}^2(\Gamma_I)\coloneqq \{\eta\in L^2(\Gamma_I)\mid (\eta,1)_{\Gamma_I}=0\}$
    is satisfied, there holds 
    \begin{align}\label{thm:H1_error.0.2}
       \|u - u_h\|_{H^1(\Omega)}^2\lesssim h^{2s} \,. 
    \end{align} 
\end{theorem}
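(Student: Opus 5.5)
The plan is to compare the energy at the discrete minimizer and at the continuous one, using convexity of the joint functional. The key algebraic identity comes from the function $F(a,\eta)\coloneqq \frac{\beta}{2}(1+\beta\eta)^{-1}a^2$, which is jointly convex in $(a,\eta)$ with $\eta\ge 0$.

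\emph{Step 1: Bregman distance.} For $a=u-u_\infty$, $\eta=\widetilde{\mathtt{d}}$ and perturbations $(a_h,\eta_h)$, the Bregman distance of $F$ has the exact expression
\begin{align*}
F(a_h,\eta_h)-F(a,\eta)-\partial_aF(a,\eta)(a_h-a)-\partial_\eta F(a,\eta)(\eta_h-\eta)
=\tfrac{\beta}{2}(1+\beta\eta_h)^{-1}\bigl(a_h-a-\tfrac{\beta a}{1+\beta\eta}(\eta_h-\eta)\bigr)^2 .
\end{align*}
This is precisely the second term in \eqref{thm:H1_error.0.1}. The linear $\eta$-term, $-\frac{\beta^2}{2}((1+\beta\widetilde{\mathtt{d}})^{-2}(u-u_\infty)^2,\widetilde{\mathtt{d}}_h-\widetilde{\mathtt{d}})_{\Gamma_I}$, is nonnegative by the variational inequality \eqref{prop:optimality.3b}, provided $\widetilde{\mathtt{d}}_h\in\mathcal{H}_m$. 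Since $\widetilde{\mathtt{d}}_h\ge 0$ and $(\widetilde{\mathtt{d}}_h,1)_{\Gamma_I}=(\widetilde{\mathtt{d}}_h,1)_{\Gamma_I,h}=m$, because the mass lumping is exact for the constant $1$, we do have $\widetilde{\mathtt{d}}_h\in\mathcal{H}_m$. The $u$-linear term cancels by testing \eqref{prop:optimality.3} with $u_h-u$.

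Hence the left-hand side of \eqref{thm:H1_error.0.1} equals $\widehat E(u_h,\widetilde{\mathtt{d}}_h)-E(u,\widetilde{\mathtt{d}})$, where $\widehat E$ is the continuous energy with data $f,g,u_\infty$ evaluated on the discrete pair. Here one uses that $\frac{\kappa}{2}\|\nabla(u_h-u)\|^2$ is the exact Bregman distance of the Dirichlet term.

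\emph{Step 2: Splitting the energy gap.} I write
\begin{align*}
\widehat E(u_h,\widetilde{\mathtt{d}}_h)-E(u,\widetilde{\mathtt{d}})=[\widehat E(u_h,\widetilde{\mathtt{d}}_h)-E_h(u_h,\widetilde{\mathtt{d}}_h)]+[E_h(u_h,\widetilde{\mathtt{d}}_h)-E_h(v_h,\eta_h)]+[E_h(v_h,\eta_h)-E(u,\widetilde{\mathtt{d}})].
\end{align*}
The middle bracket is $\le 0$ by discrete minimality. For the recovery pair I take $v_h$ to be a Scott--Zhang type interpolant of $u$, so that $\|u-v_h\|_{H^1(\Omega)}\lesssim h^s$ and its trace is close to $u$ in $H^1(\Gamma_I)$. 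For $\eta_h$ I take the positivity- and mean-preserving projection of $\widetilde{\mathtt{d}}\in H^1(\Gamma_I)$ from the appendix, so that $\eta_h\in\mathcal{H}_m^h$ and $\|\widetilde{\mathtt{d}}-\eta_h\|_{\Gamma_I}\lesssim h$.

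\emph{Step 3: Consistency terms.} The first and third brackets consist of three kinds of contribution:
\begin{itemize}
\item data errors, $(f-f_h,\cdot)$ and $(g-g_h,\cdot)$, handled by the $L^2$-projection properties and an $H^1$ approximation of the test function;
\item errors from replacing $u_\infty$ by $u_\infty^h=\mathcal{I}_h^Iu_\infty$;
\item mass-lumping errors on $\Gamma_I$.
\end{itemize}
For the mass-lumping errors I use Lemma \ref{lem:mass_lumping_lipschitz} (with $\Phi$ a truncation, exploiting that on bounded sets the integrand $(1+\beta\eta)^{-1}a^2$ is Lipschitz). The required boundedness comes from the uniform $L^\infty(\Gamma_I)$ bounds of Lemma \ref{lem:Linf} and Lemma \ref{lem:Linf_discrete}. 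Together with Lemma \ref{lem:std_prop_lumping} and quasi-uniformity (inverse estimates on $\Gamma_I$), this gives $O(h)$ for the lumping error measured against $H^1(\Gamma_I)$-regular functions, since the tangential gradients of $u_h$ and $\widetilde{\mathtt{d}}_h$ can be controlled through inverse estimates and the stability bound \eqref{thm:convergence_p1.1}.

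The main obstacle is the third bracket. There $E$ is Lipschitz only in the $L^\infty$-bounded regime, and terms linear in $u-v_h$ would give only $h^s$, not $h^{2s}$. I will handle this by first expanding $E(v_h,\eta_h)-E(u,\widetilde{\mathtt{d}})$ around $(u,\widetilde{\mathtt{d}})$ with the optimality conditions. The linear $v$-term vanishes by \eqref{prop:optimality.3}. The linear $\eta$-term, $-\frac{\beta^2}{2}((1+\beta\widetilde{\mathtt{d}})^{-2}(u-u_\infty)^2,\eta_h-\widetilde{\mathtt{d}})$, vanishes up to higher order, because the weight is constant, equal to $\mathtt{C}_u^2$, on $\{\widetilde{\mathtt{d}}>0\}$ and $\eta_h$ preserves the mean. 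What remains is quadratic, of size $h^{2s}+h^2$. The same trick applied symmetrically to the first bracket, with Young's inequality absorbing cross terms into the left-hand side, yields \eqref{thm:H1_error.0.1}.

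\emph{Step 4: Full $H^1$ bound.} For \eqref{thm:H1_error.0.2}, Friedrich's inequality \eqref{lem:poin_cont} reduces the task to controlling $\|u_h-u\|_{1,\Gamma_I}$. Write $w\coloneqq\frac{\beta(u-u_\infty)}{1+\beta\widetilde{\mathtt{d}}}$. The weighted term in \eqref{thm:H1_error.0.1}, with weight bounded below by the $L^\infty$ bounds, controls $\|(u_h-u)-w(\widetilde{\mathtt{d}}_h-\widetilde{\mathtt{d}})\|_{\Gamma_I}$. Since $\widetilde{\mathtt{d}}_h-\widetilde{\mathtt{d}}\in\widetilde L^2(\Gamma_I)$, the distance $\delta_\star>0$ in \eqref{cor:quadratic_growth_new.-1} bounds the constant part of $u_h-u$. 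Combined with Poincaré's inequality for the mean-free part (through $\nabla(u_h-u)$ and the trace theorem), this yields \eqref{thm:H1_error.0.2}.
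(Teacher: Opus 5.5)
Your Steps 1, 2 and 4 follow the paper's route.

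\begin{itemize}
\item Step 1 is Lemma~\ref{lem:quadratic_growth_new} applied at $(u_h,\widetilde{\mathtt{d}}_h)$, which is admissible because $\mathcal{H}_m^h\subseteq\mathcal{H}_m$.
\item Step 2 is the splitting behind Lemmas~\ref{lem:energy_difference} and~\ref{prop:energy_consistent_vs_lumped}.
\item Step 4 is Corollary~\ref{cor:quadratic_growth_new}.
\end{itemize}

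The genuine gap is your treatment of the first bracket $\widehat E(u_h,\widetilde{\mathtt{d}}_h)-E_h(u_h,\widetilde{\mathtt{d}}_h)$, i.e.\ the mass-lumping error at the \emph{discrete minimizer}. A two-sided estimate of this error cannot give $h^{2s}$ for $s>\frac12$. Lemma~\ref{lem:mass_lumping_lipschitz} only yields $O(h^{1/2})$. The second-order bound $h^2(\|\nabla_\Gamma(u_h-u_\infty^h)\|_{\Gamma_I}^2+\|\nabla_\Gamma\widetilde{\mathtt{d}}_h\|_{\Gamma_I}^2)$ is only $O(h)$. This is because the only available control of tangential gradients of $u_h$ (and hence of $\widetilde{\mathtt{d}}_h$) is the discrete trace/inverse estimate $\|\nabla_\Gamma u_h\|_{\Gamma_I}\lesssim h^{-1/2}\|\nabla u_h\|_\Omega$.

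Absorbing cross terms by Young's inequality does not rescue this. Write $u_h=v_h+(u_h-v_h)$ and use inverse estimates; the bad part is then of size $c\,\|u_h-v_h\|_{\Gamma_I}^2$ with an $O(1)$ constant $c$ that cannot be made small. Already for a piecewise affine $e$ on a segment, $\int I_h\{e^2\}-\int e^2$ can equal $2\|e\|^2$. Moreover, the left-hand side of \eqref{thm:H1_error.0.1} does not even control $\|u_h-u\|_{\Gamma_I}$ without $\delta_\star>0$.

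The missing idea is that only an upper bound is needed, and the sign is favorable. The map $(s,a)\mapsto(1+\beta s)^{-1}a^2$ is jointly convex, and $\widetilde{\mathtt{d}}_h$ and $u_h-u_\infty^h$ are affine on each side. Jensen's inequality with the barycentric weights therefore gives pointwise
$(1+\beta\widetilde{\mathtt{d}}_h)^{-1}(u_h-u_\infty^h)^2\le I_h\{(1+\beta\widetilde{\mathtt{d}}_h)^{-1}(u_h-u_\infty^h)^2\}$ (cf.~\eqref{eq:robin_bnd_by_Ih}).
So the lumping part of the first bracket is nonpositive, and no information on $\nabla_\Gamma u_h$ is needed. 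You noted this joint convexity in Step 1 but used it only for the Bregman identity.

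The third bracket also needs more than you state. The $f,g$ terms are harmless: they cancel exactly, since all test functions are discrete and $f_h,g_h$ are $L^2$-projections. Expanding via Lemma~\ref{lem:quadratic_growth_new} at $(v_h,\eta_h)$ is fine, and your structural remark is correct. Set $\rho\coloneqq(1+\beta\widetilde{\mathtt{d}})^{-2}(u-u_\infty)^2=\min\{|u-u_\infty|,\mathtt{C}_u\}^2$ and use $(\eta_h-\widetilde{\mathtt{d}},1)_{\Gamma_I}=0$; then the linear term equals $\frac{\beta^2}{2}(\mathtt{C}_u^2-\rho,\eta_h)_{\Gamma_I}\ge0$. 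However, this is the mass that $\Pi_h^{\mathrm{m}}\widetilde{\mathtt{d}}$ smears onto the contact set $\{\widetilde{\mathtt{d}}=0\}$, and it does not vanish. It must be quantified, e.g.\ via $\rho\in H^1(\Gamma_I)$ and the negative-norm estimate \eqref{eq:lumped_negative_norm} with $s=t=1$, which gives $O(h^2)$. This is essentially what the paper does in \eqref{eq:quotient_estimate.6}, with the weight frozen at $\widetilde{\mathtt{d}}_h^{\#}$.

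Likewise, the lumping error $E_h(v_h,\eta_h)-E(v_h,\eta_h)$ at the recovery pair must be $O(h^2)$, not the $O(h)$ you state. This follows as in \eqref{eq:quotient_estimate.4} from the side-wise tangential Hessian of $\Psi_h$. That Hessian involves only products of first tangential derivatives, so it is controlled by the $H^1(\Gamma_I)$-stability of the recovery pair; this is the reason for $\mathcal{J}_h$ and \eqref{eq:lumped_fractional_bound}.

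Finally, the $u_\infty^h\ne u_\infty$ contribution enters linearly. Lemma~\ref{prop:energy_consistent_vs_lumped} leaves $\beta\|u_h-u_\infty^h\|_{\Gamma_I,h}\|u_\infty-u_\infty^h\|_{\Gamma_I}\lesssim h$. This is $O(h^{2s})$ only for $s=\frac12$ or if $u_\infty^h=u_\infty$. The paper's argument as written does not do better here, so for $s>\frac12$ this term needs a sharper treatment in either approach.
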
 

\begin{remark}[Sufficient conditions for the non-degeneracy condition \eqref{cor:quadratic_growth_new.-1}]
\label{rem:nondegeneracy_conditions}
The non-degener\-acy condition \eqref{cor:quadratic_growth_new.-1}
excludes the possibility that constant modes of $v-u$ can be compensated
by admissible variations of $\smash{\widetilde{\mathtt{d}}}$. It is satisfied, for instance,
under the following assumptions:

\begin{itemize}[noitemsep,topsep=2pt,leftmargin=!,labelwidth=\widthof{(iii)}]

\item[(i)]\hypertarget{rem:nondegeneracy_conditions.i}{} If $\vert \{u=u_\infty\}\cap\Gamma_I\vert>0$, then $\frac{\beta(u-u_\infty)}{1+\beta\smash{\widetilde{\mathtt{d}}}}
    \xi
    =
    0$ 
on a set of positive measure for all 
$\xi\in \smash{\widetilde{L}}^2(\Gamma_I)$, so that 
\begin{align}\label{rem:nondegeneracy_conditions.1}
    \smash{1
    \notin
    \operatorname{cl}_{\smash{L^2(\Gamma_I)}}\bigl(
        \tfrac{\beta(u-u_\infty)}
        {1+\beta\smash{\widetilde{\mathtt{d}}}}
        \smash{\widetilde{L}}^2(\Gamma_I)
   \bigr)}\,,
\end{align}
and, therefore, $\delta_{\star}>0$;

\item[(ii)]\hypertarget{rem:nondegeneracy_conditions.ii}{} If $u-u_\infty\neq0 $ a.e.\ on $\Gamma_I$ and $\frac{1}{u-u_\infty}
    \in
    L^2(\Gamma_I)$,  
then $ \tfrac{\beta(u-u_\infty)}
    {1+\beta\smash{\widetilde{\mathtt{d}}}}
    \smash{\widetilde{L}}^2(\Gamma_I)$ is a 
closed hyperplane in $L^2(\Gamma_I)$ with normal vector $\frac{1+\beta\smash{\widetilde{\mathtt{d}}}}
    {\beta(u-u_\infty)}\in L^2(\Gamma_I)$. If, in addition, we have that $(\frac{1+\beta\smash{\widetilde{\mathtt{d}}}}
    {u-u_\infty},1)_{\Gamma_I}
    \neq0$, then \eqref{rem:nondegeneracy_conditions.1} applies and, therefore, $\delta_{\star}>0$;
    
    \hspace{5mm}A sufficient condition for
(\hyperlink{rem:nondegeneracy_conditions.ii}{ii}) is that $u-u_\infty\hspace{-0.15em}\ge\hspace{-0.15em} c_0$ or $u-u_\infty\hspace{-0.15em}\le\hspace{-0.15em} -c_0$ a.e.\ on $\Gamma_I$~for~some~${c_0\hspace{-0.15em}>\hspace{-0.15em}0}$; 

\item[(iii)]\hypertarget{rem:nondegeneracy_conditions.iii}{}
If $u_\infty=\mathrm{const}$ a.e.\ on $\Gamma_I$, $f\ge0 $ a.e.\ in $\Omega$, and  $g\ge0 $ a.e.\ on $\Gamma_N$, then the weak maximum principle
implies $u-u_\infty\ge0$ a.e.\ in $\Omega$, which follows by testing the optimality condition
\eqref{prop:optimality.3} with $v=-(u-u_\infty)_-\in H^1(\Omega)$ resulting in 
\begin{align}
    \kappa\|\nabla (u-u_\infty)_-\|_\Omega^2
    +
    \beta
    \bigl\|
        (1+\beta\smash{\widetilde{\mathtt{d}}})^{-\smash{\frac12}}
        (u-u_\infty)_-
    \bigr\|_{\Gamma_I}^2
    \le0 \,.
\end{align}
Consequently, if, in addition, $ u-u_\infty\hspace{-0.1em}>\hspace{-0.1em}0$
    a.e.\ on $\Gamma_I$ and $\frac{1}{u-u_\infty}\hspace{-0.1em}\in\hspace{-0.1em} L^2(\Gamma_I)$,  
then ${(\frac{1+\beta\smash{\widetilde{\mathtt{d}}}}
    {u-u_\infty},1)_{\Gamma_I}
    \hspace{-0.1em}\neq\hspace{-0.1em}0}$, so that (\hyperlink{rem:nondegeneracy_conditions.ii}{ii}) applies and, therefore, $\delta_\star>0$.

    \hspace{5mm}If, in addition,  $\Gamma_I=\partial\Omega$ and the corresponding Robin problem is regular enough for the strong maximum principle and Hopf's boundary point lemma (\textit{cf.} \cite[Lemma 3.4]{GilbargTrudinger2001Elliptic}) to apply up to $\Gamma_I$, then   $u-u_\infty\ge c_0$ a.e.\ on $\Gamma_I$ for some $c_0>0$.   Indeed, for $w\coloneqq u-u_\infty\in H^1(\Omega)$, the Robin condition is homogeneous, and the claim follows from the strong maximum principle, Hopf's boundary point lemma, and compactness of $\Gamma_I$.
\end{itemize}
\end{remark}\pagebreak

The proof of Theorem \ref{thm:H1_error} follows a standard procedure: 

In the first step, we establish a quadratic growth estimate, which bounds the natural error of the discretization in terms of an energy difference.

\begin{lemma} \label{lem:quadratic_growth_new}
    For every $(v,\eta)\in H^1(\Omega)\times\mathcal{H}_m$, there holds 
    \begin{align}\label{lem:quadratic_growth_new.0}
    \begin{aligned} 
      \tfrac{\kappa}{2}
\|\nabla(v-u)\|_\Omega^2
&+
\tfrac{\beta}{2}
\|
(1+\beta\eta)^{-\frac{1}{2}}
(
v-u
-
\tfrac{\beta(u-u_\infty)}
{1+\beta\smash{\widetilde{\mathtt{d}}}}
(\eta-\smash{\widetilde{\mathtt{d}}}))\|_{\Gamma_I}^2
\\
&\quad+
\tfrac{\beta^2}{2}
(
(1+\beta\smash{\widetilde{\mathtt{d}}})^{-2}
(u-u_\infty)^2,
\smash{\widetilde{\mathtt{d}}}-\eta)_{\Gamma_I}=E(v,\eta) - E(u,\smash{\widetilde{\mathtt{d}}})\,,
\end{aligned}
    \end{align} 
    where $\tfrac{\beta^2}{2}
(
(1+\beta\smash{\widetilde{\mathtt{d}}})^{-2}
(u-u_\infty)^2,
\smash{\widetilde{\mathtt{d}}}-\eta)_{\Gamma_I}\ge 0$ due to \eqref{prop:optimality.3b}.
\end{lemma}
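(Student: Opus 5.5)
The identity \eqref{lem:quadratic_growth_new.0} is purely algebraic once the optimality condition \eqref{prop:optimality.3} is used; no inequality is needed except for the final sign remark. I would proceed as follows.

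Fix $(v,\eta)\in H^1(\Omega)\times\mathcal{H}_m$ and abbreviate
\begin{align*}
w\coloneqq v-u\,,\qquad a\coloneqq u-u_\infty\,,\qquad D\coloneqq 1+\beta\smash{\widetilde{\mathtt{d}}}\,,\qquad H\coloneqq 1+\beta\eta\,.
\end{align*}
Since $\eta,\smash{\widetilde{\mathtt{d}}}\ge 0$, we have $D,H\ge 1$ a.e.\ on $\Gamma_I$. Hence all boundary integrands below are integrable, since $a,w\in L^2(\Gamma_I)$. Both indicator terms vanish, so $E(v,\eta)$ and $E(u,\smash{\widetilde{\mathtt{d}}})$ are finite.

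\emph{Step 1 (expand the energy difference).} Expanding the quadratic Dirichlet term gives
\begin{align*}
E(v,\eta)-E(u,\smash{\widetilde{\mathtt{d}}})
&=\tfrac{\kappa}{2}\|\nabla w\|_\Omega^2+\kappa(\nabla u,\nabla w)_\Omega-(f,w)_\Omega-\langle g,w\rangle_{\Gamma_N}\\
&\quad+\tfrac{\beta}{2}\bigl(H^{-1}(a+w)^2-D^{-1}a^2,1\bigr)_{\Gamma_I}\,.
\end{align*}

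\emph{Step 2 (use the optimality condition).} Test \eqref{prop:optimality.3} with $w\in H^1(\Omega)$. This yields
\begin{align*}
\kappa(\nabla u,\nabla w)_\Omega-(f,w)_\Omega-\langle g,w\rangle_{\Gamma_N}=-\beta(D^{-1}a,w)_{\Gamma_I}\,.
\end{align*}
Consequently, the energy difference equals
\begin{align*}
\tfrac{\kappa}{2}\|\nabla w\|_\Omega^2+\tfrac{\beta}{2}\bigl(H^{-1}(a+w)^2-D^{-1}a^2-2D^{-1}aw,1\bigr)_{\Gamma_I}\,.
\end{align*}

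\emph{Step 3 (pointwise algebra on $\Gamma_I$).} The key observation is $\beta(\eta-\smash{\widetilde{\mathtt{d}}})=H-D$. Therefore
\begin{align*}
w-\tfrac{\beta a}{D}(\eta-\smash{\widetilde{\mathtt{d}}})=(a+w)-\tfrac{aH}{D}\,,
\end{align*}
and squaring and dividing by $H$ gives
\begin{align*}
H^{-1}\bigl(w-\tfrac{\beta a}{D}(\eta-\smash{\widetilde{\mathtt{d}}})\bigr)^2
=H^{-1}(a+w)^2-2D^{-1}a(a+w)+a^2HD^{-2}\,.
\end{align*}
Similarly, the last term on the left of \eqref{lem:quadratic_growth_new.0} has integrand $\beta^2D^{-2}a^2(\smash{\widetilde{\mathtt{d}}}-\eta)=\beta\,a^2(D-H)D^{-2}$.

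Adding these two expressions, each multiplied by $\tfrac{\beta}{2}$, the terms $\pm a^2HD^{-2}$ cancel. Using $a^2D\cdot D^{-2}=a^2D^{-1}$, what remains is exactly $\tfrac{\beta}{2}\bigl(H^{-1}(a+w)^2-D^{-1}a^2-2D^{-1}aw\bigr)$. Integrating over $\Gamma_I$ and comparing with Step 2 proves \eqref{lem:quadratic_growth_new.0}.

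\emph{Step 4 (sign of the last term).} The non-negativity of $\tfrac{\beta^2}{2}\bigl((1+\beta\smash{\widetilde{\mathtt{d}}})^{-2}(u-u_\infty)^2,\smash{\widetilde{\mathtt{d}}}-\eta\bigr)_{\Gamma_I}$ is precisely the variational inequality \eqref{prop:optimality.3b}, applied with the admissible $\eta\in\mathcal{H}_m$.

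The only real obstacle is bookkeeping in Step 3, namely spotting the substitution $\beta(\eta-\smash{\widetilde{\mathtt{d}}})=H-D$ that makes the cancellation transparent. The integrability checks are routine because $H,D\ge1$.
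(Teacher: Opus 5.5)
Your proof is correct and follows the paper's argument: you expand $E(v,\eta)-E(u,\widetilde{\mathtt{d}})$, test \eqref{prop:optimality.3} with $v-u$, and verify the pointwise identity for the boundary integrands, where the substitution $\beta(\eta-\widetilde{\mathtt{d}})=H-D$ makes explicit the algebra that the paper states without derivation. Two small bookkeeping points: once you write the last integrand as $\beta a^2(D-H)D^{-2}$, it carries the factor $\tfrac12$, not $\tfrac{\beta}{2}$, and integrability of terms such as $a^2HD^{-2}$ needs $a^2\eta\in L^1(\Gamma_I)$, which follows from the trace embedding $H^1(\Omega)\hookrightarrow L^4(\partial\Omega)$ (valid for $d\le 3$) rather than from $a\in L^2(\Gamma_I)$ alone.
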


\begin{proof}
Let $(v,\eta)\in H^1(\Omega)\times\mathcal{H}_m$ be fixed, but arbitrary. Then, by the binomial formula and the optimality condition \eqref{prop:optimality.3}, we find that
\begin{align}\label{lem:quadratic_growth_new.1}
    \begin{aligned}
E(v,\eta)-E(u,\smash{\widetilde{\mathtt{d}}})
&=
\tfrac{\kappa}{2}\|\nabla(v-u)\|_\Omega^2
+
\kappa(\nabla u,\nabla(v-u))_\Omega
-(f,v-u)_\Omega
-\langle g,v-u\rangle_{\Gamma_N}
\\
&\quad+
\tfrac{\beta}{2}
\smash{\bigl(
\|
(1+\beta\eta)^{-\smash{\frac{1}{2}}}
(v-u_\infty)
\|_{\Gamma_I}^2
-
\|
(1+\beta\smash{\widetilde{\mathtt{d}}})^{-\smash{\frac{1}{2}}}
(u-u_\infty)
\|_{\Gamma_I}^2
\bigr)}
\\&=
\tfrac{\kappa}{2}\|\nabla(v-u)\|_\Omega^2
-\beta
(
(1+\beta\smash{\widetilde{\mathtt{d}}})^{-1}
(u-u_\infty),
v-u
)_{\Gamma_I}
\\
&\quad+
\tfrac{\beta}{2}
\smash{\bigl(
\|
(1+\beta\eta)^{-\smash{\frac{1}{2}}}
(v-u_\infty)
\|_{\Gamma_I}^2
-
\|
(1+\beta\smash{\widetilde{\mathtt{d}}})^{-\smash{\frac{1}{2}}}
(u-u_\infty)
\|_{\Gamma_I}^2
\bigr)}\,.
\end{aligned}
\end{align} 
Since the boundary integrands may be rewritten as 
\begin{align*}
    \left.\begin{aligned}
&(1+\beta\eta)^{-1}
(v-u_\infty)^2
-
(1+\beta\smash{\widetilde{\mathtt{d}}})^{-1}
(u-u_\infty)^2
-
2(1+\beta\smash{\widetilde{\mathtt{d}}})^{-1}
(u-u_\infty)(v-u)
\\
&=
(1+\beta\eta)^{-1}
\bigl(
v-u
-
\tfrac{\beta(u-u_\infty)}
{1+\beta\smash{\widetilde{\mathtt{d}}}}
(\eta-\smash{\widetilde{\mathtt{d}}})
\bigr)^2
+
\beta
(1+\beta\smash{\widetilde{\mathtt{d}}})^{-2}
(u-u_\infty)^2
(\smash{\widetilde{\mathtt{d}}}-\eta)
\end{aligned}\quad\right\}\quad\text{ a.e.\ on }\Gamma_I\,,
\end{align*}  
from \eqref{lem:quadratic_growth_new.1}, we conclude the claimed quadratic growth identity \eqref{lem:quadratic_growth_new.0}. 
\end{proof} 

\begin{corollary}
    \label{cor:quadratic_growth_new}
Assume that $u,u_\infty,\smash{\widetilde{\mathtt{d}}}\in L^\infty(\Gamma_I)$ and the non-degeneracy condition \eqref{cor:quadratic_growth_new.-1}. 
Then, for every $(v,\eta)\in H^1(\Omega)\times H_m$ with
$\eta\in L^\infty(\Gamma_I)$, there holds
\begin{align}\label{cor:quadratic_growth_new.0}
    \|v-u\|_{H^1(\Omega)}^2
    \lesssim E(v,\eta)-E(u,\smash{\widetilde{\mathtt{d}}})\,,
\end{align} 
where the implicit constant in $\lesssim$ depends only on
$\Omega$, $\Gamma_I$, $\kappa$, $\beta$,
$\|\smash{\widetilde{\mathtt{d}}}\|_{\infty,\Gamma_I}$,
$\|\eta\|_{\infty,\Gamma_I}$, and $\delta_{\star}^{-1}$.
\end{corollary}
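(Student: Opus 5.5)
Our starting point is the identity of Lemma~\ref{lem:quadratic_growth_new}. All three terms on its left-hand side are non-negative, the third one by \eqref{prop:optimality.3b}. Hence, writing $\Delta E\coloneqq E(v,\eta)-E(u,\smash{\widetilde{\mathtt{d}}})$, $w\coloneqq v-u$, $a\coloneqq\tfrac{\beta(u-u_\infty)}{1+\beta\smash{\widetilde{\mathtt{d}}}}\in L^\infty(\Gamma_I)$ and $\xi\coloneqq\eta-\smash{\widetilde{\mathtt{d}}}$, we obtain
\begin{align*}
\|\nabla w\|_\Omega^2\lesssim \Delta E\,,\qquad \|w-a\xi\|_{\Gamma_I}^2\le (1+\beta\|\eta\|_{\infty,\Gamma_I})\|(1+\beta\eta)^{-\frac12}(w-a\xi)\|_{\Gamma_I}^2\lesssim \Delta E\,.
\end{align*}
Since $\eta,\smash{\widetilde{\mathtt{d}}}\in\mathcal{H}_m$, we have $(\xi,1)_{\Gamma_I}=0$, i.e., $\xi\in\smash{\widetilde{L}}^2(\Gamma_I)$; moreover, $\xi\in L^\infty(\Gamma_I)$. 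So $a\xi$ lies in the set $a\smash{\widetilde{L}}^2(\Gamma_I)$ appearing in the definition of $\delta_\star$. What remains is to bound $\|w\|_{H^1(\Omega)}$, and by Friedrich's inequality \eqref{lem:poin_cont} it suffices to control $\|w\|_{1,\Gamma_I}$ (or $\|w\|_{\Gamma_I}$).

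Split $w$ into its mean and its oscillation. Set $\bar w\coloneqq|\Gamma_I|^{-1}(w,1)_{\Gamma_I}$. A Poincaré-type inequality together with the trace inequality gives $\|w-\bar w\|_{\Gamma_I}\lesssim\|\nabla w\|_\Omega$; this can be proved by applying \eqref{lem:poin_cont} to $w-\bar w$, or by the usual compactness argument. Then
\begin{align*}
\|\bar w-a\xi\|_{\Gamma_I}\le\|w-a\xi\|_{\Gamma_I}+\|w-\bar w\|_{\Gamma_I}\lesssim (\Delta E)^{\frac12}\,.
\end{align*}
The key step is the non-degeneracy condition. If $\bar w\neq0$, then $\bar w^{-1}a\xi=a(\bar w^{-1}\xi)\in a\smash{\widetilde{L}}^2(\Gamma_I)$, so
\begin{align*}
\|\bar w-a\xi\|_{\Gamma_I}=|\bar w|\,\|1-a(\bar w^{-1}\xi)\|_{\Gamma_I}\ge|\bar w|\,\delta_\star\,.
\end{align*}
Therefore $|\bar w|\le\delta_\star^{-1}\|\bar w-a\xi\|_{\Gamma_I}\lesssim\delta_\star^{-1}(\Delta E)^{\frac12}$, and the bound holds trivially if $\bar w=0$. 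Combining the estimates gives $\|w\|_{\Gamma_I}\le\|w-\bar w\|_{\Gamma_I}+|\bar w||\Gamma_I|^{\frac12}\lesssim(\Delta E)^{\frac12}$, and \eqref{lem:poin_cont} with $\|\cdot\|_{1,\Gamma_I}\lesssim\|\cdot\|_{\Gamma_I}$ yields \eqref{cor:quadratic_growth_new.0}.

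The implicit constants depend on $\kappa$, $\beta$, $\|\eta\|_{\infty,\Gamma_I}$ (through the weight $(1+\beta\eta)^{-1}$), on $\delta_\star^{-1}$, and on $\Omega,\Gamma_I$ (through the trace, Poincaré and Friedrich constants). This matches the statement; the bounds on $u,u_\infty,\smash{\widetilde{\mathtt{d}}}$ enter only to ensure $a\in L^\infty$, so that $a\xi\in L^2$. The main obstacle is the step that uses $\delta_\star$. One has to check that rescaling by $\bar w^{-1}$ keeps $\xi$ in $\smash{\widetilde{L}}^2(\Gamma_I)$, which it does because that space is linear. One also has to keep the constant-mode part separate from the oscillation part, since without the non-degeneracy condition a shift of $v$ by a constant could be absorbed by a redistribution $\xi$.
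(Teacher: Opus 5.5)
Your proof is correct and follows the paper's argument essentially step by step. Both use the identity of Lemma~\ref{lem:quadratic_growth_new} with the weight bound $(1+\beta\eta)^{-1}\ge(1+\beta\|\eta\|_{\infty,\Gamma_I})^{-1}$, the boundary Poincar\'e inequality for $w-\bar w$, the non-degeneracy bound $|\bar w|\,\delta_\star\le\|\bar w-a\xi\|_{\Gamma_I}$, and Friedrich's inequality.

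One small caveat concerns your first suggested justification of $\|w-\bar w\|_{\Gamma_I}\lesssim\|\nabla w\|_\Omega$, namely applying \eqref{lem:poin_cont} to $w-\bar w$. That route is circular, because the right-hand side of \eqref{lem:poin_cont} contains the uncontrolled term $\|w-\bar w\|_{1,\Gamma_I}$. Use your compactness argument instead. Alternatively, note that $\|w-\bar w\|_{\Gamma_I}\le\|w-c\|_{\Gamma_I}$ for $c\coloneqq|\Omega|^{-1}(w,1)_\Omega$, and then apply the trace and Poincar\'e--Wirtinger inequalities on $\Omega$.
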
 

\begin{proof}[Proof (of Corollary \ref{cor:quadratic_growth_new}).]
Let $(v,\eta)\in H^1(\Omega)\times H_m$ with
$\eta\in L^\infty(\Gamma_I)$ be fixed, but arbitrary. Then, due to Lemma \ref{lem:quadratic_growth_new} and $(1+\beta\eta)^{-1}\ge (1+\beta\|\eta\|_{\infty,\Gamma_I})^{-1}$ a.e.\ on $\Gamma_I$, we have that 
\begin{align}\label{cor:quadratic_growth_new.1} 
\begin{aligned}
    E(v,\eta)-E(u,\smash{\widetilde{\mathtt{d}}})
    &\gtrsim 
   \|\nabla(v-u)\|_\Omega^2
        +
        \smash{\bigl\|
            v-u
            -
            \tfrac{\beta(u-u_\infty)}{1+\beta\smash{\widetilde{\mathtt{d}}}}
            (\eta-\smash{\widetilde{\mathtt{d}}})
        \bigr\|_{\Gamma_I}^2}\,.
\end{aligned}
\end{align}
Next, let $\langle v-u\rangle_{\Gamma_I}\coloneqq \frac{1}{|\Gamma_I|}
    \int_{\Gamma_I}(v-u)\,\mathrm{d}s$.  
Then, by the boundary Poincar\'e inequality, there holds
\begin{align}\label{cor:quadratic_growth_new.2} 
    \|v-u-\langle v-u\rangle_{\Gamma_I}\|_{\Gamma_I}
    \lesssim \|\nabla(v-u)\|_\Omega\,.
\end{align}
Since $\eta-\smash{\widetilde{\mathtt{d}}}\in \smash{\widetilde{L}}^2(\Gamma_I)$, we have that $\frac{\beta(u-u_\infty)}{1+\beta\smash{\widetilde{\mathtt{d}}}}(\eta-\smash{\widetilde{\mathtt{d}}})\in\frac{\beta(u-u_\infty)}{1+\beta\smash{\widetilde{\mathtt{d}}}}\smash{\widetilde{L}}^2(\Gamma_I)$, so that, by the non-degeneracy condition \eqref{cor:quadratic_growth_new.-1}, we observe that
\begin{align} \label{cor:quadratic_growth_new.3} 
\begin{aligned}
    \vert\langle v-u\rangle_{\Gamma_I}\vert\delta_{\star}
    &=
    \operatorname{dist}_{L^2(\Gamma_I)}
    \bigl(
        \langle v-u\rangle_{\Gamma_I},
        \tfrac{\beta(u-u_\infty)}{1+\beta\smash{\widetilde{\mathtt{d}}}}\smash{\widetilde{L}}^2(\Gamma_I)
    \bigr)
    \\
    &\le
    \smash{\bigl\|
        \langle v-u\rangle_{\Gamma_I}
        -
        \tfrac{\beta(u-u_\infty)}{1+\beta\smash{\widetilde{\mathtt{d}}}}
        (\eta-\smash{\widetilde{\mathtt{d}}})
    \bigr\|_{\Gamma_I}}\,.
\end{aligned}
\end{align}
Moreover, using \eqref{cor:quadratic_growth_new.2}, from  \eqref{cor:quadratic_growth_new.3}, we infer that 
\begin{align*} 
\begin{aligned}
    \|v-u\|_{\Gamma_I}&\lesssim  
    \|v-u-\langle v-u\rangle_{\Gamma_I}\|_{\Gamma_I}+ \vert\langle v-u\rangle_{\Gamma_I}\vert
    \\&\lesssim  \smash{\|\nabla(v-u)\|_\Omega+\bigl\|
            v-u
            -
            \tfrac{\beta(u-u_\infty)}{1+\beta\smash{\widetilde{\mathtt{d}}}}
            (\eta-\smash{\widetilde{\mathtt{d}}})
        \bigr\|_{\Gamma_I}} \,,
\end{aligned}
\end{align*}  
which, by Friedrich's inequality \eqref{lem:poin_cont}, yields the claimed quadratic growth estimate \eqref{cor:quadratic_growth_new.0}.
\end{proof}

In the second step, we estimate the energy difference. To this end, in order to be in the position to exploit the minimality of $(u_h,\smash{\widetilde{\mathtt{d}}_h})\in \mathcal{S}^1(\mathcal{T}_h)\times \mathcal{H}_m^h $ for the discrete energy functional \eqref{def:Eh}, we first estimate the difference between the minimal discrete and the minimal continuous~energy.\enlargethispage{1.5mm}

\begin{lemma}\label{lem:energy_difference}
Under the assumptions of Theorem \ref{thm:H1_error}, there holds
\begin{align}\label{lem:energy_difference.0}
    E_h(u_h,\smash{\widetilde{\mathtt{d}}_h}) - E(u,\smash{\widetilde{\mathtt{d}}}) \lesssim \smash{h^{2s} }\,.
\end{align}
\end{lemma}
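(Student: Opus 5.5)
We bound the energy difference by exploiting the discrete minimality of $(u_h,\widetilde{\mathtt{d}}_h)$. We construct an admissible discrete competitor $(v_h,\eta_h)\in\mathcal{S}^1(\mathcal{T}_h)\times\mathcal{H}_m^h$ close to $(u,\widetilde{\mathtt{d}})$, so that
\begin{align*}
E_h(u_h,\widetilde{\mathtt{d}}_h)-E(u,\widetilde{\mathtt{d}})\le E_h(v_h,\eta_h)-E(u,\widetilde{\mathtt{d}})\,,
\end{align*}
and then estimate the right-hand side term by term. For $v_h$ we take a Scott--Zhang type quasi-interpolant of $u$. Since $u\in H^{1+s}(\Omega)$, this gives $\|\nabla(u-v_h)\|_\Omega+h^{-1}\|u-v_h\|_\Omega\lesssim h^{s}\|u\|_{1+s,\Omega}$. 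Since $u|_{\Gamma_I}\in H^1(\Gamma_I)$, we may arrange that $v_h|_{\Gamma_I}=\mathcal{I}_h^I(u|_{\Gamma_I})$, giving $\|u-v_h\|_{\Gamma_I}\lesssim h\|u\|_{1,\Gamma_I}$. For $\eta_h$ we take the positivity- and mean-preserving projection of $\widetilde{\mathtt{d}}$ from the appendix, adapted so that $(\eta_h,1)_{\Gamma_I,h}=(\eta_h,1)_{\Gamma_I}=m$ (the mass-lumped integral of a $\mathcal{S}^1$ function equals its exact integral) and $\eta_h\ge0$. Since $\widetilde{\mathtt{d}}\in H^1(\Gamma_I)\cap L^\infty(\Gamma_I)$, we get $\|\widetilde{\mathtt{d}}-\eta_h\|_{\Gamma_I}\lesssim h$ and $\|\eta_h\|_{\infty,\Gamma_I}\lesssim\|\widetilde{\mathtt{d}}\|_{\infty,\Gamma_I}$.

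We then split $E_h(v_h,\eta_h)-E(u,\widetilde{\mathtt{d}})$ into four parts:
\begin{align*}
&\text{(a)}\quad E(v_h,\eta_h)-E(u,\widetilde{\mathtt{d}})\,,\\
&\text{(b)}\quad \text{the quadrature error }\tfrac{\beta}{2}\bigl(\|(1+\beta\eta_h)^{-1/2}(v_h-u_\infty^h)\|_{\Gamma_I,h}^2-\|(1+\beta\eta_h)^{-1/2}(v_h-u_\infty^h)\|_{\Gamma_I}^2\bigr)\,,\\
&\text{(c)}\quad \text{the data perturbation from }u_\infty^h\text{ versus }u_\infty\,,\\
&\text{(d)}\quad \text{the data perturbation from }f_h,\,g_h\text{ versus }f,\,g\,.
\end{align*}
For (a) we use the identity in Lemma \ref{lem:quadratic_growth_new}. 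The gradient term is $O(h^{2s})$. The boundary quadratic term is bounded by $\|v_h-u\|_{\Gamma_I}^2+\|u-u_\infty\|_{\infty,\Gamma_I}^2\|\eta_h-\widetilde{\mathtt{d}}\|_{\Gamma_I}^2\lesssim h^2$, using Lemma \ref{lem:Linf}. The remaining linear term $\frac{\beta^2}{2}((1+\beta\widetilde{\mathtt{d}})^{-2}(u-u_\infty)^2,\widetilde{\mathtt{d}}-\eta_h)_{\Gamma_I}$ is only first order in $\|\widetilde{\mathtt{d}}-\eta_h\|$ and needs care. By \eqref{prop:optimality.1}--\eqref{prop:optimality.2}, the weight $w\coloneqq(1+\beta\widetilde{\mathtt{d}})^{-2}(u-u_\infty)^2$ equals $\mathtt{C}_u^2/\beta^2\cdot$const on $\{\widetilde{\mathtt{d}}>0\}$ (indeed $|u-u_\infty|/(1+\beta\widetilde{\mathtt{d}})=\mathtt{C}_u$ there) and is $\le\mathtt{C}_u^2$ elsewhere. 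Hence
\begin{align*}
(w,\widetilde{\mathtt{d}}-\eta_h)_{\Gamma_I}=(w-\mathtt{C}_u^2,\widetilde{\mathtt{d}}-\eta_h)_{\Gamma_I}=(\mathtt{C}_u^2-w,\eta_h)_{\{\widetilde{\mathtt{d}}=0\}}\,,
\end{align*}
where the first equality uses the mean constraint. This reduces the linear term to an integral of $\eta_h$ over the contact set, where $\widetilde{\mathtt{d}}=0$. Then $\eta_h$ is nonzero only in an $O(h)$-neighbourhood of $\operatorname{supp}\widetilde{\mathtt{d}}$, where both $\eta_h\lesssim h\|\nabla_\Gamma\widetilde{\mathtt{d}}\|$ locally and $\mathtt{C}_u^2-w\lesssim h|\nabla_\Gamma(u-u_\infty)|$ locally, since $\mathtt{C}_u^2-w$ vanishes on the free boundary. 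Each factor is first order, so the product yields $O(h^2)$ by a local Cauchy--Schwarz argument. Alternatively, the integral is bounded by $\|\mathtt{C}_u^2-w\|_{\{\eta_h>0,\widetilde{\mathtt{d}}=0\}}\|\eta_h-\widetilde{\mathtt{d}}\|_{\Gamma_I}\lesssim h\cdot h$.

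For (b), Lemma \ref{lem:mass_lumping_lipschitz} combined with \eqref{lem:std_prop_lumping.2} applies to the integrand $\Phi$, which is Lipschitz on bounded sets. We use the uniform $L^\infty$ bounds of Lemma \ref{lem:Linf} together with the boundedness of $v_h$ and $u_\infty^h$, and quasi-uniformity with inverse estimates, to get an $O(h)$ error in the lumped integral of a product of $H^1(\Gamma_I)$-bounded piecewise linears. A naive estimate only gives $O(h)$, so we write the error as $\int_{\Gamma_I}(I_h-\mathrm{id})\{\cdot\}$ and use the standard second-order bound elementwise: $|\int_S(I_h F-F)|\lesssim h_S^2|S|^{1/2}\|\nabla_S\cdots\|^2$, using the $H^1(\Gamma_I)$ stability of $\mathcal{I}_h^I$ and the projection. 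This gives $h^2\le h^{2s}$. Terms (c) and (d) are handled by $\|u_\infty-u_\infty^h\|_{\Gamma_I}\lesssim h$ and by $L^2$-orthogonality of the projections $f_h,g_h$: $(f-f_h,v_h)_\Omega=(f-f_h,v_h-\Pi_hu)$ with $\|v_h-\Pi_hu\|$ small, which yields $h^{1+s}$ terms.

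\textbf{Main obstacle.} I expect the main obstacle to be the first-order linear term in (a) and the second-order quadrature estimate (b), which both require using the optimality structure (the constant modulus $\mathtt{C}_u$ on $\operatorname{supp}\widetilde{\mathtt{d}}$) rather than generic approximation, in order to reach $h^{2s}$ instead of $h$.
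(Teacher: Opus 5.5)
Your competitor $(\mathcal{J}_hu,\Pi_h^{\mathrm{m}}\widetilde{\mathtt{d}})$ is the paper's, and routing (a) through the identity of Lemma~\ref{lem:quadratic_growth_new} instead of the paper's split $T_2^h+T_3^h$ is legitimate. The gradient and boundary-quadratic parts are $O(h^{2s})$. Your (b), once reduced to the elementwise second-order interpolation bound, is the paper's $T_1^h$ argument. Term (d) in fact vanishes exactly, since $f_h,g_h$ are $L^2$-projections and $v_h$ is discrete.

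The gap is the step you flag yourself: the linear term $(w,\widetilde{\mathtt{d}}-\eta_h)_{\Gamma_I}$ with $w=(1+\beta\widetilde{\mathtt{d}})^{-2}(u-u_\infty)^2$. Your rewriting $(w,\widetilde{\mathtt{d}}-\eta_h)_{\Gamma_I}=(\mathtt{C}_u^2-w,\eta_h)_{\{\widetilde{\mathtt{d}}=0\}}$ is correct, but neither subsequent bound is justified.
\begin{itemize}
\item The pointwise bounds $\eta_h\lesssim h|\nabla_\Gamma\widetilde{\mathtt{d}}|$ and $\mathtt{C}_u^2-w\lesssim h|\nabla_\Gamma(u-u_\infty)|$ are Lipschitz-type statements, while only $H^1(\Gamma_I)$-regularity is available. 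Even in averaged form, each factor is small only if a Poincar\'e inequality holds for a function vanishing on part of a vertex patch. Its constant degenerates when that part is a small fraction of the patch.
\item Likewise, $\|\mathtt{C}_u^2-w\|_{\{\eta_h>0,\,\widetilde{\mathtt{d}}=0\}}\lesssim h$ requires a Poincar\'e inequality on the $O(h)$-strip around the free boundary $\partial\{\widetilde{\mathtt{d}}>0\}$. This holds when $\Gamma_I$ is a curve ($d=2$). For $d=3$ it depends on the geometry of $\{\widetilde{\mathtt{d}}>0\}$, which nothing in the hypotheses controls: if that set meets a patch only in a set of tiny capacity, $\mathtt{C}_u^2-w$ can be of order one there at negligible $H^1$-cost.
\end{itemize}

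Contrary to your diagnosis, the missing idea does not use the optimality structure at all. Note that $w=\min\{\mathtt{C}_u^2,(u-u_\infty)^2\}$ is a Lipschitz function of $u-u_\infty\in H^1(\Gamma_I)$, hence lies in $H^1(\Gamma_I)$. Also, $\widetilde{\mathtt{d}}-\Pi_h^{\mathrm{m}}\widetilde{\mathtt{d}}$ has zero mean. Duality with the superconvergent negative-norm estimate \eqref{eq:lumped_negative_norm} ($s=t=1$) then gives $(w,\widetilde{\mathtt{d}}-\Pi_h^{\mathrm{m}}\widetilde{\mathtt{d}})_{\Gamma_I}\lesssim h^2|w|_{1,\Gamma_I}|\widetilde{\mathtt{d}}|_{1,\Gamma_I}$. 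That estimate comes from $\Pi_h^{\mathrm{m}}$ being a lumped $L^2$-projection together with the $O(h^2)$ lumping error \eqref{lem:std_prop_lumping.2}. This is precisely the paper's treatment of $T_3^h$, there with weight $(1+\beta\widetilde{\mathtt{d}}_h^{\#})^{-2}(u-u_\infty)^2$ after a convexity step.

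Your localization can also be repaired by bounding the product rather than each factor. Set $z\coloneqq|u-u_\infty|-\mathtt{C}_u$; then $\widetilde{\mathtt{d}}=z_+/(\beta\mathtt{C}_u)$ and $\mathtt{C}_u^2-w\le 2\mathtt{C}_u z_-$. On every patch $\omega$ with mean $\bar z$, the elementary inequality $|\omega|^{-1}\|z_+\|_{1,\omega}\|z_-\|_{1,\omega}\le 2\|z-\bar z\|_\omega^2$ holds. Inserting $\eta_h(\nu)=(\widetilde{\mathtt{d}},\varphi_\nu)_{\Gamma_I}/(1,\varphi_\nu)_{\Gamma_I}$ and a patchwise Poincar\'e inequality yields $O(h^2\|\nabla_\Gamma(u-u_\infty)\|_{\Gamma_I}^2)$.

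Finally, your (c) is not closed by $\|u_\infty-u_\infty^h\|_{\Gamma_I}\lesssim h$. It contains the cross term $\beta((1+\beta\eta_h)^{-1}(v_h-u_\infty),u_\infty-u_\infty^h)_{\Gamma_I}$, which is then only $O(h)$, not $O(h^{2s})$ for $s>\frac12$. The paper's estimate of $T_1^h$ accounts only for the quadrature error and silently drops this cross term as well. An extra ingredient is therefore needed in both versions, e.g.\ $\|u_\infty-u_\infty^h\|_{\Gamma_I}\lesssim h^{2s}$ or a negative-norm bound for $u_\infty-u_\infty^h$.
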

\begin{proof} 
    Let $\mathcal{J}_h\colon E_{\Delta}(\Omega)\coloneqq{\{ v \in  H^{\frac{3}{2}}(\Omega) \mid \Delta v \in L^2(\Omega)\}}\to\mathcal{S}^1(\mathcal{T}_h)$ be the quasi-interpolation operator defined in \eqref{eq:quasi_interpolant_def} and $\Pi_h^{\mathrm{m}}\colon L^1(\Gamma_I)\to\mathcal{S}^1(\mathcal{S}_h^{I})$  the mass-lumped $L^2$-projection~defined~in~\eqref{eq:lumped_l2_projection}. Then, define\enlargethispage{1.5mm}
    \begin{align*}
        \smash{(\smash{u_h^{\#}},\smash{\widetilde{\mathtt{d}}_h^{\#}})\coloneqq (\mathcal{J}_h u,\Pi_h^{\mathrm{m}} \smash{\widetilde{\mathtt{d}}})\in \mathcal{S}^1(\mathcal{T}_h) \times \mathcal{H}_m^h\,,\quad
        \Psi_h \coloneqq  (1 + \beta \smash{\widetilde{\mathtt{d}}_h^{\#}})^{-1}(\smash{u_h^{\#}} - u_\infty^h)^2\in W^{1,\infty}(\Gamma_I)\,.}
    \end{align*}
    To begin with, we bound the side-wise tangential Hessian $\mathrm{D}_{\Gamma,h}^2\Psi_h\hspace{-0.1em}\in \hspace{-0.1em}(L^\infty(\Gamma_I))^{d\times d}$,~for~every~${S\hspace{-0.1em}\in\hspace{-0.1em} \mathcal{S}_h^{\partial}}$, defined by $\mathrm{D}_{\Gamma,h}^2\Psi_h\!\!\restriction_S\coloneqq \mathrm{D}^2_S(\Psi_h\!\!\restriction_S)$, where $\mathrm{D}^2_{S} \coloneqq \mathrm{P}_S \mathrm{D}^2 \mathrm{P}_S$ denotes the tangential~Hessian~on~$S$: 
\begin{align}\label{eq:quotient_estimate.1}
    \begin{aligned} 
    \mathrm{D}_{\Gamma,h}^2\Psi_h &= \tfrac{2}{(1+\beta \widetilde{\mathtt{d}}_h^{\#})^3}\smash{\bigl(}(1+\beta \smash{\widetilde{\mathtt{d}}_h^{\#}})^2\nabla_{\Gamma}(\smash{u_h^{\#}} - u_\infty^h) \otimes \nabla_{\Gamma}(\smash{u_h^{\#}} - u_\infty^h)
    \\&\quad- (1 + \beta \smash{\widetilde{\mathtt{d}}_h^{\#}})\beta(\smash{u_h^{\#}} - u_\infty^h)(\nabla_{\Gamma} \smash{\widetilde{\mathtt{d}}_h^{\#}} \otimes \nabla_{\Gamma}(\smash{u_h^{\#}} - u_\infty^h)+\nabla_{\Gamma}(\smash{u_h^{\#}} - u_\infty^h) \otimes \nabla_{\Gamma} \smash{\widetilde{\mathtt{d}}_h^{\#}})
    \\&\quad+ \beta^2(\smash{u_h^{\#}} - u_\infty^h)^2\nabla_{\Gamma}\smash{\widetilde{\mathtt{d}}_h^{\#}} \otimes \nabla_{\Gamma}\smash{\widetilde{\mathtt{d}}_h^{\#}}\smash{\bigr)}\quad\text{ a.e.\ on }\Gamma_I\,,
    \end{aligned}
\end{align}
where we used that $\mathrm{D}_{\Gamma,h}^2\smash{u_h^{\#}} =\mathrm{D}_{\Gamma,h}^2 u_\infty^h =\mathrm{D}_{\Gamma,h}^2 \smash{\widetilde{\mathtt{d}}_h^{\#}}= 0$ a.e.~on~$\Gamma_I$. 
Using Lemma \ref{lem:Linf}, the $L^\infty(\Gamma_I)$-stability of $\mathcal{J}_h$ (\textit{cf}.\ \cite[Cor.\ 4.8.15]{brenner2008mathematical}), Young's inequality, \eqref{eq:quasi_interpolant_volume_error_S}, and \eqref{eq:lumped_fractional_bound},~from~\eqref{eq:quotient_estimate.1},~we~\mbox{infer}~that 
%
%
\begin{align} \label{eq:quotient_estimate.3}
    \smash{\|\mathrm{D}_{\Gamma,h}^2\Psi_h \|_{1,\Gamma_I} \lesssim \|\nabla_{\Gamma}(\smash{u_h^{\#}} - u_\infty^h)\|_{\Gamma_I}^2 + \|\nabla_{\Gamma}\smash{\widetilde{\mathtt{d}}_h^{\#}}\|_{\Gamma_I}^2 \lesssim \|\nabla_{\Gamma}(u -  u_\infty)\|_{\Gamma_I}^2\,.}
\end{align}
Since  $(u_h,\smash{\widetilde{\mathtt{d}}_h}) \in \mathcal{S}^1(\mathcal{T}_h) \times \mathcal{H}_m^h$ is minimal for the discrete energy functional \eqref{def:Eh}, we may estimate 
    \begin{align}\label{eq:quotient_estimate.3.0}
         \begin{aligned} 
        E_h(u_h,\smash{\widetilde{\mathtt{d}}_h}) - E(u,\smash{\widetilde{\mathtt{d}}})  
       &\leq \left\{\begin{aligned}
           &(E_h(\smash{u_h^{\#}},\smash{\widetilde{\mathtt{d}}_h^{\#}} ) - E(\smash{u_h^{\#}},\smash{\widetilde{\mathtt{d}}_h^{\#}}))\\& + (E(\smash{u_h^{\#}},\smash{\widetilde{\mathtt{d}}_h^{\#}}) - E(u,\smash{\widetilde{\mathtt{d}}_h^{\#}}))
            \\& + (E(u,\smash{\widetilde{\mathtt{d}}_h^{\#}}) - E(u,\smash{\widetilde{\mathtt{d}}}))
       \end{aligned}\right\} \eqqcolon T_1^h + T_2^h + T_3^h\,.
        \end{aligned}
    \end{align}
    Thus, it remains to estimate the terms $T_i^h$, $i=1,2,3$:
    
    \textit{ad $T_1^h$.} Using the local interpolation estimate for~$I_h$~on~$\Gamma_I$ (\textit{cf}.\ \cite[ineq.\ (11.17)]{EG21I}) and \eqref{eq:quotient_estimate.3}, we find that\vspace{-0.5mm}
    \begin{align}\label{eq:quotient_estimate.4}
        \smash{T_1^h  \lesssim \|I_h\{\Psi_h\} -\Psi_h\|_{1,\Gamma_I} 
         \lesssim   h^2 \| \mathrm{D}_{\Gamma,h}^2 \Psi_h\|_{1,\Gamma_I}
     \lesssim h^2 \|\nabla_{\Gamma}(u - u_\infty)\|_{\Gamma_I}^2\,.}
    \end{align} 

    \textit{ad $T_2^h$.} Using a binomial formula, the optimality condition \eqref{prop:optimality.3}, Lemma \ref{lem:Linf}, Lemma \ref{lem:Linf_discrete}, and Young's inequality, we find that
    \begin{align}\label{eq:quotient_estimate.5}
        \begin{aligned} 
       T_2^h 
       &= \tfrac{1}{2}\|\nabla (u - \smash{u_h^{\#}})\|_{\Omega}^2 + (\nabla u, \nabla(\smash{u_h^{\#}} - u))_\Omega - (f,\smash{u_h^{\#}} -u )_{\Omega} -\langle g,\smash{u_h^{\#}} - u \rangle_{\Gamma_N} 
       \\ & \quad +\tfrac{\beta}{2} \|(1+\beta \smash{\widetilde{\mathtt{d}}_h^{\#}})^{-\smash{\frac{1}{2}}}(\smash{u_h^{\#}}-u_{\infty})\|_{\Gamma_{I}}^2  - \tfrac{\beta}{2} \|(1+\beta \smash{\widetilde{\mathtt{d}}_h^{\#}})^{-\smash{\frac{1}{2}}}(u-u_{\infty})\|_{\Gamma_{I}}^2 \\
       &= \tfrac{1}{2}\|\nabla (u - \smash{u_h^{\#}})\|_{\Omega}^2 
+ \tfrac{\beta}{2} \|(1+\beta \smash{\widetilde{\mathtt{d}}_h^{\#}})^{-\smash{\frac{1}{2}}}(\smash{u_h^{\#}} - u)\|_{\Gamma_{I}}^2 
\\&\quad+ \beta^2 
((1+\beta \smash{\widetilde{\mathtt{d}}_h^{\#}})^{-1}(1+\beta \smash{\widetilde{\mathtt{d}}})^{-1}(u-u_\infty)(\smash{u_h^{\#}}-u),\smash{\widetilde{\mathtt{d}}} - \smash{\widetilde{\mathtt{d}}_h^{\#}})_{\Gamma_I}\\&\lesssim \|\nabla( u - \smash{u_h^{\#}}) \|_{\Omega}^2 + \|u - \smash{u_h^{\#}}\|_{2,\Gamma_I}^2 + \|u - \smash{u_h^{\#}}\|_{2,\Gamma_I}\|\smash{\widetilde{\mathtt{d}}} - \smash{\widetilde{\mathtt{d}}_h^{\#}}\|_{2,\Gamma_I} \\
    &\lesssim h^{2s} |u|_{1+s,\Omega}^2 + h^2 \|\nabla_{\Gamma}u\|_{\Gamma_I}^2 + h^2 \|\nabla_{\Gamma}u_\infty\|_{\Gamma_I}^2\,.
    \end{aligned}
\end{align}

    \textit{ad $T_3^h$.} Using the differentiability and convexity of $(a\mapsto (1+\beta a)^{-1})\colon\mathbb{R}_{\ge 0}\to \mathbb{R}_{\ge 0}$, that $\{(1+\beta \smash{\widetilde{\mathtt{d}}_h^{\#}})^{-2}(u - u_\infty)^2\}_{h>0} \subseteq H^1(\Gamma_I)$ is bounded (which follows from Lemma \ref{lem:Linf}, Lemma \ref{lem:Linf_discrete}, and \ref{eq:lumped_fractional_bound}), and \eqref{eq:lumped_negative_norm}, we find that\vspace{-0.5mm}
\begin{align}\label{eq:quotient_estimate.6}
    \begin{aligned} 
    T_3^h &= \tfrac{\beta}{2} \|(1+\beta\smash{\widetilde{\mathtt{d}}_h^{\#}})^{-\frac{1}{2}}(u-u_\infty)\|_{\Gamma_I}^2-\tfrac{\beta}{2} \|(1+\beta\smash{\widetilde{\mathtt{d}}})^{-\frac{1}{2}}(u-u_\infty)\|_{\Gamma_I}^2\\
    &\le \tfrac{\beta}{2}  \smash{((1+\beta \smash{\widetilde{\mathtt{d}}_h^{\#}})^{-2}(u - u_\infty)^2,\smash{\widetilde{\mathtt{d}}} - \smash{\widetilde{\mathtt{d}}_h^{\#}})_{\Gamma_I}} \\
    &\le \tfrac{\beta}{2} \|(1+\beta \smash{\widetilde{\mathtt{d}}_h^{\#}})^{-2}(u - u_\infty)^2\|_{H^1(\Gamma_I)}\|\smash{\widetilde{\mathtt{d}}} - \smash{\widetilde{\mathtt{d}}_h^{\#}}\|_{H^{-1}(\Gamma_I)} \\
    & \lesssim h^{2} \|\nabla_{\Gamma}\smash{\widetilde{\mathtt{d}}}\|_{\Gamma_I}\,.
    \end{aligned}
\end{align}
Eventually, using \eqref{eq:quotient_estimate.4}--\eqref{eq:quotient_estimate.6} in \eqref{eq:quotient_estimate.3.0}, we conclude that \eqref{lem:energy_difference.0} applies.
\end{proof}

Third, we estimate the consistency error.

\begin{lemma} \label{prop:energy_consistent_vs_lumped}
    For every $(v_h,\eta_h) \in \mathcal{S}^1(\mathcal{T}_h) \times \mathcal{H}_m^h$, there holds
    \begin{align} \label{eq:energy_consistent_vs_lumped_inconsistentdata.1}
        E(v_h, \eta_h) &\leq E_h(v_h,\eta_h)+\beta\|v_h-u_\infty^h\|_{\Gamma_I,h}
\|u_\infty-u_\infty^h\|_{\Gamma_I}
+
\tfrac{\beta}{2}\|u_\infty-u_\infty^h\|_{\Gamma_I}^2 \,.
    \end{align}
\end{lemma}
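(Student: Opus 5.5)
The plan is to compare $E(v_h,\eta_h)$ and $E_h(v_h,\eta_h)$ term by term. We show that every term except the Robin boundary term coincides exactly, and then bound the boundary term using a convexity argument for the mass-lumping quadrature.

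\emph{Step 1 (exact terms).} The Dirichlet term $\tfrac{\kappa}{2}\|\nabla v_h\|_\Omega^2$ is identical in both functionals. Since $f_h$ is the $L^2$-projection of $f$ onto $\mathcal{S}^1(\mathcal{T}_h)$ and $v_h\in\mathcal{S}^1(\mathcal{T}_h)$, we have $(f_h,v_h)_\Omega=(f,v_h)_\Omega$. Likewise, since $v_h|_{\Gamma_N}\in\mathcal{S}^1(\mathcal{S}_h^N)$, we have $(g_h,v_h)_{\Gamma_N}=\langle g,v_h\rangle_{\Gamma_N}$. Moreover, $I_h\eta_h=\eta_h$ for $\eta_h\in\mathcal{S}^1(\mathcal{S}_h^I)$, so $(\eta_h,1)_{\Gamma_I}=(\eta_h,1)_{\Gamma_I,h}=m$; together with $\eta_h\ge0$ this gives $\eta_h\in\mathcal{H}_m$ and $I_{\mathcal{H}_m}(\eta_h)=0=I_{\mathcal{H}_m^h}(\eta_h)$. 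It therefore remains to show
\begin{align*}
\tfrac{\beta}{2}\|(1+\beta\eta_h)^{-\frac12}(v_h-u_\infty)\|_{\Gamma_I}^2\le \tfrac{\beta}{2}\|(1+\beta\eta_h)^{-\frac12}(v_h-u_\infty^h)\|_{\Gamma_I,h}^2+\beta\|v_h-u_\infty^h\|_{\Gamma_I,h}\|u_\infty-u_\infty^h\|_{\Gamma_I}+\tfrac{\beta}{2}\|u_\infty-u_\infty^h\|_{\Gamma_I}^2\,.
\end{align*}

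\emph{Step 2 (splitting).} Set $w\coloneqq(1+\beta\eta_h)^{-\frac12}$, which satisfies $0<w\le1$ because $\eta_h\ge0$, and set $z_h\coloneqq v_h-u_\infty^h\in\mathcal{S}^1(\mathcal{S}_h^I)$. Writing $v_h-u_\infty=z_h+(u_\infty^h-u_\infty)$ and expanding the square gives
\begin{align*}
\|w(v_h-u_\infty)\|_{\Gamma_I}^2\le \|wz_h\|_{\Gamma_I}^2+2\|wz_h\|_{\Gamma_I}\|u_\infty-u_\infty^h\|_{\Gamma_I}+\|u_\infty-u_\infty^h\|_{\Gamma_I}^2\,,
\end{align*}
where the Cauchy--Schwarz inequality and $w\le1$ have been used. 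The last two terms are already of the required form once we show $\|wz_h\|_{\Gamma_I}\le\|z_h\|_{\Gamma_I}\le\|z_h\|_{\Gamma_I,h}$. The first inequality holds since $w\le1$, and the second follows from \eqref{lem:std_prop_lumping.1}.

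\emph{Step 3 (main step: the quadrature underestimates).} We must show $\|wz_h\|_{\Gamma_I}^2\le\|wz_h\|_{\Gamma_I,h}^2=(I_h\{z_h^2/(1+\beta\eta_h)\},1)_{\Gamma_I}$. The key observation is that the quadratic-over-linear map $\Phi(a,b)\coloneqq a^2/(1+\beta b)$ is jointly convex on $\mathbb{R}\times\mathbb{R}_{\ge0}$, being the perspective of a square. On each side $S\in\mathcal{S}_h^I$ the pair $(z_h,\eta_h)$ is affine, so $\Phi(z_h,\eta_h)|_S$ is convex on the simplex $S$. A convex function on a simplex lies pointwise below its affine nodal interpolant, since each point is a convex combination of the vertices with barycentric weights. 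Hence $\Phi(z_h,\eta_h)\le I_h\{\Phi(z_h,\eta_h)\}$ on $\Gamma_I$, and integrating yields the claim. Combining Steps 1--3 gives \eqref{eq:energy_consistent_vs_lumped_inconsistentdata.1}. I expect the only delicate point to be this convexity step: one has to verify joint convexity, for instance via the positive semidefinite Hessian of $\Phi$, or equivalently via the representation $\Phi(a,b)=\sup_{t}\{2ta-t^2(1+\beta b)\}$ as a supremum of affine functions.
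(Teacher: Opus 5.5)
Your proposal is correct and follows the same route as the paper: expand the square of $v_h-u_\infty=(v_h-u_\infty^h)+(u_\infty^h-u_\infty)$, then use the joint convexity of $(s,a)\mapsto(1+\beta s)^{-1}a^2$ to bound the integrand pointwise by its nodal interpolant. You additionally spell out details the paper leaves implicit, namely exactness of the data terms via the $L^2$-projections, $\mathcal{H}_m^h\subseteq\mathcal{H}_m$, and the use of $(1+\beta\eta_h)^{-1}\le 1$ together with \eqref{lem:std_prop_lumping.1} for the cross term.
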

\begin{proof} 
Let $(v_h,\eta_h) \hspace{-0.15em}\in \hspace{-0.15em}\mathcal{S}^1(\mathcal{T}_h)\times \mathcal{H}_m^h$ be fixed, but arbitrary. The claimed estimate~\eqref{eq:energy_consistent_vs_lumped_inconsistentdata.1}~\mbox{follows}~from $(v_h - u_\infty)^2= (v_h - u_\infty^h)^2+2(v_h - u_\infty^h)(u_\infty - u_\infty^h)+(u_\infty - u_\infty^h)^2$ on $\Gamma_I$
and
\begin{align} \label{eq:robin_bnd_by_Ih}
    (1 + \beta \eta_h)^{-1}(v_h - u_\infty^h)^2 \le I_h\{(1 + \beta \eta_h)^{-1}(v_h - u_\infty^h)^2\}\quad\text{ on }\Gamma_I\,,
\end{align}
which is a consequence of convexity of the mapping $(s,a)\mapsto (1+\beta s)^{-1}a^2\colon \mathbb{R}_{\ge 0}\times \mathbb{R}\to \mathbb{R}$.
\end{proof}

\begin{proof}[Proof (of Theorem \ref{thm:H1_error}).]
    We combine Lemma \ref{lem:quadratic_growth_new}, Corollary \ref{cor:quadratic_growth_new}, Lemma \ref{lem:energy_difference}, and Lemma  \ref{prop:energy_consistent_vs_lumped}.
\end{proof}

\subsection{Error bounds for the distribution function of the insulation material and critical temperature difference}\vspace{-0.5mm}

\hspace{5mm}In this subsection, we derive \textit{a priori} error estimates for both the distribution function of the insulation material --also outside the region of non-trivial temperature excess $\operatorname{supp}(u-u_\infty)$-- and critical temperature difference.  In doing so, we assume that $m\beta > |\Gamma_I|$, 
meaning that the available insulation material is sufficiently large relative to the size of the insulated boundary and the heat transfer across it. Under this assumption, the \textit{a priori} error estimate derived in Theorem \ref{thm:H1_error} carries over by means of the following lemma.\vspace{-0.5mm}\enlargethispage{2.5mm}

\begin{lemma}\label{lem:C_u_d_u_estimate}
If $m\beta > |\Gamma_I|$, then
\begin{subequations}
\begin{align}\label{lem:C_u_estimate.0}
    |\mathtt C_u - \mathtt{C}_{\smash{u_h}}|&\leq \tfrac{1}{m\beta - |\Gamma_I|} ( \|u-u_h\|_{1,\Gamma_I} + \|u_\infty-u_\infty^h\|_{1,\Gamma_I})\,;\\
    \|\smash{\widetilde{\mathtt{d}}} - \smash{\widetilde{\mathtt{d}}_h}\|_{1,\Gamma_I} &\lesssim \|u-u_h\|_{1,\Gamma_I} + \|u_\infty-u_\infty^h\|_{1,\Gamma_I} + h^{\frac{1}{2}}\,.\label{lem:d_u_estimate.0}
\end{align}
\end{subequations}
\end{lemma}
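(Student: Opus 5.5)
We would read both critical temperature differences as fixed points and compare them via two Lipschitz estimates. Set $w\coloneqq u-u_\infty$ and $w_h\coloneqq u_h-u_\infty^h$ on $\Gamma_I$. For $c\ge 0$ and $z\in L^1(\Gamma_I)$ define $G(z,c)\coloneqq \|(|z|-c)_+\|_{1,\Gamma_I}$. By \eqref{prop:optimality.1} we have $\mathtt{C}_u=\frac{1}{\beta m}G(w,\mathtt{C}_u)$. By \eqref{prop:discrete_optimality.1} we have $\mathtt{C}_{u_h}=\frac{1}{\beta m}\|I_h\{(|w_h|-\mathtt{C}_{u_h})_+\}\|_{1,\Gamma_I}$. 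Two elementary properties of $G$ drive the argument:
\begin{itemize}
\item[(a)] $c\mapsto G(z,c)$ is non-increasing and Lipschitz with constant $|\{|z|>c\}|\le|\Gamma_I|$, since its a.e.\ derivative is $-|\{|z|>c\}|$.
\item[(b)] $z\mapsto G(z,c)$ is $1$-Lipschitz in $L^1(\Gamma_I)$, because $a\mapsto(|a|-c)_+$ is $1$-Lipschitz pointwise.
\end{itemize}

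\emph{Step 1 (estimate for $\mathtt{C}_u$).} Subtract the two fixed-point relations and insert the intermediate quantity $G(w,\mathtt{C}_{u_h})$. The first difference, $G(w,\mathtt{C}_u)-G(w,\mathtt{C}_{u_h})$, is at most $|\Gamma_I|\,|\mathtt{C}_u-\mathtt{C}_{u_h}|$ by (a). The second difference, $G(w,\mathtt{C}_{u_h})-\|I_h\{(|w_h|-\mathtt{C}_{u_h})_+\}\|_{1,\Gamma_I}$, should be controlled by $\|w-w_h\|_{1,\Gamma_I}\le \|u-u_h\|_{1,\Gamma_I}+\|u_\infty-u_\infty^h\|_{1,\Gamma_I}$ via (b). We then obtain $\beta m\,|\mathtt{C}_u-\mathtt{C}_{u_h}|\le |\Gamma_I|\,|\mathtt{C}_u-\mathtt{C}_{u_h}|+\|w-w_h\|_{1,\Gamma_I}$. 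The assumption $m\beta>|\Gamma_I|$ lets us absorb the first term, which gives \eqref{lem:C_u_estimate.0} with constant $(m\beta-|\Gamma_I|)^{-1}$.

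\emph{Main obstacle.} The obstacle is the quadrature in the discrete relation. We cannot compare $G(w,c)$ with $\|I_h\{(|w_h|-c)_+\}\|_{1,\Gamma_I}$ through (b) alone. Our first attempt would use two observations. On each side $S\in\mathcal{S}_h^I$, the function $w_h$ is affine and $a\mapsto(|a|-c)_+$ is convex, so $(|w_h|-c)_+\le I_h\{(|w_h|-c)_+\}$. In addition, the lumped integral equals $\sum_{\nu}(|w_h(\nu)|-c)_+\int_{\Gamma_I}\varphi_\nu\,\mathrm{d}s$, so the node-wise $1$-Lipschitz property controls differences between two discrete arguments.

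If these observations do not give an exact comparison, we would fall back on Lemma \ref{lem:mass_lumping_lipschitz} with $\Phi(a,c)=(|a|-c)_+$. This bounds the quadrature defect by $h^{\frac12}\|\nabla u_h\|_\Omega$, which is uniformly bounded by the stability estimate \eqref{thm:convergence_p1.1}. The price is an extra $h^{\frac12}$, which is admissible in \eqref{lem:d_u_estimate.0} and which we would have to carry into \eqref{lem:C_u_estimate.0} if the sharp comparison fails.

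\emph{Step 2 (estimate for $\widetilde{\mathtt{d}}$).} By \eqref{prop:optimality.2} and \eqref{prop:discrete_optimality.2},
\[
\widetilde{\mathtt{d}}-\widetilde{\mathtt{d}}_h=\tfrac{1}{\beta\mathtt{C}_u}\bigl((|w|-\mathtt{C}_u)_+-(|w_h|-\mathtt{C}_{u_h})_+\bigr)+\tfrac{1}{\beta\mathtt{C}_u}\bigl((|w_h|-\mathtt{C}_{u_h})_+-I_h\{(|w_h|-\mathtt{C}_{u_h})_+\}\bigr)+\bigl(\tfrac{1}{\beta\mathtt{C}_u}-\tfrac{1}{\beta\mathtt{C}_{u_h}}\bigr)I_h\{(|w_h|-\mathtt{C}_{u_h})_+\}.
\]
Lemma \ref{lem:positivity_Cu} and the convergence \eqref{thm:convergence_p1.2.3} give $\mathtt{C}_u>0$ and $\mathtt{C}_{u_h}\ge\frac12\mathtt{C}_u$ for $h$ small. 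For larger $h$, finitely many cases handled by the stability estimate and Lemma \ref{lem:positivity_Cu_h} suffice.

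Each term is then bounded in $L^1(\Gamma_I)$ as follows:
\begin{itemize}
\item The first term is at most $\mathtt{C}_u^{-1}\bigl(\|w-w_h\|_{1,\Gamma_I}+|\Gamma_I|\,|\mathtt{C}_u-\mathtt{C}_{u_h}|\bigr)$, by the pointwise $1$-Lipschitz property in both arguments.
\item The second term is $\lesssim h^{\frac12}\|\nabla u_h\|_\Omega\lesssim h^{\frac12}$, by \eqref{lem:mass_lumping_lipschitz.1.1}, $\|\cdot\|_{1,\Gamma_I}\lesssim\|\cdot\|_{\Gamma_I}$ and \eqref{thm:convergence_p1.1}.
\item The third term equals $\frac{|\mathtt{C}_{u_h}-\mathtt{C}_u|}{\beta\mathtt{C}_u\mathtt{C}_{u_h}}\,\beta m\,\mathtt{C}_{u_h}=\frac{m}{\mathtt{C}_u}|\mathtt{C}_u-\mathtt{C}_{u_h}|$, using \eqref{prop:discrete_optimality.1}.
\end{itemize}
Inserting Step 1 into the first and third terms yields \eqref{lem:d_u_estimate.0}.
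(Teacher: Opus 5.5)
Your Step~2 is sound, but your proposal does not prove \eqref{lem:C_u_estimate.0} as stated. In Step~1 the middle difference $G(w,\mathtt{C}_{u_h})-\|I_h\{(|w_h|-\mathtt{C}_{u_h})_+\}\|_{1,\Gamma_I}$ is not controlled by (b), and your fallback leaves an extra $h^{\frac12}$ in the bound for $|\mathtt{C}_u-\mathtt{C}_{u_h}|$.

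The paper does not close this gap either. Its proof applies $\operatorname{Lip}(\Phi)\le 1$ to the difference of \eqref{prop:optimality.1} and \eqref{prop:discrete_optimality.1} as if the latter contained $(|w_h|-\mathtt{C}_{u_h})_+$ rather than its nodal interpolant. It therefore silently drops the quadrature defect
\[
D_h\coloneqq\bigl\|I_h\{(|w_h|-\mathtt{C}_{u_h})_+\}-(|w_h|-\mathtt{C}_{u_h})_+\bigr\|_{1,\Gamma_I}\ge 0\,,
\]
which is exactly the term it keeps, as $T_3^h$, in the proof of \eqref{lem:d_u_estimate.0}.

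Your suspicion is justified. The convexity bound $(|w_h|-c)_+\le I_h\{(|w_h|-c)_+\}$ helps in one direction only, and in the other direction the defect cannot be absorbed. Suppose $u=u_h$ and $u_\infty=u_\infty^h$ on $\Gamma_I$. Then the right-hand side of \eqref{lem:C_u_estimate.0} vanishes. But the increasing map $c\mapsto m\beta c-\|I_h\{(|w_h|-c)_+\}\|_{1,\Gamma_I}$ is strictly negative at $c=\mathtt{C}_u$ as soon as $|w_h|-\mathtt{C}_u$ changes sign inside some side, so its root satisfies $\mathtt{C}_{u_h}>\mathtt{C}_u$. Hence no argument that uses only the two fixed-point relations (which is all that your argument and the paper's use) can give \eqref{lem:C_u_estimate.0} as written. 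What is provable is \eqref{lem:C_u_estimate.0} with the additional term $D_h/(m\beta-|\Gamma_I|)$, i.e.\ your fallback, and this is all that \eqref{lem:d_u_estimate.0} requires.

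To make the fallback clean, use monotonicity in place of (a). The function $F(c)\coloneqq m\beta c-G(w,c)$ satisfies $F(c')-F(c)\ge m\beta(c'-c)$ for $c'\ge c$, together with $F(\mathtt{C}_u)=0$ and $|F(\mathtt{C}_{u_h})|\le\|w-w_h\|_{1,\Gamma_I}+D_h$. Hence $m\beta|\mathtt{C}_u-\mathtt{C}_{u_h}|\le\|w-w_h\|_{1,\Gamma_I}+D_h$, even without the hypothesis $m\beta>|\Gamma_I|$. When $\mathtt{C}_{u_h}\le\mathtt{C}_u$, your convexity bound removes $D_h$ altogether.

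Lemma~\ref{lem:mass_lumping_lipschitz} is formulated for $\Phi(v_h,\mathtt{C}_h)$ with $v_h$ a volume finite element function, whereas $w_h$ contains the boundary function $u_\infty^h$. Rerunning its side-wise argument gives $D_h\lesssim h^{\frac12}\|\nabla u_h\|_\Omega+h\|\nabla_\Gamma u_\infty^h\|_{\Gamma_I}\lesssim h^{\frac12}$, by \eqref{thm:convergence_p1.1} and the $H^1(\Gamma_I)$-stability of $\mathcal{I}_h^I$.

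Finally, your Step~2 is in one respect better than the paper's. The paper's bounds for all three of its terms contain $1/\mathtt{C}_{u_h}$, so its hidden constant needs a uniform lower bound on $\mathtt{C}_{u_h}$ that is never established. You instead test the coefficient difference against $I_h\{(|w_h|-\mathtt{C}_{u_h})_+\}$, whose $L^1$-norm is $\beta m\mathtt{C}_{u_h}$. So $\mathtt{C}_{u_h}$ cancels, and only $\mathtt{C}_u$, $m$, $\beta$ and $|\Gamma_I|$ enter. Your remark that $\mathtt{C}_{u_h}\ge\frac12\mathtt{C}_u$ for small $h$ is therefore unnecessary and can be dropped.
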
  

\begin{proof}

\textit{ad \eqref{lem:C_u_estimate.0}.}
Since  $\Phi\coloneqq((a,c)\mapsto(|a|-c)_+)\in C^{0,1}(\mathbb{R}\times \mathbb{R})$ and $\textup{Lip}(\Phi)\leq 1$, we find that 
\begin{align*}
     |\mathtt{C}_u - \mathtt{C}_{\smash{u_h}}|  \le \tfrac{1}{m\beta}(\|u-u_h\|_{1,\Gamma_I} +  \|u_\infty-u_\infty^h\|_{1,\Gamma_I} + \vert \Gamma_I\vert |\mathtt{C}_{\smash{u_h}} - \mathtt C_u|)\,,
\end{align*}
and, thus, due to $m\beta > |\Gamma_{I}|$, that the claimed estimate \eqref{lem:C_u_estimate.0} applies. 

\textit{ad \eqref{lem:d_u_estimate.0}.}
Using \eqref{prop:optimality.2} and \eqref{prop:discrete_optimality.2}, we find that 
\begin{align}\label{lem:d_u_estimate.1}
\begin{aligned}
    \smash{\widetilde{\mathtt{d}}}  - \smash{\widetilde{\mathtt{d}}_h} &= \tfrac{1}{\beta \mathtt C_{\smash{u}}}(\vert u-\smash{u}_{\infty}\vert-\mathtt C_{\smash{u}})_+ - \tfrac{1}{\beta \mathtt{C}_{\smash{u_h}}}I_h\{(\vert u_h-u_{\infty}^h\vert-\mathtt{C}_{\smash{u_h}})_+\}\\
&= \smash{\bigl(\tfrac{1}{\beta \mathtt C_{\smash{u}}} - \tfrac{1}{\beta \mathtt{C}_{\smash{u_h}}}\bigr)}(\vert u-\smash{u}_{\infty}\vert-\mathtt C_{\smash{u}})_+\\&\quad + \tfrac{1}{\beta \mathtt{C}_{\smash{u_h}}}((\vert u-\smash{u}_{\infty}\vert-\mathtt C_{\smash{u}})_+ -  (\vert u_h-u_{\infty}^h\vert-\mathtt{C}_{\smash{u_h}})_+)  
    \\ & \quad +\tfrac{1}{\beta \mathtt{C}_{\smash{u_h}}} ((\vert u_h-u_{\infty}^h\vert-\mathtt{C}_{\smash{u_h}})_+ -I_h\{(\vert u_h-u_{\infty}^h\vert-\mathtt{C}_{\smash{u_h}})_+\} ) \\
    &\eqqcolon T_1^h + T_2^h + T_3^h 
    \quad \text{ a.e.\ on }\Gamma_I\,.
\end{aligned}
\end{align}
Using \eqref{prop:optimality.1} and \eqref{lem:C_u_estimate.0}, we have that
\begin{align}\label{lem:d_u_estimate.2}
\begin{aligned} 
    \|T_1^h\|_{1,\Gamma_I} &\le \smash{\tfrac{1}{\beta \mathtt{C}_u \mathtt{C}_{\smash{u_h}}}}|\mathtt{C}_u - \mathtt{C}_{\smash{u_h}}|\|(\vert u-\smash{u}_{\infty}\vert-\mathtt C_{\smash{u}})_+\|_{1,\Gamma_I}
    \\&=\smash{\tfrac{m}{\mathtt{C}_{\smash{u_h}}}}|\mathtt{C}_u - \mathtt{C}_{\smash{u_h}}|
    \\&
    \le \smash{\tfrac{m}{\mathtt{C}_{\smash{u_h}}(m\beta - |\Gamma_I|)}} ( \|u-u_h\|_{1,\Gamma_I} + \|u_\infty-u_\infty^h\|_{1,\Gamma_I})\,.
    \end{aligned}
\end{align}
Since $\Phi\coloneqq ((a,c)\mapsto (\vert a\vert-c)_+)\in C^{0,1}(\mathbb{R}\times \mathbb{R})$, using \eqref{lem:C_u_estimate.0} and Lemma \ref{lem:mass_lumping_lipschitz}, 
we have that
\begin{align}\label{lem:d_u_estimate.3}
\begin{aligned} 
 \|T_2^h\|_{1,\Gamma_I} & \le \smash{\tfrac{1}{\beta \mathtt{C}_{\smash{u_h}}}}(\| \vert u-\smash{u}_{\infty}\vert - \vert u_h-u_{\infty}^h\vert\|_{1,\Gamma_I} +\vert \Gamma_I\vert \vert\mathtt{C}_u-\mathtt{C}_{\smash{u_h}}\vert)\\&\le \smash{\tfrac{1}{\mathtt{C}_{\smash{u_h}}}}\tfrac{m}{m\beta-\vert\Gamma_I\vert}(\|u - u_h\|_{1,\Gamma_I} + \|u_\infty - u_\infty^h\|_{1,\Gamma_I})\,,\\
 \|T_3^h\|_{1,\Gamma_I}&\lesssim  h^{\frac{1}{2}} \| \nabla u_h- \nabla u_{\infty}^h\|_{ \Omega}\,.
 \end{aligned}
\end{align}
Eventually, using \eqref{lem:d_u_estimate.2} and \eqref{lem:d_u_estimate.3} in \eqref{lem:d_u_estimate.1}, we conclude that the claimed estimate~\eqref{lem:d_u_estimate.0}~applies.
\end{proof}\newpage

\section{Numerical experiments}\vspace{-0.75mm}
\label{sec:num-insulation}

\hspace{5mm}All \hspace{-0.1mm}computations \hspace{-0.1mm}use \hspace{-0.1mm}the \hspace{-0.1mm}block \hspace{-0.1mm}coordinate \hspace{-0.1mm}descent \hspace{-0.1mm}solver \hspace{-0.1mm}of \hspace{-0.1mm}Algorithm~\hspace{-0.1mm}\ref{algorithm}, \hspace{-0.1mm}implemented~\hspace{-0.1mm}in~\hspace{-0.1mm}the~\hspace{-0.1mm}fi\-nite \hspace{-0.1mm}element \hspace{-0.1mm}library \hspace{-0.1mm}\texttt{NETGEN}/\texttt{NGSolve} \hspace{-0.1mm}(version \hspace{-0.1mm}v6.2.2602, \hspace{-0.1mm}\textit{cf}.\ \hspace{-0.1mm}\cite{netgen}/\cite{ngsolve}); \hspace{-0.1mm}all \hspace{-0.1mm}graphics \hspace{-0.1mm}use \hspace{-0.1mm}\texttt{Matplotlib} (version 3.10.8, \textit{cf}.\ \cite{Hunter07}) or \texttt{PyVista} (version 0.48.4, \textit{cf}.\ \cite{pyvista}). We report two examples.~The~first~is~a manufactured-solution study on the unit square (\textit{cf}.\ \Cref{sec:num-apriori}) confirming the error decay rates of Theorem~\ref{thm:H1_error}. The second is a qualitative three-dimensional example on the Orion~crew-module geometry (\textit{cf}.\ \Cref{sec:num-orion}), with a Robin datum~idealizing~windward~biased~reentry~heating.\enlargethispage{10mm}\vspace{-1mm}

\subsection{Manufactured test case}
\label{sec:num-apriori}\vspace{-0.75mm}

\hspace{5mm}In this example, let $\Omega=(0,1)^2$, $\Gamma_I=\partial\Omega$, 
$\kappa=1$, $\beta=\tfrac32$, $m=4$, and $u_\infty\in C^\infty(\mathbb{R}^2)$, for every $x=(x_1,x_2)\in \mathbb{R}^2$ defined by $ u_\infty(x_1,x_2) \coloneqq \tfrac32(\gamma(x_1)+\gamma(x_2)) - 1
            + \tfrac1{10}(\cos 2\pi x_1+\cos 2\pi x_2)$,~where $\gamma(t)\hspace{-0.1em}\coloneqq\hspace{-0.1em} t(1-t)$ for all $t\hspace{-0.1em}\in\hspace{-0.1em} [0,1]$. 
Moreover, let $\omega\hspace{-0.1em}\coloneqq \hspace{-0.1em}m\beta\|\psi+\gamma^2\|_{1,\Gamma_I}^{-1}$~and~let~$\psi\hspace{-0.1em}\in\hspace{-0.1em} W^{2,\infty}(0,1)\cap C^1[0,1]$, for every $t\in [0,1]$, be defined by\vspace{-0.5mm}
\begin{align*}
    \psi(t)\coloneqq \begin{cases} 
        \smash{0.12(1-3\xi^2+2\xi^3)}&\text{ if }t\eqqcolon a\xi\in [0,a]\,,\\[-0.5mm]
        0&\text{ if }t\in[a,b]\,,\\[-0.5mm]
        \smash{0.12(3\zeta^2-2\zeta^3)}&\text{ if }t\eqqcolon (1-b)\zeta
        +b\in [b,1]\,,
    \end{cases}\text{ where }(a,b)=(0.3765,0.6184)\,. 
\end{align*} 
If  
$f\hspace{-0.1em}\coloneqq\hspace{-0.1em}-\Delta u\hspace{-0.1em}\in\hspace{-0.1em}  L^2(\Omega)$, a pair $(u,\smash{\widetilde{\mathtt{d}}})\hspace{-0.1em}\in\hspace{-0.1em} H^1(\Omega)\times\mathcal{H}_m$ minimizing \eqref{def:E} (solving~\mbox{\eqref{prop:optimality.1}--\eqref{prop:optimality.3}}~with~${\mathtt{C}_u\hspace{-0.1em}=\hspace{-0.1em}1}$) 
is given via $(u(x),\smash{\widetilde{\mathtt{d}}}(s))\hspace{-0.1em}=\hspace{-0.1em}(u_\infty(x)+1+ \omega(\psi(x_1)+\gamma^2(x_2)),\frac{\omega}{\beta}(\psi(s_1)+\gamma^2(s_2)))$ for all ${x\hspace{-0.1em}=\hspace{-0.1em}(x_1,x_2)\hspace{-0.1em}\in\hspace{-0.1em} \mathbb{R}^2}$\linebreak and $s =(s_1,s_2)\in \Gamma_I$, so that 
$\{|u-u_\infty|=\mathtt{C}_u\}=[a,b]\times \{0,1\}$ and  
$\smash{\widetilde{\mathtt{d}}}>0$ on  $\Gamma_I\setminus \{|u-u_\infty|=\mathtt{C}_u\}$.

Since $u\in H^2(\Omega)$, Theorem~\ref{thm:H1_error} predicts first-order convergence in the energy norm~and second-order convergence of the energy error.  The plotted $L^2$-errors in $u$ and
$\smash{\widetilde{\mathtt{d}}}$,~as~well~as~the~error~in~$\mathtt{C}_u$, also exhibit approximately second-order convergence under~\mbox{uniform}~\mbox{mesh-refinement},~see~\mbox{\Cref{fig:ins-test1-conv}}. 
\Cref{fig:ins-test1-sol} visualizes the corresponding discrete temperature and optimal insulation distribution.

\begin{figure}[H]
  \centering
  \includegraphics[width=\linewidth]{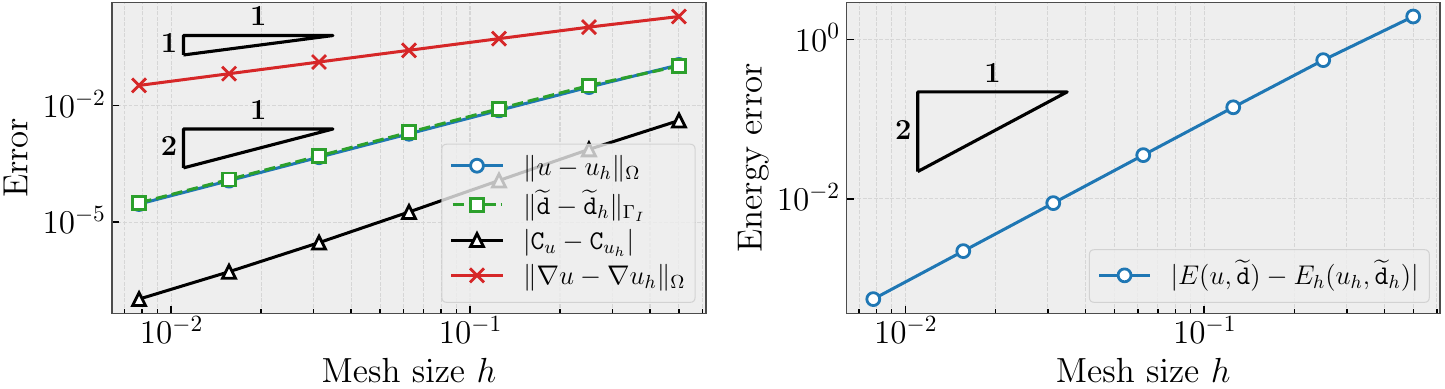}\vspace{-1mm}
  \caption{\textit{Left:} $L^2(\Omega)$- and $H^1(\Omega)$-semi-norm errors for $u$, $L^2(\Gamma_I)$-error for $\smash{\widetilde{\mathtt{d}}}$, and~\mbox{error}~in~$\mathtt C_u$; 
  \textit{right:} energy error with reference value $E(u,\smash{\widetilde{\mathtt{d}}})\approx -15.5424$.}
  \label{fig:ins-test1-conv}
\end{figure}\vspace{-5mm}

\begin{figure}[H]
  \centering
  \includegraphics[width=\linewidth]{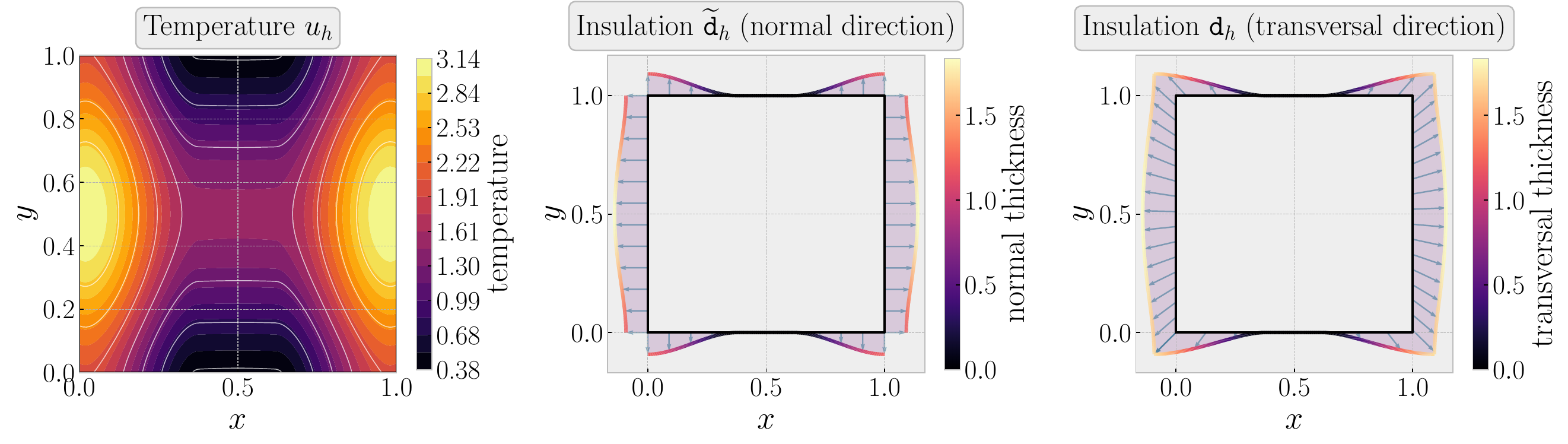}\vspace{-1.5mm}
  \caption{\textit{Left:} temperature $u_h\in \mathcal{S}^1(\mathcal{T}_h)$ computed on a
structured mesh with $65{,}536$ simplices. \textit{Center:} distribution function $\smash{\widetilde{\mathtt{d}}_h}\in \mathcal{S}^1(\mathcal{S}_h^{I})$ in the normal direction. \textit{Right:} distribution function $\smash{\widetilde{\mathtt{d}}_h}$ in the transversal direction. The support of $\smash{\widetilde{\mathtt{d}}_h}$ intersects all four sides, while the contact set has positive measure and leaves the central parts of the top and bottom edges uninsulated. 
  }
  \label{fig:ins-test1-sol}
\end{figure}\newpage
 
\subsection{A qualitative three-dimensional test inspired by reentry heating}
\label{sec:num-orion}

\hspace{5mm}In this example, we consider a qualitative three-dimensional test case on an idealized reentry capsule geometry
inspired by NASA's Orion crew module. The computational domain $\Omega$ represents a blunt
capsule with a broad heat shield, a rounded shoulder, and a tapered rear~section. The geometry
is based on the capsule dimensions reported in~\cite{HollisBergerHorvathCoblishNorrisLillardKirk2009}.
The entire exterior hull is treated as the insulated boundary (\textit{i.e.}, $\Gamma_I=\partial\Omega$).
The purpose of this example is to illustrate the behavior of the proposed finite element approximation on a
non-trivial three-dimensional geometry. It is not intended as a high-fidelity aero-thermal or
thermal-protection-system simulation.

In order to mimic the strongly windward-biased thermal loading during reentry, we prescribe a spatially
varying ambient temperature on $\Gamma_I$. Let $\widehat z\coloneqq e_3=(0,0,1)\in \mathbb{S}^2$ denote the flight axis, and assume
that the geometry is oriented such that the heat shield faces the positive~\mbox{$e_3$-direction}. Moreover,
let $x_0\in \Omega$ denote the centroid of $\Omega$ and $k\in (C^\infty(\Gamma_I))^3$~a~smooth~(globally)~transversal vector field, for every $s\in \Gamma_I$ defined by 
\begin{align*}
    k(s)\coloneqq\tfrac{s-x_0}{|s-x_0|}\,,
\end{align*}
which satisfies $k\cdot n\ge \kappa$ on $\Gamma_I$ for some $\kappa>0$, depending on the geometry of $\Omega$. The ambient temperature $u_\infty\in C^{1,1}(\Gamma_I)$, for every $x\in \mathbb{R}^3\setminus \Omega$, is defined by 
\begin{align*}
  u_\infty
  \coloneqq
  \Theta_{\mathrm{wake}}
  +
  (\Theta_{\mathrm{stag}}-\Theta_{\mathrm{wake}})
  (\widehat z\cdot k)_+^2\,,
  \qquad
  \Theta_{\mathrm{stag}}\coloneqq300\,,\quad
  \Theta_{\mathrm{wake}}\coloneqq0\,.
\end{align*}
In particular, the maximum ambient temperature is attained in the windward stagnation region, while
$u_\infty=0$ on the leeward part $\{\widehat z\cdot k\le 0\}$. The remaining data are $\kappa=40$, $\beta=1$,~and~$f\equiv\tfrac{1}{10}$.
The domain $\Omega$ is discretized by $1{,}758{,}964$ tetrahedral elements. In each step of the block
coordinate descent algorithm, the resulting linear system is solved by a direct sparse~Cholesky~factorization.

\Cref{fig:orion} displays the discrete temperature $u_h\in \mathcal{S}^1(\mathcal{T}_h)$ and the discrete optimal insulation
distribution (in transversal direction) $\mathtt{d}_h\coloneqq (k\cdot n)^{-1}\widetilde{\mathtt{d}}_h\in H^1(\Gamma_I)$ for the normalized insulation budgets
\begin{align*}
    \tfrac{m}{|\Gamma_I|}\in\{0.1,0.5,1,1.5,2,2.5\}\,.
\end{align*}

$\bullet$ For small budgets, the insulation is concentrated in a windward cap, where the temperature
mismatch $|u_h-u_\infty^h|$ is largest. As the budget increases, the support of the insulation
expands along the hull. More precisely, when $m/|\Gamma_I|$ increases from $0.1$ to $2.5$, the
threshold $\mathtt{C}_{u_h}$ decreases from $51$ to $8.4$, while the insulated area fraction increases from
$9\%$ to $71\%$. For $m/|\Gamma_I|\le 1$, the support consists of a single windward component,
with an insulated area fraction of at most $22\%$. 

$\bullet$ For larger budgets, an additional disconnected
component appears on the leeward flank.

\begin{figure}[H]
  \centering
  \includegraphics[width=\linewidth]{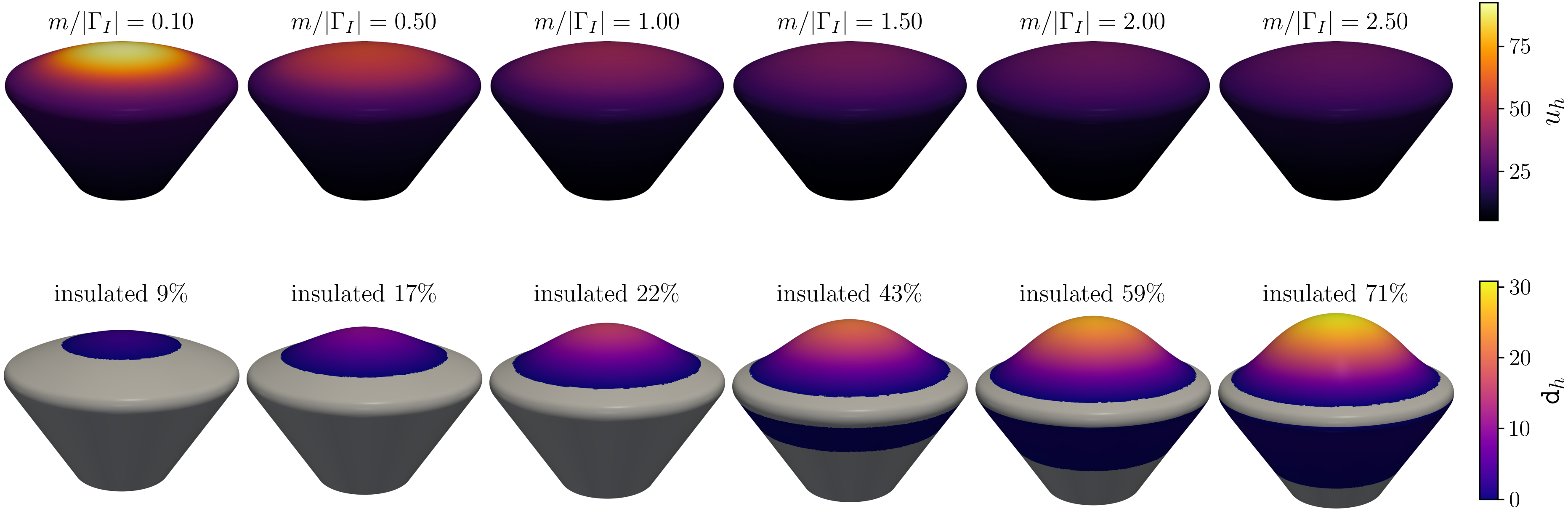}
  \caption{Qualitative three-dimensional reentry capsule test for the normalized insulation budgets
  $m/|\Gamma_I|\in\{0.1,0.5,1,1.5,2,2.5\}$.
  \textit{Top:} discrete temperature $u_h\in \mathcal{S}^1(\mathcal{T}_h)$.
  \textit{Bottom:} discrete optimal insulation distribution (in the transversal direction)
  $\mathtt{d}_h\coloneqq(k\cdot n)^{-1}\widetilde{\mathtt{d}}_h\in H^1(\Gamma_I)$.}
  \label{fig:orion}
\end{figure}\newpage

\appendix

\section{An $H^1(\Gamma_I)$-stable quasi-interpolant}\label{sec:appendix_A}\vspace{-0.5mm}
\hspace{5mm}Let $\mathcal{I}_{h} \colon H^1(\Omega) \to \mathcal{S}^1(\mathcal{T}_h)$ be a quasi-interpolant of Scott--Zhang or Cl\'ement type (see, \textit{e.g.}, \cite[Sec.\ 1.6]{ErnGuermond2004}) and $\mathcal{I}_h^I \colon H^1(\Gamma_I) \to \mathcal{S}^1(\mathcal{S}_h^{I})$ its  boundary analogue defined~on~the~\mbox{boundary}~mesh~$\mathcal{S}_h^{I}$. Then, we
define a quasi-interpolant $\mathcal{J}_h\colon E_\Delta(\Omega)\hspace{-0.1em}\coloneqq\hspace{-0.1em}\{ v \hspace{-0.1em}\in\hspace{-0.1em} H^{\frac{3}{2}}(\Omega) \mid \Delta v \in L^2(\Omega)\} \hspace{-0.1em}\to \hspace{-0.1em}\mathcal{S}^1(\mathcal{T}_h)$~by~setting\vspace{-0.5mm}
\begin{align}
    (\mathcal{J}_h v)(\nu) \coloneqq  \begin{cases}
        (\mathcal{I}_h^I v)(\nu) &\text{ if } \nu \in \mathcal{N}_h^{I}\,, \\
        (\mathcal{I}_h v)(\nu) &\text{ if } \nu \in \mathcal{N}_h\setminus \mathcal{N}_h^{I}\,,
    \end{cases}\label{eq:quasi_interpolant_def}
\end{align} 
where we used the trace embedding $E_\Delta(\Omega)\hookrightarrow \smash{H^1(\Gamma_I)}$ (\textit{cf}.\ \cite[Cor.\ 3.7]{BehrndtGesztesyMitrea2025}), 
which has local stability and approximation properties with respect to both  $\mathcal{T}_h$ and $\smash{\mathcal{S}_h^{I}}$:\enlargethispage{7.5mm}\vspace{-0.5mm}

\begin{lemma}\label{prop:properties_Jh}  The following statements apply:
\begin{itemize}[noitemsep,topsep=2pt,leftmargin=!,labelwidth=\widthof{(iii)}]
    \item[(i)]\hypertarget{prop:properties_Jh.i}{} For every $v_h\in \mathcal{S}^1(\mathcal{T}_h)$, there holds $\mathcal{J}_h v_h=v_h$ in $\Omega$;
    \item[(ii)]\hypertarget{prop:properties_Jh.ii}{} For every $S\in \mathcal{S}_h^{I}$, setting $\smash{\omega_S \coloneqq  \operatorname{int}\bigcup\{S' \in \mathcal{S}_h^{I} \mid \overline{S'} \cap \overline{S} \ne \emptyset \}}$, there holds
\begin{align}
  \smash{\|v - \mathcal{J}_h v\|_{S} +  h_S\|\nabla_S \mathcal{J}_h v\|_{S} \lesssim  h_S \| \nabla_{\Gamma} v \|_{\omega_S}}\,;\label{eq:quasi_interpolant_volume_error_S}
\end{align} 
    \item[(iii)]\hypertarget{prop:properties_Jh.iii}{} For every $T\in \mathcal{T}_h$, setting $\smash{\omega_T \coloneqq  \operatorname{int}\bigcup\{T' \in \mathcal{T}_h \mid \overline{T'} \cap \overline{T} \ne \emptyset \}}$, there holds
\begin{align}
  \smash{\|v - \mathcal{J}_h v\|_{T} +  h\|\nabla \mathcal{J}_h v\|_{T} \lesssim  h \| \nabla v \|_{\omega_T}}\,;\label{eq:quasi_interpolant_volume_error}
\end{align} 
    \item[(iv)]\hypertarget{prop:properties_Jh.iv}{} For every  $T\in \mathcal{T}_h$ and $v\in H^s(\omega_T)$, $s \in (\frac{1}{2},2]$, there holds
    \begin{align}
        \smash{\|\nabla v - \nabla\mathcal{J}_h v\|_{T} \lesssim h^{s-1}|v|_{s,\omega_T}}\,. \label{eq:Jh_estimate_H1}
    \end{align} 
\end{itemize}
The implicit constants in $\lesssim$ in \eqref{eq:quasi_interpolant_volume_error_S}--\eqref{eq:Jh_estimate_H1} depend only on the shape-regularity~of~$\{\mathcal{T}_h\}_{h>0}$.
\end{lemma}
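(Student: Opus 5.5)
The plan is to verify the four properties node by node and element by element, exploiting the fact that $\mathcal{J}_h$ coincides with $\mathcal{I}_h^I$ at the insulated boundary nodes and with $\mathcal{I}_h$ at all remaining nodes. Throughout, $\mathcal{I}_h$ and $\mathcal{I}_h^I$ are taken to be Scott--Zhang type operators. For each $\nu\in\mathcal{N}_h^I$ we pick the averaging simplex (facet) inside $\mathcal{S}_h^I$, and for $\nu\in\mathcal{N}_h\setminus\mathcal{N}_h^I$ we pick an averaging facet/element that avoids $\Gamma_I$ where possible. Both operators are then projections onto their respective $\mathcal{S}^1$-spaces.

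\emph{(i)} For $v_h\in\mathcal{S}^1(\mathcal{T}_h)$, the trace $v_h|_{\Gamma_I}$ lies in $\mathcal{S}^1(\mathcal{S}_h^I)$. Hence $(\mathcal{I}_h^Iv_h)(\nu)=v_h(\nu)$ for $\nu\in\mathcal{N}_h^I$, and $(\mathcal{I}_hv_h)(\nu)=v_h(\nu)$ otherwise. Since an element of $\mathcal{S}^1(\mathcal{T}_h)$ is determined by its nodal values, $\mathcal{J}_hv_h=v_h$.

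\emph{(ii)} On $S\in\mathcal{S}_h^I$, all vertices of $S$ lie in $\overline{\Gamma}_I$. I expect to need that $\Gamma_I$ is relatively open, so vertices on $\partial\Gamma_I$ belong to $\mathcal{N}_h^N$, or at least are not in $\mathcal{N}_h^I$. If all vertices are in $\mathcal{N}_h^I$, then $\mathcal{J}_hv|_S=\mathcal{I}_h^Iv|_S$ and \eqref{eq:quasi_interpolant_volume_error_S} is the standard local estimate for $\mathcal{I}_h^I$ on the boundary mesh. Vertices on $\partial\Gamma_I$ are the delicate point. I would resolve it by defining $\mathcal{N}_h^I$ as the nodes of $\overline{\Gamma}_I$, consistent with $I_h$ being defined on $C^0(\overline{\Gamma}_I)$, so that every vertex of $S$ uses $\mathcal{I}_h^I$.

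\emph{(iii)} For $T\in\mathcal{T}_h$, write
\[
\mathcal{J}_hv-\mathcal{I}_hv=\sum_{\nu\in\mathcal{N}_h^I\cap\overline T}\bigl((\mathcal{I}_h^Iv)(\nu)-(\mathcal{I}_hv)(\nu)\bigr)\varphi_\nu .
\]
If the Scott--Zhang averaging facet of $\mathcal{I}_h$ at a boundary node is chosen on $\Gamma_I$, then both nodal values are local $L^2$ averages of the trace $v|_{\Gamma_I}$ against dual basis functions on neighbouring boundary facets. Their difference is therefore bounded by $h^{-(d-1)/2}\|v-c\|_{\omega_\nu\cap\Gamma_I}$ for any constant $c$. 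A scaled trace inequality together with a Poincaré inequality on the patch then gives $|(\mathcal{I}_h^Iv)(\nu)-(\mathcal{I}_hv)(\nu)|\lesssim h^{1-d/2}\|\nabla v\|_{\omega_T}$. Since $\|\varphi_\nu\|_T\lesssim h^{d/2}$ and $\|\nabla\varphi_\nu\|_T\lesssim h^{d/2-1}$, the correction term satisfies the same bound as in \eqref{eq:quasi_interpolant_volume_error}. Combining this with the known estimate for $\mathcal{I}_h$ yields (iii).

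\emph{(iv)} Because of (i), $\mathcal{J}_h$ reproduces affine functions locally. I would use the Bramble--Hilbert/Deny--Lions argument on $\omega_T$: $\nabla(v-\mathcal{J}_hv)=\nabla(v-p)-\nabla\mathcal{J}_h(v-p)$ for an affine $p$. For this I need a local stability bound
\[
\|\nabla\mathcal{J}_hw\|_T\lesssim h^{-1}\|w\|_{\omega_T}+h^{s-1}|w|_{s,\omega_T},\qquad s>\tfrac12 .
\]
The boundary nodal values are controlled by a trace estimate in $H^s$ for $s>\frac12$, which is exactly why that range is assumed; the interior nodal values are controlled as for $\mathcal{I}_h$. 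Choosing $p$ as the averaged Taylor polynomial then gives $h^{s-1}|v|_{s,\omega_T}$.

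The \textbf{main obstacle} is in (iii) and (iv): controlling the mismatch between the boundary-averaged and the volume-averaged nodal values using only $H^1$ (respectively $H^s$, $s>\frac12$) norms of $v$ on $\omega_T$, uniformly in $h$. The plan is to enforce that the averaging facets used by $\mathcal{I}_h$ at $\Gamma_I$-nodes lie in $\mathcal{S}_h^I$; if that is not permitted, I would fall back on the scaled trace inequality $\|w\|_{S}^2\lesssim h^{-1}\|w\|_{T_S}^2+h\|\nabla w\|_{T_S}^2$.
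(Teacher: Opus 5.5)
Your arguments for (i)--(iii) follow the paper's proof. For (i) you use nodal reproduction. For (ii) you use the identity $\mathcal{J}_h v|_{\Gamma_I}=\mathcal{I}_h^I v$. For (iii) you compare $(\mathcal{I}_h^I v)(\nu)$ with $(\mathcal{I}_h v)(\nu)$ at the $\Gamma_I$-nodes of $T$ via a scaled trace inequality and a patch Poincar\'e inequality, obtaining $h^{1-d/2}\|\nabla v\|_{\omega_T}$, and then scale. The paper does the scaling step through the node-based norm equivalence and an inverse estimate. If $\mathcal{I}_h$ and $\mathcal{I}_h^I$ use the same boundary facet and dual basis at each $\Gamma_I$-node, the correction even vanishes and $\mathcal{J}_h$ is just a Scott--Zhang operator.

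Your concern about vertices on $\partial\Gamma_I$ is justified, and the paper passes over it. With $\Gamma_I$ relatively open and $\mathcal{N}_h^I$ read literally, $\mathcal{J}_h v|_{\Gamma_I}\neq\mathcal{I}_h^I v$ on facets touching $\partial\Gamma_I$. Counting the nodes of $\overline{\Gamma}_I$ is the right repair, and it is what $\mathcal{S}^1(\mathcal{S}_h^I)\subseteq C^0(\overline{\Gamma}_I)$ and the nodal interpolant $I_h$ implicitly require anyway. It leaves (i) and (iii) intact.

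For (iv) your route differs from the paper's and is the better one. The paper subtracts only the patch mean $c$ and then asserts $\|\nabla(v-c)\|_{\omega_T}\lesssim h^{s-1}|v|_{s,\omega_T}$. Since $\nabla(v-c)=\nabla v$, while $|p|_{s,\omega_T}=0$ for affine $p$ when $s>1$, this holds only for $s=1$. Your approach of subtracting an affine $p$ and using local $\mathbb{P}^1$-reproduction gives a correct proof for $s\in[1,2]$. This is why fixing Scott--Zhang, rather than a mean-value Cl\'ement operator, matters. In that range the $H^1$-stability $\|\nabla\mathcal{J}_h w\|_T\lesssim\|\nabla w\|_{\omega_T}$ from (iii) already suffices, so no $H^s$-trace bound is needed.

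For $s\in(\tfrac12,1)$, however, your argument fails at the first term. $\|\nabla(v-p)\|_T$ is not bounded by $h^{s-1}|v|_{s,\omega_T}$; it need not even be finite for $v\in H^s(\omega_T)$. The $H^s$-stability of $\mathcal{J}_h$ that you emphasize does not address this term. No argument can close the gap, because the estimate is false there. Take $v=\epsilon\sin(x_1/\delta)$ on $\omega_T$ with $\delta\ll h$. The left-hand side is $\gtrsim\epsilon\delta^{-1}h^{d/2}$, while the right-hand side is $\lesssim\epsilon h^{s-1}\delta^{-s}h^{d/2}$, so their ratio $(h/\delta)^{1-s}$ is unbounded as $\delta\to0$. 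Hence (iv) holds, and should be claimed, only for $s\in[1,2]$. That range covers its sole use later, $u\in H^{1+s}(\Omega)$ with $s\in[\tfrac12,1]$. The condition $s>\tfrac12$ is not what makes the estimate work.
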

\begin{proof}
\textit{ad (\hyperlink{prop:properties_Jh.i}{i}).} For every $v_h\in \mathcal{S}^1(\mathcal{T}_h)$ and $\nu \in \mathcal{N}_h$, there holds $(\mathcal{J}_h v_h)(\nu)=(\mathcal{I}_h^{I} v_h)(\nu) = v_h(\nu)$ if  $\nu \in \mathcal{N}_h^{I}$ and $(\mathcal{J}_h v_h)(\nu)=(\mathcal{I}_h v_h)(\nu) = v_h(\nu)$, so that $\mathcal{J}_h v_h=v_h$ in $\Omega$.

\textit{ad (\hyperlink{prop:properties_Jh.ii}{ii}).} Since $\mathcal{J}_hv\!\!\restriction_{\Gamma_I}=\mathcal{I}_h^{I}v$, using the local stability and interpolation error estimate of $\mathcal{I}_h^{I}$ (\textit{cf}.\ \cite[Lems.\ 1.127, 1.130]{ErnGuermond2004}), we conclude that \eqref{eq:quasi_interpolant_volume_error_S}  applies.

\textit{ad (\hyperlink{prop:properties_Jh.iii}{iii}).}
 \hspace{-0.1mm}Let \hspace{-0.1mm}$v\hspace{-0.15em}\in\hspace{-0.15em} H^1(\Omega)$ \hspace{-0.1mm}be \hspace{-0.1mm}fixed, \hspace{-0.1mm}but \hspace{-0.1mm}arbitrary. \hspace{-0.1mm}We \hspace{-0.1mm}distinguish \hspace{-0.1mm}the \hspace{-0.1mm}cases  \hspace{-0.1mm}${\overline{T} \hspace{-0.15em}\cap\hspace{-0.15em} \overline{\Gamma}_I\hspace{-0.15em} = \hspace{-0.15em}\emptyset}$~and~${\overline{T} \hspace{-0.15em}\cap\hspace{-0.15em} \overline{\Gamma}_I \hspace{-0.15em}\neq\hspace{-0.15em} \emptyset}$:

 \textit{$\bullet$ Case 1: ($\overline{T} \cap \overline{\Gamma}_I = \emptyset$).} In this case,
 we have that $\mathcal{J}_hv\!\!\restriction_T=\mathcal{I}_hv\!\!\restriction_T$, so that \eqref{eq:quasi_interpolant_volume_error} follows from a local stability and interpolation error estimate for $\mathcal{I}_h$ (\textit{cf}.\ \cite[Lems.\ 1.127, 1.130]{ErnGuermond2004}).

 \textit{$\bullet$ Case 2: ($\overline{T} \cap \overline{\Gamma}_I \neq \emptyset$).} In this case, by the node-based norm equivalence (\textit{cf}.\ \cite[Prop.~12.5]{EG21I}),\vspace{-0.5mm}
 \begin{align}\label{prop:properties_Jh.1}
     \|\mathcal{I}_h v - \mathcal{J}_h v\|_{T}^2\sim \max_{\nu^{i}\in \mathcal{N}_h^{I}\cap \overline{T}}{\bigl\{h_{\nu^{i}}^d\vert (\mathcal{I}_h v - \mathcal{I}_h^{I} v)(\nu^{i})\vert^2\bigr\}}\,,\\[-7mm]\notag
 \end{align}
 where, for every $\smash{\nu^{i}}\hspace{-0.1em}\in\hspace{-0.1em} \smash{\mathcal{N}_h^{I}\cap \overline{T}}$, we define $\smash{h_{\smash{\omega_{\smash{\nu^{i}}}}}\hspace{-0.1em}\coloneqq \hspace{-0.1em}\textup{diam}(\omega_{\nu^{i}})}$ and $\smash{\omega_{\nu^{i}} \hspace{-0.1em}\coloneqq   \hspace{-0.1em}\operatorname{int}\bigcup\{\overline{T}\hspace{-0.1em} \in\hspace{-0.1em} \mathcal{T}_h \mid \nu^{i} \hspace{-0.1em}\in\hspace{-0.1em} \overline{T} \}}$,~due~to $\mathcal{I}_hc=c$ in $\Omega$ and $\mathcal{I}_h^{I} c=c$ on $\Gamma_I$ for all $c\in \mathbb{R}$, 
 local inverse estimates (\textit{cf}.\ \cite[Lem.\ 12.1]{EG21I}), a discrete trace estimate (\textit{cf}.\ \cite[Lem.\ 12.8]{EG21I}), a local interpolation error estimate for $\mathcal{I}_h$ (\textit{cf}.\ \cite[Lems.\ 1.127, 1.130]{EG21I}), and the fractional Poincar\'{e}--Steklov inequality (\textit{cf}.\ \cite[Lem.~3.26]{EG21I}),~there~holds\vspace{-0.5mm} 
  \begin{align}\label{prop:properties_Jh.2}
  \begin{aligned} 
    |(\mathcal{I}_h v - \mathcal{I}_h^{I} v)(\nu^{i})|^2 
   & \lesssim  \smash{h^{-d}_{\nu^{i}}} \| \mathcal{I}_h v - (1,v)_{\smash{\omega_{\smash{\nu^{i}}}}}\|_{\smash{\omega_{\smash{\nu^{i}}}}}^2 + \smash{h^{1-d}_{\nu^{i}}} \| \mathcal{I}_h^{I} v - (1,v)_{\smash{\omega_{\smash{\nu^{i}}}}}\|_{\partial\smash{\omega_{\smash{\nu^{i}}}}}^2 \\
     & \lesssim  \smash{h^{-d}_{\nu^{i}}} \| \mathcal{I}_h v \pm v - (1,v)_{\smash{\omega_{\smash{\nu^{i}}}}}\|_{\smash{\omega_{\smash{\nu^{i}}}}}^2 \\
          & \lesssim  
          \smash{h^{2-d}_{\nu^{i}}} \|\nabla v\|_{\omega_{T}}^2\,.
    \end{aligned}
    \end{align}
 Therefore, from \eqref{prop:properties_Jh.2} in \eqref{prop:properties_Jh.1}, we obtain\vspace{-0.5mm}
 \begin{align}
  \|v - \mathcal{J}_h v\|_{T} \lesssim  h \| \nabla v \|_{\omega_T}\,.\label{prop:properties_Jh.3}
\end{align}
 Eventually, from \eqref{prop:properties_Jh.3}, using a stability and interpolation error estimate for $\mathcal{I}_h$ (\textit{cf}.\ \cite[Lems.\ 1.127, 1.130]{ErnGuermond2004}) and a local inverse estimate (\textit{cf}.\ \cite[Lem.\ 12.1]{EG21I}), we infer that\vspace{-0.5mm}
 \begin{align}\label{prop:properties_Jh.4}
    \begin{aligned} 
     \|\nabla\mathcal{J}_h v\|_{T}&\lesssim \|\nabla\mathcal{I}_h v\|_{T}+\|\nabla (\mathcal{I}_h v-\mathcal{J}_h v)\|_{T}
     \\&\lesssim \|\nabla v\|_{\omega_T} + \smash{h^{-1}}\| \mathcal{I}_h v\pm v-\mathcal{J}_h v\|_{T}
     \\&\lesssim \|\nabla v\|_{\omega_T}\,.
     \end{aligned}
 \end{align} 
 In summary, from \eqref{prop:properties_Jh.3} and \eqref{prop:properties_Jh.4}, we conclude that the claimed local stability and interpolation error estimate \eqref{eq:quasi_interpolant_volume_error} applies.\newpage

 \textit{ad (\hyperlink{prop:properties_Jh.iv}{iv}).} \hspace{-0.1mm}Due \hspace{-0.1mm}to \hspace{-0.1mm}$\mathcal{J}_h(1,v)_{\omega_T}\hspace*{-0.15em}=\hspace*{-0.15em}(1,v)_{\omega_T}$, \hspace{-0.1mm}using \hspace{-0.1mm}\eqref{eq:quasi_interpolant_volume_error} \hspace{-0.1mm}and \hspace{-0.1mm}the \hspace{-0.1mm}fractional \hspace{-0.1mm}Poincar\'{e}--Steklov~\hspace{-0.1mm}\mbox{inequality} (\textit{cf}.\ \cite[Lem.\ 3.26]{EG21I}), we conclude that
    \begin{align*}
        \|\nabla (v - \mathcal{J}_h v)\|_{T} &\le  \|\nabla (v - (1,v)_{\omega_T}) \|_{T} + \| \nabla \mathcal{J}_h(v - (1,v)_{\omega_T}) \|_{T} \\&\lesssim \|\nabla (v - (1,v)_{\omega_T})\|_{\omega_T}\\&\lesssim h^{s-1}|v|_{s,\omega_T}\,,
    \end{align*} 
    \textit{i.e.}, the claimed local interpolation error estimate \eqref{eq:Jh_estimate_H1} applies.
\end{proof}  

\section{A positivity- and mean-preserving projection}

\hspace{5mm}In this section, we show that the \emph{mass-lumped $L^2$-projection} (onto $\mathcal{S}^1(\mathcal{S}_h^I)$) $\Pi_h^{\mathrm{m}} \colon L^1(\Gamma_I) \to \mathcal{S}^1(\mathcal{S}_h^I)$, for every $\eta\in L^1(\Gamma_I)$ and $\mu_h\in \mathcal{S}^1(\mathcal{S}_h^I)$ defined by 
\begin{align} \label{eq:lumped_l2_projection}
    (\Pi_h^{\mathrm{m}} \eta, \mu_h)_{\Gamma_I,h} \coloneqq (\eta, \mu_h)_{\Gamma_I}\,,
\end{align}
is positivity- and  mean-preserving, $L^p(\Gamma_I)$- and $H^s(\Gamma_I)$-stable, and satisfies interpolation error estimates in $L^2(\Gamma_I)$ and $H^{-t}(\Gamma_I)$. 


\begin{lemma}[Positivity and  mean preservation] \label{prop:mass_positivity_preserving}
    \hspace{-0.15em}For every $\eta\hspace{-0.15em}\in\hspace{-0.15em} L^1(\Gamma_I)$ with $\eta\hspace{-0.15em}\ge\hspace{-0.15em} 0$~a.e.~on~$\Gamma_I$, there holds $\Pi_h^{\mathrm{m}} \eta \hspace{-0.15em}\ge\hspace{-0.15em} 0$ a.e.\ on~$\Gamma_I$ and $\|\Pi_h^{\mathrm{m}} \eta\|_{1,\Gamma_I}\hspace{-0.15em}=\hspace{-0.15em}\|\eta\|_{1,\Gamma_I}$. In particular,  
    there~holds~$\Pi_h^{\mathrm{m}}(\mathcal{H}_m)\hspace{-0.15em}\subseteq \hspace{-0.15em}\mathcal{H}_m^h$. 
\end{lemma}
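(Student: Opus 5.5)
The plan is to compute $\Pi_h^{\mathrm{m}}$ explicitly in the nodal basis, since the mass-lumped inner product is diagonal there. For $\nu\in\mathcal{N}_h^{I}$, write $\varphi_\nu^I\coloneqq\varphi_\nu\!\!\restriction_{\Gamma_I}$. Since $I_h\{v_hw_h\}=\sum_{\nu\in\mathcal{N}_h^I}v_h(\nu)w_h(\nu)\varphi_\nu^I$ for $v_h,w_h\in\mathcal{S}^1(\mathcal{S}_h^I)$, we get
\begin{align*}
(v_h,w_h)_{\Gamma_I,h}=\sum_{\nu\in\mathcal{N}_h^I}v_h(\nu)w_h(\nu)\,(\varphi_\nu^I,1)_{\Gamma_I}\,,
\end{align*}
and $(\varphi_\nu^I,1)_{\Gamma_I}>0$. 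Testing the defining relation \eqref{eq:lumped_l2_projection} with $\mu_h=\varphi_\nu^I$ therefore yields the explicit formula
\begin{align*}
(\Pi_h^{\mathrm{m}}\eta)(\nu)=\frac{(\eta,\varphi_\nu^I)_{\Gamma_I}}{(\varphi_\nu^I,1)_{\Gamma_I}}\qquad\text{for all }\nu\in\mathcal{N}_h^I\,.
\end{align*}
The same computation shows that \eqref{eq:lumped_l2_projection} uniquely defines $\Pi_h^{\mathrm{m}}\eta$ for every $\eta\in L^1(\Gamma_I)$, because $\varphi_\nu^I\in L^\infty(\Gamma_I)$.

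\emph{Positivity.} If $\eta\ge0$ a.e.\ on $\Gamma_I$, then, since $\varphi_\nu^I\ge0$, each nodal value above is non-negative. As a function in $\mathcal{S}^1(\mathcal{S}_h^I)$ is side-wise affine, it is a convex combination of its nodal values on every side, so $\Pi_h^{\mathrm{m}}\eta\ge0$ on $\Gamma_I$.

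\emph{Mean preservation.} Take $\mu_h=1\in\mathcal{S}^1(\mathcal{S}_h^I)$ in \eqref{eq:lumped_l2_projection}. Since $I_h\{\Pi_h^{\mathrm{m}}\eta\cdot 1\}=\Pi_h^{\mathrm{m}}\eta$, this gives
\begin{align*}
(\Pi_h^{\mathrm{m}}\eta,1)_{\Gamma_I}=(\Pi_h^{\mathrm{m}}\eta,1)_{\Gamma_I,h}=(\eta,1)_{\Gamma_I}\,.
\end{align*}
For $\eta\ge0$, both $\eta$ and $\Pi_h^{\mathrm{m}}\eta$ are non-negative, so these integrals equal the $L^1$-norms and $\|\Pi_h^{\mathrm{m}}\eta\|_{1,\Gamma_I}=\|\eta\|_{1,\Gamma_I}$.

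\emph{Inclusion $\Pi_h^{\mathrm{m}}(\mathcal{H}_m)\subseteq\mathcal{H}_m^h$.} Let $\eta\in\mathcal{H}_m$. Then $\Pi_h^{\mathrm{m}}\eta\ge0$ by the positivity step. Since $\Pi_h^{\mathrm{m}}\eta$ is already piecewise affine, $I_h$ leaves it unchanged, so $(\Pi_h^{\mathrm{m}}\eta,1)_{\Gamma_I,h}=(\Pi_h^{\mathrm{m}}\eta,1)_{\Gamma_I}=(\eta,1)_{\Gamma_I}=m$, which is the constraint defining $\mathcal{H}_m^h$.

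There is no real obstacle here. The only point that needs care is the identity $(v_h,1)_{\Gamma_I,h}=(v_h,1)_{\Gamma_I}$ for $v_h\in\mathcal{S}^1(\mathcal{S}_h^I)$, which holds because the nodal quadrature is exact for $\mathbb{P}^1$ integrands.
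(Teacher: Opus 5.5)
Your proposal is correct and follows essentially the same route as the paper: the paper first proves the nodal representation $(\Pi_h^{\mathrm{m}}\eta)(\nu)=(1,\varphi_\nu)_{\Gamma_I}^{-1}(\eta,\varphi_\nu)_{\Gamma_I}$ by testing with $\varphi_\nu$ and using the diagonal lumped mass matrix, deduces positivity from $\varphi_\nu\ge 0$, and obtains mean preservation by testing with $\mu_h=1$. Your explicit verification of $(\Pi_h^{\mathrm{m}}\eta,1)_{\Gamma_I,h}=m$ for the inclusion $\Pi_h^{\mathrm{m}}(\mathcal{H}_m)\subseteq\mathcal{H}_m^h$ spells out a step that the paper leaves implicit.
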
 

In order to prove Lemma \ref{prop:mass_positivity_preserving}, we first characterize the nodal values of $\Pi_h^{\mathrm{m}}$.

\begin{lemma}\label{lem:lumped_l2_representation}
    For every $\eta \in L^1(\Gamma_I)$ and  $\nu \in \mathcal{N}_h^I$, there holds 
    \begin{align} \label{eq:lumped_l2_representation}
        (\Pi_h^{\mathrm{m}} \eta)(\nu) = 
        (1, \varphi_\nu)_{\Gamma_I}^{-1} (\eta, \varphi_\nu )_{\Gamma_I}\,.
    \end{align}
\end{lemma}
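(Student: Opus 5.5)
We will test the defining relation \eqref{eq:lumped_l2_projection} with the nodal basis function $\mu_h\coloneqq\varphi_\nu\!\!\restriction_{\Gamma_I}$ for a fixed node $\nu\in\mathcal{N}_h^I$. The point is that the mass-lumping inner product diagonalizes on the nodal basis. Writing $\Pi_h^{\mathrm{m}}\eta=\sum_{\nu'\in\mathcal{N}_h^I}(\Pi_h^{\mathrm{m}}\eta)(\nu')\varphi_{\nu'}\!\!\restriction_{\Gamma_I}$, the definition of $(\cdot,\cdot)_{\Gamma_I,h}$ gives
\begin{align*}
    (\Pi_h^{\mathrm{m}}\eta,\varphi_\nu)_{\Gamma_I,h}=\bigl(I_h\{(\Pi_h^{\mathrm{m}}\eta)\varphi_\nu\},1\bigr)_{\Gamma_I}\,.
\end{align*}
For every $\nu'\in\mathcal{N}_h^I$ we have $\varphi_\nu(\nu')=\delta_{\nu\nu'}$, so the nodal values of the product $(\Pi_h^{\mathrm{m}}\eta)\varphi_\nu$ are $(\Pi_h^{\mathrm{m}}\eta)(\nu)\delta_{\nu\nu'}$. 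Hence
\begin{align*}
    I_h\{(\Pi_h^{\mathrm{m}}\eta)\varphi_\nu\}=(\Pi_h^{\mathrm{m}}\eta)(\nu)\,\varphi_\nu\!\!\restriction_{\Gamma_I}\,,
\end{align*}
and therefore $(\Pi_h^{\mathrm{m}}\eta,\varphi_\nu)_{\Gamma_I,h}=(\Pi_h^{\mathrm{m}}\eta)(\nu)\,(1,\varphi_\nu)_{\Gamma_I}$.

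Equating this with the right-hand side $(\eta,\varphi_\nu)_{\Gamma_I}$ of \eqref{eq:lumped_l2_projection} and dividing by $(1,\varphi_\nu)_{\Gamma_I}$ yields \eqref{eq:lumped_l2_representation}. The division is legitimate because $(1,\varphi_\nu)_{\Gamma_I}>0$: $\varphi_\nu\ge 0$, and $\varphi_\nu\!\!\restriction_{\Gamma_I}$ is positive on the relative interior of the patch of sides of $\mathcal{S}_h^I$ containing $\nu$, which has positive surface measure. The pairing $(\eta,\varphi_\nu)_{\Gamma_I}$ is finite for $\eta\in L^1(\Gamma_I)$ since $\varphi_\nu\in L^\infty(\Gamma_I)$.

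This nodal formula also settles well-posedness of $\Pi_h^{\mathrm{m}}$, which I would note in passing. The Gram matrix of $(\cdot,\cdot)_{\Gamma_I,h}$ in the nodal basis is diagonal with positive entries $(1,\varphi_\nu)_{\Gamma_I}$. So \eqref{eq:lumped_l2_projection}, tested with all basis functions, has exactly one solution, given by the formula above; equivalently, $(\cdot,\cdot)_{\Gamma_I,h}$ is positive definite by \eqref{lem:std_prop_lumping.1}.

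There is no real obstacle here. The only step needing care is the identity $I_h\{w_h\varphi_\nu\}=w_h(\nu)\varphi_\nu$ on $\Gamma_I$ for $w_h\in\mathcal{S}^1(\mathcal{S}_h^I)$. This holds because $I_h$ depends only on nodal values on $\mathcal{N}_h^I$, and the nodal basis on $\Gamma_I$ is the restriction of $\{\varphi_\nu\}$.
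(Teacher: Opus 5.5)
Your proof is correct and matches the paper's argument: test \eqref{eq:lumped_l2_projection} with $\varphi_\nu\!\!\restriction_{\Gamma_I}$, expand $\Pi_h^{\mathrm{m}}\eta$ in the nodal basis, and use that the lumped inner product is diagonal, $(\varphi_{\nu'},\varphi_\nu)_{\Gamma_I,h}=\delta_{\nu\nu'}(1,\varphi_\nu)_{\Gamma_I}$. Your remarks that $(1,\varphi_\nu)_{\Gamma_I}>0$ and that $\Pi_h^{\mathrm{m}}$ is therefore well-defined supply details the paper leaves implicit.
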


    \begin{proof}
    Let $\eta \in L^1(\Gamma_I)$ and  $\nu \in \mathcal{N}_h^I$ be fixed, but arbitrary. Choosing $\mu_h = \varphi_{\nu}\!\!\restriction_{\Gamma_I}\in \mathcal{S}^1(\mathcal{S}_h^{I})$~in~\eqref{eq:lumped_l2_projection} as well as
        using  $\Pi_h^{\mathrm{m}} \eta \hspace{-0.1em}= \hspace{-0.1em}\sum_{\nu' \in \mathcal{N}_h^I} (\Pi_h^{\mathrm{m}} \eta)(\nu') \varphi_{v'}$ in $\Gamma_I$ and $(\varphi_\nu, \varphi_{\nu'})_{\Gamma_I,h}\hspace{-0.1em}=\hspace{-0.1em}\delta_{\nu\nu'}( 1,\varphi_\nu)_{\Gamma_I,h}$,~we~find~that $(\eta,\varphi_{\nu})_{\Gamma_I}= (\Pi_h^{\mathrm{m}} \eta,\varphi_{\nu})_{\Gamma_I,h}= \sum_{\nu' \in \mathcal{N}_h^I} (\Pi_h^{\mathrm{m}} \eta)(\nu') (\varphi_{\nu'}, \varphi_{\nu})_{\Gamma_I,h} = 
           (\Pi_h^{\mathrm{m}} \eta)(\nu)(1,\varphi_{\nu})_{\Gamma_I}$.
        %
    \end{proof}
    
\begin{proof}[Proof (of Lemma \ref{prop:mass_positivity_preserving}).] Let $\eta \in L^1(\Gamma_I)$ with $\eta\ge 0$ a.e.\ on $\Gamma_I$ be fixed, but arbitrary.

    \textit{ad Positivity preservation.} From $\eta\hspace{-0.15em}\ge\hspace{-0.15em} 0$ a.e.\ on $\Gamma_I$, by Lemma \ref{lem:lumped_l2_representation} and $\varphi_\nu\hspace{-0.15em}\ge\hspace{-0.15em} 0$~on~$\Gamma_I$~for~all~${\nu\hspace{-0.15em} \in\hspace{-0.15em} \mathcal{N}_h^I}$, it follows that 
    $(\Pi_h^{\mathrm{m}} \eta)(\nu) \ge 0$ for all $\nu \in \mathcal{N}_h^I$ and, thus, $\Pi_h^{\mathrm{m}} \eta \ge 0$ on $\Gamma_I$.

    \textit{ad Mean preservation.} From $\eta\ge 0$ a.e.\ on $\Gamma_I$ and $\Pi_h^{\mathrm{m}} \eta \ge 0$ on $\Gamma_I$, choosing $\mu_h=1 \in \mathcal{S}^1(\mathcal{S}_h^I)$ in \eqref{eq:lumped_l2_projection}, it follows that $\| \Pi_h^{\mathrm{m}} \eta \|_{1,\Gamma_I}=(1,\Pi_h^{\mathrm{m}} \eta)_{\Gamma_I}=(1,\Pi_h^{\mathrm{m}} \eta)_{\Gamma_I,h}=(1, \eta)_{\Gamma_I}=\|\eta\|_{1,\Gamma_I}$.
\end{proof}
Lemma \ref{prop:mass_positivity_preserving}, in turn, puts us in the position to show the $L^p(\Gamma_I)$-stability of $\Pi_h^{\mathrm{m}}$~with~\mbox{constant}~1.

\begin{proposition}\label{prop:Lp-stab}
    For every $p\in [1,+\infty]$ and $\eta\in L^p(\Gamma_I)$, there holds
   \begin{align}\label{eq:lumped_l2_nonexpansive}
   \| \Pi_h^{\mathrm{m}} \eta \|_{p,\Gamma_I} \le \| \eta \|_{p,\Gamma_I}\,.
   \end{align}
\end{proposition}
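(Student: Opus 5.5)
The plan is to interpolate between the endpoint cases $p=1$ and $p=\infty$. Both endpoints follow from the nodal representation in Lemma \ref{lem:lumped_l2_representation}: each nodal value $(\Pi_h^{\mathrm{m}}\eta)(\nu)$ is a weighted average of $\eta$ with the non-negative weight $\varphi_\nu$ normalized by $(1,\varphi_\nu)_{\Gamma_I}$. Rather than invoking Riesz--Thorin, I would give a direct Jensen-type argument, which handles all $p\in[1,\infty)$ at once and keeps the constant exactly $1$.

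\emph{Step 1 (case $p=\infty$).} Since $\varphi_\nu\ge 0$, Lemma \ref{lem:lumped_l2_representation} gives
\[
\vert(\Pi_h^{\mathrm{m}}\eta)(\nu)\vert\le (1,\varphi_\nu)_{\Gamma_I}^{-1}(\vert\eta\vert,\varphi_\nu)_{\Gamma_I}\le\|\eta\|_{\infty,\Gamma_I}
\]
for every $\nu\in\mathcal{N}_h^I$. A side-wise affine function attains its maximum modulus at nodes, so the bound transfers to $\|\Pi_h^{\mathrm{m}}\eta\|_{\infty,\Gamma_I}$.

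\emph{Step 2 (case $p\in[1,\infty)$).} Because $\{\varphi_\nu\restriction_{\Gamma_I}\}_{\nu\in\mathcal{N}_h^I}$ is a partition of unity on $\Gamma_I$, the map $t\mapsto\vert t\vert^p$ is convex on each side, and $\Pi_h^{\mathrm{m}}\eta$ is the convex combination $\sum_\nu(\Pi_h^{\mathrm{m}}\eta)(\nu)\varphi_\nu$ pointwise. Hence
\[
\vert\Pi_h^{\mathrm{m}}\eta\vert^p\le \sum_{\nu}\vert(\Pi_h^{\mathrm{m}}\eta)(\nu)\vert^p\varphi_\nu=I_h\{\vert\Pi_h^{\mathrm{m}}\eta\vert^p\}\quad\text{on }\Gamma_I,
\]
and integrating yields
\[
\|\Pi_h^{\mathrm{m}}\eta\|_{p,\Gamma_I}^p\le\sum_\nu \vert(\Pi_h^{\mathrm{m}}\eta)(\nu)\vert^p(1,\varphi_\nu)_{\Gamma_I}.
\]
Next, apply Jensen's inequality to the probability measure $(1,\varphi_\nu)_{\Gamma_I}^{-1}\varphi_\nu\,\mathrm{d}s$ in the nodal representation:
\[
\vert(\Pi_h^{\mathrm{m}}\eta)(\nu)\vert^p\le(1,\varphi_\nu)_{\Gamma_I}^{-1}(\vert\eta\vert^p,\varphi_\nu)_{\Gamma_I}.
\]
Substituting and using $\sum_\nu\varphi_\nu=1$ on $\Gamma_I$ gives $\|\Pi_h^{\mathrm{m}}\eta\|_{p,\Gamma_I}^p\le(\vert\eta\vert^p,1)_{\Gamma_I}$, which is \eqref{eq:lumped_l2_nonexpansive}.

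\emph{Main obstacle.} The only delicate point is the first inequality of Step 2: comparing the exact $L^p$-norm of a piecewise affine function with its lumped counterpart, in the direction favorable to us. This is exactly the convexity of $\vert\cdot\vert^p$ applied to barycentric combinations, mirroring the argument behind \eqref{eq:robin_bnd_by_Ih}, so I expect it to go through. For $p=1$ one can alternatively cite Lemma \ref{prop:mass_positivity_preserving}, applied to $\eta^\pm$ together with linearity, as a consistency check.
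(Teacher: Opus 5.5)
Your proof is correct, but it takes a different route from the paper. The paper proves only the endpoint cases and then invokes the Riesz--Thorin interpolation theorem. For $p=1$ it splits $\eta=\eta^+-\eta^-$ and uses positivity and mean preservation (Lemma \ref{prop:mass_positivity_preserving}); for $p=\infty$ it bounds the nodal values via Lemma \ref{lem:lumped_l2_representation}.

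You instead handle every $p\in[1,\infty)$ directly with two convexity steps:
\begin{itemize}
\item the pointwise bound $\vert\Pi_h^{\mathrm{m}}\eta\vert^p\le I_h\{\vert\Pi_h^{\mathrm{m}}\eta\vert^p\}$ for barycentric combinations;
\item Jensen's inequality for the nodal averages of Lemma \ref{lem:lumped_l2_representation}.
\end{itemize}
The partition of unity $\sum_\nu\varphi_\nu=1$ on $\Gamma_I$ joins the two. In effect this is the Schur test for the nonnegative symmetric kernel $K(x,y)=\sum_{\nu}(1,\varphi_\nu)_{\Gamma_I}^{-1}\varphi_\nu(x)\varphi_\nu(y)$, whose row and column integrals both equal $1$.

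What your route buys:
\begin{itemize}
\item It is fully elementary, needing no interpolation theorem, and it makes the constant $1$ transparent.
\item Your $p=\infty$ step bounds the modulus of the nodal values directly. The paper's displayed bound covers only their maximum, with the lower bound left to symmetry.
\item It gives the sharper intermediate estimate $\sum_{\nu}\vert(\Pi_h^{\mathrm{m}}\eta)(\nu)\vert^p(1,\varphi_\nu)_{\Gamma_I}\le\|\eta\|_{p,\Gamma_I}^p$. This is contraction in the mass-lumped $L^p$-norm. For $p=2$ it reads $\|\Pi_h^{\mathrm{m}}\eta\|_{\Gamma_I,h}\le\|\eta\|_{\Gamma_I}$, which is stronger than the claim by \eqref{lem:std_prop_lumping.1}.
\end{itemize}

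What the paper's route buys is brevity. It reuses the already established positivity, mass conservation and nodal representation, and so identifies these as the structural reasons for the contraction.

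Both arguments tacitly assume that $\mathcal{N}_h^I$ contains all vertices of $\overline{\Gamma}_I$, so that $\{\varphi_\nu\}_{\nu\in\mathcal{N}_h^I}$ is a partition of unity on $\Gamma_I$. You state this explicitly; the paper uses it implicitly in its mean-preservation step.
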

\begin{proof}
    Let $\eta\in L^p(\Gamma_I)$,  $p\in [1,\infty]$, be fixed, but arbitrary. By the Schur--Riesz--Thorin interpolation theorem (\textit{cf}.\ \cite[Thm.\ 4.32]{Brezis2011}), it is sufficient to establish the assertion for $p\in \{1,\infty\}$:

    $\bullet$ \textit{Case $p=1$.} In this case, if we set ${\eta^+\hspace{-0.15em}\coloneqq\hspace{-0.15em} (\eta)_+,\eta^-\hspace{-0.15em}\coloneqq\hspace{-0.15em} (-\eta)_+\hspace{-0.15em}\in\hspace{-0.15em} L^1(\Gamma_I)}$, so that $\eta^+,\eta^- \ge 0$, $\eta = \eta^+ - \eta^-$, and $\vert \eta \vert= \vert\eta^+\vert + \vert\eta^-\vert$ a.e.\ on $\Gamma_I$, using Lemma \ref{prop:mass_positivity_preserving},
    we~find~that $\|\Pi_h^{\mathrm{m}}\eta\|_{1,\Gamma_I} \le \|\Pi_h^{\mathrm{m}}\eta^+\|_{1,\Gamma_I} + \|\Pi_h^{\mathrm{m}}\eta^-\|_{1,\Gamma_I} = \|\eta^+\|_{1,\Gamma_I} + \|\eta^-\|_{1,\Gamma_I} = \| \eta \|_{1,\Gamma_I}$.

    $\bullet$ \textit{Case $p=\infty$.}  In this case, using Lemma \ref{lem:lumped_l2_representation} and that~$ (1,\varphi_\nu)_{\Gamma_I}=\|\varphi_{\nu}\|_{1,\Gamma_I}$ for all $\nu \hspace{-0.15em}\in \hspace{-0.15em}\mathcal{N}_h^{I}$, we obtain $ {\max_{\nu\in \mathcal{N}_h^{I}}{\hspace{-0.15em}\{(\Pi_h^{\mathrm{m}} \eta)(\nu)\}}\hspace{-0.15em}=\hspace{-0.15em}\max_{\nu\in \mathcal{N}_h^{I}}{\hspace{-0.15em}\{\|\varphi_{\nu}\|_{1,\Gamma_I}^{-1} \hspace{-0.1em}(\eta, \varphi_\nu )_{\Gamma_I}\}}\hspace{-0.15em}\leq\hspace{-0.15em} \|\eta\|_{\infty,\Gamma_I} }$. 
\end{proof}

\begin{lemma}\label{lem:lumped_fractional_bound}
For every $\eta\in H^s(\Gamma_I)$, $s\in (0,1]$, there holds
\begin{align} \label{eq:lumped_fractional_bound}
    \| \eta - \Pi_h^{\mathrm{m}} \eta \|_{\Gamma_I}+h^s|\Pi_h^{\mathrm{m}} \eta|_{s,\Gamma_I} \lesssim h^s|\eta|_{s,\Gamma_I}\,,
\end{align}
where the implicit constant in $\lesssim$ depends only on the shape-regularity of $\{\mathcal{T}_h\}_{h>0}$.
\end{lemma}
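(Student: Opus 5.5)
The proof runs through the nodal representation of Lemma \ref{lem:lumped_l2_representation}: $\Pi_h^{\mathrm{m}}$ is a quasi-interpolant whose nodal functionals are local weighted averages $(1,\varphi_\nu)_{\Gamma_I}^{-1}(\eta,\varphi_\nu)_{\Gamma_I}$ over the patch $\omega_\nu\coloneqq\operatorname{supp}\varphi_\nu\cap\Gamma_I$. By construction it reproduces constants, and Proposition \ref{prop:Lp-stab} gives local $L^2$-stability. The strategy is to establish the endpoint estimates $s=0$ (trivial) and $s=1$ locally, and to reach fractional $s$ by local fractional Poincar\'e arguments rather than abstract interpolation, because the seminorm bound $|\Pi_h^{\mathrm{m}}\eta|_{s,\Gamma_I}$ is nonlocal.

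\textbf{Step 1 (local $L^2$ error).} Fix $S\in\mathcal{S}_h^I$ with patch $\omega_S$ as in Lemma \ref{prop:properties_Jh}. For any constant $c$ we have $\Pi_h^{\mathrm{m}}c=c$. Since $|(\Pi_h^{\mathrm{m}}w)(\nu)|\le \|\varphi_\nu\|_{1,\Gamma_I}^{-1}\|w\|_{1,\omega_\nu}\lesssim h_S^{-(d-1)/2}\|w\|_{\omega_S}$, local stability $\|\Pi_h^{\mathrm{m}}w\|_S\lesssim\|w\|_{\omega_S}$ follows. Taking $w=\eta-c$ with $c$ the mean of $\eta$ over $\omega_S$ gives
\[
\|\eta-\Pi_h^{\mathrm{m}}\eta\|_S\lesssim\|\eta-c\|_{\omega_S}\lesssim h_S^{s}|\eta|_{s,\omega_S},
\]
where the last step is the fractional Poincar\'e--Steklov inequality (\cite[Lem.~3.26]{EG21I}) on the patch. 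This patch is a connected union of flat sides of a Lipschitz polyhedral surface, possibly folded across edges; the constant depends only on shape regularity by the usual finitely-many-reference-configurations argument. Summing over $S$ with finite overlap gives the first term in \eqref{eq:lumped_fractional_bound}.

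\textbf{Step 2 (seminorm bound).} For $s=1$, use a local inverse estimate and constant reproduction:
\[
\|\nabla_S\Pi_h^{\mathrm{m}}\eta\|_S=\|\nabla_S\Pi_h^{\mathrm{m}}(\eta-c)\|_S\lesssim h_S^{-1}\|\eta-c\|_{\omega_S}\lesssim\|\nabla_\Gamma\eta\|_{\omega_S}.
\]
For $s\in(0,1)$, split the double integral defining $|\Pi_h^{\mathrm{m}}\eta|_{s,\Gamma_I}^2$ into a near-field part, where $|x-y|\lesssim h$ (pairs in neighbouring patches), and a far-field part. In the near field, write $\Pi_h^{\mathrm{m}}\eta=(\Pi_h^{\mathrm{m}}\eta-\eta)+\eta$. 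The $\eta$-term contributes at most $|\eta|_{s}^2$. The discrete difference is handled by the local inverse estimate $|v_h|_{s,\omega}\lesssim h^{-s}\|v_h\|_{\omega}$ applied to $\Pi_h^{\mathrm{m}}(\eta-c)$, plus Step 1. In the far field, bound $|a(x)-a(y)|^2\le 2|a(x)|^2+2|a(y)|^2$ with $a=\Pi_h^{\mathrm{m}}\eta-\eta$ and use $\int_{|x-y|>h}|x-y|^{-(d-1)-2s}\,\mathrm{d}y\lesssim h^{-2s}$. This reduces the far-field term to $h^{-2s}\|\eta-\Pi_h^{\mathrm{m}}\eta\|_{\Gamma_I}^2\lesssim|\eta|_{s,\Gamma_I}^2$ by Step 1. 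Combining the two parts yields $h^s|\Pi_h^{\mathrm{m}}\eta|_{s,\Gamma_I}\lesssim h^s|\eta|_{s,\Gamma_I}$. This is the standard Faermann-type localization of Sobolev--Slobodeckij seminorms.

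\textbf{Main obstacle.} The delicate point is the fractional seminorm on a polyhedral surface $\Gamma_I$, which is not flat. One has to check that the near/far splitting and the fractional inverse estimate hold with constants depending only on shape regularity on patches straddling edges of $\Gamma_I$. My first attempt would pull back each patch via a bi-Lipschitz chart to a planar reference patch, which is possible since $\Omega$ is Lipschitz; the Slobodeckij seminorm is equivalent under bi-Lipschitz maps. Should the localization be messy, a fallback is a K-method argument. The error operator $\mathrm{Id}-\Pi_h^{\mathrm{m}}$ is bounded $L^2\to L^2$ with constant $2$ and $H^1\to L^2$ with constant $h$, so interpolation gives the first term directly. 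The second term would then follow from the triangle inequality $|\Pi_h^{\mathrm{m}}\eta|_{s}\le|\Pi_h^{\mathrm{m}}(\eta-\eta_h)|_{s}+|\Pi_h^{\mathrm{m}}\eta_h|_s$, with $\eta_h$ an $H^1$-stable smoothing of $\eta$ at scale $h$.
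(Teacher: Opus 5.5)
Your argument is correct in substance, but it takes a genuinely different route from the paper.

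\emph{The paper's route.} The paper never localizes. For $s=1$ it compares $\Pi_h^{\mathrm{m}}$ with the $L^2(\Gamma_I)$-orthogonal projection $\Pi_h$. From $(\Pi_h^{\mathrm{m}}\eta,\mu_h)_{\Gamma_I,h}=(\Pi_h\eta,\mu_h)_{\Gamma_I}$ and a mass-lumping quadrature estimate (a variant of \eqref{lem:std_prop_lumping.2}) it obtains $\|\Pi_h^{\mathrm{m}}\eta-\Pi_h\eta\|_{\Gamma_I}\lesssim h\|\nabla_{\Gamma}\Pi_h\eta\|_{\Gamma_I}$. It then invokes the $H^1(\Gamma_I)$-stability of $\Pi_h$ and a global inverse estimate. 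For $s<1$ it interpolates operators. Interpolating $\Pi_h^{\mathrm{m}}$ between $L^2\to L^2$ (Proposition \ref{prop:Lp-stab}) and $H^1\to H^1$, and then using constant reproduction and the Poincar\'e--Steklov inequality, gives the seminorm bound. Interpolating $\Pi_h$ between $L^2\to H^1$ (norm $\lesssim h^{-1}$) and $H^1\to H^1$ gives $\|\nabla_\Gamma\Pi_h\eta\|_{\Gamma_I}\lesssim h^{s-1}|\eta|_{s,\Gamma_I}$, and hence the $L^2$-error. This is short and reuses standard tools. However, the $H^1$-stability of $\Pi_h$ and the global inverse estimates are tied to quasi-uniformity (or mesh-grading restrictions).

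\emph{What your route buys.} You use only the nodal averaging formula of Lemma \ref{lem:lumped_l2_representation}, local $L^2$-stability, constant reproduction, and patch-wise Poincar\'e and inverse estimates. This genuinely yields constants depending only on shape regularity (and $\Gamma_I$), which is what the lemma states.

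Your stated reason for avoiding interpolation does not hold up, though. The nonlocality of $|\cdot|_{s,\Gamma_I}$ is no obstacle. Your own $s=1$ step already proves $H^1$-stability of $\Pi_h^{\mathrm{m}}$ under shape regularity alone. Interpolating this against $L^2$-stability and then subtracting the mean gives $|\Pi_h^{\mathrm{m}}\eta|_{s,\Gamma_I}\lesssim|\eta|_{s,\Gamma_I}$ exactly as in the paper. So you could drop the near/far-field splitting entirely.

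If you keep the splitting, it needs two repairs.
\begin{itemize}
\item The cut must be at the local scale. For $x\in S$, put the far field at $y\notin\omega_S$, so that $\int_{\Gamma_I\setminus\omega_S}|x-y|^{-(d-1)-2s}\,\mathrm{d}y\lesssim h_S^{-2s}$ and your side-wise Step 1 applies. With the global cut $|x-y|\le h$, near pairs lie in neighbouring patches only on quasi-uniform meshes.
\item $\Pi_h^{\mathrm{m}}\eta-\eta$ is not discrete, so the inverse estimate cannot be applied to it. In the near field, apply the inverse estimate directly to $\Pi_h^{\mathrm{m}}(\eta-c)$. Reserve the splitting $\Pi_h^{\mathrm{m}}\eta=(\Pi_h^{\mathrm{m}}\eta-\eta)+\eta$ for the far field.
\end{itemize}

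Finally, your ``main obstacle'' is milder than you fear for $s<1$. With $c$ the mean of $\eta$ over a patch $\omega$, one has $\|\eta-c\|_{\omega}^2\le|\omega|^{-1}\operatorname{diam}(\omega)^{d-1+2s}|\eta|_{s,\omega}^2$ on any patch, folded across edges or not. No charts are needed there.
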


\begin{proof}
Let $\eta\in H^s(\Gamma_I)$, $s\in (0,1]$, be fixed, but arbitrary. Then, the proof proceeds in two steps: 

\textit{$\bullet$ Step 1: ($s=1$).} For every $\mu_h \in \mathcal{S}^1(\mathcal{S}_h^I)$, 
due to \eqref{eq:lumped_l2_projection} and the $L^2(\Gamma_I)$-self-adjointness~of~$\Pi_h$, we have that $(\Pi_h^{\mathrm{m}} \eta, \mu_h)_{\Gamma_I,h} = (\Pi_h\eta, \mu_h)_{\Gamma_I}$ and, thus, using \cite[Lem.\ 3.9]{Bartels15}, we obtain\vspace{-0.5mm}
\begin{align}\label{lem:lumped_fractional_bound.1} 
   \smash{\vert(\Pi_h^{\mathrm{m}} \eta - \Pi_h\eta,\mu_h)_{\Gamma_I,h}\vert = \vert(\Pi_h\eta,\mu_h)_{\Gamma_I} - (\Pi_h \eta,\mu_h)_{\Gamma_I,h}\vert  \lesssim h \| \nabla_{\Gamma}\Pi_h \eta\|_{\Gamma_I} \|\mu_h\|_{\Gamma_I}\,.} \\[-6mm]\notag
\end{align}
Then, choosing  $\mu_h = \Pi_h^{\mathrm{m}} \eta - \Pi_h\eta\in \smash{\mathcal{S}^1(\mathcal{S}_h^I)}$ in \eqref{lem:lumped_fractional_bound.1} as well as using the $\smash{H^1(\Gamma_I)}$-stability of $\Pi_h$ and \cite[Lem.\ 3.9]{Bartels15}, we find that\vspace{-0.5mm}
\begin{align}\label{lem:lumped_fractional_bound.1.0} 
    \smash{\|\Pi_h^{\mathrm{m}} \eta - \Pi_h\eta\|_{\Gamma_I}\leq \|\Pi_h^{\mathrm{m}} \eta - \Pi_h\eta\|_{\Gamma_I,h}\lesssim h \| \nabla_{\Gamma}\Pi_h \eta\|_{\Gamma_I} \lesssim h \| \nabla_{\Gamma} \eta\|_{\Gamma_I} \,,}\\[-6mm]\notag 
\end{align}
which, by the interpolation error estimate for $\Pi_h$ (\textit{cf}.\ \cite[Prop.\ 22.19]{EG21I}), implies that\vspace{-0.5mm}
\begin{align}\label{lem:lumped_fractional_bound.1.2}
     \smash{\| \eta - \Pi_h^{\mathrm{m}}\eta\|_{\Gamma_I}\leq \|\eta-\Pi_h \eta\|_{\Gamma_I}+\|\Pi_h\eta-\Pi_h^{\mathrm{m}} \eta\|_{\Gamma_I}\lesssim h \| \nabla_{\Gamma} \eta\|_{\Gamma_I} \,.}\\[-6mm]\notag
\end{align}
From \hspace{-0.125mm}\eqref{lem:lumped_fractional_bound.1.0}, \hspace{-0.125mm}in \hspace{-0.125mm}turn, \hspace{-0.125mm}using \hspace{-0.125mm}the \hspace{-0.125mm}$\smash{H^1(\Gamma_I)}$-stability \hspace{-0.125mm}of \hspace{-0.125mm}$\Pi_h$ \hspace{-0.125mm}and \hspace{-0.125mm}a \hspace{-0.125mm}global \hspace{-0.125mm}inverse \hspace{-0.125mm}estimate \hspace{-0.125mm}(\textit{cf}.~\hspace{-0.125mm}\mbox{\cite[\hspace{-0.5mm}Lem.~\hspace{-0.5mm}3.5]{Bartels15}}), we infer that\vspace{-1mm}
\begin{align}\label{lem:lumped_fractional_bound.2}
    \begin{aligned}
    \|\nabla_{\Gamma}\Pi_h^{\mathrm{m}} \eta \|_{\Gamma_I} &\le \|\nabla_{\Gamma}\Pi_h \eta \|_{\Gamma_I} + \|\nabla_{\Gamma}\Pi_h^{\mathrm{m}} \eta - \nabla_{\Gamma}\Pi_h \eta \|_{\Gamma_I} \\
    & \lesssim \|\nabla_{\Gamma}\eta \|_{\Gamma_I} + h^{-1} \|\Pi_h^{\mathrm{m}} \eta - \Pi_h \eta \|_{\Gamma_I} \\
    & \lesssim \|\nabla_{\Gamma}\eta \|_{\Gamma_I}\,.
    \end{aligned}\\[-6mm]\notag
\end{align}
In summary, from \eqref{lem:lumped_fractional_bound.1.2} and \eqref{lem:lumped_fractional_bound.2}, we conclude that the claimed local stability and interpolation error estimate \eqref{eq:lumped_fractional_bound} applies in the case $s=1$.

\textit{$\bullet$ Step 2: ($s<1$).}
First, interpolating between $L^2(\Gamma_I)$ and $H^1(\Gamma_I)$ (\textit{cf}.\ \cite[Prop.\ 14.1.5]{brenner2008mathematical}), from Proposition \ref{prop:Lp-stab}  and \eqref{lem:lumped_fractional_bound.2}, for every $\mu\in H^s(\Gamma_I)$, it follows that\vspace{-0.5mm}
\begin{align} \label{eq:Hs_norm_stability}
    \smash{\|\Pi_h^{\mathrm{m}} \mu \|_{s,\Gamma_I} \lesssim  \|\mu \|_{s,\Gamma_I}\,.}\\[-6mm]\notag
\end{align} 
Then, since $\Pi_h^{\mathrm{m}}(1,\eta )_{\Gamma_I}=(1,\eta )_{\Gamma_I}$ on $\Gamma_I$, from \eqref{eq:Hs_norm_stability} and the fractional Poincar\'{e}--Steklov inequality (\textit{cf}.\ \cite[Lem.\ 3.26]{EG21I}), we infer that\vspace{-0.5mm}
\begin{align}\label{eq:Hs_norm_stability.2}
  \smash{|\Pi_h^{\mathrm{m}} \eta|_{s,\Gamma_I} = |\Pi_h^{\mathrm{m}} (\eta - (1,\eta )_{\Gamma_I})|_{s,\Gamma_I} \lesssim \|\eta - (1,\eta )_{\Gamma_I}\|_{s,\Gamma_I}  \lesssim | \eta|_{s,\Gamma_I}\,.}\\[-6mm]\notag
\end{align}
Since $\|\Pi_h\|_{H^1 \to H^1} \lesssim 1$ and $\|\Pi_h\|_{L^2 \to H^1}\lesssim \smash{h^{-1}}$ (\textit{cf}.\ \cite[Lem.\ 3.5]{Bartels15}), interpolating~between~$L^2(\Gamma_I)$ and $H^1(\Gamma_I)$ (\textit{cf}.\ \cite[Prop.\ 14.1.5]{brenner2008mathematical}), we find that $\| \Pi_h\|_{H^s \to H^1}\leq \|\Pi_h\|_{H^1 \to H^1}^s\|\Pi_h\|_{L^2 \to H^1}^{1-s}\lesssim h^{s-1}$. In other words, for every $\mu\in H^s(\Gamma_I)$, there holds\vspace{-0.5mm}
\begin{align}\label{eq:Hs_norm_stability.3}
    \smash{\| \nabla_{\Gamma}\Pi_h \mu\|_{\Gamma_I} \lesssim h^{s-1}\|\mu\|_{s,\Gamma_I}\,.}\\[-6mm]\notag
\end{align}
Arguing as for \eqref{eq:Hs_norm_stability.2}, but using \eqref{eq:Hs_norm_stability.3} instead of \eqref{eq:Hs_norm_stability}, since $\Pi_h$ preserves constants,~we~infer~that\vspace{-0.5mm}
\begin{align} \label{eq:h1_to_hs_seminorm}
    \smash{\|\nabla_{\Gamma} \Pi_h \eta \|_{\Gamma_I}  
    \le \| \nabla_{\Gamma}\Pi_h (\eta - (1,\eta )_{\Gamma_I}) \|_{\Gamma_I} \lesssim h^{s-1} \vert \eta - (1,\eta )_{\Gamma_I}\vert_{s,\Gamma_I} \lesssim h^{s-1} |\eta|_{s,\Gamma_I}\,.}\\[-6mm]\notag
\end{align}
Using the interpolation error estimate for $\Pi_h$ (\textit{cf}.\ \cite[Prop.\ 22.19]{EG21I}), \eqref{lem:lumped_fractional_bound.1}, and  \eqref{eq:h1_to_hs_seminorm},~we~obtain\vspace{-0.5mm}
\begin{align}\label{eq:h1_to_hs_seminorm.2}
\begin{aligned}
    \|\eta - \Pi_h^{\mathrm{m}} \eta\|_{\Gamma_I} &\le  \|\eta - \Pi_h \eta\|_{\Gamma_I} +  \|\Pi_h\eta - \Pi_h^{\mathrm{m}} \eta\|_{\Gamma_I} \\
    & \lesssim h^s|\eta|_{s,\Gamma_I} + h\|\nabla_{\Gamma}\Pi_h \eta\|_{\Gamma_I} \\
    & \lesssim h^s|\eta|_{s,\Gamma_I}\,.
    \end{aligned}\\[-6mm]\notag
\end{align}
In summary, from \eqref{eq:Hs_norm_stability.2} and \eqref{eq:h1_to_hs_seminorm.2}, we conclude that the claimed local stability and interpolation error estimate \eqref{eq:lumped_fractional_bound} applies in the case $s\in (0,1)$.\enlargethispage{7.5mm}
\end{proof}

\begin{lemma} For every  $\eta \in H^s(\Gamma_I)$, $s \in (0,1]$, and $t \in (0,1]$, there holds\vspace{-0.5mm}
    \begin{align}\label{eq:lumped_negative_norm}
        \smash{\|\eta - \Pi_h^{\mathrm{m}} \eta \|_{-t,\Gamma_I} \lesssim h^{s+t} |\eta|_{s,\Gamma_I}\,,}\\[-6mm]\notag
    \end{align}
    where the implicit constant in $\lesssim$ depends only on the shape-regularity of $\{\mathcal{T}_h\}_{h>0}$.
\end{lemma}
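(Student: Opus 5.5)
The plan is a standard duality argument. By definition of the negative norm,
\begin{align*}
\|\eta - \Pi_h^{\mathrm{m}} \eta\|_{-t,\Gamma_I}=\sup_{\mu\in H^t(\Gamma_I)\setminus\{0\}}\frac{(\eta-\Pi_h^{\mathrm{m}}\eta,\mu)_{\Gamma_I}}{\|\mu\|_{t,\Gamma_I}}\,.
\end{align*}
So it suffices to bound the pairing $(\eta-\Pi_h^{\mathrm{m}}\eta,\mu)_{\Gamma_I}$ for fixed $\mu\in H^t(\Gamma_I)$. The defining relation \eqref{eq:lumped_l2_projection} does not give Galerkin orthogonality in the exact inner product, only in the mixed pairing $(\eta,\mu_h)_{\Gamma_I}=(\Pi_h^{\mathrm{m}}\eta,\mu_h)_{\Gamma_I,h}$. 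I therefore insert an approximation $\mu_h\coloneqq \Pi_h\mu\in\mathcal{S}^1(\mathcal{S}_h^I)$, the $L^2(\Gamma_I)$-projection (the Scott--Zhang type $\mathcal{I}_h^I$ would also work), and split
\begin{align*}
(\eta-\Pi_h^{\mathrm{m}}\eta,\mu)_{\Gamma_I}=(\eta-\Pi_h^{\mathrm{m}}\eta,\mu-\mu_h)_{\Gamma_I}+\bigl[(\Pi_h^{\mathrm{m}}\eta,\mu_h)_{\Gamma_I,h}-(\Pi_h^{\mathrm{m}}\eta,\mu_h)_{\Gamma_I}\bigr]\,.
\end{align*}
The bracket uses \eqref{eq:lumped_l2_projection} to replace $(\eta,\mu_h)_{\Gamma_I}$.

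\textbf{First term.} By Cauchy--Schwarz, \eqref{eq:lumped_fractional_bound}, and the interpolation estimate $\|\mu-\Pi_h\mu\|_{\Gamma_I}\lesssim h^t|\mu|_{t,\Gamma_I}$ (cf.\ \cite[Prop.\ 22.19]{EG21I}, interpolated for fractional $t$), it is bounded by $h^{s+t}|\eta|_{s,\Gamma_I}\|\mu\|_{t,\Gamma_I}$.

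\textbf{Second term (quadrature error).} This is the main obstacle. I first apply \eqref{lem:std_prop_lumping.2}, which gives the bound $h^2\|\nabla_\Gamma\Pi_h^{\mathrm{m}}\eta\|_{\Gamma_I}\|\nabla_\Gamma\mu_h\|_{\Gamma_I}$. For the gradient of $\mu_h$ I use the bound $\|\nabla_\Gamma\Pi_h\mu\|_{\Gamma_I}\lesssim h^{t-1}|\mu|_{t,\Gamma_I}$, exactly as in \eqref{eq:h1_to_hs_seminorm}. For the gradient of $\Pi_h^{\mathrm{m}}\eta$ I need the analogous bound $\|\nabla_\Gamma\Pi_h^{\mathrm{m}}\eta\|_{\Gamma_I}\lesssim h^{s-1}|\eta|_{s,\Gamma_I}$. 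To get it, write $\Pi_h^{\mathrm{m}}\eta=\Pi_h\eta+(\Pi_h^{\mathrm{m}}\eta-\Pi_h\eta)$. Apply \eqref{eq:h1_to_hs_seminorm} to the first part. For the second part, use a global inverse estimate (cf.\ \cite[Lem.\ 3.5]{Bartels15}, requiring quasi-uniformity, as already used in the proof of Lemma \ref{lem:lumped_fractional_bound}) together with $\|\Pi_h^{\mathrm{m}}\eta-\Pi_h\eta\|_{\Gamma_I}\lesssim h\|\nabla_\Gamma\Pi_h\eta\|_{\Gamma_I}\lesssim h^s|\eta|_{s,\Gamma_I}$, which follows from \eqref{lem:lumped_fractional_bound.1} and \eqref{eq:h1_to_hs_seminorm}. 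Combining these, the quadrature term is bounded by $h^2h^{s-1}h^{t-1}|\eta|_{s,\Gamma_I}|\mu|_{t,\Gamma_I}=h^{s+t}|\eta|_{s,\Gamma_I}|\mu|_{t,\Gamma_I}$.

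Summing the two bounds and taking the supremum over $\mu$ yields \eqref{eq:lumped_negative_norm}. The delicate point is the bound on $\|\nabla_\Gamma\Pi_h^{\mathrm{m}}\eta\|_{\Gamma_I}$ for $s<1$ and $\|\nabla_\Gamma\Pi_h\mu\|_{\Gamma_I}$ for $t<1$: these rely on the operator-norm interpolation in \eqref{eq:Hs_norm_stability.3} and hence on quasi-uniformity. If that assumption is unavailable, I would instead use the local $H^1$-stability of $\mathcal{I}_h^I$ on $H^t$ via elementwise inverse estimates with local mesh sizes.
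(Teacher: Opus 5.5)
Your proposal is correct and follows essentially the same route as the paper. Like the paper, you insert $\Pi_h\mu$, bound $(\eta-\Pi_h^{\mathrm{m}}\eta,\mu-\Pi_h\mu)_{\Gamma_I}$ via \eqref{eq:lumped_fractional_bound} and the projection error, and control the lumping defect by \eqref{lem:std_prop_lumping.2} using the gradient bounds $h^{s-1}|\eta|_{s,\Gamma_I}$ and $h^{t-1}|\mu|_{t,\Gamma_I}$.

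The paper only cites the bound $\|\nabla_\Gamma\Pi_h^{\mathrm{m}}\eta\|_{\Gamma_I}\lesssim h^{s-1}|\eta|_{s,\Gamma_I}$, whereas you derive it explicitly by splitting off $\Pi_h\eta$ and applying an inverse estimate. Your remark that this step relies on quasi-uniformity, not just shape-regularity, is accurate.
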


\begin{proof}
    For every   $\psi \in \smash{H^t(\Gamma_I)}$, using \eqref{eq:lumped_l2_projection}, \eqref{lem:std_prop_lumping.2}, \eqref{eq:h1_to_hs_seminorm}, and Lemma \ref{lem:lumped_fractional_bound}, we find that
   \begin{align*}
      |(\eta - \Pi_h^{\mathrm{m}} \eta, \psi)_{\Gamma_I}| &\leq \vert (\eta - \Pi_h^{\mathrm{m}} \eta, \psi - \Pi_h \psi)_{\Gamma_I}\vert + \vert(\Pi_h^{\mathrm{m}} \eta , \Pi_h \psi )_{\Gamma_I,h}-(\Pi_h^{\mathrm{m}} \eta, \Pi_h \psi )_{\Gamma_I}\vert
      \\&\lesssim \|\eta - \Pi_h^{\mathrm{m}} \eta\|_{\Gamma_I}\|\psi - \Pi_h \psi\|_{\Gamma_I} + h^2\|\nabla_{\Gamma}\Pi_h^{\mathrm{m}} \eta\|_{\Gamma_I} \|\nabla_{\Gamma}\Pi_h \psi\|_{\Gamma_I} \\
      &\lesssim h^{s+t} |\eta|_{s,\Gamma_I} |\psi|_{t,\Gamma_I} + h^{1+s-1}h^{1+t-1} | \eta|_{s,\Gamma_I} | \psi|_{t,\Gamma_I} \,.
   \end{align*}
   so that, taking the supremum with respect to 
    $\psi \in \smash{H^t(\Gamma_I)}$ with $| \psi|_{t,\Gamma_I}=1$,~we~conclude~the~claimed negative-norm interpolation error estimate \eqref{eq:lumped_negative_norm}. 
\end{proof}

\section{Block coordinate descent algorithm}

\hspace{5mm}In order to compute a minimizer $(u_h,\smash{\widetilde{\mathtt{d}}_h})\hspace{-0.15em} \in\hspace{-0.15em} \mathcal{S}^{1}(\mathcal{T}_h)\times \mathcal{H}_m^h$ of the discrete heat loss~\mbox{functional}~\eqref{def:Eh}, we resort to a block coordinate descent algorithm that seeks to iteratively solve the discrete optimality conditions  \eqref{prop:discrete_optimality.1}--\eqref{prop:discrete_optimality.3}.

\begin{algorithm}[Block coordinate descent algorithm]\label{algorithm}
    Let $u_h^{(0)}\in \mathcal{S}^{1}(\mathcal{T}_h)$ be an   initial iterate. 
    Then, for every $i\in \mathbb{N}_0$, perform the following iteration loop:
     \begin{enumerate}[noitemsep,topsep=2pt,leftmargin=!,labelwidth=\widthof{(iii)}]
	\item[(i)]\hypertarget{step.i}{}  
    Given $\smash{u_h^{(i)}}\in \mathcal{S}^{1}(\mathcal{T}_h)$ from the previous iteration, 
    compute $\smash{\mathtt{C}_{\smash{u_h^{(i)}}}}> 0$, implicitly defined by
    \begin{align} \label{algorithm.1}
    \mathtt{C}_{\smash{u_h^{(i)}}} = \tfrac{1}{m\beta}\|I_h\{(\vert  u_h^{(i)}-u_{\infty}^h\vert -\mathtt{C}_{\smash{u_h^{(i)}}})_+\}\|_{1,\Gamma_{I}}\,;
    \end{align} 
           \item[(ii)]\hypertarget{step.ii}{} Given $\smash{\mathtt{C}_{\smash{u_h^{(i)}}}}> 0$ from step (\hyperlink{step.i}{i}), 
           compute 
           $\smash{\smash{\widetilde{\mathtt{d}}_h}^{(i)}} \in \mathcal{H}_m^h$ given via 
           \begin{align}\label{algorithm.3}
               \smash{\widetilde{\mathtt{d}}_h}^{(i)}  \coloneqq \smash{\tfrac{1}{\mathtt{C}_{\smash{u_h^{(i)}}}\beta}}I_h\{(|u_h^{(i)}-u_{\infty}^h|-\mathtt{C}_{\smash{u_h^{(i)}}})_+\}\quad \text{ on }\Gamma_{I}\,;
           \end{align}
           \item[(iii)]\hypertarget{step.iii}{} Given $\smash{\smash{\widetilde{\mathtt{d}}_h}^{(i)}} \in \mathcal{H}_m^h$ from step (\hyperlink{step.ii}{ii}), compute $\smash{u_h^{(i+1)}} \in \mathcal{S}^{1}(\mathcal{T}_h)$ such that for every $v_h\in \mathcal{S}^{1}(\mathcal{T}_h)$, there holds
           \begin{align}\label{algorithm.4}
            \hspace*{-2mm}\kappa(\nabla u_h^{(i+1)},\nabla v_h)_{\Omega} + \beta((1+\beta \smash{\widetilde{\mathtt{d}}_h}^{(i)})^{-1}(u_h^{(i+1)}-u_{\infty}^h), v_h)_{\smash{\Gamma_{I}},h}  = (f_h, v_h)_{\Omega}+(g_h,v_h)_{\Gamma_N}\,.
           \end{align}
    \end{enumerate}   
\end{algorithm}

The following theorem establishes the well-posedness and linear convergence of 
Algorithm~\ref{algorithm}.

\begin{theorem}[Well-posedness and  linear convergence]\label{thm:algorithm}
    The following statements apply: 
    \begin{itemize}[noitemsep,topsep=2pt,leftmargin=!,labelwidth=\widthof{(ii)}]
        \item[(i)]\hypertarget{thm:algorithm.i}{}  Algorithm \ref{algorithm} is well-posed, \textit{i.e.}, for every  $i\in \mathbb{N}_0$, the iterate $(u_h^{(i+1)},\mathtt{C}_{\smash{u_h^{(i)}}},\smash{\widetilde{\mathtt{d}}_h}^{(i)})\in \mathcal{S}^1(\mathcal{T}_h)\times \mathbb{R}_{>0}\times \mathcal{H}_m^h$ is computable according to steps (\hyperlink{step.i}{i})--(\hyperlink{step.iii}{iii});
        \item[(ii)]\hypertarget{thm:algorithm.ii}{} If, in addition, the discrete non-degeneracy condition
        \begin{align}\label{thm:algorithm.0.1}
            \delta_{\star,h}
    \coloneqq
    \operatorname{dist}_{\mathcal{S}^1(\mathcal{S}_h^{I})}\bigl(1, \tfrac{\beta (u_h-u_\infty^h)}{1+\beta\smash{\widetilde{\mathtt{d}}_h}}\smash{\widetilde{\mathcal{S}}}^1(\mathcal{S}_h^{I})\bigr)>0\,,
        \end{align}
        where $\smash{\widetilde{\mathcal{S}}}^1(\mathcal{S}_h^{I})\coloneqq \{\eta_h\in \mathcal{S}^1(\mathcal{S}_h^{I})\mid (\eta_h,1)_{\Gamma_I,h}=0\}$, is satisfied, 
        Algorithm \ref{algorithm}~is~\mbox{linearly}~convergent, \textit{i.e.}, for every $i\in \mathbb{N}_0$, there holds
    \begin{align}\label{thm:algorithm.0.2}
        \begin{aligned} 
         \sigma_h\|u_h^{(i)}-u_h\|_{H^1(\Omega)}^2&\leq E_h(u_h^{(i)},\smash{\widetilde{\mathtt{d}}_h}^{(i)}) - E_h(u_h,\smash{\widetilde{\mathtt{d}}_h})\\& \le (1 - \tfrac{\sigma_h}{8L})^i \bigl(E_h(u_h^0,\smash{\widetilde{\mathtt{d}}_h}^0) - E_h(u_h,\smash{\widetilde{\mathtt{d}}_h})\bigr)\,,
        \end{aligned}
    \end{align}
    where $\sigma_h>0$ is a constant that may deteriorate as $h\to 0^+$ and $L\coloneqq \smash{\kappa+c_{\mathrm{Tr}}^2\beta(d+1)}>0$.
    \end{itemize}
\end{theorem}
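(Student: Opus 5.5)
For (i), I will argue by induction over $i\in\mathbb{N}_0$ and treat each step in turn. For step (i), Lemma \ref{lem:uniqueness_Cu}, transferred to the node-based setting, gives a unique $\mathtt{C}_{u_h^{(i)}}\ge0$. The transfer uses that $c\mapsto \tfrac{1}{m\beta}\|I_h\{(|u_h^{(i)}-u_\infty^h|-c)_+\}\|_{1,\Gamma_I}-c$ is continuous and strictly decreasing, positive at $c=0$ unless the nodal values vanish, and negative for large $c$. Positivity is the delicate point. It is automatic for $i\ge1$ by arguing as in Lemma \ref{lem:positivity_Cu_h}: $u_h^{(i)}$ solves \eqref{algorithm.4}, so $u_h^{(i)}=u_\infty^h$ on $\Gamma_I$ would contradict \eqref{eq:discrete_data_condition} after testing with $v_h=1$. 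For $i=0$ I will simply assume $u_h^{(0)}\neq u_\infty^h$ on $\Gamma_I$, or replace $u_h^{(0)}$ by one sweep of step (iii).

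Step (ii) then yields $\widetilde{\mathtt{d}}_h^{(i)}\in\mathcal{H}_m^h$ by construction, since $I_h$ of a nonnegative function is nonnegative and $(\cdot,1)_{\Gamma_I,h}=(\cdot,1)_{\Gamma_I}$ on $\mathcal{S}^1(\mathcal{S}_h^I)$. Step (iii) is a linear problem with a symmetric bilinear form. This form is coercive on $\mathcal{S}^1(\mathcal{T}_h)$ by Friedrich's inequality \eqref{lem:poin_cont}, \eqref{lem:std_prop_lumping.1}, and the bound $\|\widetilde{\mathtt{d}}_h^{(i)}\|_{\infty,\Gamma_I}\lesssim h^{1-d}m$. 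Lax--Milgram then applies.

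For (ii), I will view the iteration as exact alternating minimization of $E_h$, viewed as a function of $(v_h,\eta_h)$. By the proof of Proposition \ref{prop:discrete_optimality}, steps (i)--(ii) compute $\widetilde{\mathtt{d}}_h^{(i)}=\operatorname{argmin}_{\eta_h}E_h(u_h^{(i)},\eta_h)$, and step (iii) computes $u_h^{(i+1)}=\operatorname{argmin}_{v_h}E_h(v_h,\widetilde{\mathtt{d}}_h^{(i)})$. Hence the energies $e_i\coloneqq E_h(u_h^{(i)},\widetilde{\mathtt{d}}_h^{(i)})-E_h(u_h,\widetilde{\mathtt{d}}_h)$ are nonincreasing. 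The first inequality in \eqref{thm:algorithm.0.2} is a discrete analogue of Lemma \ref{lem:quadratic_growth_new} and Corollary \ref{cor:quadratic_growth_new}. I will redo the binomial identity of Lemma \ref{lem:quadratic_growth_new} with $(\cdot,\cdot)_{\Gamma_I,h}$ in place of $(\cdot,\cdot)_{\Gamma_I}$, using \eqref{prop:discrete_optimality.3} and \eqref{prop:discrete_optimality.3b}; this works nodewise because everything is evaluated through $I_h$. I will then use the discrete non-degeneracy $\delta_{\star,h}>0$ exactly as in \eqref{cor:quadratic_growth_new.3}, together with uniform $L^\infty$-bounds on $\eta_h$ available in finite dimension. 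This produces $\sigma_h$, depending on $h$ through $\delta_{\star,h}$ and $\|\eta_h\|_\infty$.

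For the contraction, I will use a standard sufficient-decrease plus error-bound argument. The $v$-block of $E_h$ has a gradient that is $L$-Lipschitz, with $L=\kappa+c_{\mathrm{Tr}}^2\beta(d+1)$ by \eqref{lem:std_prop_lumping.1} and the trace inequality, because $(1+\beta\eta_h)^{-1}\le1$. Exact minimization in $v$ therefore decreases the energy at least as much as a gradient step with step size $1/L$:
\begin{align*}
E_h(u_h^{(i)},\widetilde{\mathtt{d}}_h^{(i)})-E_h(u_h^{(i+1)},\widetilde{\mathtt{d}}_h^{(i)})\ge \tfrac{1}{2L}\|\mathrm{D}_1E_h(u_h^{(i)},\widetilde{\mathtt{d}}_h^{(i)})\|^2_{*}\,.
\end{align*}
Since $\widetilde{\mathtt{d}}_h^{(i)}$ is the exact $\eta$-minimizer for $u_h^{(i)}$, the reduced functional $\phi(v_h)\coloneqq\min_{\eta_h}E_h(v_h,\eta_h)$ is convex. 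Its gradient at $u_h^{(i)}$ is $\mathrm{D}_1E_h(u_h^{(i)},\widetilde{\mathtt{d}}_h^{(i)})$ by a Danskin-type argument, because the minimizer in $\eta$ is unique. The quadratic growth from the previous paragraph reads $\phi(v_h)-\phi(u_h)\ge\sigma_h\|v_h-u_h\|^2_{H^1}$. Combined with convexity, this yields a Polyak--Łojasiewicz inequality $\|\nabla\phi(u_h^{(i)})\|_*^2\ge c\,\sigma_h\,(\phi(u_h^{(i)})-\phi(u_h))$. Chaining the two inequalities gives $e_{i+1}\le e_i-\tfrac{c\sigma_h}{2L}e_i$, so $e_{i+1}\le(1-\tfrac{\sigma_h}{8L})e_i$ once the constants are tracked (the factor $8$ absorbs the PL constant, roughly $\sigma_h/4$, obtained from growth plus convexity). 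Iterating this gives \eqref{thm:algorithm.0.2}.

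I expect the main obstacle to be the gradient identity for $\phi$ and the PL step, since $\phi$ involves the nonsmooth constraint set $\mathcal{H}_m^h$. To handle this, I will first try to verify differentiability directly. The explicit formula makes $\eta$-minimization equivalent to the truncation $\beta(1+\beta\widetilde{\mathtt{d}}_h)^{-1}(u_h-u_\infty^h)=\mathtt{T}_{\mathtt{C}}(u_h-u_\infty^h)$ at the nodes, as in \eqref{thm:convergence.9}. This gives $\phi$ as a $C^{1}$ Huber-type functional whose derivative is the $v$-derivative above. I will check this nodewise via the envelope theorem.
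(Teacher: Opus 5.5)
Your proposal is correct. Part (i) matches the paper's proof, but part (ii) takes a different route to the contraction.

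\textbf{How the paper argues.} Adapting Both, the paper never introduces the reduced functional $\phi$. Joint convexity of the extension $\widehat{E}_h$ and the variational inequality of the $\eta$-step give \eqref{thm:algorithm.1}, i.e., $e_i\le\langle g_i,u_h^{(i)}-u_h\rangle$ with $g_i\coloneqq\mathrm{D}_{v_h}\widehat{E}_h(u_h^{(i)},\widetilde{\mathtt{d}}_h^{(i)})$. The paper then bounds this pairing by comparing $u_h^{(i+1)}$ with the intermediate point $u_h^{(i)}+\gamma(u_h-u_h^{(i)})$, using the Lipschitz gradient, convexity in $v_h$, and quadratic growth. The choice $\gamma=\sigma_h/(4L)$ is where the factor $8L$ comes from.

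\textbf{How your route compares.} You combine the descent lemma (exact minimization in $v_h$ beats a gradient step of length $1/L$) with a Polyak--{\L}ojasiewicz inequality. This is cleaner and gives a better rate. Your PL constant is exactly $\sigma_h$, not ``roughly $\sigma_h/4$'': $e_i\le\|g_i\|_*\|u_h^{(i)}-u_h\|_{H^1(\Omega)}\le\|g_i\|_*(e_i/\sigma_h)^{1/2}$. Hence $e_{i+1}\le(1-\frac{\sigma_h}{2L})e_i$, which implies \eqref{thm:algorithm.0.2}.

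\textbf{The step you flag as the main obstacle is avoidable.} The only property of $\nabla\phi$ your PL step uses is $\phi(u_h^{(i)})-\phi(u_h)\le\langle g_i,u_h^{(i)}-u_h\rangle$. That is exactly \eqref{thm:algorithm.1}, so no Danskin/envelope argument or Huber structure is needed. Your Danskin argument is still sound: $\mathcal{H}_m^h$ is compact, and the $\eta$-minimizer is unique as soon as $u_h^{(i)}\neq u_\infty^h$ at some node of $\Gamma_I$. One small correction: nodewise the identity reads $(1+\beta\widetilde{\mathtt{d}}_h)^{-1}(u_h-u_\infty^h)=\mathtt{T}_{\mathtt{C}}(u_h-u_\infty^h)$, without the factor $\beta$ on the left.

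\textbf{Your caveat at $i=0$ is justified.} The paper obtains $\mathtt{C}_{u_h^{(i)}}>0$ from step (iii) of the previous iteration, which does not exist for $i=0$. So an assumption such as $u_h^{(0)}\neq u_\infty^h$ at some node of $\Gamma_I$ (or a preliminary solve of step (iii)) is genuinely needed. Your proposal fills a small gap in the stated well-posedness claim here.
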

\begin{proof} \emph{ad (\hyperlink{thm:algorithm.i}{i}).} 
First, the well-posedness of step (\hyperlink{step.i}{i}), by analogy with \cite[Lem.\ 4.1]{PietraNitschScalaTrombetti2021}, follows from the intermediate value theorem. Second,  the well-posedness of step (\hyperlink{step.ii}{ii}) follows from $\mathtt{C}_{\smash{u_h^{(i)}}}>0$, which itself follows from step (\hyperlink{step.iii}{iii}) of the previous iteration arguing as in the proof of Lemma \ref{lem:positivity_Cu_h}. Third, the well-posedness of step (\hyperlink{step.iii}{iii}) follows from the Lax--Milgram lemma (\textit{cf}.\ \cite[Lem.\ 2.2]{ErnGuermond2004}).

\emph{ad (\hyperlink{thm:algorithm.ii}{ii}).} To begin with, we introduce the extended energy functional $\widehat{E}_h\colon \mathcal{S}^1(\mathcal{T}_h)\times \mathcal{S}^1(\mathcal{S}_h^{I})\to \mathbb{R}$, for every $(v_h,\eta_h)\in \mathcal{S}^1(\mathcal{T}_h)\times \mathcal{S}^1(\mathcal{S}_h^{I})$ defined by
\begin{align}
    \widehat{E}_h(v_h,\eta_h)\coloneqq\tfrac{\kappa}{2}\|\nabla v_h\|^2_{\Omega}  + \tfrac{\beta}{2}\|(1 + \beta \eta_h)^{-\smash{\frac{1}{2}}}(v_h-u_{\infty}^h)\|_{\Gamma_{I},h}^2 - (f_h, v_h)_{\Omega}-(g_h,v_h)_{\Gamma_N} \,,\label{thm:algorithm.0}
\end{align}
which satisfies the following quadratic growth and Lipschitz gradient properties:

\textit{$\bullet$ Quadratic growth.}~Analogously~to~the proof of Corollary \ref{cor:quadratic_growth_new}, for every $(v_h,\eta_h)\in \mathcal{S}^1(\mathcal{T}_h)\times \mathcal{H}_m^h$, we find that
\begin{align}\label{eq:quadratic_growth_Eh}
    \smash{\widehat{E}_h(v_h,\eta_h)-\widehat{E}_h(u_h,\smash{\widetilde{\mathtt{d}}_h})\ge \sigma_h(\eta_h) \|v_h-u_h\|_{H^1(\Omega)}^2\,,}
\end{align}
where $\sigma_h(\eta_h)>0$ depends on $\mathcal{T}_h$, $\Omega$, $\Gamma_I$, $\kappa$, $\beta$,
$\|\smash{\widetilde{\mathtt{d}}_h}\|_{\infty,\Gamma_I}$,
$\|\eta_h\|_{\infty,\Gamma_I}$, and $\delta_{\star,h}^{-1}$.~By~a~global~inverse estimate (\textit{cf}.\ \cite[Lem.\ 3.5]{Bartels15}), we have that
$\|\eta_h\|_{\infty,\Gamma_I}\lesssim h^{-d+1}\|\eta_h\|_{1,\Gamma_I}=h^{-d+1}m$~for~all~$\eta_h\in  \mathcal{H}_m^h$, so that \eqref{eq:quadratic_growth_Eh} applies with $\sigma_h(\eta_h)$~replaced~by~${\sigma_h\coloneqq\inf_{\eta_h\in \mathcal{H}_m^h}{\{\sigma_h(\eta_h)\}}>0}$.

\textit{$\bullet$ Lipschitz gradient.}
For every $(v_h,\eta_h)\in \mathcal{S}^1(\mathcal{T}_h)\times \mathcal{S}^1(\mathcal{S}_h^{I})$ and $w_h\in \mathcal{S}^1(\mathcal{T}_h)$, using~\eqref{lem:std_prop_lumping.1}, we find that
\begin{align}\label{eq:Lipschitz_gradient_Eh}
\begin{aligned} 
&\|\mathrm{D}_{v_h}\{\widehat{E}_h(v_h+w_h,\eta_h)-\widehat{E}_h(v_h,\eta_h)\}\|_{(\mathcal{S}^1(\mathcal{T}_h))^*}\\&=\sup_{\widetilde{w}_h\in \mathcal{S}^1(\mathcal{T}_h)\,:\,\|\widetilde{w}_h\|_{H^1(\Omega)}\leq 1}{\bigl\{\kappa (\nabla w_h,\nabla \widetilde{w}_h)_{\Omega}+\beta((1+\beta \eta_h)^{-1}w_h,\widetilde{w}_h)_{\Gamma_I,h}\bigr\}}
\\[-1mm]&\leq \sup_{\widetilde{w}_h\in \mathcal{S}^1(\mathcal{T}_h)\,:\,\|\widetilde{w}_h\|_{H^1(\Omega)}\leq 1}{\bigl\{\kappa \|\nabla w_h\|_{\Omega}\|\nabla\widetilde{w}_h\|_{\Omega}+\beta(d+1)\|w_h\|_{\Gamma_I}\|\widetilde{w}_h\|_{\Gamma_I}\bigr\}}
\\[-1mm]&\leq L\| w_h\|_{H^1(\Omega)}\,.
\end{aligned}
\end{align}

Based on the quadratic growth and Lipschitz gradient properties, following~\mbox{arguments}~of~\cite{Both:2022}, we are now in the position to establish the claimed linear convergence of Algorithm \ref{algorithm}:

Then, from the Fr\'echet differentiability and convexity of $\widehat{E}_h\colon \hspace{-0.125em}\mathcal{S}^1(\mathcal{T}_h)\times \mathcal{S}^1(\mathcal{S}_h^{I})\hspace{-0.125em}\to\hspace{-0.125em} \mathbb{R}$~as~well~as~that $\langle \mathrm{D}_{\eta_h} \widehat{E}_h(u_h^{(i)},\smash{\widetilde{\mathtt{d}}_h}^{(i)}), \smash{\widetilde{\mathtt{d}}_h}^{(i)} - \smash{\widetilde{\mathtt{d}}_h} \rangle_{\mathcal{S}^1(\mathcal{S}_h^{I})} \le 0$, since  $-\mathrm{D}_{\eta_h} \widehat{E}_h(u_h^{(i)},\smash{\widetilde{\mathtt{d}}_h}^{(i)}) \in \partial_{\eta_h} I_{\mathcal{H}_m^h}(\smash{\widetilde{\mathtt{d}}_h}^{(i)})$,  for all $i\in \mathbb{N}_0$, for every $i\in \mathbb{N}_0$, it follows that
\begin{align}\label{thm:algorithm.1}
\begin{aligned} 
 \widehat{E}_h(u_h^{(i)},\smash{\widetilde{\mathtt{d}}_h}^{(i)}) - \widehat{E}_h(u_h,\smash{\widetilde{\mathtt{d}}_h}) 
    & \le \langle \mathrm{D}_{v_h} \widehat{E}_h(u_h^{(i)},\smash{\widetilde{\mathtt{d}}_h}^{(i)}), u_h^{(i)} - u_h\rangle_{\mathcal{S}^1(\mathcal{T}_h)}\\[-0.5mm]&\quad + \langle \mathrm{D}_{\eta_h} \widehat{E}_h(u_h^{(i)},\smash{\widetilde{\mathtt{d}}_h}^{(i)}),\smash{\widetilde{\mathtt{d}}_h}^{(i)}-\smash{\widetilde{\mathtt{d}}_h} \rangle_{\mathcal{S}^1(\mathcal{S}_h^{I})}
    \\[-0.5mm]&\leq \langle \mathrm{D}_{v_h} \widehat{E}_h(u_h^{(i)},\smash{\widetilde{\mathtt{d}}_h}^{(i)}), u_h^{(i)} - u_h\rangle_{\mathcal{S}^1(\mathcal{T}_h)}\,,
\end{aligned}
\end{align}
where, for every $\gamma \in (0,1]$, due to \eqref{eq:quadratic_growth_Eh}, \eqref{eq:Lipschitz_gradient_Eh}, the Fr\'echet differentiability and convexity of $\widehat{E}_h(\cdot,\smash{\widetilde{\mathtt{d}}_h}^{(i)})\colon \mathcal{S}^1(\mathcal{T}_h)\to \mathbb{R}$, and the minimality of $u_h^{(i+1)}\in \mathcal{S}^1(\mathcal{T}_h)$~for~the~latter,~we~have~that
\begin{align}\label{thm:algorithm.2}
\begin{aligned}
    &\langle \mathrm{D}_{v_h} \widehat{E}_h(u_h^{(i)},\smash{\widetilde{\mathtt{d}}_h}^{(i)}), u_h^{(i)} - u_h\rangle_{\mathcal{S}^1(\mathcal{T}_h)}  \\[-0.5mm]&=  \langle \mathrm{D}_{v_h} \widehat{E}_h(u_h^{(i)},\smash{\widetilde{\mathtt{d}}_h}^{(i)}) - \mathrm{D}_{v_h} \widehat{E}_h(u_h^{(i)} + \gamma(u_h - u_h^{(i)}),\smash{\widetilde{\mathtt{d}}_h}^{(i)}), u_h^{(i)} - u_h\rangle_{\mathcal{S}^1(\mathcal{T}_h)} \\[-0.5mm]&\quad+ \tfrac{1}{\gamma}\langle  \mathrm{D}_{v_h} \widehat{E}_h(u_h^{(i)} + \gamma(u_h - u_h^{(i)}),\smash{\widetilde{\mathtt{d}}_h}^{(i)}), \gamma(u_h^{(i)} - u_h)\rangle_{\mathcal{S}^1(\mathcal{T}_h)} \\[-0.5mm]
& \le L\gamma \| u_h^{(i)} - u_h\|_{H^1(\Omega)}^2 + \tfrac{1}{\gamma}\bigl(\widehat{E}_h(u_h^{(i)},\smash{\widetilde{\mathtt{d}}_h}^{(i)}) - \widehat{E}_h(u_h^{(i)} + \gamma(u_h - u_h^{(i)}), \smash{\widetilde{\mathtt{d}}_h}^{(i)})\bigr) \\[-0.5mm]
& \le \tfrac{2L\gamma}{\sigma_h} \bigl(\widehat{E}_h(u_h^{(i)},\smash{\widetilde{\mathtt{d}}_h}^{(i)}) -\widehat{E}_h(u_h,\smash{\widetilde{\mathtt{d}}_h})\bigr) + \tfrac{1}{\gamma}\bigl(\widehat{E}_h(u_h^{(i)},\smash{\widetilde{\mathtt{d}}_h}^{(i)}) - \widehat{E}_h(u_h^{(i+1)}, \smash{\widetilde{\mathtt{d}}_h}^{(i)})\bigr)\,.
\end{aligned}
\end{align}
Using \eqref{thm:algorithm.2} with $\gamma = \smash{\frac{\sigma_h}{4L}}\in (0, \frac{1}{4}]$ in \eqref{thm:algorithm.1}  and  rearranging the resulting estimate,~for~every~$i\in \mathbb{N}_0$, we find that
\begin{align*}
   \widehat{E}_h(u_h^{(i+1)},\smash{\widetilde{\mathtt{d}}_h}^{(i)})-\widehat{E}_h( u_h,\smash{\widetilde{\mathtt{d}}_h}) \le (1 - \tfrac{\sigma_h}{8L}) (\widehat{E}_h(u_h^{(i)},\smash{\widetilde{\mathtt{d}}_h}^{(i)}) -\widehat{E}_h( u_h,\smash{\widetilde{\mathtt{d}}_h}))\,.
\end{align*}
%
Eventually, noting that $\smash{\widehat{E}_h(u_h^{(i)},\smash{\widetilde{\mathtt{d}}_h}^{(i)})\leq \widehat{E}_h(u_h^{(i)},\smash{\widetilde{\mathtt{d}}_h}^{(i-1)}) \leq
    \widehat{E}_h(u_h^{(i-1)},\smash{\widetilde{\mathtt{d}}_h}^{(i-1)})}$ for all $i\in \mathbb{N}$, 
%
for every $i\in \mathbb{N}_0$, we conclude that
\begin{align*}
    \widehat{E}_h(u_h^{(i+1)},\smash{\widetilde{\mathtt{d}}_h}^{(i+1)}) - \widehat{E}_h(u_h,\smash{\widetilde{\mathtt{d}}_h}) &\le \widehat{E}_h(u_h^{(i+1)},\smash{\widetilde{\mathtt{d}}_h}^{(i)}) - \widehat{E}_h(u_h,\smash{\widetilde{\mathtt{d}}_h}) \\[-0.5mm] &\le (1 - \tfrac{\sigma_h}{8L}) \bigl(\widehat{E}_h(u_h^{(i)},\smash{\widetilde{\mathtt{d}}_h}^{(i)}) - \widehat{E}_h(u_h,\smash{\widetilde{\mathtt{d}}_h})\bigr) \\[-2.5mm]
    & \,\,\vdots \\[-2mm]
    & \le (1 - \tfrac{\sigma_h}{8L})^{i+1} \bigl(\widehat{E}_h(u_h^{(0)},\smash{\widetilde{\mathtt{d}}_h}^{(0)}) - \widehat{E}_h(u_h,\smash{\widetilde{\mathtt{d}}_h})\bigr)\,,
\end{align*}
which, together with \eqref{eq:quadratic_growth_Eh}, is the claimed linear convergence estimate \eqref{thm:algorithm.0.2} for Algorithm \ref{algorithm}.
\end{proof}

\begin{remark}[further comments on Algorithm \ref{algorithm}]
    \begin{itemize}[noitemsep,topsep=2pt,leftmargin=!,labelwidth=\widthof{(ii)}]
        \item[(i)] \hspace{-0.1mm}\emph{Computation \hspace{-0.1mm}of \hspace{-0.1mm}$\mathtt{C}_{\smash{u_h^{(i)}}}$.} \hspace{-0.1mm}Although~\hspace{-0.1mm}in~\hspace{-0.1mm}the proof of Theorem \ref{thm:algorithm}(i), the existence of a constant $\mathtt{C}_{\smash{u_h^{(i)}}}$, solving \eqref{algorithm.1},~was~shown~based~on the intermediate value theorem, there are several ways to obtain this constant constructively:
        \begin{itemize}[noitemsep,topsep=2pt,leftmargin=!,labelwidth=\widthof{(i.c)}]
            \item[(i.a)] In general,  since the intermediate value theorem is applicable, one may resort to the bisection method, which converges globally with linear rate and reduces
the error by a factor of $\frac{1}{2}$ in each iteration.
            \item[(i.b)] If a suitable initial guess is available (for example, obtained via the
            bisection method), one may employ a semi-smooth Newton method, which
converges locally superlinearly.

            \item[(i.c)]
If $|\Gamma_I|<m\beta$, then the associated fixed-point mapping is a
contraction; consequently, by the Banach fixed-point theorem, one may resort to
a Picard iteration, which converges globally with linear rate and reduces
the error by a factor of $\smash{\frac{|\Gamma_I|}{m\beta}}$ in each iteration.
        \end{itemize} 

        \item[(ii)] \emph{Convergence of $\smash{\{(\mathtt{C}_{\smash{u_h^{(i)}}},\smash{\widetilde{\mathtt{d}}_h}^{(i)})\}_{i\in \mathbb{N}_0}}$.} If $|\Gamma_I|<m\beta$, arguing similarly to the proof of Lemma~\ref{lem:C_u_d_u_estimate}, for every $i\in \mathbb{N}_0$, we find that\vspace{-1mm}
        \begin{subequations} 
        \begin{align}
            |\mathtt{C}_{\smash{u_h^{(i)}}} - \mathtt{C}_{\smash{u_h}}|  &\leq \smash{\tfrac{1}{m\beta-\vert \Gamma_I\vert}}\|u_h^{(i)}-u_h\|_{1,\Gamma_I}\,,\\
            \|\smash{\widetilde{\mathtt{d}}_h}^{(i)}-\smash{\widetilde{\mathtt{d}}_h}\|_{1,\Gamma_I}&\leq \smash{\tfrac{2m}{\mathtt{C}_{\smash{u_h^{(i)}}}}}\smash{\tfrac{1}{m\beta-\vert \Gamma_I\vert }}\|u_h^{(i)}-u_h\|_{1,\Gamma_I}\,,
        \end{align} 
        so that the linear convergence estimate in Theorem \ref{thm:algorithm}(\hyperlink{thm:algorithm.ii}{ii}) carries over.
        \end{subequations}
    \end{itemize}
\end{remark}

\begin{remark}[Sufficient conditions for the discrete non-degeneracy condition \eqref{thm:algorithm.0.1}]
\label{rem:nondegeneracy_conditions_discrete}
The discrete non-degeneracy condition
\eqref{thm:algorithm.0.1}
excludes the possibility that constant modes of $v_h-u_h$ can be
compensated by admissible variations of $\smash{\widetilde{\mathtt{d}}_h}$. It is satisfied,
for instance, under the following assumptions:

\begin{itemize}[noitemsep,topsep=2pt,leftmargin=!,labelwidth=\widthof{(iii)}]

\item[(i)]\hypertarget{rem:nondegeneracy_conditions_discrete.i}{}
If there exists a vertex
$\nu\in \mathcal{N}_h^I$ such that $u_h(\nu)=u_\infty^h(\nu)$, then $\smash{\frac{\beta(u_h(\nu)-u_\infty^h(\nu))}
    {1+\beta\smash{\widetilde{\mathtt{d}}_h}(\nu)}
    \xi_h(\nu)}
    =
    0$ for all 
$\xi_h\in \smash{\widetilde{\mathcal{S}}}^1(\mathcal{S}_h^{I})$, so that 
\begin{align}\label{rem:nondegeneracy_conditions_discrete.1}
    1
    \notin
    \tfrac{\beta(u_h-u_\infty^h)}
    {1+\beta\smash{\widetilde{\mathtt{d}}_h}}
    \smash{\widetilde{\mathcal{S}}}^1(\mathcal{S}_h^{I})\,,
\end{align}
which is equivalent to
$\delta_{\star,h}>0$;

\item[(ii)]\hypertarget{rem:nondegeneracy_conditions_discrete.ii}{}
If $u_h(\nu)-u_\infty^h(\nu)\neq0$ for all $\nu\in \mathcal{N}_h^I$,  
then $\frac{\beta(u_h-u_\infty^h)}
    {1+\beta\smash{\widetilde{\mathtt{d}}_h}}
    \smash{\widetilde{\mathcal{S}}}^1(\mathcal{S}_h^{I})$ 
is a closed hyperplane~in~the~finite-dimensional  trace space with
normal vector $\frac{1+\beta\smash{\widetilde{\mathtt{d}}_h}}
    {\beta(u_h-u_\infty^h)}$.  
If, in addition, $(
        \frac{1+\beta\smash{\widetilde{\mathtt{d}}_h}}
        {u_h-u_\infty^h},
        1 )_{\Gamma_I,h}
    \neq0$, 
then \eqref{rem:nondegeneracy_conditions_discrete.1}
applies, which is equivalent to
$\delta_{\star,h}>0$;

A sufficient condition for
(\hyperlink{rem:nondegeneracy_conditions_discrete.ii}{ii})
is that $u_h-u_\infty^h
    \ge c_0$  or $u_h-u_\infty^h
    \le -c_0$ on $\Gamma_I$~for~some~$c_0>0$;

\item[(iii)]\hypertarget{rem:nondegeneracy_conditions_discrete.iii}{}
If
$u_\infty^h=\mathrm{const}$ on $\Gamma_I$,
$f_h\ge0$ in $\Omega$, 
$g_h\ge0$ on $\Gamma_N$, and
 Assumption~\ref{assum:nonobtuse_mesh}
is satisfied,  then the discrete maximum principle implies $u_h-u_\infty^h\ge0$ on $\Gamma_I$, which follows
by testing the discrete optimality condition \eqref{prop:discrete_optimality.3}
with $v_h= -I_h\{(u_h-u_\infty^h)_-\}\in \mathcal{S}^1(\mathcal{T}_h)$ 
and~using~Lemma~\ref{lem:discrete_chainrule}, resulting in
\begin{align}
    \kappa
    \|
        \nabla
        I_h\{(u_h-u_\infty^h)_-\}
    \|_\Omega^2
    +
    \beta
    \bigl\|
         I_h\{(1+\beta\smash{\widetilde{\mathtt{d}}_h})^{-\smash{\frac12}}
       (u_h-u_\infty^h)_-\}
    \bigr\|_{\Gamma_I,h}^2
    \le0 \,.
\end{align}
Consequently, if $ u_h-u_\infty^h>0$
    on $\Gamma_I$,  
then $\smash{(\frac{1+\beta\smash{\widetilde{\mathtt{d}}_h}}
    {u_h-u_\infty^h},1)_{\Gamma_I,h}
  }  >0$, so that (\hyperlink{rem:nondegeneracy_conditions.ii}{ii}) applies. Otherwise, (\hyperlink{rem:nondegeneracy_conditions.i}{i}) applies and, therefore, $\delta_{\star,h}>0$.
\end{itemize}
\end{remark}

{\setlength{\bibsep}{0pt plus 0.0ex}\small
		
		\bibliographystyle{aomplain}
		\bibliography{references}
		
	}
    
\end{document}